\tikzstyle{bullet}=[fill={rgb,255: red,6; green,6; blue,6}, draw=black, shape=circle, minimum size=.02cm, scale=0.6]
\tikzstyle{circle}=[fill=white, draw=black, shape=circle, inner sep=0pt]
\tikzstyle{new edge style 0}=[-]
\numberwithin{equation}{section}
\theoremstyle{plain}
\newtheorem{theorem}[equation]{Theorem}
\newtheorem{lemma}[equation]{Lemma}
\newtheorem{proposition}[equation]{Proposition}
\newtheorem{corollary}[equation]{Corollary}
\newtheorem{introtheorem}{Theorem}
\theoremstyle{definition}
\newtheorem{definition}[equation]{Definition}
\newtheorem{construction}[equation]{Construction}
\newtheorem{example}[equation]{Example}
\newtheorem{remark}[equation]{Remark}
\newtheorem{convention}[equation]{Convention}
\newtheorem{observation}[equation]{Observation}
\newtheorem{warning}[equation]{Warning}
\newtheorem{propdef}[equation]{Proposition/definition}
\setlist[enumerate]{label=(\arabic*), leftmargin=*}
\setlist[itemize]{label=$\vcenter{\hbox{\footnotesize$\bullet$}}$, leftmargin=*}
\newcommand{\mb}[1]{\mathbf{#1}}
\newcommand{\mm}[1]{\mathrm{#1}}
\newcommand{\mf}[1]{\mathfrak{#1}}
\newcommand{\mc}[1]{\mathcal{#1}}
\newcommand{\ol}[1]{\overline{#1}}
\newcommand{\cat}[1]{
\StrLen{#1}[\mystrlen]
\ifnum\mystrlen=1 \mathscr{#1}
\else \mathrm{#1}
\fi}
\newcommand{\scat}[1]{\mb{#1}}
\newcommand{\oocat}[1]{
\StrLen{#1}[\mystrlen]
\ifnum\mystrlen=1 \mathscr{#1}
\else \mathbf{#1}
\fi}
\newcommand{\colim}{\operatornamewithlimits{\mathrm{colim}}}
\newcommand{\holim}{\operatornamewithlimits{\mathrm{holim}}}
\newcommand{\Hom}[0]{\mm{Hom}}
\newcommand{\Map}[0]{\mm{Map}}
\newcommand{\rto}[1]{\stackrel{#1}{\rt}}
\newcommand{\rt}[0]{\longrightarrow}
\newcommand{\lt}[0]{\longleftarrow}
\newcommand{\wgt}[1]{\langle #1\rangle}
\newcommand{\longsquiggly}{\xymatrix{{}\ar@{~>}[r]&{}}}
\def\dar{\ar@{*{\hspace{-1pt}}*{\hspace{1pt}\text{-}}>}}
\def\ddar{\ar@{*[left]{\hspace{-1pt}}*[left]+<-4.5pt, 0pt>{\hspace{1pt}\text{-}}>}}
\newcommand{\Fun}[0]{\cat{Fun}}
\newcommand{\gr}[0]{\mm{gr}} 
\newcommand{\tot}[0]{\mm{tot}}
\newcommand{\cl}[0]{\mm{cl}}
\newcommand{\mix}[0]{\mm{mix}}
\renewcommand{\Bar}{\operatorname{Bar}}
\DeclareMathOperator{\End}{End}
\DeclareMathOperator{\Tot}{Tot}
\DeclareMathOperator{\Tw}{Tw}
\DeclareMathOperator{\id}{id}
\DeclareMathOperator{\bcirc}{\hat{\circ}}
\DeclareMathOperator{\botimes}{\hat{\otimes}}
\newcommand{\Alg}[0]{\cat{Alg}}
\newcommand{\Mod}[0]{\cat{Mod}}
\newcommand{\MC}[0]{\mm{MC}}
\newcommand{\CC}[0]{\mathscr{C}}
\newcommand{\PP}[0]{\mathscr{P}}
\newcommand{\uCC}[0]{\mm{u}\mathscr{C}}
\newcommand{\cPP}[0]{\mm{c}\mathscr{P}}
\newcommand{\Sym}[0]{\mathrm{Sym}}
\newcommand{\antishriek}{{\scriptstyle \text{\rm !`}}}
\newcommand{\Gr}{\operatorname{Gr}} 
\newcommand{\curv}{\operatorname{curv}}
\newcommand{\sgn}{\operatorname{sgn}}
\newcommand{\dR}{\mm{dR}}
\newcommand{\Spec}{\operatorname{Spec}}
\newcommand{\blend}{\operatorname{blend}}
\newcommand{\Loid}{\mathfrak{L}}
\newcommand{\Loide}{\mathfrak{H}}
\newcommand{\Loi}{\mathfrak{F}}
\newcommand{\tang}{\mathfrak{t}}
\newcommand{\keranc}{\mf{n}}
\newcommand{\kerance}{\mf{m}}
\newcommand{\As}[0]{\mathsf{As}}
\newcommand{\ucoas}{\mathrm{u}\mathsf{coAs}}
\newcommand{\Lie}[0]{\mathsf{Lie}}
\newcommand{\Pois}[0]{\mathsf{Pois}}
\newcommand{\cocom}{\mathsf{coCom}}
\newcommand{\ucocom}{\mathsf{ucoCom}}
\newcommand{\cLie}{\mathsf{cLie}}
\title{Lie algebroids are curved Lie algebras}
\author[]{Damien Calaque\thanks{\href{mailto:damien.calaque@umontpellier.fr}{damien.calaque@umontpellier.fr}}}
\author[]{Ricardo Campos\thanks{\href{mailto:ricardo.campos@umontpellier.fr}{ricardo.campos@umontpellier.fr}}}
\author[]{Joost Nuiten \thanks{\href{mailto:joost-jakob.nuiten@umontpellier.fr}{joost-jakob.nuiten@umontpellier.fr}}}
\affil[]{IMAG, Univ. Montpellier, CNRS, Montpellier, France.}
\begin{document}

\maketitle
\begin{abstract}
%
We show that there is an equivalence of $\infty$-categories between Lie algebroids and certain kinds of curved Lie algebras. For this we develop a method to study the $\infty$-category of curved Lie algebras using the homotopy theory of algebras over a complete operad. 

\end{abstract}

\tableofcontents

\section{Introduction}

Differential graded (dg) Lie algebras have shown to be of great importance in deformation theory and rational homotopy theory 
\cite{SchlessingerStasheff2012,QuillenAnnals}. On the deformation theory side, this culminated with a theorem 
of Lurie and Pridham \cite{DAGX,Pridham} stating an equivalence (of $\infty$-categories) between dg-Lie algebras and pointed 
formal deformation problems (also known as formal moduli problems, or FMPs). 

On the side of rational homotopy theory, (reduced) dg-Lie algebras are models for rational $1$-connected pointed spaces. 
In both of these cases, dg-Lie algebras arise from the same procedure: given a pointed space or pointed formal moduli problem, the loop space at the basepoint has the structure of  group and the corresponding dg-Lie algebra is the `tangent space' of this group (this perspective on rational homotopy theory is made more precise in \cite{LurieDAGXIII}). In particular, the datum of a basepoint plays a crucial role in the appearance of dg-Lie algebras.

Dg-Lie algebras sit inside the larger category of \emph{curved Lie algebras}, which are graded Lie algebras with a ``differential'' that does not square to zero, but whose square is controlled by a curvature element: $d^2=[\theta,-]$. It is a well-accepted idea that such curved Lie algebras, or more generally curved $L_\infty$-algebras, correspond to geometric objects without a fixed basepoint. Indeed, let us point out that in the seminal paper \cite{Kontsevich2003} Kontsevich considers formal graded pointed $Q$-manifolds, which are equivalent to $L_\infty$-algebras; the unpointed version is known to correspond to curved $L_\infty$-algebras.
Based on this idea, there have been attempts to approach unbased rational homotopy theory by using curved dg-Lie algebras \cite{maunder2015unbased, maunder2017Koszul, ChuangLazarevMannan2016}. 

A similar philosophy has been used on the deformation theory side, where people encountered the need for a version of 
deformation theory under or over a given space \cite{calaque2018formal}. For instance, Costello \cite{costellowittenII} uses curved Lie algebras, or rather curved $L_\infty$-algebras, as models for certain formal derived (differentiable) stacks that appear in field theories, which he calls $L_\infty$-spaces. These formal derived stacks can be seen as formal thickenings of a given manifold $X$, and the corresponding curved $L_\infty$-algebras live over the de Rham algebra $\Omega^*(X)$. In the setting of Costello's formal derived geometry, Grady and Gwilliam \cite{grady_gwilliam} have shown that every \emph{Lie algebroid} 
over a manifold $X$ can be viewed as an $L_\infty$-space. Recall that Lie algebroids are to Lie groupoids what Lie algebras are to Lie groups. It is therefore not so surprising to see them appearing in the context of ``unbased'' deformation theory.

In the context of derived \emph{algebraic} geometry, the $L_\infty$-spaces of Costello roughly correspond to the so-called perfect families of affine formal derived stacks over $X_{\dR}$, as defined and studied in \cite{CPTVV2017}. Indeed, these are formal thickenings of $X$ sitting between $X$ itself and its de Rham stack as
$$\begin{tikzcd}
X\arrow[r] & Y \arrow[r] & X_{\dR}.
\end{tikzcd}$$
From the perspective of $X_{\dR}$, one can view $Y$ as a formal thickening that does not quite come equipped with a basepoint (i.e.\ a section of the second map). One therefore expects $Y$ to give rise to a curved $L_\infty$-algebra over $\Omega^*(X)$, i.e.\ to an $L_\infty$-space.

From the point of view of $X$, one can view $Y$ as the quotient of $X$ by the formal groupoid $X\times_Y X \rightrightarrows X$. Consequently, $Y$ should give rise 
to a Lie algebroid on $X$ (see e.g.\ \cite{CalaqueCaldararuTu2014,gaitsgory_rozenblyum_vol2, Yu2017Dolbeault}).
In the algebraic context, the third author proved \cite{nuiten2019koszul} that dg-Lie algebroids are indeed equivalent to 
formal moduli problems under $X=\mathrm{Spec}(A)$ (see also \cite{calaque2018formal}), for $A$ a connective commutative differential 
graded algebra (cdga). Together with previous works on formal derived geometry \cite{gaitsgory_rozenblyum_vol2,CPTVV2017}, 
this suggests that dg-Lie algebroids over $X$ do not just give rise to curved Lie algebras over $X_{\dR}$, but that their $\infty$-categories should be very closely related. 
Indeed, the main objective of this paper is to show that there is an equivalence (of $\infty$-categories) 
between dg-Lie algebroids over $A$ and certain curved Lie algebras over the complete filtered de Rham algebra of $A$:

\begin{introtheorem}[See Theorem \ref{thm:comparison tangent case}]\label{thm:main intro}
	Let $k$ be a field of characteristic zero and suppose that $A$ is a smooth algebra or a cofibrant cdga over $k$, locally of finite presentation. Then there is an equivalence of $\infty$-categories
	$$\begin{tikzcd}
		\curv\colon \cat{Lie\ algebroids}(A/k)\arrow[r] & \cat{Curved\ Lie\ algebras}_{\dR(A)},
	\end{tikzcd}$$
	between Lie algebroids over $A$ and curved Lie algebras over the de Rham algebra $\dR(A)$ equipped with the Hodge filtration, satisfying a normalizing assumption.
		This equivalence sends a Lie algebroid $L \stackrel{\rho}{\twoheadrightarrow} T_A$ to $\curv(L) = \ker(\rho) \otimes_A \dR(A)$, see Section \ref{sec:explicit}.
\end{introtheorem}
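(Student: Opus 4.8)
The plan is to realise both $\infty$-categories as localisations of explicit (filtered) operadic homotopy theories, to build $\curv$ at the point-set level on cofibrant objects, and then to deduce the equivalence from a single comparison statement about the de Rham algebra.

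First I would set up the two homotopy theories. On the one side, dg-Lie algebroids over $A$ carry the model structure of \cite{nuiten2019koszul}, transferred along the forgetful functor to $A$-modules; a cofibrant object is quasi-free, of the form $(L,\rho,d,[-,-])$ with $L$ freely generated over $A$ and $\rho\colon L\to T_A$ the anchor. On the other side, the body of this paper provides the homotopy theory of curved Lie algebras over $\dR(A)$ as algebras over a complete operad — a completed, curved incarnation of the Lie operad obtained from the Hodge-filtered Koszul dual of the (co)commutative operad — and identifies the normalised ones intrinsically. The functor $\curv$ I would then define on cofibrant Lie algebroids by $L\mapsto \ker(\rho)\otimes_A\dR(A)$: this is legitimate because $\ker(\rho)$ is a Lie ideal on which the bracket is $A$-linear, so the bracket extends $\dR(A)$-linearly; the differential combines the internal differential of $\ker(\rho)$, the de Rham differential, and the connection term $\nabla_X(-)=[s(X),-]$ coming from a chosen splitting $s$ of $\rho$; and its failure to square to zero is exactly $[\theta,-]$ for $\theta\in\ker(\rho)\otimes_A\Omega^2_A$ the $\ker(\rho)$-valued curvature $2$-form of $s$. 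One checks this is a curved Lie algebra over $\dR(A)$, that changing $s$ yields an isomorphic object up to the expected homotopy so that $\curv$ descends to a functor of $\infty$-categories, and that its image lands in the normalised subcategory (the curvature has Hodge weight $\ge 2$ and the weight-one part of the differential is a connection).

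The equivalence I would then organise around Koszul duality. Via Chevalley--Eilenberg, a Lie algebroid $L\to T_A$ produces a filtered-complete cdga $\dR(A)\to \mathrm{CE}^{*}(L)\to A$ under $\dR(A)$ and over $A$ — a formal moduli problem wedged between $\Spec(A)$ and $\Spec(A)_{\dR}$ — and $\curv(L)$ is the Koszul dual of $\mathrm{CE}^{*}(L)$ relative to $\dR(A)$; dually, \cite{nuiten2019koszul} identifies such cdgas over $A$ with Lie algebroids. Fully faithfulness then reduces to showing that $\curv$ induces an equivalence on mapping spaces, which I would compute on both sides as Maurer--Cartan spaces of convolution (curved) Lie algebras: $-\otimes_A\dR(A)$ together with completeness matches these convolution algebras, the crucial input being that the associated graded of $\dR(A)$ is the free graded-commutative $A$-algebra on $\Omega^1_A[-1]$, so that in characteristic zero and for $A$ smooth (or cofibrant, locally of finite presentation) the operadic Koszul duality for the commutative operad transports across the factorisation $\Spec(A)\to Y\to\Spec(A)_{\dR}$ without loss — this is exactly where the hypotheses on $A$ are used, guaranteeing that $\dR(A)$ is the expected well-behaved filtered resolution. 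For essential surjectivity I would, given a normalised curved Lie algebra $C$ over $\dR(A)$, form its Koszul-dual filtered cdga $B$ under $\dR(A)$; the normalising assumption is precisely the condition ensuring that $B\otimes_{\dR(A)}A$ is a genuine (pro-Artinian) formal moduli problem over $A$, hence of the form $\mathrm{CE}^{*}(L)$ for a Lie algebroid $L$ by \cite{nuiten2019koszul}, and one checks $\curv(L)\simeq C$.

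The main obstacle is twofold. Because a curved Lie algebra has no underlying chain complex, none of the above can be run through transferred model structures or naive quasi-isomorphisms: every step asserting that $\curv$ is a functor of $\infty$-categories, or computing a mapping space, or a homotopy colimit, must be routed through the complete-operad machinery, whose construction and basic properties are the technical backbone supplied in the body of the paper. The genuinely delicate point specific to this theorem is controlling the interplay of the Hodge filtration and the completion with Koszul duality — showing that $-\otimes_A\dR(A)$ really does identify the two convolution curved Lie algebras, and dually that the normalising assumption is exactly the descent datum along $\dR(A)\to A$ that lets one recover a Lie algebroid — and this is where I expect the bulk of the work to lie.
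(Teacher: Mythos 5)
Your object-level formula for $\curv$ matches the paper's (an algebroid with surjective anchor splits as $T_A\oplus\keranc$ and is sent to $\dR(A)\otimes_A\keranc$ with connection and curvature terms), but your proposed proof is not the paper's route, and as written it has a genuine gap. The paper never runs a Koszul duality between curved Lie algebras over $\dR(A)$ and filtered cdgas under $\dR(A)$: Theorem \ref{thm:comparison} is proved by a direct, strict correspondence. Proposition \ref{prop:curved algebroid vs curved algebra} gives a bijection between curved $L_\infty$-algebroid structures on $\tang\oplus\keranc$ over $\tang$ and curved $L_\infty$-algebra structures on $C^*(\tang)\otimes_A\keranc$, and Proposition \ref{prop:curv is compatible with oo-maps} does the same for $\infty$-morphisms; the only real input is that $F^0\tang=0$ and each $F^i\tang$ is finitely generated quasiprojective, so that $\Sym_A(\tang[1])$ is dualizable and $B\otimes_A\keranc\cong\Hom_A(\Sym_A(\tang[1]),\keranc)$. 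This, rather than ``$\dR(A)$ being a well-behaved filtered resolution'', is where smoothness or finite presentation of $A$ enters: it makes $T_A$ finitely generated projective. The main theorem then follows by taking $\tang=T_A\wgt{1}$ and restricting along the fully faithful anchor-filtration functor $(-)^{\mm{anc}}$ of Proposition \ref{prop:adding filtrations fully faithful}, with the normalized essential image identified by a homotopy-transfer argument.

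The concrete gaps in your route are the three statements you lean on, each of which is unproven and essentially as hard as the theorem itself. First, a Koszul duality ``relative to $\dR(A)$'' identifying $\curv(L)$ with the dual of $\mathrm{CE}^*(L)$ and matching normalized curved Lie algebras with Chevalley--Eilenberg-type filtered cdgas is not available: \cite{nuiten2019koszul} concerns dg-Lie algebroids versus formal moduli problems over $A$, with no curvature and no Hodge filtration, and extending it to the curved complete-filtered setting is precisely what needs to be established. Second, computing mapping spaces of Lie algebroids as Maurer--Cartan spaces of a convolution curved Lie algebra is not justified: Lie algebroids are not algebras over an operad (the anchor and the Leibniz rule obstruct this), which is exactly why the paper builds bar-cobar resolutions and $\infty$-morphisms for algebroids by hand (Proposition \ref{prop:cobar resolution for loo algebroids}) instead of invoking a convolution-algebra formalism. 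Third, your essential-surjectivity step asserts that the normalizing assumption is ``precisely'' the condition making $B\otimes_{\dR(A)}A$ a formal moduli problem over $A$; that identification is nowhere argued, whereas in the paper essential surjectivity onto the normalized subcategory is a short homotopy-transfer argument once the strict bijection is in place. Finally, independence of the choice of splitting $s$ cannot simply be asserted; the paper handles it by replacing the slice $\oocat{cLie}(A/k)_{/\tang}$ by the simplicial category of split objects (Lemma \ref{lem:C is slice}) and defining $\curv$ strictly there.
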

This result relies on having a well-behaved homotopy theory for Lie algebroids and curved Lie algebras. The $\infty$-category of Lie algebroids can be described efficiently in terms of model categories, but for curved Lie algebras this issue is more subtle.

Indeed, even though curved algebraic structures have already appeared in many areas (matrix factorizations \cite{CaldararuTu}, deformation quantization \cite{CattaneoFelder}, Floer theory \cite{Fukaya2003,FOOO}\mbox{, ...)}, 
their homotopy theory is still a subject of ongoing study.
In particular, curved Lie algebras have no underlying cochain complexes and hence do not admit an obvious homotopy theory in terms of quasi-isomorphisms. To make sense of Theorem \ref{thm:main intro}, the first question that needs to be answered is therefore: ``what is a good homotopy theory for curved Lie algebras?''.

Various approaches to the homotopy theory of curved objects have been presented in the literature, each suiting different purposes \cite{AmorimTu2020,Bellier-MillesDrummond-Cole2020, DotsenkoShadrinVallette2018, HirshMilles2012, positselskiweaklycurved}.
A secondary purpose of this paper is to develop a homotopy theory for curved algebras (including homotopy transfer theorem) which is suitable for the study of derived deformation theory and in particular for Theorem \ref{thm:main intro}.
 The basic idea will be to endow objects with a complete filtration and control their homotopy theory by the associated graded. 

The curved Lie algebras appearing in Theorem \ref{thm:main intro} will then come with a complete filtration; geometrically, this means that they correspond to formal stacks sitting in between $X$ and its Hodge stack $X_\mm{Hodge}$; the latter is a stack over $\hat{\mathbb{A}}^1/\mathbb{G}_m$ controlling the Hodge filtration on de Rham cohomology, whose special fiber is the shifted tangent bundle $T[1]X$. This geometric picture is substantiated by Theorems \ref{thm:intro curved vs uncurved} and \ref{thm:intro general main} below.

\medskip

We point out that Theorem \ref{thm:main intro} does not apply to the situations usually considered in differential geometry: the algebra of functions on a smooth manifold $\mc{C}^\infty(M)$ is not in the conditions of Theorem \ref{thm:comparison tangent case} and the notion of Lie and $L_\infty$-algebroids varies slightly, as Lie algebroids are typically required to arise from vector bundles \cite{mackenzie87,pradines67}.
Nevertheless the same methods can be adapted to prove a differential-geometric version of the main theorem:

\begin{introtheorem}[See Theorem \ref{thm:comparison differentiable case}]
	Let $M$ be a differentiable manifold. The $\curv$ construction establishes an equivalence of $\infty$-categories
	$$\begin{tikzcd}
		\curv\colon \cat{Lie\ algebroids}(M)\arrow[r] & \cat{Vector\ bundle\ curved\ Lie\ algebras}_{\Omega^*(M)},
	\end{tikzcd}$$
between (differential-geometric) $L_\infty$-algebroids over $M$ and those curved $L_\infty$-algebras $\mf{g}$ over the de Rham complex $\Omega^*(M)$ of $M$ that are of the form $\mf{g}\simeq \Omega^*(M)\otimes_{\mc{C}^\infty(M)} E$, with $E$ a bounded above graded vector bundle on $M$.
\end{introtheorem}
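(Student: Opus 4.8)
The plan is to run the same argument as for the algebraic comparison theorem~\ref{thm:comparison tangent case}, with the ring of smooth functions $\mc{C}^\infty(M)$ in the role of the smooth base algebra~$A$, and the \emph{smooth} de Rham complex $\Omega^*(M)$ --- equipped with the Hodge (brutal) filtration $F^p=\Omega^{\geq p}(M)$ --- in the role of the filtered de Rham algebra $\dR(A)$. On objects the functor is given by the same formula as in Theorem~\ref{thm:main intro}: $\curv$ sends an $L_\infty$-algebroid $L\xrightarrow{\rho}T_M$ to $\curv(L)=E\otimes_{\mc{C}^\infty(M)}\Omega^*(M)$, where $E$ is a bounded-above graded vector bundle underlying the homotopy fibre of the anchor $\rho$ (for an honest Lie algebroid this is the two-term complex $[\mathcal{L}\to TM]$, so $E$ is bounded and, when $\rho$ has locally constant rank, $E=\ker\rho$, matching $\ker(\rho)\otimes_A\dR(A)$). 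The curved $L_\infty$-structure on $\curv(L)$ is obtained by $\Omega^*(M)$-linearly extending the brackets of $L$, adding $d_{\dR}$ to the (no longer square-zero) unary operation, and building the curvature from $\rho$ together with a choice of connection; one checks directly that this produces a vector bundle curved $L_\infty$-algebra over $\Omega^*(M)$, functorially in $L$, with the equivalence class independent of the chosen connection.

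First I would set up homotopy theories on both sides. For $L_\infty$-algebroids over $M$ coming from vector bundles, weak equivalences are the quasi-isomorphisms of underlying complexes of $\mc{C}^\infty(M)$-modules, and the requisite model/$\infty$-categorical structure is built exactly as in the algebraic case, using that finite-rank vector bundles are finitely generated projective $\mc{C}^\infty(M)$-modules (Serre--Swan) and that $M$ is paracompact. For vector bundle curved $L_\infty$-algebras over $\Omega^*(M)$ I would invoke the machinery of the body of the paper: regard curved Lie algebras as algebras over a complete operad, equip them with their canonical complete filtration, and call a map a weak equivalence when it is a quasi-isomorphism on associated graded. One then checks that the full subcategory spanned by the $\Omega^*(M)\otimes_{\mc{C}^\infty(M)}E$ with $E$ bounded-above is closed under the needed constructions --- in particular under the homotopy transfer theorem for curved algebras proved earlier --- because bounded-above finitely generated projective modules are, and because the Hodge-graded pieces $\Gr^p\Omega^*(M)\simeq\Omega^p(M)[-p]$ are finite-rank vector bundles sitting in a single degree. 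This vector bundle condition is the differential-geometric analogue of the ``normalizing assumption'' of Theorem~\ref{thm:comparison tangent case}; note that no finiteness hypothesis on $M$ is needed, since vector bundles on a manifold are automatically finitely generated projective.

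The equivalence is then established by the two-step scheme of Theorem~\ref{thm:comparison tangent case}. \emph{Essential surjectivity}: given $\mf{g}\simeq\Omega^*(M)\otimes_{\mc{C}^\infty(M)}E$, choose a connection $\nabla$ on $E$; the homotopy transfer theorem replaces $\mf{g}$ by a gauge-equivalent curved $L_\infty$-structure adapted to $\nabla$, from which one reads off an $L_\infty$-algebroid structure on $E$ --- the anchor from the linear and curvature parts, the higher brackets from the transferred operations --- whose image under $\curv$ is equivalent to $\mf{g}$. \emph{Fully faithfulness}: compare mapping spaces using the complete filtrations on both sides and the compatibility of $\curv$ with passing to associated graded; this reduces the comparison, filtration degree by filtration degree, to the already-understood comparison between graded Lie algebroids on $M$ and graded Lie algebras over $\Gr\Omega^*(M)=\bigoplus_p\Omega^p(M)[-p]$, which is the graded shadow of the algebraic statement and runs along the same lines.

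The main obstacle is that $\mc{C}^\infty(M)$ is neither finitely presented nor polynomial-like and, most seriously, its \emph{algebraic} module of K\"ahler differentials is pathological, so the algebraic de Rham complex must be replaced throughout by the smooth one; one therefore has to re-verify, directly for the pair $\big(\mc{C}^\infty(M),\Omega^*(M)\big)$, every step of the algebraic proof that used cofibrancy or smoothness of~$A$: formal \'etaleness of the de Rham complex over the base in the sense needed to identify $\curv$ with a composite of the operadic (co)bar-type functors, dualizability of the relevant modules and exactness of the completed tensor products, and the finiteness of the Hodge filtration. The two genuinely delicate points are (i) preservation of the vector bundle condition in both directions under homotopy transfer, where boundedness-above of $E$ and the single-degree finiteness of the Hodge-graded pieces are indispensable, and (ii) convergence of the various completions, where paracompactness of $M$ must do the work that Noetherianity does in the algebraic argument. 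The conceptual content is the same as in the algebraic case; what takes work is substituting smooth-geometry input for commutative-algebra input at each step.
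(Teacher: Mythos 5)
Your overall strategy -- run the algebraic comparison with $A=\mc{C}^\infty(M)$ and $\Omega^*(M)$ in place of $\dR(A)$, use Serre--Swan for projectivity, and use homotopy transfer to pin down the vector-bundle image -- is indeed the paper's strategy, but your diagnosis of the ``main obstacle'' misses how the paper avoids it, and this matters. The smooth case is not deduced from Theorem \ref{thm:comparison tangent case} (whose smooth/cofibrant, locally-finitely-presented hypothesis you would then have to re-examine), but from the more general Theorem \ref{thm:comparison}, whose only hypotheses are $F^0(\tang)=0$ and finite generation plus quasiprojectivity of each $F^i(\tang)$; for $\tang=T_{\mc{C}^\infty(M)}\wgt{1}$ these are supplied exactly by Serre--Swan. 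Moreover, the filtered ``de Rham algebra'' in that theorem is by \emph{definition} the Chevalley--Eilenberg algebra $C^*(\tang)\cong\Sym_A(\tang^\vee[-1])$, built from the $A$-linear dual of the derivation module; for $A=\mc{C}^\infty(M)$ this is automatically the smooth de Rham complex $\Omega^*(M)$, so K\"ahler differentials never enter, and no Noetherianity, finite presentation, ``formal \'etaleness'' or completion-convergence issues arise anywhere in the algebraic argument (paracompactness is used only through Serre--Swan). Your program of ``re-verifying every step that used cofibrancy or smoothness of $A$'' is therefore unnecessary, and proposing it suggests you have not noticed that the comparison theorem is already stated at a level of generality that applies verbatim.

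The one genuine gap as written is your full-faithfulness argument: reducing ``filtration degree by filtration degree'' to a comparison of graded objects over $\Gr\,\Omega^*(M)$ is not a proof -- mapping spaces of curved objects are not computed degreewise from the associated graded without a convergence argument, and the ``already-understood'' graded comparison you invoke is not a prior result but essentially the statement to be proved. In the paper, full faithfulness is not obtained by d\'evissage at all: Propositions \ref{prop:curved algebroid vs curved algebra} and \ref{prop:curv is compatible with oo-maps} give a strict bijection between ($\infty$-)morphisms of split curved $L_\infty$-algebroids over $\tang$ and $\infty$-morphisms of curved $L_\infty$-algebras over $C^*(\tang)$, using dualizability of $\Sym_A(\tang[1])$, so that $\curv$ is fully faithful on the nose as a simplicial functor; the passage from ordinary to curved algebroids is the fully faithful functor $(-)^\mm{anc}$ of Proposition \ref{prop:adding filtrations fully faithful}. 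What is genuinely new in Theorem \ref{thm:comparison differentiable case} is only the identification of the essential image -- the condition $\Gr^i(\mf{h})\simeq\Omega^i(M)\otimes_{\mc{C}^\infty(M)}\Gr^0(\mf{h})$ together with the vector-bundle condition -- and there your plan (choose a connection/splitting, apply the Homotopy Transfer Theorems \ref{thm:htt-B} and \ref{thm:htt lie algebroids}, using boundedness above of $E$ and finiteness of the Hodge-graded pieces) is correct and is exactly what the paper does.
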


\subsection*{Outline and main results}

Contrary to dg-Lie algebras, curved Lie algebras and curved $L_\infty$-algebras do not directly form a model category. This can be explained by the fact that curved Lie (or curved $L_\infty$-)algebras do not arise as algebras in cochain complexes over a `curved Lie' operad.

Nevertheless, part of the theory of curved Lie algebras works as if such an operad of curved Lie algebras existed, and its Koszul dual cooperad were the linear dual of the operad governing unital commutative algebras. For example, there is a ``bar construction'' sending a curved Lie algebra $\mf{g}$ to the cocommutative coalgebra $\Sym^c(\mf{g}[1])$, as well as a natural notion of $\infty$-morphisms.
Furthermore, while curved Lie algebras are not cochain complexes but instead have a pre-differential that does not square to zero, the ones ``appearing in nature'' do typically carry a natural complete filtration such that the pre-differential squares to zero on the associated graded.

\medskip

$\rhd$ The approach carried out in \textbf{Section \ref{sec:complete operads}} aims to formalize the heuristic above.
The first step is to work in the underlying category of \emph{complete filtered complexes}, i.e.\ cochain complexes equipped with a decreasing complete filtration. 
In this category one can define obvious notions of complete operads and their algebras; we show that algebras over a complete filtered operad form a model category such that weak equivalences are maps inducing quasi-isomorphisms at the level of the associated graded (Theorem \ref{thm:Model str}).
Equivalently, one can think of such objects as graded mixed complexes as appearing in \cite{CPTVV2017}, see Section \ref{sec:graded mixed}. Most of the operadic calculus from \cite{LodayVallette2012}, such as the (co)operadic bar-cobar constructions, $\infty$-morphisms and homotopy transfer, generalises to the complete filtered setting.

In particular, applying this machinery to a filtered version of the counital cocommutative cooperad $\ucocom$, we obtain a complete operad $\cLie_{\infty} \coloneqq \Omega(\ucocom\{1\})$. The algebras over this operad, which we call \emph{mixed-curved $L_\infty$-algebras}, differ from ordinary curved $L_\infty$-algebras in the sense that their pre-differential comes with a splitting as $d+\ell_1$ where $d$ squares strictly to zero and $\ell_1$ is filtration increasing.
Still, the model category of mixed-curved $L_\infty$-algebras can be fruitfully used to study the $\infty$-category of curved $L_\infty$-algebras and $\infty$-morphisms between them (Definition \ref{def:cat of classical curved Lie}). In short, we prove the following result:

\begin{introtheorem}[See Section \ref{sec:Koszul morphism} and Corollary \ref{cor:main result}]\label{thmC}
The complete operad $\cLie_\infty$ admits a model $\cLie$, whose algebras are mixed-curved Lie algebras. The $\infty$-category $\oocat{cLie}^\mm{mix}$ associated to the model category of mixed-curved Lie algebras is equivalent to the $\infty$-category of mixed-curved $L_\infty$-algebras with $\infty$-morphisms between them.

The (simplicial) $\infty$-category $\oocat{cLie}$ of curved Lie algebras is then given by a pullback of $\infty$-categories, each of which arises from a model category
$$
\oocat{cLie}\simeq \oocat{cLie}^\mm{mix}\times_{\oocat{Mod}_k^\mm{cpl}} \oocat{Mod}_k^\gr.
$$
Here $\oocat{Mod}_k^\mm{cpl}$ denotes the $\infty$-category of complete complexes and $\oocat{Mod}_k^\gr$ denotes the $\infty$-category of graded complexes. Consequently, $\oocat{cLie}$ is a presentable $\infty$-category.
\end{introtheorem}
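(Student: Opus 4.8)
\emph{Overview.} The plan is to prove the three assertions of the theorem in succession, using throughout the complete-operadic calculus of Section~\ref{sec:complete operads} and the model structure of Theorem~\ref{thm:Model str}. For the minimal model, I would first run Koszul duality for the complete filtered cooperad $\ucocom\{1\}$, a filtered avatar of the linear dual of the operad of unital commutative algebras; the role of the complete filtration is to place the counit in positive weight, which is what makes the cobar construction $\cLie_\infty=\Omega(\ucocom\{1\})$ converge even though $\ucocom$ is not conilpotent in the classical sense. I would show that $\ucocom\{1\}$ is Koszul by proving that the associated Koszul complex is acyclic: the weight filtration reduces this to the classical Koszulness of $\Com$ and $\Lie$ on the weight-preserving part, together with an explicit contracting homotopy absorbing the arity-zero curvature generator. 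A homotopy-transfer and perturbation argument then peels the non-minimal part off the cobar differential on $\cLie_\infty$ and produces a minimal model $\cLie$; unwinding a $\cLie$-algebra structure on a complete complex $(\mf{g},d)$ reproduces exactly the data of a curvature $\ell_0$ in positive filtration, a filtration-increasing unary operation $\ell_1$ with $(d+\ell_1)^2=[\ell_0,-]$, and a strict graded Lie bracket, i.e.\ a mixed-curved Lie algebra.

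\emph{The category $\oocat{cLie}^\mm{mix}$.} By Theorem~\ref{thm:Model str}, both $\cLie$-algebras and $\cLie_\infty$-algebras carry model structures whose weak equivalences are detected on associated graded, and the weak equivalence of complete operads $\cLie\to\cLie_\infty$ from the previous step induces a Quillen equivalence between them --- the complete filtered analogue of rectification along a weak equivalence of (in characteristic zero, automatically $\Sigma$-cofibrant) operads, which I would read off from the explicit description of (co)fibrations in Theorem~\ref{thm:Model str}. It then remains to compare the model category of $\cLie_\infty$-algebras with \emph{strict} morphisms to the $\infty$-category of $\cLie_\infty$-algebras with $\infty$-morphisms. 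Since $\cLie_\infty=\Omega(\ucocom\{1\})$ is quasi-free, this is the familiar statement that the (complete) bar-cobar adjunction realizes the localization: $\infty$-morphisms of $\cLie_\infty$-algebras are strict morphisms of $\ucocom\{1\}$-coalgebras, the $\infty$-quasi-isomorphisms are precisely those inducing quasi-isomorphisms on the associated graded of the linear term, and this class is saturated and admits a calculus of fractions. I would transplant Vallette's proof of this to the complete filtered setting (see Section~\ref{sec:Koszul morphism}); the one genuinely new ingredient is that completeness forces the relevant gauge and homotopy series to converge.

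\emph{The pullback square, and presentability.} For the last assertion I would exhibit the comparison functor $\oocat{cLie}\to\oocat{cLie}^\mm{mix}\times_{\oocat{Mod}_k^\mm{cpl}}\oocat{Mod}_k^\gr$: the leg out of $\oocat{cLie}^\mm{mix}$ is the derived functor sending a mixed-curved Lie algebra to its underlying complete complex, the leg out of $\oocat{Mod}_k^\gr$ is the natural comparison of graded complexes with complete complexes, and a curved Lie algebra is sent to the triple consisting of a mixed-curved Lie algebra obtained by splitting its pre-differential against a homotopy-splitting of its underlying filtered complex, that splitting recorded on the graded side, and the resulting tautological identification. Essential surjectivity amounts to reassembling a compatible triple into a curved Lie algebra by transporting the total operation $d+\ell_1$ through the given equivalence; full faithfulness I would check by comparing mapping spaces through the bar and coalgebra models of all four $\infty$-categories, the point being that the space of homotopy-splittings involved is contractible and therefore leaves mapping spaces unchanged. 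Presentability of $\oocat{cLie}$ is then formal: $\oocat{cLie}^\mm{mix}$ is presentable by the previous step (it is the $\infty$-category of a combinatorial model category), $\oocat{Mod}_k^\mm{cpl}$ and $\oocat{Mod}_k^\gr$ likewise, the two legs are accessible and preserve limits (the first being a forgetful functor from algebras over an operad, the second computed levelwise in the weight), and a pullback of presentable $\infty$-categories along functors in $\mathrm{Pr}^R$ is again presentable. The hard part is this last step: because curved Lie algebras do not themselves form a model category, the equivalence with the pullback cannot be obtained by formal model-categorical arguments and must be verified by hand through the bar construction, all while keeping track of the curvature and of convergence in the complete filtration; the Koszulness input of the first step is also delicate, since the naive Koszul dual is a genuinely curved cooperad and the usual conilpotence arguments have to be replaced by completeness arguments.
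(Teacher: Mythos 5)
Your first two steps track the paper reasonably closely, with one organizational difference: the paper does not obtain $\cLie$ by a Koszul-complex-plus-perturbation argument, but defines it outright as the quotient of $\cLie_\infty=\Omega(\ucocom^\mix\{1\})$ by the ideal $(\ell_3,\ell_4,\dots)$ and proves that the quotient map is a weak equivalence by decomposing the associated graded according to ``skeleta'' into copies of the classical map $\Lie_\infty\to\Lie$ (Proposition/definition \ref{propdef:mixed-curved lie}); this makes the identification of $\cLie$-algebras with mixed-curved Lie algebras tautological, whereas a transferred minimal model would still have to be matched with this explicit quotient. More seriously, your plan to ``transplant Vallette's proof'' with convergence of gauge series as the only new ingredient underestimates a genuine obstruction: in the complete setting $\ucocom^\mix\{1\}$ is only \emph{complete} conilpotent (Warning \ref{war:complete conilpotence}), the coradical filtration is not exhaustive, and the standard induction proving that bar--cobar resolutions are cofibrant breaks down. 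This cofibrancy is exactly what identifies the localization of the model category with the $\infty$-morphism category (Proposition \ref{prop:oocat of algebras over k}), and the paper has to replace the classical argument by one based on the Homotopy Transfer Theorem (Lemma \ref{lem:sections from htt}, Proposition \ref{prop:bar cobar resolutions are cofibrant}). You need this input, or a substitute for it, before your second step is complete.

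The genuine gap is in your third step. You propose a functor \emph{out of} $\oocat{cLie}$ into $\oocat{cLie}^\mm{mix}\times_{\oocat{Mod}_k^\mm{cpl}}\oocat{Mod}_k^\gr$ built by choosing, for each curved $L_\infty$-algebra, a splitting of its filtration, and you appeal to contractibility of the space of such splittings for full faithfulness. As stated this does not define a functor of simplicially enriched categories at all (the splitting cannot be chosen naturally in $\infty$-morphisms), and the contractibility claim is neither formulated nor proved; making these choices coherent is essentially the content of the theorem. The paper avoids the issue by running the comparison in the opposite direction, through the auxiliary model category of \emph{graded} mixed-curved $L_\infty$-algebras, i.e.\ algebras over the weight-graded operad $\cLie_\infty^\tot$ (Definition \ref{def:graded mixed-curved Loo}): the choice-free functors $\Tot$ and $\blend$ give $\oocat{cLie}^\mm{gr-mix}\to\oocat{cLie}^\mm{mix}\to\oocat{cLie}$, the composite is strictly fully faithful because $\phi'_1$ decomposes \emph{uniquely} into a weight-homogeneous part and a part of filtration weight $\geq 1$, and essential surjectivity only requires an objectwise splitting (Proposition \ref{prop:graded mixed = classical}); the square is then a strict pullback of simplicial categories whose right leg $\oocat{cLie}^\mm{mix}\to\oocat{Mod}_k^\mm{cpl}$ is a fibration (Kan condition via cofibrancy of bar--cobar resolutions, isofibration via homotopy transfer), hence a homotopy pullback (Proposition \ref{prop:graded mixed as pullback}), and presentability of $\oocat{cLie}$ follows at once from the equivalence with $\oocat{cLie}^\mm{gr-mix}$, which arises from a combinatorial model category (Corollary \ref{cor:main result}). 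If you wish to keep your outline, the repair is to introduce this graded-mixed intermediary and route all comparisons through it, rather than making splittings on the curved side and arguing their space is contractible.
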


While we focus on curved Lie algebras in this paper, our framework can equally well handle other types of curved algebras. For example, it applies to curved associative algebras which arise, for instance, from vector bundles with non-flat connections, see Section \ref{sec:Other types of curved algebras}.

Perhaps surprisingly, our framework produces non-trivial results even when restricted to uncurved Lie (or $L_\infty$) algebras, since there can be non-trivial curved morphisms between uncurved objets. In particular, by expressing the Maurer--Cartan space of an $L_\infty$-algebra $\mathfrak g$ as the space of curved maps $0 \rightsquigarrow \mathfrak g$, in Section \ref{sec:rogers} we recover some results due to Dolgushev and Rogers.

We point out that our approach is not directly comparable to \cite{Bellier-MillesDrummond-Cole2020}. While Bellier-Mill\`es and Drummond-Cole also consider filtered objects, their operads \emph{themselves} have a curvature and a pre-differential not squaring to zero. 

Many of the results above  extend naturally if we replace the ground field $k$ by a cdga $A$.
However, the curved Lie algebras appearing in Theorem \ref{thm:main intro} do not quite fit into this framework, because their pre-differential is required to interact with the de Rham differential on $\dR(A)$. In other words, it becomes important to view $\dR(A)$ as a \emph{graded mixed} cdga (with weight-grading given by the form degree, as in \cite{CPTVV2017}), and to consider curved Lie algebras that interact with the graded mixed structure.

\medskip

$\rhd$ The goal of \textbf{Section \ref{sec: filtered algebras}} is then to carry out a similar analysis as before, but for curved algebras in modules over a graded mixed cdga $B$. 
The main insight, spelled out in Lemma \ref{lem:operad for mixed-curved Loo over B}, is that mixed-curved $L_\infty$-algebras over such $B$ are also governed by a complete filtered $B$-operad $\cLie_{\infty, B}$, constructed as a distributive law $\cLie_{\infty, B}\cong B\circ \cLie_\infty$.

While $\cLie_{\infty, B}$ is not obtainable as a cobar construction (it is not even augmented), the upshot of Section \ref{sec: filtered algebras} is that we still have (somewhat \emph{ad hoc}) bar-cobar resolutions, $\infty$-morphisms and crucially, a version of the Homotopy Transfer Theorem \ref{thm:htt-B}. This opens the way to a generalization of Theorem \ref{thmC} (Theorem \ref{thm:homotopy theories of curved lie over B}), which allows us to study classical curved Lie algebras over $B$ via a pullback of $\infty$-categories:
$$
\oocat{cLie}_B\simeq \oocat{cLie}_B^\mm{mix}\times_{\oocat{Mod}_B^\mm{cpl}} \oocat{Mod}_{B_{\gr}}^\gr.
$$

$\rhd$ Starting from \textbf{Section \ref{sec:Curved Lie algebroids}}, our goal is to study the homotopy theory of Lie algebroids over a cdga $A$. 
Notice that Lie algebroids over a fixed base are not algebras over an operad, so that the usual methods of constructing a model structure on them do not quite work.
In \cite{nuiten2019homotopicalalgebra}, the third author showed that Lie (or equivalently $L_\infty$) algebroids carry a semi-model structure for which the weak equivalences are quasi-isomorphisms.

In fact,  we can go further than \cite{nuiten2019homotopicalalgebra} and study Lie algebroids which are themselves curved.
Such type of objects have been considered for instance in \cite{baarsmaphd}. 
Similar to the previous sections, the $\infty$-category $\oocat{cLie}(A/k)$ of curved $L_\infty$-algebroids over $A$ can be conveniently studied using a mixed variant of curved $L_\infty$-algebroids, which can be organized into a (semi) model category. Most results of Section \ref{sec:Curved Lie algebroids} are extensions of the results of the previous section to Lie algebroids, and can be summarized as follows.
\begin{enumerate}
\item The category of mixed-curved $L_\infty$-algebroids over $A$ carries a semi-model structure whose weak equivalences are $A$-module maps inducing quasi-isomorphisms on the associated graded (Theorem \ref{thm:model structure lie algebroids}).

\item While there are no bar or cobar constructions for curved $L_\infty$-algebroids, there is a ``bar-cobar'' resolution $\Loid\mapsto Q(\Loid)$ on mixed-curved $L_\infty$-algebroids such that structure preserving maps of mixed-curved $L_\infty$-algebroids $Q(\Loid) \to \Loide$ correspond to $\infty$-morphisms $\Loid \rightsquigarrow \Loide$ (Proposition \ref{prop:cobar resolution for loo algebroids}).

\item The association $(d,\ell_1) \mapsto d+\ell_1$ induces an equivalence $\oocat{cLie}(A/k)^\mm{gr-mix} \simeq \oocat{cLie}(A/k) $ from the $\infty$-category of graded mixed-curved Lie algebroids to the $\infty$-category of curved Lie algebroids (Proposition \ref{prop:classical curved Lie algebroid=graded mixed}).
\end{enumerate}
Finally, we also give a description of curved $L_\infty$-algebroids using uncurved objects (which is already interesting for curved $L_\infty$-algebras over the base field $k$):
\begin{introtheorem}[See Theorem \ref{thm:curved Lie algebroids conceptually}]\label{thm:intro curved vs uncurved}
%
There is an equivalence of $\infty$-categories
$$\oocat{cLie}(A/k) \simeq \oocat{Lie}(A/k)^\gr{\scriptstyle / \mc{R}(T_A)}$$
between curved $L_\infty$-algebroids and graded \underline{uncurved} $L_\infty$-algebroids over a certain graded Lie algebroid $\mc{R}(T_A)$, whose Chevalley--Eilenberg complex is the Rees algebra of the de Rham complex of $A$.
\end{introtheorem}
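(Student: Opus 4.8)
\emph{Proof strategy.} The plan is to compare the two sides through their Chevalley--Eilenberg complexes and to realise the passage from ``curved'' to ``uncurved over $\mc{R}(T_A)$'' as the Rees construction with respect to the Hodge weight. As a first reduction, Proposition \ref{prop:classical curved Lie algebroid=graded mixed} allows us to replace $\oocat{cLie}(A/k)$ by the $\infty$-category $\oocat{cLie}(A/k)^\mm{gr-mix}$ of graded mixed-curved $L_\infty$-algebroids over $A$; we must then identify the latter with the slice $\oocat{Lie}(A/k)^\gr_{/\mc{R}(T_A)}$. That we may restrict to \emph{graded mixed}-curved objects is not a technicality but the crux: the weight grading is precisely what forces the curvature to live in strictly positive weight, which is what will allow it to be absorbed into an honest differential below.

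Next, recall that the (completed) Chevalley--Eilenberg cochain functor $\CE$ sends an $L_\infty$-algebroid $\Loid\xrightarrow{\rho}T_A$ to $\widehat{\Sym}_A(\Loid^\vee[-1])$, a cdga that receives a map from $\CE(T_A)=\dR(A)$ (encoding the anchor) and, \emph{when $\Loid$ is uncurved}, is augmented over $A$. By the bar--cobar/Koszul-duality machinery of Section \ref{sec:Curved Lie algebroids} (Proposition \ref{prop:cobar resolution for loo algebroids}, in the spirit of \cite{nuiten2019koszul,nuiten2019homotopicalalgebra}), this realises $\oocat{Lie}(A/k)^\gr_{/\mc{R}(T_A)}$ as a suitable $\infty$-category of graded cdgas sitting under $\CE(\mc{R}(T_A))$, the Rees algebra of $\dR(A)$ with respect to the Hodge filtration, and augmented over $A$. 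For a \emph{curved} $L_\infty$-algebroid the curvature $\ell_0=\theta$ enters $\CE(\Loid)$ as a contraction term $\iota_\theta$, which is still a derivation, so $\CE(\Loid)$ remains an honest cdga under $\dR(A)$; what fails is only that its differential no longer commutes with the augmentation to $A$, the failure being exactly $\theta$ --- and in the graded mixed setting $\theta$ (like the mixed differential) has strictly positive weight.

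The heart of the argument is then the following un-curving statement, which I would verify by a direct computation on generators: the Rees construction $\mc{R}(-)$ with respect to the Hodge weight, performed relative to the map $\dR(A)\to\CE(\mc{R}(T_A))$, turns the curved cdga $\CE(\Loid)$ into an \emph{honest, augmented} graded cdga under $\CE(\mc{R}(T_A))$. Concretely, the weight-$j$ piece of the total predifferential (the strict differential in weight $0$, the mixed differential in weight $1$, the contraction $\iota_\theta$ in weight $\geq 2$, and the higher brackets weighted accordingly) acquires a factor of the Rees parameter; the resulting operator squares to zero precisely because the graded mixed-curved $L_\infty$-relations assert that the unweighted combination does, and the curvature term --- now carried into positive weight --- maps to $0$ in the weight-$0$ quotient $A$, so the augmentation is restored. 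This is reversible: from a graded cdga under $\CE(\mc{R}(T_A))$ augmented over $A$ one recovers a graded mixed-curved cdga under $\dR(A)$ by specialising the Rees parameter together with its induced filtration, reading the curvature off from the failure of the differential to preserve the Rees weight. Dualising through $\CE$, this gives the bijection on objects, the inverse of the asserted equivalence sending $\Loid$ to the uncurved graded $L_\infty$-algebroid $\mc{R}(\Loid)$ together with its canonical map to $\mc{R}(T_A)$. (Geometrically this is the statement that a formal stack interpolating between $X$ and $X_\mm{Hodge}$ is the same datum as an uncurved algebroid mapping to $\mc{R}(T_A)$.)

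Finally I would promote this to an equivalence of $\infty$-categories. Both sides are presentable --- the left by the semi-model structure of Theorem \ref{thm:model structure lie algebroids} together with the analogues of Theorems \ref{thmC} and \ref{thm:homotopy theories of curved lie over B}, the right as a slice of a presentable $\infty$-category. The Rees construction is exact and compatible with the relevant tensor products, so the functor above is homotopical (weak equivalences on both sides are detected on associated graded, respectively on underlying graded modules, and $\mc{R}(-)$ modulo its parameter is the associated graded), preserves homotopy colimits and compact objects, and is therefore a left adjoint; full faithfulness is then checked on a generating set and essential surjectivity is exactly the reversibility above, upgraded to homotopy-coherent data. The main obstacle, and where care is genuinely needed, is the interaction between the Rees construction and the completeness/pro-nilpotence conditions under which $\CE$ is fully faithful: weights are a priori unbounded and the Rees parameter is not nilpotent, so one must check that ``base change along $\dR(A)\to\CE(\mc{R}(T_A))$'' and ``specialise the parameter'' really do preserve the correct subcategories and are mutually inverse there, rather than only after completion, and that the transferred semi-model structures agree. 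I would dispatch this by systematically using the pullback decompositions of Theorems \ref{thmC} and \ref{thm:homotopy theories of curved lie over B} (a complete filtered part fibred over a graded part), which reduce the convergence bookkeeping to facts already established for $\oocat{cLie}_B$, $\oocat{Mod}^\mm{cpl}$ and $\oocat{Mod}^\gr$.
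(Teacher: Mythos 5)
Your first reduction (replace $\oocat{cLie}(A/k)$ by $\oocat{cLie}(A/k)^\mm{gr-mix}$ via Proposition \ref{prop:classical curved Lie algebroid=graded mixed}) and your central algebraic idea (absorb the curvature and mixed differentials, which sit in strictly positive weight, into genuine brackets by a Rees-type construction) are exactly the paper's: the ``direct computation on generators'' you promise is Construction \ref{cons:Rees} together with Proposition \ref{prop:rees construction}, which shows that setting $\ell_n(\theta,\dots,\theta,x_{p+1},\dots,x_n)=p!\,\ell^p_{n-p}(x_{p+1},\dots,x_n)$ on $\Loid\oplus A\wgt{-1}[-1]$ produces an uncurved weight-graded $L_\infty$-algebroid over $\mc{R}(T_A)$ if and only if the $\ell^p_n$ define a graded mixed-curved structure. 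However, the vehicle you choose for turning this into the stated equivalence --- Chevalley--Eilenberg complexes and ``dualising through $C^*$'' --- has a genuine gap. The theorem is about the \emph{entire} presentable $\infty$-categories, with no finiteness or dualizability hypotheses on the underlying $A$-modules, and in that generality $C^*$ is not fully faithful: the paper never establishes (and does not use) a Koszul duality between graded $L_\infty$-algebroids over $\mc{R}(T_A)$ and graded cdgas under $C^*(\mc{R}(T_A))$ augmented over $A$. Proposition \ref{prop:cobar resolution for loo algebroids} only corepresents $\infty$-morphisms by a bar--cobar resolution on the algebroid side; finiteness conditions enter only later, in Theorem \ref{thm:comparison}, precisely because that is where dualization is performed. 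So the step ``this realises $\oocat{Lie}(A/k)^\gr_{/\mc{R}(T_A)}$ as an $\infty$-category of cdgas'' is not available, and the completeness issues you defer to ``bookkeeping'' are not the real obstruction --- the duality itself is.

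A second, related gap is in your essential-surjectivity/inversibility claim. The Rees construction naturally lands in weight-graded algebroids over $\mc{R}(T_A)$ that come \emph{equipped with a chosen $A$-linear summand} $A\wgt{-1}[-1]$ lifting $\theta$ (equivalently, a factorization $\tilde{0}\to\Loid\to\mc{R}(T_A)$), and an arbitrary object of the slice carries no such datum; ``reading the curvature off from the failure of the differential to preserve the Rees weight'' presupposes it. Removing this extra datum coherently is exactly what the paper's proof of Theorem \ref{thm:curved Lie algebroids conceptually} does: it exhibits a \emph{strict} equivalence between graded mixed-curved $L_\infty$-algebroids and the category of algebroids with maps $\tilde{0}\to\Loid\to\mc{R}(T_A)$, compatible with the semi-model structures of Theorem \ref{thm:model structure lie algebroids}, and then uses that $\tilde{0}$ is cofibrant and weakly contractible to get a Quillen equivalence with the slice over $\mc{R}(T_A)$. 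Nothing in your outline plays the role of this step, and without it (or the unavailable $C^*$-duality) the argument does not close; note also that the paper needs no presentability or adjoint-functor input at this point, since everything is done at the level of (semi-)model categories on the algebroid side.
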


\medskip

$\rhd$ Finally, in \textbf{Section \ref{sec:The equivalence between curved Lie algebras and Lie algebroids}} we prove the main theorems.
In fact, we deduce them from a more general result characterizing the category of all curved $L_\infty$-algebroids over complete filtered algebras of the form $C^*(\tang)$, where $\tang\to T_A$ is a complete $L_\infty$-algebroid over $A$ and $C^*(\tang)$ denotes its Chevalley--Eilenberg complex (with the Hodge filtration).
\begin{introtheorem}[See Theorem \ref{thm:comparison}]\label{thm:intro general main}
	Let $A$ be a nonpositively graded cdga and let $\tang$ be a complete $L_\infty$-algebroid over $A$ such that $F^0(\tang)=0$ and each $F^i(\tang)$ is finitely generated quasiprojective as an $A$-module. Then there is an equivalence of $\infty$-categories
	$$\begin{tikzcd}
		\curv\colon \oocat{cLie}(A/k)_{\scriptstyle/\tang}\arrow[r, "\sim"] & \oocat{cLie}_{C^*(\tang)}.
	\end{tikzcd}$$
Taking $\tang = T_A$ the terminal Lie algebroid on $A$ and restricting to uncurved $L_\infty$ algebroids we recover precisely the statement of our main Theorem \ref{thm:main intro}.
\end{introtheorem}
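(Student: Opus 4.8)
The plan is to reduce the statement, via the ``mixed versus curved'' and base-change descriptions available on both sides, to a Koszul-duality equivalence between mixed objects, and then to match the curved Lie structure on $\keranc\otimes_A C^*(\tang)$ with the algebroid data. First I would pin down the functor $\curv$: given a curved $L_\infty$-algebroid $\rho\colon L\to\tang$ over $A$ with kernel $\keranc=\ker(\rho)$, the underlying $C^*(\tang)$-module of $\curv(L)$ is $\keranc\otimes_A C^*(\tang)$; the Chevalley--Eilenberg differential of $C^*(\tang)$, the bracket of $\keranc$ extended $C^*(\tang)$-linearly, and the $\tang$-action on $\keranc$ read off from the $L$-module structure assemble into a pre-differential together with a curvature element, the curvature measuring the failure of $0\to\keranc\to L\to\tang\to 0$ to split as a sequence of Lie algebroids (an Atiyah-class type obstruction). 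The hypotheses $F^0(\tang)=0$ and $F^i(\tang)$ finitely generated quasiprojective over $A$ guarantee that $C^*(\tang)$ is a complete filtered cdga of the kind handled in Section \ref{sec: filtered algebras}, that its Hodge filtration is complete, and that $\curv(L)$ lands in the normalizing subcategory on the right.

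Next I would invoke the structural descriptions of both $\infty$-categories. On the target, Theorem \ref{thm:homotopy theories of curved lie over B} gives $\oocat{cLie}_{C^*(\tang)}\simeq \oocat{cLie}^\mm{mix}_{C^*(\tang)}\times_{\oocat{Mod}^\mm{cpl}_{C^*(\tang)}}\oocat{Mod}^\gr_{C^*(\tang)_\gr}$; on the source, Theorem \ref{thm:model structure lie algebroids}, Proposition \ref{prop:classical curved Lie algebroid=graded mixed} and Theorem \ref{thm:intro curved vs uncurved} show that $\oocat{cLie}(A/k)_{/\tang}$ is controlled by the semi-model category of mixed-curved $L_\infty$-algebroids over $\tang$, fitting into a matching square built from $\oocat{cLie}(A/k)^\mm{mix}_{/\tang}$ and module $\infty$-categories. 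It then suffices to realise $\curv$ compatibly on every corner and to check it is an equivalence there. On the module corners the content is that $M\mapsto M\otimes_A C^*(\tang)$ identifies the $\infty$-category of $\tang$-representations which are finitely generated quasiprojective over $A$ with the normalizing subcategory of $C^*(\tang)$-modules; this is essentially full faithfulness of the Chevalley--Eilenberg/de Rham functor on such modules, which I would verify by a direct computation with the (semi)free resolutions afforded by quasiprojectivity. This is precisely where the finiteness hypotheses on $\tang$ enter.

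The heart is the mixed corner: the equivalence $\oocat{cLie}(A/k)^\mm{mix}_{/\tang}\xrightarrow{\sim}\oocat{cLie}^\mm{mix}_{C^*(\tang)}$. For this I would use the ad hoc bar-cobar resolutions and the Homotopy Transfer Theorem over $B=C^*(\tang)$ (Theorem \ref{thm:htt-B}), together with the cobar resolution $\Loid\mapsto Q(\Loid)$ for mixed-curved $L_\infty$-algebroids (Proposition \ref{prop:cobar resolution for loo algebroids}). For full faithfulness, compute the mapping spaces on both sides through these resolutions: in each case they are given by a Maurer--Cartan/convolution $\infty$-groupoid built from the relevant Chevalley--Eilenberg coalgebra, and the two descriptions match under $\curv$. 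For essential surjectivity, after cofibrant replacement any object of the target has underlying module $E\otimes_A C^*(\tang)$ for a finitely generated quasiprojective $A$-module $E$, and its mixed-curved Lie structure is a Maurer--Cartan element of a convolution Lie algebra over $C^*(\tang)$; unwinding the distributive-law presentation $\cLie_{\infty, C^*(\tang)}\cong C^*(\tang)\circ\cLie_\infty$ of Lemma \ref{lem:operad for mixed-curved Loo over B} turns this datum into an $L_\infty$-algebroid structure on $L=E\oplus\tang$ over $\tang$ with $\ker(\rho)\simeq E$, and one checks that the two constructions are mutually inverse and homotopical. Finally, specialising to $\tang=T_A$ gives $C^*(T_A)=\dR(A)$ with the Hodge filtration; smoothness, respectively cofibrancy and local finite presentation, of $A$ makes each $F^i(T_A)$ finitely generated quasiprojective, so the hypotheses apply, and restricting the equivalence to uncurved $L_\infty$-algebroids recovers Theorem \ref{thm:main intro}, the differential-geometric variant (Theorem \ref{thm:comparison differentiable case}) being obtained verbatim with $\Omega^*(M)$ in place of $\dR(A)$.

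I expect the main obstacle to be the essential-surjectivity part of the mixed corner: setting up the precise dictionary between a Maurer--Cartan element in the convolution Lie algebra $\Conv(C^*(\tang),E)$ and the structure maps of an $L_\infty$-algebroid bracket on $E\oplus\tang$, while simultaneously keeping track of the two interacting filtrations (the Hodge filtration on $C^*(\tang)$ and the filtration along which $\tang$ is complete) and checking that the correspondence respects the weak equivalences on both sides -- which on the left are $A$-module quasi-isomorphisms on associated graded and on the right are $C^*(\tang)_\gr$-module quasi-isomorphisms on associated graded.
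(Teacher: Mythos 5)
Your description of the functor itself is on target, but the proof strategy you propose both misses the actual content of the paper's argument and, as stated, would prove a different (weaker) statement. The corner-by-corner reduction cannot work in the form you describe: in the pullback square of Theorem \ref{thm:homotopy theories of curved lie over B} the module corners are the \emph{full} categories $\oocat{Mod}^\mm{cpl}_{C^*(\tang)}$ and $\oocat{Mod}^\gr_{C^*(\tang)_\gr}$, whereas the functor induced by $\curv$ on underlying modules is base change $\keranc\mapsto C^*(\tang)\otimes_A\keranc$ from complete $A$-modules, which is nowhere near an equivalence onto these corners; and if you instead shrink the corners to a ``normalizing'' subcategory of $C^*(\tang)$-modules (your Koszul-duality-for-representations statement, which is not in the paper), the pullback changes and you prove at best a statement of the shape of Theorem \ref{thm:comparison tangent case}, not Theorem \ref{thm:comparison}. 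Indeed, in the general theorem the complement $\keranc$ of a curved $L_\infty$-algebroid over $\tang$ carries a nontrivial filtration, its image $C^*(\tang)\otimes_A\keranc$ need not satisfy any normalization condition, and there is no finiteness restriction on objects of $\oocat{cLie}_{C^*(\tang)}$ (the finiteness hypotheses concern $\tang$ only); your restriction to finitely generated quasiprojective $\tang$-representations with normalized image conflates Theorem \ref{thm:comparison} with Theorem \ref{thm:comparison tangent case}.

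Moreover, the step you defer as ``the main obstacle'' is precisely the entire proof in the paper. The paper does not pass through mixed objects, convolution/Maurer--Cartan mapping spaces, or homotopy transfer to prove Theorem \ref{thm:comparison}: it establishes a \emph{strict bijection} (Proposition \ref{prop:curved algebroid vs curved algebra}) between curved $L_\infty$-algebroid structures on $\tang\oplus\keranc$ with strict projection to $\tang$ and curved $L_\infty$-algebra structures on $C^*(\tang)\otimes_A\keranc$, using the dualizability coming from $F^0\tang=0$ and finite generation to identify $C^*(\tang)\otimes_A\keranc\cong\Hom_A\big(\Sym_A(\tang[1]),\keranc\big)$ and matching the structure equations term by term; the same explicit computation is carried out for $\infty$-morphisms (Proposition \ref{prop:curv is compatible with oo-maps}). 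The slice $\oocat{cLie}(A/k)_{/\tang}$ is then modeled by the auxiliary simplicial category $\scat{C}$ of algebroids with a chosen $A$-linear splitting (Lemma \ref{lem:C is slice}), on which $\curv$ is a simplicially enriched functor (Lemma \ref{lem:curv functor}) that is strictly fully faithful and essentially surjective by the two bijections. So until you actually write down and verify the dictionary between the $\ell_p^{(i)}$-components of the algebroid brackets and the $B$-(multi)linear/derivation structure maps over $B=C^*(\tang)$ --- including the compatibility with composition of $\infty$-morphisms and with the simplicial enrichment --- your argument has no proof of either full faithfulness or essential surjectivity, and the reduction scaffolding you build around it does not substitute for it.
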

In light of Theorem \ref{thm:intro curved vs uncurved} and the relation between Lie algebroids and formal stacks, this suggests a more geometric interpretation of the $\infty$-category of all curved $L_\infty$-algebras over $\dR(A)$ in terms of formal stacks over the Hodge stack.

\subsection*{Notations and conventions}
Throughout, differentials have degree $1$ and filtrations are decreasing. In the body of the text, in the absence of additional adjectives, all objects are assumed differential graded (dg) by default i.e., they live over the category of cochain complexes over a field $k$ of characteristic zero.
So for instance, when we refer to a Lie algebra, this is synonymous to a dgla, whereas a classical curved Lie algebra will be described as a graded Lie algebra with a degree $+1$ endomorphism and a degree $2$ curvature element satisfying some properties.

By default our operads are unital. The equivalence from unital augmented operads to non-unital operads sending an operad to the kernel of the augmentation map is denoted by  $\PP \mapsto \overline{\PP}$. 
On the other hand, cooperads are by default assumed to be non-counital. Given a cooperad $\CC$, we denote the corresponding counital coaugmented cooperad by $\CC_+ = \CC \oplus I$, see Convention \ref{conv:augmentation}. All other operadic terminology and conventions are in line with \cite{LodayVallette2012}.

In line with the unitality assumptions, $\Sym$ denotes the free unital commutative algebra, i.e. $\Sym V = k \oplus V \oplus V\otimes V \oplus \dots$.


Everywhere in the paper, $A$ will denote a cdga over $k$ (over which Lie algebroids live), while $B$ will be a graded mixed cdga, whose main example is the de Rham algebra $B=\dR(A)$ equipped with the Hodge filtration.


Finally, we use a roman typestyle for ordinary categories and a bold font for $\infty$-categories, while we reserve a sans serif typestyle for named (co)operads.
For instance, $\Lie$ will denote the Lie operad, while $\Alg_\Lie$ will denote the model category of Lie algebras and $\oocat{Alg}_\Lie\simeq \oocat{Alg}_{\Lie_\infty}$ the corresponding $\infty$-category. We will not distinguish between a simplicially enriched category and the corresponding $\infty$-category.

\subsection*{Acknowledgments}
 We thank Joan Bellier-Mill\`es for carefully reading and pointing out several inaccuracies on Section \ref{sec:complete operads} and  Chris Rogers for comments that led to Section \ref{sec:rogers}.
 We also thank Bruno Vallette, Victor Roca, and Jim Stasheff for useful comments on the first version of the paper.
 This project has received funding from the European Research Council (ERC) under the European Union's Horizon 2020 research and innovation programme (grant agreement No 768679) and from the grant ANR-20-CE40-0016 HighAGT.

%
%
%
%
%
%
%
%
%
%
%
%
%
%
%

\section{Complete filtered operadic homotopy theory}\label{sec:complete operads}

The goal of this section is to develop the appropriate homotopical framework in which to consider curved Lie algebras. We will do this by studying operads and their algebras in the complete filtered setting: we show that given a filtered operad $\PP$, $\PP$-algebras form a model category and satisfy a form of the Homotopy Transfer Theorem in such a way that the associated $\infty$-category is equivalent to the one of $\PP_\infty$-algebras and $\infty$-morphisms. We then discuss the complete operads $\cLie$ and $\cLie_\infty$ governing respectively mixed-curved Lie algebras and mixed-curved $L_\infty$-algebras, which can be used to study curved Lie algebras in the usual sense.

\subsection{Recollections on filtered complexes}
Given a field $k$ of characteristic zero, a \emph{filtered complex} is a $\mathbb{Z}$-indexed sequence of cochain complexes of $k$-vector spaces and inclusions between them
$$\begin{tikzcd}
\dots \arrow[r, hookrightarrow] & F^1V\arrow[r, hookrightarrow] & F^0V \arrow[r, hookrightarrow] & F^{-1}V\arrow[r, hookrightarrow] & \dots
\end{tikzcd}$$
We will typically denote a filtered complex by its colimit $V\coloneqq \colim_{n\to -\infty} F^nV$ and think of each $F^nV$ as a subcomplex of $V$. A filtered complex $V$ is said to be \emph{nonnegatively filtered} if $F^0V=V$. Given a filtered complex, one can shift its filtration weight by $p$ to get another filtered complex $V\wgt{p}$
$$
F^q V\wgt{p}\coloneqq F^{p+q}V.
$$
We will denote by $\Mod_k^\mm{filt}$ the category of filtered complexes, with maps between them given by maps of cochain complexes preserving filtration weights. 
	
A filtered complex is \emph{complete} if the filtration is complete and Hausdorff, i.e.\ if the map $V \to \displaystyle\lim_{\leftarrow n} V/F^nV$ is an isomorphism. The inclusion of the complete complexes into all filtered complexes admits a left adjoint
\[\begin{tikzcd}
	\widehat{(-)}\colon \cat{Mod}_k^\mm{filt}\arrow[r, yshift=0.4pc] \arrow[r, yshift=-0.4pc, hookleftarrow] & \Mod_k^\mm{cpl}\colon \iota 
\end{tikzcd}\]
sending a filtered complex to its completion. In particular, the category of complete complexes admits all colimits, which are computed in the category of filtered complexes and then completed. For example, infinite coproducts of complete complexes are given by \emph{completed direct sums}. 

A \emph{(weight-)graded complex} is simply a $\mathbb{Z}$-indexed family of complexes $\{V\wgt{p}\}_{p\in \mathbb{Z}}$. There are functors between complete complexes and graded complexes
$$\begin{tikzcd}
\Mod_k^\gr\arrow[r, rightarrow, "\Tot"] & \Mod_k^\mm{cpl}\arrow[r, "\Gr"] & \Mod_k^\gr.
\end{tikzcd}$$
The functor $\Tot$ sends a graded complex to its \emph{total complex}, i.e.\ to
$$
\Tot(V) \coloneqq \widehat{\bigoplus}_p V\wgt{p}, \qquad\qquad F^q\Tot(V) = \prod_{n\leq 0} V\wgt{n-q}.
$$
The second functor takes the associated graded $\Gr^p(V)\coloneqq F^pV\big/F^{p+1}V$. Note that for any filtered complex $V$, the map to its completion $V\rt \widehat{V}$ induces an isomorphism on the associated graded.
\begin{definition}\label{def:admissible mono}
A map of (complete) filtered complexes $V\rt W$ is called a \emph{surjection} if it is a surjection is every filtration weight. It is called an \emph{admissible monomorphism} if each $F^pV\rt F^pW$ is a monomorphism and furthermore, each map 
$$
F^pV\oplus_{F^{p+1}V} F^{p+1}W\rt F^pW
$$ 
is a monomorphism.
\end{definition}
\begin{remark}\label{rem:exact category}
Let us say that $0\rt V\rt W\rt Z\rt 0$ is a \emph{short exact sequence} of complete complexes if it is short exact in each filtration weight. A map is an admissible monomorphism (a surjection) precisely if it is the first (second) map in such a short exact sequence. With this notion of short exact sequence, the category of complete complexes becomes an exact category in the sense of Quillen \cite{QuillenKTheory}.
\end{remark}
\begin{remark}
Note that, even though we are working over a field, not every inclusion is admissible, i.e.\ fits into a short exact sequence: for example, take $k\rt k'$ where the codomain is just $k$, in filtration degree $1$ instead of $0$.
\end{remark}
\begin{remark}\label{rem:surjections have sections}
Suppose that $p\colon W\rt Z$ is a surjection of complete complexes. Without differentials, $p$ admits a section: indeed, without differentials we can simply choose a basis for $Z$. Each basis vector has a certain (maximal) filtration weight, and choosing inverse images with the same weight provides a filtration-preserving section.
\end{remark}
\begin{lemma}\label{lem:graded exact}
The functor $\Gr\colon \Mod_k^\mm{cpl}\rt \Mod_k^\gr$ preserves (infinite) direct sums and products, filtered colimits and is exact (for the exact structure as in Remark \ref{rem:exact category}). In particular, it preserves pushouts along admissible monomorphisms and pullbacks along maps that are surjective in each filtration weight.
\end{lemma}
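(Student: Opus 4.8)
The plan is to exploit that $\Gr^{p}$ is, in each weight $p$, the composite of the weight-$p$ evaluation $V\mapsto F^{p}V$ (valued in cochain complexes of $k$-modules) with the cokernel of the filtration inclusion, i.e.\ $\Gr^{p}V=\operatorname{coker}\!\big(F^{p+1}V\hookrightarrow F^{p}V\big)$. The only structural inputs needed are: colimits in $\Mod_k^\mm{cpl}$ are computed in $\Mod_k^\mm{filt}$ and then completed; products in $\Mod_k^\mm{cpl}$ are computed in $\Mod_k^\mm{filt}$ (as $\iota$ is a right adjoint); (co)limits in $\Mod_k^\mm{filt}$ are computed weight-wise; and the completion map $U\to\widehat U$ is an isomorphism on associated gradeds, as noted in the text above. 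Granting these, preservation of direct sums, products and filtered colimits is immediate: each of these operations is exact on cochain complexes of $k$-modules, hence commutes with the cokernel computing $\Gr^{p}$ weight by weight (for products one also uses that $\prod_{i}F^{p+1}V_{i}\to\prod_{i}F^{p}V_{i}$ is injective with cokernel $\prod_{i}\Gr^{p}V_{i}$, which again holds because products are exact over a field).

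The heart of the lemma is exactness. Given a short exact sequence $0\to V\to W\to Z\to 0$ in the sense of Remark~\ref{rem:exact category}, i.e.\ weight-wise short exact, I would fix $p$ and stack the weight-$(p+1)$ and weight-$p$ rows into a commuting ladder of short exact sequences of $k$-complexes connected by the (injective) filtration inclusions, and then apply the snake lemma. Since the vertical maps are monomorphisms their kernels vanish, so the snake connecting map has trivial source and the six-term exact sequence degenerates to $0\to\Gr^{p}V\to\Gr^{p}W\to\Gr^{p}Z\to 0$. Hence $\Gr$ sends short exact sequences of complete complexes to short exact sequences in the abelian category $\Mod_k^\gr$.

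For the ``in particular'' clause, I would argue that $\Gr$ preserves the relevant bicartesian squares. Let $i\colon V\to W$ be an admissible monomorphism and $V\to V'$ arbitrary. Weight-wise the pushout is a pushout of a monomorphism in $\Mod_k$, hence bicartesian: $0\to F^{p}V\to F^{p}W\oplus F^{p}V'\to F^{p}W'\to 0$ is exact for all $p$ (injectivity of $F^{p}V\to F^{p}W$ is part of admissibility). A short diagram chase together with the Mittag--Leffler criterion and the five lemma shows that the resulting filtered complex $W'$ is already complete, so it also computes the pushout in $\Mod_k^\mm{cpl}$ --- alternatively, one simply notes that completion does not affect $\Gr$. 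Applying additivity and exactness of $\Gr$ to these weight-wise short exact sequences yields $0\to\Gr V\to\Gr W\oplus\Gr V'\to\Gr W'\to 0$ exact in $\Mod_k^\gr$, which says precisely that the square of associated gradeds is a pushout. Pullbacks along weight-wise surjections are handled dually: by Remark~\ref{rem:exact category} these are exactly the admissible epimorphisms, the pullback of an admissible epimorphism is bicartesian with a weight-wise short exact sequence $0\to F^{p}W''\to F^{p}W\oplus F^{p}Z'\to F^{p}Z\to 0$, and the same argument with arrows reversed applies.

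I do not expect a genuine obstacle; the proof is a sequence of standard homological-algebra manipulations. The one point that requires attention is organizing the (co)limits in the statement so that they become weight-wise operations modulo a completion that $\Gr$ ignores --- in particular the verification that a pushout of an admissible monomorphism need not be completed, which is where the Mittag--Leffler and five-lemma step enters.
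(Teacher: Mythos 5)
Your proof is correct and follows essentially the same route as the paper: the preservation of sums, products and filtered colimits is the ``readily verified'' weight-wise check, and your treatment of pushouts and pullbacks via the short exact sequences $0\to V\to W\oplus V'\to W'\to 0$ is exactly the mechanism behind the paper's appeal to the general fact that an exact functor between Quillen exact categories preserves such (co)cartesian squares. The only extra work you do --- the Mittag--Leffler/five-lemma verification that the pushout of an admissible monomorphism is already complete --- is dispensable, as you note yourself, since $\Gr$ is unaffected by completion.
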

\begin{proof}
The first part is readily verified and the second part  is true for any functor between Quillen exact categories preserving exact sequences.
\end{proof}
The category of filtered complexes is a closed symmetric monoidal category via the tensor product
\[
F^r(V\otimes W)\coloneqq\sum_{p+q=r}F^pV\otimes F^qW.
\]
One easily sees that the internal mapping object is the filtered complex given in weight $r$ by maps that increase filtration weight by (at most) $r$:
\[
F^r\Hom(V,W) = \big\{\text{filtration preserving maps }V\rt W\wgt{r}\big\}.
\]
We will also refer $\Hom(V, W)$ (with the above filtration) as the \emph{filtered mapping complex}.
\begin{proposition}\label{prop:SMC}
	The category of complete cochain complexes carries a closed symmetric monoidal structure $\botimes$ such that the completion functor $\widehat{(-)}\colon \Mod_k^\mm{filt}\rt \Mod_k^\mm{cpl}$ is symmetric monoidal.
\end{proposition}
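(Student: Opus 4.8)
The plan is to obtain the closed symmetric monoidal structure on $\Mod_k^\mm{cpl}$ by transporting the one on $\Mod_k^\mm{filt}$ along the completion adjunction $\widehat{(-)}\dashv \iota$. The key observation is that $\iota$ is lax symmetric monoidal (in fact its essential image, the complete complexes, is closed under the filtered internal hom), so that the left adjoint $\widehat{(-)}$ is oplax, and one shows directly that this oplax structure is in fact strong. Concretely, I would set
$$
V\botimes W \coloneqq \widehat{\iota V\otimes \iota W},
$$
i.e. the completion of the filtered tensor product, and $\Hom^\mm{cpl}(V,W)\coloneqq \Hom(\iota V,\iota W)$ with the filtered mapping-complex structure recalled just above the statement. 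The first thing to check is that this internal hom is already complete when $W$ is: since $F^r\Hom(V,W)$ consists of filtration-preserving maps $V\to W\wgt r$, an element of $\lim_n F^r\Hom/F^{r+n}\Hom$ is a compatible system of maps modulo ``arbitrarily high filtration weight'', and completeness of $W$ lets one reassemble this into a genuine map $V\to W\wgt r$; hence $\iota$ lands inside the complete complexes on internal homs, which is what makes the adjoint transpose work. This also immediately upgrades the symmetric monoidal closed structure on $\Mod_k^\mm{filt}$ (associativity, unit $k$ concentrated in weight $0$, symmetry, the tensor–hom adjunction) to $\Mod_k^\mm{cpl}$, because all the structure isomorphisms are obtained by applying the idempotent completion functor, which is a reflective localization and hence preserves all the coherence data.

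The crucial step — and the one I expect to be the main obstacle — is showing that $\widehat{(-)}$ is \emph{strong} monoidal, i.e. that the canonical comparison map $\widehat{\iota V\otimes \iota W}\to \widehat{\iota(V\botimes W)}\cong V\botimes W$ induced from the adjunction is an isomorphism, and more importantly that for filtered (not yet complete) $V,W$ the natural map
$$
\widehat{V\otimes W}\ \xrightarrow{\ \sim\ }\ \widehat{V}\botimes\widehat{W}=\widehat{\iota\widehat V\otimes \iota\widehat W}
$$
is an isomorphism. The idea here is that completion does not change the associated graded (stated right before Definition \ref{def:admissible mono}: $V\to\widehat V$ is a $\Gr$-isomorphism), and the filtered tensor product interacts well with $\Gr$ in the sense that $\Gr(V\otimes W)\cong \Gr(V)\otimes\Gr(W)$ as graded complexes, because we work over a field so there are no $\mathrm{Tor}$ obstructions and $F^r(V\otimes W)=\sum_{p+q=r}F^pV\otimes F^qW$ is exactly the convolution filtration. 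A map of filtered complexes that induces an isomorphism on $\Gr$ and whose source and target are complete is an isomorphism (a standard ``complete filtered Nakayama'' argument: the kernel and cokernel have vanishing associated graded and are complete, hence are zero). Applying this to $\widehat{V\otimes W}\to \widehat V\botimes\widehat W$, whose source and target are complete by construction and which is a $\Gr$-isomorphism because $V\otimes W\to V\otimes W$, $V\to\widehat V$ and $W\to\widehat W$ all are, gives the claim. From strong monoidality of $\widehat{(-)}$, the fact that $(\Mod_k^\mm{cpl},\botimes,k)$ is symmetric monoidal closed follows formally: one transports associator, unitors and braiding through $\widehat{(-)}$ and checks the pentagon/hexagon by naturality, and closedness is the adjoint transpose of the already-established adjunction $\Hom(\iota V\otimes\iota W,-)\cong \Hom(\iota V,\Hom(\iota W,-))$ on $\Mod_k^\mm{filt}$ together with the completeness of internal homs proved in the first step.

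Finally I would record the unit: $\widehat{k}=k$ (the monoidal unit of $\Mod_k^\mm{filt}$, namely $k$ in weight $0$, is already complete), so $k$ is the unit for $\botimes$ and $\widehat{(-)}$ is unital on the nose. Assembling: $\widehat{(-)}\colon \Mod_k^\mm{filt}\to\Mod_k^\mm{cpl}$ is a strong symmetric monoidal left adjoint onto a reflective subcategory, the target inherits a symmetric monoidal structure with $V\botimes W=\widehat{\iota V\otimes\iota W}$ and internal hom the (automatically complete) filtered mapping complex, and $\widehat{(-)}$ is symmetric monoidal as asserted. The only genuinely nontrivial input is the $\Gr$-detects-isomorphisms lemma for complete complexes together with $\Gr$ commuting with $\otimes$; everything else is formal nonsense about idempotent monoidal adjunctions.
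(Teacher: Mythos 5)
Your proposal is correct, and its crucial input coincides with the paper's: the filtered mapping complex $\Hom(V,W)$ is already complete whenever $W$ is, so that the complete complexes form a reflective subcategory closed under internal hom, the completion functor becomes strong symmetric monoidal, and $V\botimes W=\widehat{V\otimes W}$ with the filtered hom as internal hom. The two arguments diverge in how they establish and exploit this. The paper proves hom-completeness by exhibiting $\Hom(V,W)$ as the double limit $\lim_{m}\lim_{n}\Hom(F^mV, W/F^nW)$ of complexes whose filtrations vanish in sufficiently high weight, and then treats the remaining monoidal-localization part as formal; you prove hom-completeness by an element-wise reassembly using completeness of $W$ (fine, though you should also record the easy injectivity: a map lying in every $F^n\Hom(V,W)$ sends $F^pV$ into $\bigcap_n F^{p+n}W=0$), and you make the formal part explicit by verifying strong monoidality of $\widehat{(-)}$ directly, using that $\Gr$ is monoidal, that $V\to\widehat V$ is a $\Gr$-isomorphism, and that a map of complete complexes inducing an isomorphism on $\Gr$ is an isomorphism. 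That last strict ``complete Nakayama'' statement is not recorded in the paper (only its quasi-isomorphism analogue, Lemma \ref{lem:fibrations and w.e. on associated graded}), but it is correct: one inducts on the finite quotients $F^pV/F^nV$ and passes to the limit using completeness and exhaustiveness. So your more hands-on route is a complete proof; it buys an explicit identification of the comparison map $\widehat{V\otimes W}\to \widehat{V}\botimes\widehat{W}$ at the cost of redoing by hand the reflection formalism that the paper's one-line proof leaves implicit.
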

\begin{proof}
	This follows from the fact that for any filtered complex $V$ and any complete complex $W$, the filtered mapping complex $\Hom(V, W)$ is itself already complete; indeed, it is a limit
	$$
	\lim\limits_{m\to -\infty}\lim\limits_{n\to \infty} \Hom(F^mV, W/F^nW)
	$$ 
	of complexes with (complete) filtrations vanishing in sufficiently high degrees.
\end{proof}
\begin{remark}
	The category of complexes is a full monoidal subcategory of complete complexes, by endowing a complex $V$ with the trivial filtration $F^{1}V=0$ and $F^rV = V$ for $r\leq 0$. We tend to tacitly view complexes as complete complexes in this way. In particular, $\Mod_k^\mm{cpl}$ is tensored and enriched over cochain complexes; the complex of maps $V\rt W$ is simply $F^0\Hom(V, W)$.
\end{remark}
\begin{remark}\label{rem:graded is monoidal}
The category of (weight-)graded cochain complexes has a similar closed symmetric monoidal structure, where 
$$
(V\otimes W)\wgt{r}=\bigoplus_{p+q=r} V\wgt{p}\otimes W\wgt{q}\quad \text{and}\quad \Hom(V, W)\wgt{p}=\prod_q \Hom(V\wgt{q}, W\wgt{q+p}).
$$
The functors $\Tot\colon \Mod_k^\gr\rt \Mod_k^\mm{cpl}$ and $\Gr\colon \Mod_k^\mm{cpl}\rt \Mod_k^\gr$ are both symmetric monoidal and furthermore preserve internal mapping objects, i.e.\
$$
\Tot\big(\Hom(V, W)\big) \cong \Hom\big(\Tot(V), \Tot(W)\big), \qquad\Gr\big(\Hom(V, W)\big) \cong \Hom\big(\hspace{-1pt}\Gr(V), \Gr(W)\big).
$$
To see the second isomorphism, note that without differential one can decompose $V=\widehat{\bigoplus}_\alpha k\wgt{p_\alpha}[n_\alpha]$ as a completed sum of copies of $k$, in various degrees and filtration weights. Indeed, to do this one simply has to choose a basis for the associated graded of $V$ and lift all basis vectors to $V$ itself. The above isomorphism then takes the form
$$
\Gr\Big(\prod_\alpha W\wgt{-p_\alpha}[-n_\alpha]\Big)\cong \prod_\alpha \Big(\Gr(W)\wgt{-p_\alpha}[-n_\alpha]\Big)
$$
which holds because taking the associated graded preserves products.
\end{remark}
We can then consider operads over this symmetric monoidal category.
\begin{definition}[Complete operads]
	A \emph{complete operad} $\PP$ is an (by default \emph{unital}, symmetric) operad in the category of complete complexes, i.e.\ a unital algebra in symmetric sequences of complete complexes, with respect to the composition product $\bcirc$. Explicitly, $\PP$ comes with composition maps
	$$
	\gamma\colon \big(\PP\bcirc\PP\big)(n)=\widehat{\bigoplus_{k}} \PP(k)\botimes_{\Sigma_k}\left(\widehat{\bigoplus_{i_1+\dots +i_k=n}}\mm{Ind}_{\Sigma_{i_1}\times\dots \times \Sigma{i_k}}^{\Sigma_n}\Big(\PP(i_1) \botimes\dots \botimes\PP(i_k)\Big)\right)\rt \PP(n)
	$$
	from a completed direct sum of completed tensor products.
	Given a complete operad $\PP$, a \emph{complete $\PP$-algebra} is a complete complex $A$ equipped with a map $\PP\bcirc A\rt A$ satisfying the usual associativity and unitality conditions.
\end{definition}
\begin{remark}[Filtered operads]
In a similar way, one can define (not necessarily complete) filtered operads and algebras over them, using the category of symmetric sequences of filtered complexes, equipped with the (non-completed) composition product $\circ$. A complete operad is then equivalently a filtered operad whose underlying symmetric sequence is complete: the structure map $\PP\circ\PP\rt \PP$ then extends uniquely to the completion.

Likewise, if $\PP$ is a filtered operad, then there is an equivalence between filtered $\PP$-algebras $A$ whose underlying filtered complex is complete, and complete algebras over the completion $\widehat{\PP}$: indeed, the structure map $\PP\circ A\rt A$ extends uniquely to the completion $\widehat{\PP}\bcirc A\rt A$.
\end{remark}
\begin{example}
	It will be important (particularly in Section \ref{sec:HTT}) that due to Proposition \ref{prop:SMC} we can define the \emph{endomorphisms operad} of a filtered complex $A$ to be the filtered operad $\End_A$ given by $\End_A(n) \coloneqq \Hom(A^{\otimes n}, A)$, such that filtered algebras over $\PP$ can be identified with filtered operad maps $\PP\rt \End_A$. When $A$ is complete, this is a complete operad, which is isomorphic to the complete endomorphism operad with operations $A\botimes\dots \botimes A\rt A$.
\end{example}

Since we consider objects with non-trivial arity $0$ pieces, we need to be precise about the version of cooperads we consider, namely in what concerns partial vs total cocompositions and conilpotence.

Recall that given a symmetric sequence $\CC$ with partial cocompositions $\Delta_i\colon \CC(k)\to \CC(m)\otimes \CC(k-n+1), i=1,\dots,m$ (appropriately $\Sigma$-equivariant and coassociative), one can iterate them to obtain a total cocomposition $\Delta \colon \CC(k) \to \prod_{i_1+\dots +i_n=k} \CC(n)\otimes \CC(i_1)\otimes \dots\otimes \CC(i_n)$, making $\CC$ a coalgebra with respect to $\circ$, as the argument dual to \cite[Proposition 5.3.8]{LodayVallette2012} shows. Notice however that in the absence of a unit one cannot obtain the partial cocomposition from the total one. 

\begin{definition}\label{def:complete cooperad}
	A \emph{complete cooperad} $\CC$ is a symmetric sequence of complete complexes equipped with (by default \emph{non-counital}) $\Sigma$-equivariant coassociative partial cocompositions $\Delta_i\colon \CC(k)\to \CC(m)\otimes \CC(k-n+1), i=1,\dots,m$. In particular, $\CC$ is a coalgebra in symmetric sequences of complete complexes with respect to the completed composition product $\bcirc$.

A \emph{(conilpotent) $\CC$-coalgebra} is a complete complex $C$ together with a map $C\rt \CC\bcirc C$ satisfying the usual associativity constraint.
	
	A complete cooperad $\CC$ is said to be \emph{complete conilpotent} if the $n$-fold cocomposition determines a map
$$
		(\Delta, \Delta^2, \Delta^3, \dots)\colon \cat{C}\rt \widehat{\bigoplus_{n\geq 2}}\, \cat{C}\bcirc \dots \bcirc \cat{C}
$$
	into the \emph{completed sum} of completed composition products. In other words, an operation in $\cat{C}(n)$ can be decomposed into infinitely many trees, but their sum converges with respect to the filtration.
\end{definition}
\begin{warning}\label{war:complete conilpotence}
	Recall that (strict) conilpotency in the usual sense is the requirement that the coradical filtration be exhaustive, $\colim_n \mathrm{corad}^n(\CC) = \CC$.	
	The condition of complete conilpotency  is \emph{weaker} than conilpotency in the usual sense: for each element $c\in \CC$ and $r\geq 0$, there exists an $n$ such that $\Delta^n(c)$ is of filtration degree $r$, but $\Delta^n(c)$ need not vanish for large $n$. However, note that the associated graded $\Gr(\CC)$ is a cooperad in weight-graded complexes which \emph{is} conilpotent in the usual sense.
\end{warning}


\begin{convention}\label{conv:augmentation}[Augmented and non-unital operads]
There is an equivalence of categories between unital augmented operads and non-unital operads, given by quotienting out the unit in arity one. We will denote this construction by $\PP \mapsto \overline{\PP}$.
We take the convention that our operads are unital unless otherwise specified.
On the other hand, we take the convention that cooperads are non-counital.
 Given a cooperad $\CC$, we denote the corresponding counital coaugmented cooperad by $\CC_+ = \CC \oplus I$.
	
The reason for this choice is that non-unitaly is convenient to define conilpotent cooperads, and for the constructions in Section \ref{sec:bar-cobar}, but slightly inconvenient when talking about \emph{algebras} over operads: indeed, when $\PP$ is a non-unital complete operad and $A$ is a complete complex, the free $\PP$-algebra on $A$ is given by $\PP\circ A$, not $\overline{\PP}\circ A$.
\end{convention}

\subsection{Model structures on filtered complexes and algebras}
In this section we show that the category of complete complexes can be endowed with a model structure whose weak equivalences are maps inducing a quasi-isomorphism at the level of the associated graded. Furthermore, this model structure transfers to a model structure on algebras over operads:
\begin{theorem}\label{thm:Model str}
	Let $\PP$ be a complete operad. The category of complete algebras over $\PP$ admits a cofibrantly generated model category structure such that:
	\begin{itemize}
		\item Weak equivalences are maps inducing quasi-isomorphisms on the associated graded.
		\item Fibrations are maps that induce surjections in each filtration weight $p$.
	\end{itemize}
	In particular every $\PP$-algebra is fibrant.
\end{theorem}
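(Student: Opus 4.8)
The plan is to obtain the model structure by right transfer along the free--forgetful adjunction
\[
\PP\bcirc(-)\colon \Mod_k^\mm{cpl}\ \rightleftarrows\ \Alg_\PP\colon U,
\]
declaring a map of $\PP$-algebras to be a weak equivalence (resp.\ fibration) exactly when $U$ sends it to one. So the first task is the case $\PP=I$ of the trivial operad: a cofibrantly generated model structure on $\Mod_k^\mm{cpl}$ itself with the stated classes. The second task is to verify the two hypotheses of Kan's transfer theorem, the nontrivial one being that every pushout of a free map $\PP\bcirc(j)$ on a generating acyclic cofibration $j$, and every transfinite composite of such, is a weak equivalence.

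For the base case, take as generating cofibrations the usual generating cofibrations $S^{n-1}\hookrightarrow D^n$ of cochain complexes, each placed in a single filtration weight $p$ (write $S^{n-1}\wgt p\hookrightarrow D^n\wgt p$), and as generating acyclic cofibrations the maps $0\to D^n\wgt p$, for all $n,p\in\mathbb Z$. The concrete heart of the matter is that, since every complete complex $V$ satisfies $F^pV=\lim_m F^pV/F^mV$, the following are equivalent for a map $f$ of complete complexes: (i) $\Gr f$ is a quasi-isomorphism and $f$ is surjective in each filtration weight; (ii) $F^pf$ is a surjective quasi-isomorphism for every $p$; (iii) $f$ has the right lifting property against all $S^{n-1}\wgt p\hookrightarrow D^n\wgt p$. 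The implication (ii)$\Rightarrow$(i) follows from the long exact sequence of $0\to F^{p+1}V\to F^pV\to\Gr^pV\to 0$ together with Lemma~\ref{lem:graded exact}; for (i)$\Rightarrow$(ii) one runs the finite filtrations $F^pV/F^mV$, on which $f$ is a levelwise quasi-isomorphism, and passes to the limit, using that an isomorphism of towers induces an isomorphism on both $\lim$ and $\lim^1$. Granting this, right lifting against the generating cofibrations is exactly the class of surjective $\Gr$-quasi-isomorphisms, right lifting against $\{0\to D^n\wgt p\}$ is exactly the class of fibrations, the weak equivalences visibly satisfy $2$-out-of-$3$ and are closed under retracts, and the remaining axioms follow in the standard way.

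The genuine obstacle is that colimits in $\Mod_k^\mm{cpl}$ are computed by completing colimits of filtered complexes, so $S^{n-1}\wgt p$ and $D^n\wgt p$ are \emph{not} small --- attaching one cell in each of the weights $p,p+1,\dots$ produces a completed direct sum rather than an ordinary one --- and the transfinite small object argument does not literally apply. The cleanest way around this is to use the equivalent description of complete filtered complexes as $t$-complete weight-graded $\base[t]$-modules via the Rees construction (equivalently, as graded mixed complexes, cf.\ Section~\ref{sec:graded mixed}), which \emph{is} locally presentable and a symmetric monoidal model category in which the above model structure is combinatorial --- its weak equivalences being the maps inverted by $-\otimes^{\mathbb L}_{\base[t]}\base$ --- and then to transport everything, including the operad $\PP$ and its algebras, back to $\Mod_k^\mm{cpl}$. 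Alternatively, one can construct the required functorial factorizations by hand, exploiting that after forgetting differentials every complete complex is a completed sum of shifted copies of $\base$ (Remark~\ref{rem:graded is monoidal}).

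With the base model structure in place the transfer is essentially formal. Because $\base$ has characteristic zero there are no $\Sigma_n$-homotopy obstructions, and $\Mod_k^\mm{cpl}$ is a symmetric monoidal model category with cofibrant unit (Proposition~\ref{prop:SMC}, plus the direct check that $0\to\base$ is a cofibration), so $\Alg_\PP$ admits functorial path objects --- for the operads relevant here one may simply tensor with the de Rham interval $\base[t,dt]$, and in general one invokes the standard machinery for algebras over ($\Sigma$-cofibrant) operads; together with the fact that every $\PP$-algebra is fibrant this supplies the acyclicity hypothesis by the usual path-object criterion. Finally, fibrations in $\Alg_\PP$ are by definition created by $U$, and in $\Mod_k^\mm{cpl}$ the map from any object to the terminal object $0$ is surjective in every filtration weight, hence a fibration; therefore $U(\Loid)$ is fibrant for every $\PP$-algebra $\Loid$, i.e.\ every $\PP$-algebra is fibrant. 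I expect the main difficulty to be precisely the foundational point flagged above --- making ``cofibrantly generated'' literally correct even though $\Mod_k^\mm{cpl}$ is not locally presentable for the evident generators --- which is what forces the detour through the Rees / graded-mixed presentation (or an explicit construction of the factorizations).
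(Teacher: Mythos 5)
Your overall mechanism coincides with the paper's: a transferred structure in which fibrations and weak equivalences are created by forgetting, acyclicity checked by Hinich's functorial path object $A\botimes\Omega[\Delta^1]$ (verified on the associated graded, using that $\Gr$ is monoidal), and every object fibrant. The divergence, and the genuine gap, is in your foundational step. The paper never builds a model structure on $\Mod_k^\mm{cpl}$ separately: it quotes the cofibrantly generated model structure on \emph{all} (not necessarily complete) filtered complexes from \cite[Theorem 3.14]{cirici2019model} (the case $r=0$) and transfers directly along $\widehat{\mm{Free}}\colon \cat{Mod}_k^\mm{filt}\leftrightarrows \cat{Alg}_\PP^\mm{cpl}$, so the generating sets and smallness questions live in the locally presentable category of filtered complexes; the complete case $\PP=$ trivial operad is then an instance of the theorem rather than an input. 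Your identification of the classes in the base case ((i)$\Leftrightarrow$(ii)$\Leftrightarrow$(iii)) is fine — it is essentially Lemma \ref{lem:fibrations and w.e. on associated graded} — but your repair of the smallness problem you correctly flag does not work as stated.

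Concretely: if by ``$t$-complete weight-graded $\base[t]$-modules'' you mean the strictly complete ones, the Rees construction is an equivalence of $1$-categories with $\Mod_k^\mm{cpl}$, so local presentability of the one would imply local presentability of the other; you cannot simultaneously claim that the generators of $\Mod_k^\mm{cpl}$ fail to be small and that an equivalent category is combinatorial. If instead you mean a genuinely combinatorial model (all graded $\base[t]$-modules with weak equivalences the maps inverted by $-\otimes^{\mathbb L}_{\base[t]}\base$, or strict graded mixed complexes), these are \emph{not} equivalent to $\Mod_k^\mm{cpl}$ as $1$-categories — the comparison of Proposition \ref{prop:graded mixed complexes} is an equivalence of $\infty$-categories and uses $\infty$-morphisms — and ``transporting everything back'' along such a comparison does not produce factorizations in the actual category $\cat{Alg}_\PP^\mm{cpl}$, which is what the theorem asserts; a Quillen or $\infty$-equivalence does not transplant a model structure onto a prescribed $1$-category. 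Your fallback of constructing the factorizations by hand is only a gesture. So the base case, and with it the transfer, is not established as written. The clean repair is the paper's move — transfer from filtered (not complete) complexes, where cofibrant generation is already available — or, if one insists on working inside the complete category, an honest smallness analysis (e.g.\ smallness of the relevant domains with respect to a sufficiently large cardinal) in place of the detour through an ``equivalent'' presentation; your remaining steps (characteristic zero, path objects, fibrancy of all objects) then go through exactly as in the paper.
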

In particular, taking $\PP$ to be the trivial operad gives a model structure on complete complexes. 
\begin{proof}
The category of (not necessarily complete) filtered complexes admits a cofibrantly generated model structure in which weak equivalences (resp.\ fibrations) are graded quasi-isomorphisms (resp.\ surjections in each filtration degree): this is the special case of \cite[Theorem 3.14]{cirici2019model} where $r=0$. 
Now consider the adjoint pair
$$\begin{tikzcd}
\widehat{\mm{Free}}\colon \cat{Mod}_k^\mm{filt}\arrow[r, yshift=0.4pc] & \cat{Alg}_{\PP}^\mm{cpl}\colon \mm{forget}\arrow[l, yshift=-0.4pc]
\end{tikzcd}$$ 
	between complete $\PP$-algebras and (not necessarily complete) filtered complexes. To check that the model structure transfers along this adjunction, it suffices to provide a functorial path object in complete $\PP$-algebras. This is just the classical argument from Hinich \cite{hinich1997homological}: if $A$ is a complete $\PP$-algebra, then $A\botimes \Omega[\Delta^1]$ is a complete $\PP$-algebra as well, where $\Omega[\Delta^1]$ is considered as a commutative algebra with the trivial filtration. This factors the diagonal as 
	$$
	A\rt A\botimes\Omega[\Delta^1]\rt A\times A.
	$$
	Since the associated graded functor from complete complexes to graded complexes is symmetric monoidal, one finds a factorization of graded algebras $\Gr(A)\rt \Gr(A)\otimes \Omega[\Delta^1]\rt \Gr(A)\times \Gr(A)$, where $\Omega[\Delta^1]$ is in weight $0$. This is clearly a weak equivalence, followed by a surjection.
\end{proof}
\begin{lemma}\label{lem:fibrations and w.e. on associated graded}
Let $f\colon V\rt W$ be a map of complete complexes.
\begin{enumerate}
\item $f$ induces a quasi-isomorphism on the associated graded if and only if it induces a quasi-isomorphism in each filtration degree.
\item $f$ induces a surjection on the associated graded if and only if it induces a surjection in each filtration degree.
\end{enumerate}
\end{lemma}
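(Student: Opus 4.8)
The plan is to prove both statements by reducing to the structure of complete complexes without differentials. The key point is that, forgetting differentials, a complete complex $V$ splits as a completed direct sum $\widehat{\bigoplus}_\alpha k\wgt{p_\alpha}[n_\alpha]$ of shifted copies of $k$ placed in various degrees and filtration weights; this is exactly the observation used in Remark \ref{rem:graded is monoidal}, obtained by choosing a basis of $\Gr(V)$ and lifting. Under such a splitting, $F^pV = \prod_{\alpha:\, p_\alpha \geq p} k\wgt{p_\alpha}[n_\alpha]$ and $\Gr^p(V) = \bigoplus_{\alpha:\, p_\alpha = p} k\wgt{p_\alpha}[n_\alpha]$, so that $F^pV$ is a product (over $q\geq p$) of the graded pieces $\Gr^q(V)$, and these splittings can moreover be chosen compatibly with a filtered map $f$ by Remark \ref{rem:surjections have sections}-style arguments (or just by lifting a splitting of the graded map). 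The underlying-graded-object functor thus identifies $F^pV$ with $\prod_{q\geq p}\Gr^q(V)$ naturally in the relevant sense, reducing everything to how quasi-isomorphisms and surjections interact with (countable) products.

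For \textbf{(2)}: surjectivity in filtration weight $p$ means $F^pV\rt F^pW$ is surjective. Since (after choosing compatible splittings) this map is, up to the differential, the product map $\prod_{q\geq p}\Gr^q(V)\rt \prod_{q\geq p}\Gr^q(W)$, and products of surjections are surjections, surjectivity on all of $\Gr$ gives surjectivity in each $F^p$. Conversely, $\Gr^p(f)$ is a subquotient of $F^p(f)$, so surjectivity of $F^p(f)$ forces surjectivity of $\Gr^p(f)$. Both directions are in fact elementary once one notes that $\Gr^p(V) = F^pV/F^{p+1}V$ and that a surjection of filtered complexes restricts and passes to subquotients; I would phrase it directly using the short exact sequences $0\rt F^{p+1}V\rt F^pV\rt \Gr^p(V)\rt 0$ rather than invoking the splitting, which makes (2) immediate by a diagram chase.

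For \textbf{(1)}: the forward direction ($\Gr$-quasi-iso $\Rightarrow$ quasi-iso in each $F^p$) is the substantive one. Fix $p$. Using the identification $F^pV\cong \prod_{q\geq p}\Gr^q(V)$ of underlying graded objects — but now the differential on $F^pV$ is \emph{not} the product differential; it is upper-triangular with respect to the filtration, i.e.\ $d(F^qV)\subseteq F^qV$ but its ``diagonal part'' on $\Gr^q$ is $d_{\Gr^q}$ and there are lower-order correction terms raising filtration weight. So $F^pV$ is the inverse limit of the tower $\dots \rt F^pV/F^{p+2}V\rt F^pV/F^{p+1}V = \Gr^p(V)$, in which each map has kernel $\Gr^{p+n}(V)$; this is a finite-stage-wise iterated extension, and the tower is a tower of surjections of complexes, hence has vanishing $\lim^1$ on cohomology degreewise (each term is eventually constant? no — rather: use that $\lim^1$ vanishes for towers of surjections, or the Mittag-Leffler condition). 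One then argues by induction up the tower: $F^pV/F^{p+1}V = \Gr^p V \rt \Gr^p W$ is a quasi-iso by hypothesis, and given that $F^pV/F^{p+n}V\rt F^pW/F^{p+n}W$ is a quasi-iso, the map of short exact sequences
$$
0\rt \Gr^{p+n}V\rt F^pV/F^{p+n+1}V\rt F^pV/F^{p+n}V\rt 0
$$
together with the hypothesis on $\Gr^{p+n}$ and the five lemma gives it at stage $n+1$. Passing to the limit $F^pV = \lim_n F^pV/F^{p+nV}$, completeness and the Milnor sequence (with the $\lim^1$-term vanishing because we have a tower of surjections) yield that $F^pV\rt F^pW$ is a quasi-isomorphism. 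The converse is trivial: if $F^p(f)$ and $F^{p+1}(f)$ are quasi-isos then so is $\Gr^p(f)$ by the five lemma applied to $0\rt F^{p+1}\rt F^p\rt \Gr^p\rt 0$.

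The main obstacle is the limit step in (1): one must be careful that $H^*(\lim_n F^pV/F^{p+n}V)\cong \lim_n H^*(F^pV/F^{p+n}V)$, which requires the $\lim^1$ of the tower of cohomologies to vanish. This holds because the tower $\{F^pV/F^{p+n}V\}_n$ consists of surjections (the maps are the obvious quotients), so the tower of cochain complexes is degreewise a tower of surjections of $k$-vector spaces, which is automatically Mittag--Leffler; hence $\lim^1$ vanishes and the Milnor exact sequence collapses. Alternatively — and this is perhaps cleaner for insertion — one can deduce (1) purely formally: $\Gr$ is exact and preserves products and filtered colimits (Lemma \ref{lem:graded exact}), so it suffices to observe that a complete complex is canonically built from its associated graded by a convergent (complete) filtration, and that a map of complete complexes inducing quasi-isomorphisms on graded pieces induces one on each finite quotient by the five lemma, then on the inverse limit by completeness and Mittag--Leffler. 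I would present the tower/five-lemma/$\lim^1$ argument explicitly since it is short and self-contained, and remark that (2) follows immediately from the short exact sequences $0\rt F^{p+1}\rt F^p\rt \Gr^p\rt 0$.
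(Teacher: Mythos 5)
Your treatment of (2) has a genuine gap in the hard direction ($\Gr(f)$ surjective $\Rightarrow$ $F^p(f)$ surjective). The claim that the splittings of $V$ and $W$ can be chosen "compatibly with $f$" so that $F^p(f)$ becomes the product of the maps $\Gr^q(f)$ is false for a general filtered map: with respect to any splittings $f$ is only upper triangular (it acquires components that strictly increase the weight), and for instance the identity of $k$, viewed as a filtered map from $k$ placed in weight $0$ to $k$ placed in weight $1$, cannot be diagonalized at all. Under the hypothesis that $\Gr(f)$ is surjective, producing splittings that diagonalize $f$ amounts to lifting generators of $\Gr(W)$ through $f$, i.e.\ it is essentially the surjectivity you are trying to prove, so this step is circular. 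The fallback claim that (2) is "immediate by a diagram chase" on $0\rt F^{p+1}V\rt F^pV\rt \Gr^p V\rt 0$ also fails: the chase needs surjectivity of $F^{p+1}(f)$ to deduce that of $F^p(f)$, and this downward induction never terminates. Note that the statement is genuinely false without completeness (the inclusion $\bigoplus_n k\wgt{n}\rt \prod_n k\wgt{n}$ is an isomorphism on $\Gr$ but surjective in no filtration weight), so any correct proof must invoke completeness of $V$ — which your (2) never does. The missing ingredient is either a successive-approximation argument (solve weight by weight, and use completeness of $V$ and Hausdorffness of $W$ to sum the corrections), or, as in the paper, surjectivity of the finite quotients $F^pV/F^{p+q}V\rt F^pW/F^{p+q}W$ together with the snake lemma showing the kernels $K^q$ have surjective transition maps, so that $\lim^1 K^q=0$ and surjectivity survives the limit.

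For (1) your architecture (five-lemma induction over the tower $F^pV/F^{p+n}V$, then pass to the inverse limit, which is $F^pV$ by completeness) is a legitimate alternative to the paper's mapping-cone argument, but the justification of the limit step is wrong. Degreewise surjectivity of the tower of complexes gives you the Milnor sequence, but the $\lim^1$ term there is $\lim^1$ of the tower of \emph{cohomology groups}, whose transition maps need not be surjective; towers of $k$-vector spaces are not automatically Mittag--Leffler, so this $\lim^1$ has no reason to vanish and $H^*(\lim_n)\cong \lim_n H^*$ can fail. Fortunately you do not need it to vanish: since each finite-stage map is a quasi-isomorphism, it induces isomorphisms on $\lim H^*$ and on $\lim^1 H^{*-1}$, and the five lemma applied to the two Milnor sequences gives that $F^p(f)$ is a quasi-isomorphism; equivalently, run the argument on the (degreewise surjective) tower of mapping cones, which are acyclic at every finite stage, so that the relevant $\lim^1$ is that of the zero tower. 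This last variant is essentially the paper's proof, which works with the cone of $f$ throughout and thereby avoids the issue you ran into.
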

\begin{proof}
In both cases, the `if' part is immediate. For (1), let $C$ denote the mapping cone of $f$. Since taking the associated graded commutes with taking mapping cones, the associated graded of $C$ is acyclic. In particular, the sequence $\dots\hookrightarrow F^pC\hookrightarrow F^{p-1}C\hookrightarrow \dots$ consists of acyclic cofibrations, so that each inclusion $F^pC\hookrightarrow C$ into the colimit is an acyclic cofibration. Consequently, each $C\big/F^pC$ is acyclic, so the completion $C=\holim C\big/F^pC$ is acyclic as well. This implies that each $F^pC$ is acyclic, so that $f$ is a quasi-isomorphism in each degree.

For (2), by induction on $q\geq 1$ using the snake lemma, one sees that $f$ fits into short exact sequences $0\to K^q\rt F^pV/F^{p+q}V\rt F^pW/F^{p+q}W\to 0$ such that the natural map $K^{q+1}\rt K^q$ is surjective. Taking the limit $q\to \infty$ and using that $F^pV$ and $F^pW$ are complete, one then obtains an exact sequence $F^pV\rt F^pW\rt \smash{\lim\limits_{\leftarrow q}{}^1} K^q=0$. This implies that $f$ is surjective in each filtration degree.
\end{proof}

\begin{lemma}\label{lem:cofibrations of complete complexes}
A map of complete complexes (over a field $k$) $V\rt W$ is a cofibration if and only if it is an admissible monomorphism (Definition \ref{def:admissible mono}).
\end{lemma}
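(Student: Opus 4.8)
The plan is to prove the two implications separately, after first recording a cleaner description of admissible monomorphisms; essentially all of the content is in the ``if'' direction, which is where the hypothesis that $k$ is a field enters. The observation is that, for complete complexes $V,W$, a map $i\colon V\rt W$ is an admissible monomorphism (Definition \ref{def:admissible mono}) if and only if $\Gr(i)$ is injective in every weight: the second condition in Definition \ref{def:admissible mono} amounts exactly to $F^pV\cap F^{p+1}W=F^{p+1}V$, i.e.\ to injectivity of each $\Gr^p(i)$, and this already forces the first condition, since if $0\neq v\in V$ then, $V$ being complete and hence Hausdorff, $v\in F^qV\setminus F^{q+1}V$ for some $q$, so $0\neq[v]\in\Gr^qV$ and hence $i(v)\notin F^{q+1}W$, in particular $i(v)\neq 0$.

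Given this criterion, the implication ``cofibration $\Rightarrow$ admissible monomorphism'' reduces to checking that $\Gr$ carries cofibrations to weight-wise injective maps. Every cofibration is a retract of a transfinite composition of pushouts of generating cofibrations, and, as in the proof of Theorem \ref{thm:Model str}, the generating cofibrations of $\Mod_k^\mm{cpl}$ may be taken to be those of the filtered model structure of \cite{cirici2019model} (they are finite, hence complete): these are inclusions of a filtered sphere into a filtered disc placed in a single weight, in particular admissible monomorphisms. Since $\Gr$ preserves filtered colimits and pushouts along admissible monomorphisms (Lemma \ref{lem:graded exact}), and trivially retracts, it sends such a cofibration to a retract of a transfinite composition of pushouts of weight-wise injective maps, which is again weight-wise injective.

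For the converse, ``admissible monomorphism $\Rightarrow$ cofibration'', I would solve an arbitrary lifting problem of $i\colon V\rt W$ against an acyclic fibration $p\colon X\rt Y$. By Theorem \ref{thm:Model str} and Lemma \ref{lem:fibrations and w.e. on associated graded}, $p$ is surjective and a quasi-isomorphism in every filtration weight, so $K\coloneqq\ker(p)$ is a complete complex with each $F^pK$ acyclic and hence $\Gr(K)$ acyclic. The field hypothesis now provides two splittings. First, $W\rt W/V$ is a surjection of complete complexes, so by Remark \ref{rem:surjections have sections} it has a filtration-preserving linear section; thus $W\cong V\oplus(W/V)$ as filtered graded vector spaces, and consequently restriction $i^*\colon\Hom(W,K)\rt\Hom(V,K)$ is surjective in every filtration weight. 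Second, $p$ has a filtration-preserving linear section $\sigma$, identifying $X\cong Y\oplus K$ as filtered graded vector spaces, with the differential of $X$ differing from the product differential by a filtration-preserving ``twist'' $\beta\colon Y\rt K$. Writing a prospective lift of a square $(\phi\colon V\rt X,\ \psi\colon W\rt Y)$ as $h=\sigma\psi+\eta$ with $\eta\colon W\rt K$ of degree $0$ and filtration-preserving, one checks that $h$ is a chain map extending $\phi$ precisely when $\partial\eta=-\beta\psi$ in $\Hom(W,K)$ and $i^*\eta=\phi-\sigma\psi i$ (the latter landing in $\Hom(V,K)$ because $p(\phi-\sigma\psi i)=0$). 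Finally, $\Hom(W,K)$ is complete (Proposition \ref{prop:SMC}) with $\Gr\Hom(W,K)=\Hom(\Gr W,\Gr K)$ (Remark \ref{rem:graded is monoidal}), which is acyclic --- indeed contractible, $\Gr K$ being an acyclic complex over a field --- so by Lemma \ref{lem:fibrations and w.e. on associated graded} every $F^r\Hom(W,K)$ is acyclic, and likewise for $\Hom(V,K)$. A standard diagram chase then produces $\eta$: pick a degree-$0$ primitive $\xi\in F^0\Hom(W,K)$ of the cocycle $-\beta\psi$, observe that $\phi-\sigma\psi i-i^*\xi$ is a cocycle in the acyclic $F^0\Hom(V,K)$, hence equal to $\partial\zeta$, lift $\zeta$ through the filtered surjection $i^*$ to $\widetilde\zeta$, and set $\eta=\xi+\partial\widetilde\zeta$.

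The hard part is this last direction, and specifically the reliance on $k$ being a field to split both the admissible monomorphism and the acyclic fibration as filtered graded vector spaces: this is precisely what converts the lifting problem into the vanishing of an obstruction class in the acyclic filtered complex $\Hom(W,K)$. The remaining points --- that $K$ and $\Hom(W,K)$ are complete with acyclic associated graded, and that $i^*$ is surjective in each filtration weight --- are routine given the results already established.
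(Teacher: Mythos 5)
Your proof is correct, but the two directions compare differently with the paper's. For ``cofibration $\Rightarrow$ admissible monomorphism'' the paper does not touch the generating cofibrations at all: it lifts $V\rt W$ against the trivial fibration $V[0,1]\rt 0$ from the mapping cone of $V$, and the $V$-component of the resulting lift is a filtration-preserving linear retraction of $V\rt W$, exhibiting it as a summand inclusion without differential, which by Remark \ref{rem:surjections have sections} is the same thing as an admissible monomorphism. Your route instead first characterizes admissible monomorphisms of complete complexes as the maps that are injective on the associated graded (your Hausdorffness argument for this equivalence is correct), and then pushes the retract-of-cellular-maps description of cofibrations through $\Gr$ using Lemma \ref{lem:graded exact}; this works (completion does not change the associated graded, so the transfinite steps are harmless), but it rests on identifying the generating cofibrations of the transferred model structure with the already-complete generating cofibrations of \cite{cirici2019model}, a point you assert rather than verify, whereas the paper's cone trick is shorter and self-contained. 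For the converse your argument is essentially the paper's, just unpacked: the paper packages your two conditions $\partial\eta=-\beta\psi$ and $i^*\eta=\phi-\sigma\psi i$, obtained from the same two linear splittings provided by the field hypothesis, into the single short exact sequence of filtered mapping complexes
$$
0\rt \Hom(W/V, Z)\rt \Hom(W, Y)\rt \Hom(V, Y)\times_{\Hom(V, X)} \Hom(W, X)\rt 0,
$$
and concludes by noting that $\Hom(W/V,Z)$ is graded-acyclic (Remark \ref{rem:graded is monoidal} plus Lemma \ref{lem:fibrations and w.e. on associated graded}), so that the right-hand map is a trivial fibration in filtration weight $0$ and the lift exists; your explicit obstruction chase with $\xi$, $\zeta$, $\widetilde\zeta$ is a correct hands-on rendering of exactly that acyclicity argument, at the cost of a little more bookkeeping and the gain of making the obstruction class visible.
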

\begin{proof}
Suppose that $V\rt W$ is a cofibration and let $V[0, 1]$ denote the mapping cone of $V$. Using the lifting property against the trivial fibration $V[0,1]\rt 0$, one sees that without differential, $V\rt W$ is given by a summand inclusion $W\cong V\oplus W/V$. Note that by Remark \ref{rem:surjections have sections}, such summand inclusions are exactly the admissible monomorphisms.

Conversely, suppose that $V\rt W$ is an admissible monomorphism, i.e.\ a summand inclusion without the differential. Let $p\colon Y\rt X$ be an acyclic fibration of complete complexes, with fiber $Z$. We then have a short exact sequence of filtered mapping complexes
	$$
	0\rt \Hom(W/V, Z)\rt \Hom(W, Y)\rt \Hom(V, Y)\times_{\Hom(V, X)} \Hom(W, X)\rt 0
	$$
	and we have to check that the right map is a trivial fibration (in filtration degree $0$). Using Lemma \ref{lem:fibrations and w.e. on associated graded}, it suffices to verify that the associated graded of $\Hom(W/V, Z)$ is acyclic. By Remark \ref{rem:graded is monoidal}, it suffices to verify that $\Hom(\Gr(W/V), \Gr(Z))$ is acyclic, which is immediate because $Z$ was graded-acyclic.
\end{proof}
\begin{remark}\label{rem:model for derived complete}
Consider the category $\Fun(\mathbb{Z}, \Mod_k)$ of sequences of cochain complexes $\dots \rt F^1V\rt F^0V\rt \dots$, equipped with the projective model structure. One can verify that an object is cofibrant in this model structure if and only if it is a filtered complex.
The obvious fully faithful inclusion $\Mod_k^\mm{cpl}\hookrightarrow \Fun(\mathbb{Z}, \Mod_k)$ is a right Quillen functor, which furthermore preserves cofibrant objects. In particular, it induces a fully faithful functor of $\infty$-categories. The essential image can be seen to consist of those sequences of complexes such that $\holim_{i\rt \infty} F^iV\simeq 0$. Consequently, $\Mod_k^\mm{cpl}$ is a model for the $\infty$-category of (derived) complete complexes.
\end{remark}
\begin{corollary}\label{cor:monoidal model}
	The model structure on complete cochain complexes is monoidal model with respect to the completed tensor product of Proposition \ref{prop:SMC}. Furthermore, the functor $V\hat\otimes (-)$ preserves graded-quasi isomorphisms for any object $V$.
\end{corollary}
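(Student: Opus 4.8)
The plan is to deduce both assertions from their (essentially formal) counterparts for weight-graded complexes by transporting along the associated graded functor $\Gr\colon \Mod_k^\mm{cpl}\rt\Mod_k^\gr$. The properties of $\Gr$ that I would use are all already available: it is symmetric monoidal (Remark \ref{rem:graded is monoidal}), it is exact and preserves direct sums and pushouts along admissible monomorphisms (Lemma \ref{lem:graded exact}), it detects weak equivalences (by the very definition of the model structure, cf.\ also Lemma \ref{lem:fibrations and w.e. on associated graded}), and --- the one point needing an argument --- a chain map $f$ of complete complexes is a cofibration if and only if $\Gr(f)$ is a monomorphism in each weight. For this last equivalence, ``$\Rightarrow$'' is immediate from Lemma \ref{lem:cofibrations of complete complexes} and its proof, where a cofibration is identified with a filtered summand inclusion forgetting the differential; conversely, if $\Gr(f)\colon\Gr(V)\rt\Gr(W)$ is levelwise injective then, working over a field, one picks a basis of $\Gr(W)$ adapted to the subobject $\Gr(V)$ and lifts it to $W$, exhibiting $f$ as a summand inclusion without the differential, hence as an admissible monomorphism; this is the same ``filtered bases'' manoeuvre already used in Remark \ref{rem:graded is monoidal}.

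With this dictionary in place I would first settle the last sentence of the statement, which also feeds into the pushout--product argument. For a fixed complete complex $V$, monoidality of $\Gr$ gives $\Gr(V\botimes W)\cong\Gr(V)\otimes\Gr(W)$ naturally in $W$, and since $\Gr(V)$ is degreewise a $k$-vector space, the endofunctor $\Gr(V)\otimes(-)$ of $\Mod_k^\gr$ is exact and preserves quasi-isomorphisms and monomorphisms. Hence $V\botimes(-)$ sends graded quasi-isomorphisms to graded quasi-isomorphisms, and admissible monomorphisms to admissible monomorphisms (by the cofibration criterion above); by symmetry the same holds for $(-)\botimes V$.

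Next comes the pushout--product axiom. Given cofibrations $f$ and $g$ of complete complexes, both legs of the pushout span whose colimit is the source of $f\square g$ are obtained from $f$ or $g$ by tensoring with a fixed complete complex, hence are admissible monomorphisms by the previous paragraph; therefore, by Lemma \ref{lem:graded exact} and monoidality of $\Gr$, there is a canonical identification $\Gr(f\square g)\cong\Gr(f)\square\Gr(g)$ in $\Mod_k^\gr$. Now $\Mod_k^\gr$, with the convolution tensor product of Remark \ref{rem:graded is monoidal}, quasi-isomorphisms as weak equivalences and epimorphisms as fibrations, is a monoidal model category whose cofibrations are exactly the levelwise monomorphisms --- this is the standard model structure on chain complexes over the semisimple abelian $\otimes$-category of $\mathbb{Z}$-graded $k$-vector spaces, and its pushout--product axiom is inherited weight-by-weight from that of $\Mod_k$ (tensoring with an object supported in a single weight merely shifts weights). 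Since $\Gr(f)$ and $\Gr(g)$ are cofibrations there, $\Gr(f)\square\Gr(g)$ is a cofibration, i.e.\ a levelwise monomorphism, so $f\square g$ is an admissible monomorphism; and if $f$ is moreover a weak equivalence, then $\Gr(f)$ is a trivial cofibration, so $\Gr(f)\square\Gr(g)$ is one as well, in particular a quasi-isomorphism, whence $f\square g$ is a weak equivalence. Finally, the tensor unit $k$ (with trivial filtration) is cofibrant, since $0\rt k$ is visibly an admissible monomorphism, so the unit axiom holds without any cofibrant replacement, and $\Mod_k^\mm{cpl}$ is a monoidal model category.

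The main obstacle is precisely the criterion ``$f$ is a cofibration $\iff$ $\Gr(f)$ is a levelwise monomorphism'' --- equivalently, that a map of complete complexes injective on the associated graded is automatically an admissible monomorphism --- since everything else is formal bookkeeping with the exact symmetric monoidal functor $\Gr$ and the (routine) monoidal model structure on graded complexes. But this criterion is established by the elementary filtered-basis argument over a field that recurs throughout this section, so I do not expect it to be a genuine difficulty.
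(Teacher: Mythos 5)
Your argument is correct, but it takes a different route from the paper's. The paper's proof is a one-step direct computation: by Lemma \ref{lem:cofibrations of complete complexes} the two cofibrations are, after forgetting differentials, summand inclusions, so their pushout-product is again a summand inclusion (hence a cofibration) whose cokernel is visibly $W_1/V_1\botimes W_2/V_2$; since $\Gr$ is monoidal and $k$ is a field, this cokernel is graded acyclic as soon as one of the maps is a graded quasi-isomorphism, which settles both the acyclicity clause and (implicitly) the ``furthermore'' statement. You instead set up a transfer principle along $\Gr$: you prove the detection criterion that a map of complete complexes is a cofibration if and only if it is injective on the associated graded (the ``if'' direction, via lifting an adapted basis of $\Gr(W)$ and using completeness, is the only point not already in the paper, and your sketch of it is sound), note that $\Gr$ is exact, symmetric monoidal and preserves the relevant pushouts (Lemma \ref{lem:graded exact}, Remark \ref{rem:graded is monoidal}), and then import the pushout-product axiom from the standard monoidal model structure on weight-graded complexes. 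This is somewhat longer than the paper's argument, but it buys you a reusable statement ($\Gr$ creates cofibrations as well as weak equivalences and fibrations), and it makes the proof of the ``furthermore'' clause---that $V\botimes(-)$ preserves graded quasi-isomorphisms for arbitrary $V$---completely explicit, whereas the paper leaves that step to the reader. Both arguments ultimately rest on the same two inputs: the identification of cofibrations with admissible monomorphisms and the exact monoidality of $\Gr$ over a field, so the difference is one of packaging rather than substance.
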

\begin{proof}
	It suffices to verify the pushout-product axiom. Let $V_1\rt W_1$ and $V_2\rt W_2$ be two cofibrations. Using Lemma \ref{lem:cofibrations of complete complexes}, their pushout-product is a summand inclusion without differential, hence a cofibration, whose cokernel is just $W_1/V_1\otimes W_2/V_2$. In particular, if one of the two maps is furthermore a graded quasi-isomorphism, then this cokernel is graded acyclic.
\end{proof}
\begin{remark}
As a consequence of Corollary \ref{cor:monoidal model}, one finds that for any complete dg-algebra $B$, the model category of complete $B$-modules is tensored over complete complexes (via the tensor product $\botimes_k$). In particular, for any two complete $B$-modules $M, N$, there is complete mapping complex $\Hom_B(M, N)$, given in filtration degree $p$ by the maps of $B$-modules increasing filtration weight by (at most) $p$.
\end{remark}
In fact, Lemma \ref{lem:cofibrations of complete complexes} has an analogue for complete modules over a complete dg-algebra $B$. To this end, let us recall the following terminology:
\begin{definition}
Let $B$ be a complete dg-algebra over $k$. A complete $B$-module $M$ is called \emph{quasiprojective} if without differential, it is the retract of a free complete $B$-module $B\botimes_k V$.
\end{definition}
Since taking the associated graded is symmetric monoidal (Remark \ref{rem:graded is monoidal}), every complete $B$-module has an underlying weight-graded module over the weight-graded dg-algebra $\Gr(B)$. The category of such weight-graded modules admits a model structure, whose weak equivalences (fibrations) are quasi-isomorphisms (surjections) of complexes in each weight. Using this we have the following characterization of cofibrations of complete $B$-modules in terms of their associated graded:
\begin{proposition}\label{prop:cofibrations of complete modules}
Let $B$ be a complete dg-algebra and $f\colon M\rt N$ a map of complete $B$-modules. Then $f$ is a cofibration if and only if it is an admissible monomorphism with quasiprojective cokernel $N\big/M$, such that $\Gr(N\big/M)$ is a cofibrant module over $\Gr(B)$.
\end{proposition}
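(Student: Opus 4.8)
The plan is to follow the proof of Lemma~\ref{lem:cofibrations of complete complexes}, carrying the $B$-module structure along. Recall that the model structure on complete $B$-modules is obtained by transfer along the free--forgetful adjunction $\widehat{\Free}_B = B\botimes_k(-)\colon \Mod_k^{\mm{cpl}}\rightleftarrows \Mod_B^{\mm{cpl}}$ (the functorial path object $M\botimes_k\Omega[\Delta^1]$ of the proof of Theorem~\ref{thm:Model str} works verbatim); since the forgetful functor creates (trivial) fibrations, $\widehat{\Free}_B$ is left Quillen and the generating (trivial) cofibrations of $\Mod_B^{\mm{cpl}}$ may be taken to be $\widehat{\Free}_B$ applied to those of $\Mod_k^{\mm{cpl}}$. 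Hence every cofibration of complete $B$-modules is a retract of a transfinite composition of pushouts of maps of the form $B\botimes_k(V\hookrightarrow W)$, with $V\hookrightarrow W$ an admissible monomorphism of complete complexes (Lemma~\ref{lem:cofibrations of complete complexes}).

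For the ``only if'' direction I would let $\mathcal S$ denote the class of maps of complete $B$-modules that are admissible monomorphisms whose cokernel $Q$ is quasiprojective with $\Gr(Q)$ a cofibrant $\Gr(B)$-module, and show $\mathcal S$ contains every cofibration. The generating cofibrations lie in $\mathcal S$: since $k$ is a field, $B\botimes_k(-)$ preserves short exact sequences of complete complexes, so $B\botimes_k(V\hookrightarrow W)$ is an admissible monomorphism with cokernel the free module $B\botimes_k(W/V)$, whose associated graded $\Gr(B)\otimes_k\Gr(W/V)$ is a free $\Gr(B)$-module on a graded complex, hence cofibrant. Then $\mathcal S$ is closed under cobase change (admissible monomorphisms are stable under pushout and the cokernel is unchanged), under transfinite composition (for a tower $M_0\hookrightarrow M_1\hookrightarrow\cdots$ in $\mathcal S$ with successive cokernels $Q_i$ one has $M_\infty^{\#}\cong M_0^{\#}\oplus\widehat{\bigoplus}_i Q_i^{\#}$ without differentials, a completed direct sum of quasiprojectives is quasiprojective, and $\Gr$ commutes with direct sums by Lemma~\ref{lem:graded exact}), and under retracts. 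Therefore every cofibration lies in $\mathcal S$.

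For the ``if'' direction, let $f\colon M\to N$ be an admissible monomorphism with cokernel $P$ as in the statement, and let $p\colon Y\to X$ be a trivial fibration of complete $B$-modules with (graded-acyclic) fibre $Z$. As in the proof of Lemma~\ref{lem:cofibrations of complete complexes} there is a short exact sequence of complete filtered mapping $B$-complexes
$$0\to \Hom_B(P,Z)\to \Hom_B(N,Y)\to \Hom_B(M,Y)\times_{\Hom_B(M,X)}\Hom_B(N,X)\to 0,$$
the surjectivity in each filtration weight of the last map following from the quasiprojectivity of $P$ (so that $\Hom_B(P,-)$ sends the weightwise surjection $p$ to a weightwise surjection, choosing preimages of the same filtration weight as in Remark~\ref{rem:surjections have sections}) together with the splitting $N^{\#}\cong M^{\#}\oplus P^{\#}$ coming from $f$. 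It then suffices, exactly as in Lemma~\ref{lem:cofibrations of complete complexes}, to show that $\Hom_B(P,Z)$ is graded-acyclic. Since $P$ is quasiprojective one has $\Gr\,\Hom_B(P,Z)\cong\Hom_{\Gr(B)}(\Gr(P),\Gr(Z))$: reducing along a retraction onto a free module, this comes from $\Hom_B(B\botimes_k V,Z)\cong\Hom_k(V,Z)$ and the compatibility of $\Gr$ with internal homs in Remark~\ref{rem:graded is monoidal}. Finally $\Gr(Z)$ is acyclic in each weight, so $\Gr(Z)\to 0$ is a trivial fibration of weight-graded $\Gr(B)$-modules; as $\Gr(P)$ is cofibrant, the pushout--product axiom in that model category gives that $\Hom_{\Gr(B)}(\Gr(P),\Gr(Z))$ is acyclic. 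Thus $f$ lifts against every trivial fibration and is a cofibration.

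I expect the main obstacle to be the ``if'' direction, and specifically the passage between the completed mapping $B$-complex $\Hom_B(P,Z)$ and its associated graded: the identification $\Gr\,\Hom_B(P,Z)\cong\Hom_{\Gr(B)}(\Gr(P),\Gr(Z))$ is false for general $P$, and it is precisely to make this (and the subsequent acyclicity argument) go through that the cokernel is required to be quasiprojective with cofibrant associated graded. A minor additional point is the bookkeeping in the ``only if'' direction, namely that completed direct sums of quasiprojective complete $B$-modules remain quasiprojective, which is needed for closure of $\mathcal S$ under transfinite composition.
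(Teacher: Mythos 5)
Your proof is correct and follows essentially the same route as the paper: the forward direction by analysing (retracts of transfinite compositions of pushouts of) the generating cofibrations $B\botimes_k V\rt B\botimes_k W$ and applying $\Gr$, and the converse via the same short exact sequence of $B$-linear filtered mapping complexes, reducing to graded acyclicity of $\Hom_B(N/M,Z)$ through the isomorphism $\Gr\,\Hom_B(N/M,Z)\cong\Hom_{\Gr(B)}(\Gr(N/M),\Gr(Z))$ afforded by quasiprojectivity. Your explicit closure-class bookkeeping in the ``only if'' direction (completed sums of quasiprojectives, stability under pushout and retract) merely spells out what the paper leaves implicit.
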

\begin{proof}
Let us first prove that all cofibrations indeed have the listed properties. By the small object argument, $f\colon M\rt N$ is a retract of a transfinite composition of pushouts of maps of the form $B\botimes_k V\rt B\botimes_k W$, where $V\rt W$ is an admissible monomorphism of filtered complexes. Such maps are themselves admissible monomorphisms, with quasiprojective cokernel. Furthermore, Lemma \ref{lem:graded exact} and Remark \ref{rem:graded is monoidal} imply that taking the associated graded sends this to the retract of a transfinite composition of pushouts of maps $\Gr(B)\otimes \Gr(V)\rt \Gr(B)\otimes \Gr(W)$; such a map is a cofibration of weight-graded modules over $\Gr(B)$. It follows that the cokernel $\Gr(N\big/M)\cong \Gr(N)\big/\Gr(M)$ is a cofibrant module over $\Gr(B)$.

Conversely, suppose that $f$ has the listed properties and let $p\colon Y\rt X$ be an acyclic fibration of complete $B$-modules, with kernel $B$. Since the cokernel of $f$ is quasiprojective, there exists a splitting $N\cong M\oplus N\big/M$ without differentials. This implies that there is a short exact sequence of $B$-linear filtered mapping complexes
	$$
	0\rt \Hom_B(N/M, Z)\rt \Hom_B(N, Y)\rt \Hom_B(M, Y)\times_{\Hom_B(M, X)} \Hom_B(N, X)\rt 0.
	$$
	We have to check that the right map is a trivial fibration in filtration degree $0$. By Lemma \ref{lem:fibrations and w.e. on associated graded}, it suffices to verify that the associated graded of $\Hom_B(N/M, Z)$ is acyclic. An argument very similar to Remark \ref{rem:graded is monoidal}, using that $N/M$ is quasiprojective to write it as a retract of a completed sum of shifted copies of $B$, shows that there is an isomorphism
	$$
	\Gr\Big(\Hom_B\big(N/M, Z\big)\Big)\cong \Hom_{\Gr(B)}\big(\Gr(N/M), \Gr(Z)\big).
	$$
	Since $\Gr(Z)$ is acyclic and $\Gr(N/M)$ is a cofibrant module over $\Gr(B)$ by assumption, we conclude that $\Hom_B(N/M, Z)$ is indeed graded acyclic.
\end{proof}

\subsubsection{Complete filtered complexes versus graded mixed complexes}\label{sec:graded mixed}
Recall that by definition, the associated graded functor $\Gr\colon \Mod^{\mm{cpl}}_k\rt \Mod_k^{\gr}$ preserves and detects weak equivalences. Since it preserves exact sequences and all direct sums and direct products, the induced functor between (stable) $\infty$-categories preserves limits and colimits and detects equivalences. It follows formally from this that one can identify the $\infty$-category of complete complexes with algebras in weight-graded complexes over a certain monad. In fact, there is a well-known way to describe complete filtered complexes concretely in terms of weight-graded complexes with additional algebraic structure:
\begin{definition}\label{def:graded mixed complex}
A \emph{graded mixed complex} is a weight graded complex $V$, equipped with operations $\delta_k\colon V\rt V$ for $k\geq 1$ of weight $k$ and degree $1$, such that
$$
d\circ \delta_k + \delta_k\circ d + \sum_{i+j=k} \delta_i\delta_j=0.
$$
We will write $\Mod_k^{\mm{gr-mix}}$ for the category of graded mixed complexes and maps between them that preserve the weights and 
strictly commute with the operations $\delta_k$.
\end{definition}
\begin{remark}\label{remark:strict graded mixed complexes}
The above definition of a graded mixed complex is also known as a (shifted) multicomplex, and differs from the graded mixed complexes appearing in e.g.\ \cite{CPTVV2017}, where instead the \emph{strict} notion of graded mixed complex is used, corresponding to the situation where $\delta_k=0$ for $k\geq 2$. 
In fact, the inclusion of the category of strict graded mixed complexes into the one of graded mixed complexes becomes an equivalence of $\infty$-categories 
after localizing at the weak equivalences (i.e.\ weightwise quasi-isomorphisms). Indeed, strict 
graded mixed complexes are weight graded modules over $k\langle \delta_1\rangle/\delta_1^2$, where $\delta_1$ has both degree $1$ and weight $1$, while 
graded complexes are weight graded modules over its quasi-free resolution $k\langle \delta_i|i\leq 1\rangle$, where $\delta_i$ has degree $1$ and weight $i$, and 
$$
d(\delta_k)=-\sum_{i+j=k} \delta_i\delta_j.
$$ 
\end{remark}
\begin{definition}\label{def:infty-morphisms of graded mixed complex}
An $\infty$-morphism between two graded mixed complexes, denoted by a wiggly arrow $V\rightsquigarrow W$, is a collection of maps $\varphi_k\colon V\rt W$ for $k\geq 0$ 
of weight $k$ and degree $0$, such that
$$
\sum_{i+j=k}(\delta_i\varphi_j+\varphi_i\delta_j)=0,
$$
where we use the convention that $\delta_0=d$. We will write $\Mod_k^{\mm{gr-mix},\infty}$ for the category of graded mixed complexes and $\infty$-morphisms between them.
\end{definition}
\begin{remark}\label{remark:almost model}
The category $\Mod_k^{\mm{gr-mix},\infty}$ is almost a model category (see e.g.\ \cite[\S4.1]{vallette2014homotopy}), in the sense that all axioms 
but the bicompleteness one are satistfied, though finite products and pullbacks of fibrations exist: an $\infty$-morphism $\varphi=(\varphi_k)_{k\geq0}$ is 
a (co)fibration (resp.~a weak equivalence) if $\varphi_0$ is a (co)fibration (resp.~a weak equivalence). One easily sees that every object is then both fibrant and cofibrant. 
Moreover, one can actually prove that the faithful (but not fully faithful) functor $\Mod_k^{\mm{gr-mix}}\to\Mod_k^{\mm{gr-mix},\infty}$ given by the identity on 
objects induces an equivalence of $\infty$-categories after localizing at weak equivalences. 
\end{remark}
Led by the above Remark \ref{remark:almost model}, we define the $\infty$-category $\scat{Mod}_k^{\mm{gr-mix}}$ as the simplicial category 
whose objects are graded mixed complexes, and with $n$-simplices in the space of morphisms from $V$ to $W$ being $\infty$-morphisms 
$V\rightsquigarrow W\otimes\Omega[\Delta^n]$. It then follows that the 
$\infty$-functors $\Mod_k^{\mm{gr-mix}}[\text{w.e.}^{-1}]\to \Mod_k^{\mm{gr-mix},\infty}[\text{w.e.}^{-1}]\to \scat{Mod}_k^{\mm{gr-mix}}$ are equivalences. 

\medskip

If $V$ is a graded mixed complex, then the total complex $\Tot(V)$ comes equipped with a differential $d_{\tot}=d+\sum_{k\geq 1} \delta_k$. 
Moreover, every $n$-simplex of $\infty$-morphisms $\varphi=(\varphi_k)_{k\geq0}\colon V\rightsquigarrow W\otimes\Omega[\Delta^n]$ leads to an $n$-simplex of 
filtered morphisms $\varphi_{\tot}=\sum_{k\geq0}\varphi_k\colon \Tot(V)\rightsquigarrow \Tot(W)\otimes\Omega[\Delta^n]$. Hence we have a functor 
$$\begin{tikzcd}
\Tot\colon \scat{Mod}_k^{\mm{gr-mix}}\arrow[r] & \scat{Mod}_k^\mm{cpl}.
\end{tikzcd}$$
between simplicial categories.
\begin{proposition}\label{prop:graded mixed complexes}
The functor $\Tot\colon  \scat{Mod}_k^{\mm{gr-mix}}\rt \scat{Mod}_k^\mm{cpl}$ is an equivalence of $\infty$-categories.
\end{proposition}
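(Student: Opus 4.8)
The strategy is to verify that the simplicial functor $\Tot$ constructed just above the statement is both essentially surjective and fully faithful. It is convenient to record first that every object on both sides is bifibrant: complete complexes are fibrant by Theorem \ref{thm:Model str} and cofibrant because $0\rt W$ is trivially an admissible monomorphism (Lemma \ref{lem:cofibrations of complete complexes}), while graded mixed complexes with $\infty$-morphisms are bifibrant by Remark \ref{remark:almost model}. Hence on each side the simplicial hom-objects defining $\scat{Mod}_k^{\mm{gr-mix}}$ and $\scat{Mod}_k^\mm{cpl}$ already model the derived mapping spaces, so it suffices to produce for $\Tot$ a bijection on these hom-objects together with essential surjectivity.

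\emph{Essential surjectivity.} I would run the dévissage of the associated graded already used in Remark \ref{rem:graded is monoidal}. Given a complete complex $W$, choosing a homogeneous basis of $\Gr(W)$ and lifting it into the $F^pW$ yields an isomorphism of filtered complexes, \emph{ignoring differentials}, $\Tot(\Gr W)\cong W$. Transporting $d_W$ across this isomorphism produces a filtration-preserving, degree-$1$, square-zero operator on $\Tot(\Gr W)$; by completeness it decomposes uniquely as a convergent sum $\sum_{k\geq 0}\delta_k$ with $\delta_k$ of weight $k$, where $\delta_0$ is the differential induced on $\Gr(W)$. Expanding $d_W^2=0$ weight by weight is exactly the family of relations in Definition \ref{def:graded mixed complex}, so $V:=(\Gr W;\delta_0,\delta_1,\dots)$ is a graded mixed complex with $\Tot(V)\cong W$. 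Thus $\Tot$ is surjective on objects up to isomorphism.

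\emph{Full faithfulness.} Since $\Tot$ is symmetric monoidal and $\Omega[\Delta^n]$ sits in weight $0$, we have $\Tot(W\otimes\Omega[\Delta^n])\cong\Tot(W)\botimes\Omega[\Delta^n]$ naturally in $n$, and on $n$-simplices $\Tot$ carries an $\infty$-morphism $\varphi=(\varphi_k)_{k\geq 0}\colon V\rightsquigarrow W\otimes\Omega[\Delta^n]$ to $\varphi_\tot=\sum_k\varphi_k\colon \Tot(V)\rt\Tot(W)\botimes\Omega[\Delta^n]$. I claim this assignment is a bijection for every $V,W,n$. It is injective, since $\varphi_k$ is recovered as the weight-$k$ component of $\varphi_\tot$. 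For surjectivity, any morphism of complete complexes $f\colon\Tot(V)\rt\Tot(W)\botimes\Omega[\Delta^n]$ sends the weight-$p$ summand of the source into the part of weight $\geq p$ of the target, so it has the form $f=\sum_{k\geq 0}f_k$ with $f_k$ of weight $k$; unwinding the condition that $f$ intertwine the two total differentials (and the de Rham differential on $\Omega[\Delta^n]$) weight by weight is precisely the system $\sum_{i+j=k}(\delta_i\varphi_j+\varphi_i\delta_j)=0$ of Definition \ref{def:infty-morphisms of graded mixed complex} for $(f_k)_{k\geq 0}$, so $f=\varphi_\tot$ with $\varphi=(f_k)$. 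These bijections are compatible with faces and degeneracies, hence assemble into an \emph{isomorphism} of simplicial hom-objects, in particular a weak equivalence. Together with essential surjectivity this shows $\Tot$ is an equivalence of $\infty$-categories.

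The argument is short, and the only delicate points are bookkeeping ones; the closest thing to a genuine obstacle is the dévissage step. One must check that a choice of lift of a homogeneous basis of $\Gr(W)$ really defines a \emph{filtration-preserving} isomorphism $\Tot(\Gr W)\cong W$ of complexes-without-differential, and that completeness lets one split an arbitrary filtration-preserving operator or morphism into weight-homogeneous pieces with convergent sum. The remaining issue is purely matching Koszul signs between ``commutes with $d_\tot$'' and the sign convention of Definition \ref{def:infty-morphisms of graded mixed complex}; once the conventions of Section \ref{sec:complete operads} are fixed this is mechanical.
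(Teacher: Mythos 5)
Your argument is correct and follows the same route as the paper: essential surjectivity via splitting the filtration of $W$ without differential, transporting $d_W$ and reading off the weight components $\delta_k$, exactly as in the paper's proof. The full-faithfulness step you spell out (the weight-by-weight bijection between $\infty$-morphisms $V\rightsquigarrow W\otimes\Omega[\Delta^n]$ and filtered chain maps of totalizations) is what the paper dismisses as ``fully faithful by definition,'' so your write-up is just a more explicit version of the same proof.
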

This result, well-known to experts, provides the blueprint for our discussion in Section \ref{sec:curved algebras}, where we show how curved (filtered) 
$L_\infty$-algebras are equivalent to a kind of `graded mixed-curved $L_\infty$-algebra'. We will also recover it in Example \ref{ex:graded mixed} from 
an operadic perspective.
\begin{proof}
The $\infty$-functor is fully faithful by definition, hence we just have to prove that it is essentially surjective. For every complete filtered complex $W$, 
we can choose a splitting of the filtration on $W$ (without differential) and obtain an isomorphism of complete filtered graded vector spaces $W\cong \Tot(\Gr(W))$. 
Decomposing the differential on $W$ into its homogeneous components of weight $k$ as $d_W=d+\sum_{k\geq 1} \delta_k$, this determines a graded mixed 
structure on $\Gr(W)$, which is such that $W\cong\Tot\big(\Gr(W),\delta_1,\dots)$ as complete filtered complexes. 
\end{proof}


\subsection{Bar, cobar, and twisting morphisms}\label{sec:bar-cobar}

In this section we will see that the classical bar and cobar constructions between operads and conilpotent cooperads, as well as the twisting morphism yoga generalize in a fairly straightforward way to the complete setting. A closely related discussion appears in \cite[Chapter 2]{DotsenkoShadrinVallette2018}. Recall from \ref{conv:augmentation} that cooperads are not assumed to be counital.

%
%

\begin{propdef}
	Let $\PP$ be a complete augmented operad and $\CC$ be a complete cooperad.
	
	\begin{enumerate}
		\item The \emph{convolution Lie algebra} of $\CC$ and $\PP$ is the complete Lie algebra 
		$$\mathfrak g = \prod_{n\geq 0} \Hom(\CC(n), \overline{\PP}(n))$$
		with filtration induced by the internal $\Hom$. The bracket is induced from the pre-Lie product $f\star g = \gamma_{(1)}^\PP\circ (f\hat \otimes g)\circ \Delta_{(1)}^\CC$, where the indices $_{(1)}$ indicate infinitesimal (co)composition.
		
		\item A \emph{twisting morphism} $\phi \colon \CC \to \PP$ is a Maurer--Cartan element of the Lie algebra $\mathfrak g$ (in particular, it takes values in $\ol{\PP}$).
		 The set of twisting morphisms is denoted $\Tw(\CC,\PP) = \{f\in F^0\mathfrak g_1 \ | \ \partial f + \frac{1}{2}[f,f]=0\}$. 
	\end{enumerate}
	
\end{propdef}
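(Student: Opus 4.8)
The statement is largely a definition; the substance to verify is that $\mathfrak g=\prod_{n\geq 0}\Hom(\CC(n),\overline{\PP}(n))$, equipped with the stated product $f\star g$, is a \emph{complete} dg pre-Lie algebra, from which the complete dg-Lie algebra structure, and the fact that the Maurer--Cartan equation makes sense, follow formally. The plan is to replay the classical construction of the convolution pre-Lie algebra from \cite{LodayVallette2012} inside the category of complete filtered complexes, checking filtration-compatibility at each step; nothing essentially new is needed beyond the monoidal formalism set up earlier in Section \ref{sec:complete operads}.

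First I would record that $\mathfrak g$ is a complete complex: by Proposition \ref{prop:SMC} each $\Hom(\CC(n),\overline{\PP}(n))$ is complete since its target is, and a product of complete complexes is again complete, as products are limits computed weightwise in filtered complexes and these preserve the Hausdorff-completeness condition $V\xrightarrow{\sim}\lim_n V/F^nV$. Its differential is the usual internal-$\Hom$ differential $\partial f=d_{\overline{\PP}}\circ f-(-1)^{|f|}f\circ d_\CC$, and the filtration is the internal $\Hom$ filtration, $F^r\mathfrak g=\prod_n F^r\Hom(\CC(n),\overline\PP(n))$.

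Next I would check that $f\star g=\gamma_{(1)}^{\PP}\circ(f\botimes g)\circ\Delta_{(1)}^{\CC}$ is a well-defined degree-$0$, filtration-weight-$0$ operation, i.e.\ assembles into a map of complete complexes $\mathfrak g\botimes\mathfrak g\to\mathfrak g$. The key point — and the only place where something could conceivably go wrong in the complete setting — is convergence. But infinitesimal decomposition $\Delta_{(1)}^{\CC}$ splits off only a single vertex, so in each arity $n$ it is a \emph{finite} sum of components landing in summands $\CC(m)\otimes_{\Sigma}\CC(k)$ with $m+k-1=n$; hence no completed sums, and therefore no convergence question, arise in the formula for $\star$. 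Composing with $f\botimes g$ and then with $\gamma_{(1)}^{\PP}$, which is filtration-preserving because $\PP$ is a \emph{complete} operad, and invoking the closed monoidal structure of Proposition \ref{prop:SMC} (so that $F^r\Hom\botimes F^s\Hom\to F^{r+s}\Hom$), one concludes $\star\colon F^r\mathfrak g\botimes F^s\mathfrak g\to F^{r+s}\mathfrak g$. The Leibniz rule $\partial(f\star g)=\partial f\star g+(-1)^{|f|}f\star\partial g$ is then the usual consequence of $d_{\overline\PP}$ being a derivation for $\gamma_{(1)}^{\PP}$ and $d_\CC$ a coderivation for $\Delta_{(1)}^{\CC}$.

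Finally, the right pre-Lie identity — that $(f\star g)\star h-f\star(g\star h)$ is graded-symmetric in $g$ and $h$ — is a purely formal consequence of coassociativity of $\Delta_{(1)}^{\CC}$ and associativity of $\gamma_{(1)}^{\PP}$, verbatim as in the uncurved unfiltered case; the filtrations enter only through ensuring every composite is defined, which was just checked. Antisymmetrizing, $[f,g]=f\star g-(-1)^{|f||g|}g\star f$ is a dg-Lie bracket of filtration weight $0$ on the complete complex $\mathfrak g$, which is exactly what ``complete dg-Lie algebra'' means. For $f\in F^0\mathfrak g_1$ the element $[f,f]\in F^0\mathfrak g_2$ is a single well-defined term, so $\partial f+\tfrac12[f,f]=0$ is meaningful without any completeness hypothesis, and $\Tw(\CC,\PP)$ is well-defined; the parenthetical remark that a twisting morphism takes values in $\overline\PP$ is immediate, since every element of $\mathfrak g$ is so by construction. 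The one genuine task is the bookkeeping in the previous paragraph, and — as emphasized there — it reduces entirely to the monoidality and functoriality recorded in Proposition \ref{prop:SMC} together with the finiteness of infinitesimal (co)composition in each arity, so in the end nothing actually needs to converge.
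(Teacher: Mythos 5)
Your proposal is correct and follows the same route as the paper: the paper's proof simply notes that $\mathfrak g$ is complete because the direct product preserves completeness, that the bracket preserves filtrations, and defers the algebraic identities to the unfiltered treatment in \cite[Section 6.4]{LodayVallette2012}. Your write-up is just a more detailed version of this, with the useful extra observation that the infinitesimal (co)composition is arity-wise finite, so no convergence issues arise.
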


\begin{proof}
	It is easy to see that the filtrations are preserved by the Lie bracket and
	$\mathfrak g$ is complete since the direct product preserves completeness.
	See \cite[Section 6.4]{LodayVallette2012} for a treatment of twisting morphisms in the unfiltered case.
\end{proof}

Let $E$ be a complete symmetric sequence. The free operad generated by $E$, denoted $\mathcal T(E)$ is the completion of the vector space spanned by trees labeled by elements of $E$, where the filtration level of an $E$-labeled tree the sum of the filtration levels of each vertex. Composition is given by grafting trees.

Similarly, the cofree complete conilpotent cooperad on $E$ is denoted by $\mathcal T^c(E)$;  it differs only from $\mathcal T(E)$ by the unit in arity $1$, and has cocomposition given by ungrafting trees.


\begin{propdef}\label{propdef:bar operad}
	The \emph{bar construction} of a complete operad $\PP$ is the complete (non-unital) conilpotent cooperad $$\Bar{\PP}=(\mathcal T^c(\overline{\PP}[1]),  d_{\PP}+d_{\gamma^{(1)}_\PP})$$ with the filtration induced by the cofree conilpotent cooperad functor and differential arising from the differential on $\PP$ and the bar differential, contracting edges of trees. 
	
	Similarly, the \emph{cobar construction} of a complete cooperad $\CC$ is the complete (unital augmented) operad
	$$\Omega{\CC}=(\mathcal T(\CC[-1]), d_{\CC}+d_{\Delta^{(1)}_\CC}).$$
	These functors form an adjoint pair $\Omega\colon \cat{Coop}^\mm{conil}\leftrightarrows \cat{Op}^{\mm{aug}} \colon \Bar$ between complete augmented operads and \emph{complete conilpotent} cooperads. Furthermore the counit of the adjunction $\Omega \Bar \stackrel{\sim}\Rightarrow \mathrm{id}_{\cat{Op}^{\mm{aug}}}$ is a weak equivalence. 
\end{propdef}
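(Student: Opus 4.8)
The plan is to transport the classical bar--cobar formalism of \cite[Ch.~6]{LodayVallette2012} to the complete filtered setting, the only genuinely new inputs being the bookkeeping of filtrations and a reduction of the weak-equivalence statement to its classical counterpart via the associated graded. First I would check that $\Bar\PP$ and $\Omega\CC$ are well defined as, respectively, a complete conilpotent cooperad and a complete augmented operad. The completed free operad $\mathcal{T}(E)$ and completed cofree conilpotent cooperad $\mathcal{T}^c(E)$ satisfy the expected universal properties among complete operads, resp.\ complete conilpotent cooperads: these follow from the classical universal properties by composing with the completion functor $\widehat{(-)}$, which is symmetric monoidal by Proposition~\ref{prop:SMC}, and the filtration by total vertex-weight is complete by construction. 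Both summands $d_\PP$ and $d_{\gamma^{(1)}_\PP}$ of the bar differential preserve this filtration because the composition map $\gamma$ of a complete operad is filtration-preserving; that their sum is a square-zero coderivation is the same formal computation as in the uncompleted case, and dually for $d_\CC+d_{\Delta^{(1)}_\CC}$ on $\Omega\CC$. Complete conilpotency of $\Bar\PP$ and the augmentation on $\Omega\CC$ are built into the (co)free constructions.

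Next I would establish the adjunction by the classical twisting-morphism yoga: for a complete augmented operad $\PP$ and a complete cooperad $\CC$ I would produce natural bijections
\[
\Hom_{\cat{Op}^{\mm{aug}}}\big(\Omega\CC,\PP\big)\;\cong\;\Tw(\CC,\PP)\;\cong\;\Hom_{\cat{Coop}^{\mm{conil}}}\big(\CC,\Bar\PP\big).
\]
For the left bijection, by the universal property of $\mathcal{T}$ a morphism of complete operads out of $\Omega\CC=\mathcal{T}(\CC[-1])$ amounts to a morphism of complete symmetric sequences $\CC[-1]\rt\PP$, i.e.\ an element $f\in F^0\mathfrak{g}_1$ of the convolution Lie algebra; unwinding compatibility with the differentials yields exactly the Maurer--Cartan equation $\partial f+\tfrac12[f,f]=0$, which makes sense because $\mathfrak{g}$ is complete. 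The right bijection is dual, using the universal property of $\mathcal{T}^c$ among complete conilpotent cooperads: corestriction to cogenerators turns a cooperad map $\CC\rt\Bar\PP$ into an element $\CC\rt\ol{\PP}[1]$, compatibility with the bar differential is again Maurer--Cartan, and the full map is recovered from its corestriction, the relevant infinite sum of tree components converging by complete conilpotency. Naturality is clear, giving $\Omega\dashv\Bar$.

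Finally, to see that the counit $\Omega\Bar\PP\rt\PP$ is a weak equivalence, recall that this means $\Gr$ of it is a quasi-isomorphism. The functor $\Gr$ is symmetric monoidal (Remark~\ref{rem:graded is monoidal}), commutes with completion, and preserves arbitrary sums, products and exact sequences (Lemma~\ref{lem:graded exact}), and it carries the structure maps $\gamma,\Delta$ to the corresponding structure maps on the associated gradeds. Hence $\Gr$ commutes with the completed (co)free (co)operad functors and with the bar and cobar differentials, giving isomorphisms $\Gr(\Bar\PP)\cong\Bar(\Gr\PP)$ and $\Gr(\Omega\DD)\cong\Omega(\Gr\DD)$ of (co)operads in the symmetric monoidal abelian category $\Mod_k^{\gr}$, and identifying $\Gr$ of the counit with the classical bar--cobar counit $\Omega\Bar(\Gr\PP)\rt\Gr\PP$ for the augmented operad $\Gr\PP$ (note that $\Gr\Bar\PP=\Bar(\Gr\PP)$ is conilpotent in the strict sense, cf.\ Warning~\ref{war:complete conilpotence}). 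The classical bar--cobar resolution theorem \cite[Ch.~6]{LodayVallette2012}, valid for any augmented operad in $\Mod_k^{\gr}$ over a field of characteristic zero, then shows this map is a weightwise quasi-isomorphism, so the original counit is a weak equivalence.

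The step I expect to be the main obstacle is not a single hard calculation but the careful handling of completeness: verifying that the completed (co)free constructions retain exactly the universal properties needed for the adjunction argument, so that the Maurer--Cartan equation and the reconstruction of a cooperad map from its corestriction genuinely converge with respect to the filtration, and that $\Gr$ honestly commutes with $\Bar$ and $\Omega$ so that the quasi-isomorphism claim reduces cleanly to \cite{LodayVallette2012}.
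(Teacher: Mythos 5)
Your proposal is correct and follows essentially the paper's own route: the adjunction is obtained from the natural bijections $\Hom_{\cat{Op}^{\mm{aug}}}(\Omega\CC,\PP)\cong\Tw(\CC,\PP)\cong\Hom_{\cat{Coop}^{\mm{conil}}}(\CC,\Bar\PP)$ adapted from the unfiltered case, and the counit is shown to be a weak equivalence by passing to the associated graded, using that $\Gr$ is symmetric monoidal, preserves (completed) sums, and hence commutes with $\Bar$ and $\Omega$. The only difference is cosmetic: where you cite the classical bar--cobar resolution theorem of Loday--Vallette applied weightwise to $\Gr\PP$, the paper re-proves that graded statement directly, filtering $\Omega\Bar(\Gr\PP)$ by the number of inner edges and exhibiting a contracting homotopy onto $\Gr\PP$ in the style of Fresse.
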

\begin{proof}
	The proof of the adjunction follows from showing that there are natural bijections $\Hom_{\text{Op}^\mm{aug}}(\Omega \CC, \PP) \cong \Tw(\CC, \PP) \cong \Hom_{\text{Coop}^\mm{conil}}(\CC, \Bar \PP)$, which is a straightforward adaptation from the unfiltered case \cite[Theorem 6.5.10]{LodayVallette2012} (in fact, the left bijection also exists when $\CC$ is not conilpotent).

	For the second part, notice that the functor associated graded commutes with the bar and cobar constructions (since it preserves tensor products).
	Ignoring degrees, elements of $ \Omega \Bar(\Gr(\PP))$ can be seen as trees whose vertices are themselves (``inner'') trees whose vertices are labeled by $\PP$. Taking a second filtration by the number of inner edges (which is the bar filtration) we recover at the level of the associated graded only the piece of the differential corresponding to the one from $\mathscr P$ and a second one making an inner edge into an outer edge. 
	One checks that the associated graded retracts into $\mathscr P$ by constructing a homotopy that makes an outer edge into an inner edge (cf.\ \cite[Proposition 3.1.12]{Fresse2004} for the unfiltered case).
\end{proof}
Notice that, in addition to the complete filtration coming from $\CC$, $\Omega\CC$ admits another (decreasing) filtration given by the number of vertices in $\mathcal T \CC$. This filtration will be referred to as the \emph{cobar filtration}.
Dually, the number of vertices in $\mathcal T^c \PP$ induces an exhausting increasing filtration on  $\Bar\PP$ that will be called the \emph{bar filtration}. Taking the associated spectral sequence one can show that $\Bar$ preserves weak equivalences of complete operads. 
\begin{definition}
A twisting morphism $\CC\to \PP$ is said to be \emph{Koszul} if the induced map $\Omega \CC\to \PP$ is a weak equivalence of complete operads.
\end{definition}

In particular, the projection $\Bar \PP \to \PP$ is a Koszul twisting morphism by Proposition/definition \ref{propdef:bar operad}. Recall that a twisting morphism $\CC\to \PP$ gives rise to a bar and cobar construction at the level of (co)algebras:

\begin{propdef}\label{propdef:bar algebras}
	Let $\phi \colon \CC \to \PP$ be a Koszul twisting morphism between complete (co)operads. 
	The \emph{bar construction} of a $\PP$-algebra $A$, denoted $\Bar_\phi A$  and the \emph{cobar construction} of a conilpotent $\CC$-coalgebra $C$, denoted $\Omega_\phi C$ are the quasi-free (co)algebras 
	\[
	\Bar_\phi A = (\CC_+\circ A, d_A+d_\phi) \qquad\qquad\qquad \Omega_\phi C = (\PP \circ C, d_C+d^\phi).
	\]
	See Convention \ref{conv:augmentation} about free algebras and our convention conerning unitality. The differentials are induced by those on $A$ and $C$, together with the (co)bar differentials as in \cite[Section 11.2]{LodayVallette2012}. An \emph{$\infty$-morphism} between two $\PP$-algebras
	, denoted by a wiggly arrow $\rightsquigarrow$, is by definition a morphism of $\CC$-coalgebras
	between the respective bar 
	constructions.
	
	These (co)bar constructions define an adjoint pair $\Omega_\phi\colon \cat{Coalg}_{\CC}\leftrightarrows \cat{Alg}_{\PP}\colon \Bar_\phi$, and the counit of the adjunction $\Omega_\phi \Bar_\phi A\to A$ is a weak equivalence if $\CC$ is a complete conilpotent cooperad.
\end{propdef}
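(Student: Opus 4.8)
The plan is to mirror the classical bar--cobar yoga for algebras over a Koszul pair, as developed in \cite[Chapter 11]{LodayVallette2012}, and check that each step survives the passage to the complete filtered setting. First I would verify that the quasi-free (co)algebras $\Bar_\phi A = (\CC_+\bcirc A, d_A + d_\phi)$ and $\Omega_\phi C = (\PP\bcirc C, d_C + d^\phi)$ are well-defined, i.e.\ that $d_A + d_\phi$ squares to zero. This is a purely formal consequence of $\phi$ being a twisting morphism (a Maurer--Cartan element of the convolution Lie algebra), exactly as in the classical case; the only new point is that all sums converge, which holds because $\CC$ is complete conilpotent and the filtration on $\CC_+\bcirc A$ is complete, so the completed composition product behaves well. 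Here I would point out that $d_\phi$ is filtration-\emph{nonincreasing} because $\phi\in F^0\mf{g}$, so it is a bona fide endomorphism of the complete complex. The adjunction $\Omega_\phi\dashv \Bar_\phi$ then follows from a chain of natural bijections
$$
\Hom_{\cat{Coalg}_\CC}(C, \Bar_\phi A)\cong \Tw_\phi(C, A)\cong \Hom_{\cat{Alg}_\PP}(\Omega_\phi C, A),
$$
where $\Tw_\phi(C,A)$ denotes the set of twisting morphisms relative to $\phi$ (Maurer--Cartan elements of the convolution algebra $\Hom(C, A)$ built from the $\CC$-coalgebra and $\PP$-algebra structures together with $\phi$); this is the filtered analogue of \cite[Proposition 11.3.1 and 11.3.2]{LodayVallette2012}, and again the only verification is convergence, guaranteed by completeness.

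The substantive part is the claim that the counit $\Omega_\phi\Bar_\phi A\to A$ is a weak equivalence, i.e.\ a graded quasi-isomorphism. The standard strategy is to filter $\Omega_\phi\Bar_\phi A = (\PP\bcirc\CC_+\bcirc A, d)$ by a suitable weight (e.g.\ the total number of $\PP$- and $\CC$-generators, or just the $\CC$-arity together with the internal complete filtration) and run the associated spectral sequence; on the first page the differential reduces to the classical bar--cobar differential on $\PP\circ\CC_+\circ A$ built from the Koszul twisting morphism $\Gr(\phi)\colon \Gr(\CC)\to \Gr(\PP)$ of \emph{associated graded} (co)operads. Since $\Gr(\CC)$ is genuinely conilpotent (Warning \ref{war:complete conilpotence}), the classical result \cite[Theorem 11.3.3]{LodayVallette2012} applies verbatim on the associated graded: the bar--cobar resolution of any $\Gr(\PP)$-algebra is acyclic, via the explicit contracting homotopy of Loday--Vallette. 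One must check that this homotopy is compatible with the complete filtration --- it is, because it is built only from the coaugmentation/augmentation and the generators, all of which are filtration preserving --- so that $\Gr(\Omega_\phi\Bar_\phi A)\to \Gr(A)$ is a quasi-isomorphism. Invoking Lemma \ref{lem:fibrations and w.e. on associated graded}, this is exactly what it means to be a weak equivalence.

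The main obstacle I anticipate is bookkeeping around convergence and the interaction of the \emph{two} filtrations (the intrinsic complete filtration and the auxiliary weight/bar filtration used for the spectral sequence): one needs the auxiliary filtration to be exhaustive and the spectral sequence to converge, which in the classical case is automatic by a weight grading but here requires the extra care that each step of the contracting homotopy decreases (or preserves) the auxiliary filtration while being continuous for the complete filtration. Concretely, I would arrange the auxiliary filtration so that its pieces are themselves complete filtered complexes and the spectral sequence is that of a complete exhaustive filtration in the sense compatible with Remark \ref{rem:exact category}; then convergence of the spectral sequence reduces to the completeness statements already established, and no genuinely new analytic input is needed. Everything else --- naturality, the triangle identities for the adjunction, the identification of $\infty$-morphisms with coalgebra maps of bar constructions --- is a routine transcription of \cite[\S11.2--11.4]{LodayVallette2012} with ``$\otimes$'' replaced by ``$\botimes$'' and ``$\circ$'' replaced by ``$\bcirc$'' throughout.
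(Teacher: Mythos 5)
Your first two steps are fine and coincide with what the paper does: well-definedness of $d_A+d_\phi$ and $d_C+d^\phi$ is the usual Maurer--Cartan computation, and the adjunction via relative twisting morphisms is exactly the argument of \cite[Propositions 11.3.1--11.3.2]{LodayVallette2012} that the paper also invokes. Reducing the counit statement to the associated graded (where $\Gr(\CC)$ is conilpotent in the usual sense, cf.\ Warning \ref{war:complete conilpotence}) is likewise the paper's first move; note, though, that once you are on $\Gr$ there is nothing left to check about compatibility of any homotopy with the complete filtration --- a weak equivalence is by definition a graded quasi-isomorphism --- so that part of your discussion conflates the two stages of the argument.

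The genuine gap is the sentence claiming that the classical result ``applies verbatim on the associated graded'' because $\Gr(\CC)$ is conilpotent. Loday--Vallette prove the algebra-level statement (the counit $\Omega_\phi\Bar_\phi A\to A$ is a quasi-isomorphism for a Koszul morphism $\phi$) under the standing hypothesis that $\CC$ and $\PP$ are \emph{connected weight graded}, and their proof is a filtration/spectral-sequence comparison whose convergence uses that connectivity; it is not an explicit contracting homotopy. The cooperads relevant here violate that hypothesis even after passing to $\Gr$: for instance $\Gr(\ucocom^\mix\{1\})$ has nontrivial operations in arities $0$ and $1$, so conilpotency alone does not let you quote the classical theorem, and the explicit contracting homotopy you appeal to exists only for the universal twisting morphism $\iota\colon\CC\to\Omega\CC$ (where $\Omega_\iota\Bar_\iota A$ admits an extra degeneracy), not for a general Koszul $\phi$. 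This is exactly the point the paper flags (``a proof that does not use that $\CC$ and $\PP$ are connected weight graded'') and then fills with a two-step argument at the graded level: (i) for $\iota$, filter $\Omega_\iota\Bar_\iota A$ by the sum of the coradical filtrations on the $\CC_+$ and $\CC[-1]$ pieces and contract the surviving differential by an explicit homotopy, so $\Omega_\iota\Bar_\iota A\to A$ is a quasi-isomorphism; (ii) filter by the coradical filtration on the $\CC_+$ factor alone to reduce $\Omega_\iota\Bar_\iota A\to\Omega_\phi\Bar_\phi A$ to the quasi-isomorphism $\Omega\CC\rt\PP$ given by Koszulness of $\phi$; the counit for $\phi$ is then a weak equivalence by two-out-of-three. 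Without an argument of this kind (or an independent proof of the unfiltered statement free of the connectivity hypothesis), your proof of the last assertion of the proposition is incomplete.
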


\begin{proof}
	The functors are adjoint by the same argument as \cite[Proposition 11.3.2]{LodayVallette2012}. For $\Omega_\phi \Bar_\phi A\to A$ being a weak equivalence, we argue as in the unfiltered case \cite[Theorem 11.3.6]{LodayVallette2012}, but unlike there we require a proof that does not used that $\CC$ and $\PP$ are connected weight graded. Since the functor $\Gr\colon \Mod_k^\mm{filt}\rt \Mod_k^{\gr}$ preserves colimits and tensor products and detects weak equivalences, it suffices to prove this at the graded level (so we can forget about filtrations, while the weight-grading will play no role); in particular, $\CC$ is now conilpotent in the usual sense (see Warning \ref{war:complete conilpotence}).
		
	We start by showing that for the universal Koszul twisting morphism $\iota\colon \CC \to \Omega \CC$, the map  $\Omega_\iota \Bar_\iota A\to A$  is a weak equivalence.
	Ignoring differentials, $\Omega_\iota \Bar_\iota A$ takes the form  $\Omega \CC \circ \CC_+ \circ A$. One can take a filtration on $\Omega_\iota \Bar_\iota A$ given by the sum of the coradical filtrations on all $\CC_+$ and $\CC[-1]$ pieces appearing. On the associated graded, the only non-internal piece of the differential that survives is the counital part that takes an element $p$ in one of the $\CC_+$ pieces, replaces it by $p\circ 1$ and ``moves''  $p$ to the $\Omega \CC$ while increasing its degree by $1$. 
		There is a natural contracting homotopy to this differential that takes any rightmost $c\in \CC[-1] \subset \Omega\CC$ connected only to units $1\in \CC_+$ and moves it to the $\CC_+$ side. It follows that the only surviving piece corresponds to $k\circ k\circ A = A$.
	
	\medskip
	
	Secondly, let us show that the map $\Omega_\iota \Bar_\iota A\to \Omega_\phi \Bar_\phi A$ is a weak equivalence. 
	Ignoring differentials, this corresponds to showing that the map $\Omega\CC \to \PP$ induces an equivalence $\Omega \CC \circ \CC_+ \circ A \stackrel{\sim}{\rt} \PP \circ \CC_+ \circ A$. 
	This time, one takes a filtration consisting of the total coradical filtration on the $\CC_+$ part (ignoring the $\Omega\CC$ and $\PP$ pieces). Now, on the associated graded, we obtain precisely $\Omega \CC \circ \CC_+ \circ A \stackrel{\sim}{\rt} \PP \circ \CC_+ \circ A$ with only the internal differentials, which is a quasi-isomorphism since $\phi$ is Koszul.
	
	\medskip
	
	Finally the result follows from the 2-out-of-3 property since the map $\Omega_\iota \Bar_\iota A\to A$  is precisely the composite $\Omega_\iota \Bar_\iota A\to \Omega_\phi \Bar_\phi A\to  A$.
\end{proof}

\subsection{Homotopy Transfer Theorem}\label{sec:HTT}

In this section we prove a version of the Homotopy Transfer Theorem for complete complexes. The proof of the theorem itself is fairly standard and does not actually make use of the completeness of the filtered complexes. However, to obtain some classical consequences such as the construction of higher Massey products on the homology of an algebra there are some obstructions coming from the underlying category of filtered vector spaces. As we will see, essentially all obstructions vanish under the assumption that the filtered complexes involved are complete.

We start by noticing that the usual notion of homotopy equivalence of cochain complexes extends naturally to the filtered setting.

\begin{definition}
	A map $f\colon W \rt V$ between complete complexes is a \emph{filtered homotopy equivalence} if there exists a map of filtered complexes $g\colon V \to W$ and filtration-preserving homotopies $h_V\colon V \rt V$, $h_W\colon W \rt W$ of degree $-1$ such that 
	\[
	dh_V + h_Vd = \id_V - f \circ g\qquad\text{and}\qquad dh_W + h_Wd = \id_W - g \circ f.
	\]
A \emph{ homotopy retract} consists of filtered maps $i,p$ of degree $0$ and $h$ of degree $1$
\begin{equation*}
\begin{tikzcd}[cells={nodes={}}]
V \arrow[yshift=-.5ex,swap]{r}{i}  
& W \arrow[yshift=.5ex,swap]{l}{p} 
\arrow[loop right, distance=2em, 
]{}{h} 
\end{tikzcd}
\end{equation*}
such that $ip-\operatorname{id}_W= [d_W,h]$. It is called a \emph{deformation retract} if furthermore $pi=\operatorname{id}_V$.
\end{definition}
\begin{lemma}\label{lem:acyclic fib is def retract}
Let $p\colon W\twoheadrightarrow V$ be an acyclic fibration of complete complexes. Then $p$ is part of a deformation retract. Furthermore, the homotopy $h$ can be chosen to satisfy the side conditions $ph=0$, $hi=0$ and $h^2=0$.
Dually, if $i\colon V\hookrightarrow W$ is an acyclic cofibration of complete complexes, then it is part of a similar deformation retract.
\end{lemma}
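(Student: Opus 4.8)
The plan is to reduce the statement to constructing a filtration-preserving contracting homotopy on an acyclic complete complex, and then to produce the deformation-retract data by a block-triangular argument along the short exact sequence defining $p$.

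First set $K\coloneqq\ker(p)$ with the subspace filtration. By Lemma~\ref{lem:fibrations and w.e. on associated graded}, $p$ is surjective in each filtration weight, so $0\to K\to W\to V\to 0$ is short exact in every weight and $K$ is again a complete complex. Applying the exact functor $\Gr$ (Lemma~\ref{lem:graded exact}) and using that $p$ is a graded quasi-isomorphism, the long exact sequence shows that $\Gr(K)$ is acyclic.

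The technical heart is the claim that any complete complex $K$ with $\Gr(K)$ acyclic admits a filtration-preserving $s\colon K\to K$ of degree $-1$ with $d_Ks+sd_K=\id_K$ and $s^2=0$. Here I would choose a splitting of the filtration without differentials, identifying $K\cong\Tot(\Gr K)$ and writing $d_K=d+\sum_{k\ge1}\delta_k$ with $\delta_k$ of weight $k$, as in the proof of Proposition~\ref{prop:graded mixed complexes}. Since $(\Gr K,d)$ is acyclic in each weight and $k$ is a field, it splits weightwise into two-term acyclic complexes, yielding a weight-preserving $s_0$ with $ds_0+s_0d=\id$ and $s_0^2=0$. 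Then $s\coloneqq\sum_{n\ge0}(-1)^n s_0(\delta s_0)^n$, with $\delta=\sum_{k\ge1}\delta_k$, converges in the complete mapping complex $\Hom(K,K)$ because $s_0$ preserves and $\delta$ strictly raises the filtration weight; this is exactly the output of the Basic Perturbation Lemma applied to the trivial contraction of $(\Gr K,d)$ onto $0$ perturbed by $\delta$, so $d_Ks+sd_K=\id_K$, and $s^2=0$ persists since $(\delta s_0)^m s_0=0$ for $m\ge1$. This is the one place where completeness is essential; over a general filtered complex the series need not converge, matching the remarks preceding the lemma.

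Finally I would assemble the deformation retract. Choosing a filtration-preserving linear section $\sigma\colon V\to W$ of $p$ (Remark~\ref{rem:surjections have sections}) gives $W\cong K\oplus V$ without differentials, with $d_W=\bigl(\begin{smallmatrix}d_K&\beta\\ 0&d_V\end{smallmatrix}\bigr)$ for a filtration-preserving $\beta\colon V\to K$ of degree $1$ satisfying $d_K\beta+\beta d_V=0$. A direct check then shows that $i\coloneqq\sigma-s\beta$ is a chain map with $pi=\id_V$ and that $h\coloneqq-s\circ\mathrm{pr}_K$ satisfies $ip-\id_W=d_Wh+hd_W$; the side conditions $ph=0$ (as $h$ lands in $\ker p$), $hi=0$ and $h^2=0$ (both from $s^2=0$) are then immediate. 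The dual statement follows from the same argument applied to the cokernel $C\coloneqq W/V$ in place of $K$: since $i$ is an admissible monomorphism (Lemma~\ref{lem:cofibrations of complete complexes}) and a graded quasi-isomorphism, $C$ is a complete complex with $\Gr(C)$ acyclic, and one takes a filtration-preserving splitting $W\cong V\oplus C$, with off-diagonal part $\alpha\colon C\to V$ of $d_W$ and a contracting homotopy $t$ on $C$ as above, and sets $p\coloneqq\mathrm{pr}_V-\alpha t\,\mathrm{pr}_C$, $h\coloneqq-t\circ\mathrm{pr}_C$. The main obstacle throughout is the middle step — getting $s$ (with $s^2=0$) compatible with the filtration — and everything there hinges on the convergence of the perturbation series, which is precisely where completeness is used; the remaining verifications are routine block-matrix bookkeeping with $d_Ks+sd_K=\id_K$, $s^2=0$ and $d_K\beta+\beta d_V=0$.
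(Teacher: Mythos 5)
Your proof is correct, but it takes a genuinely different route from the paper's. The paper argues model-categorically: since every complete complex is cofibrant (Lemma \ref{lem:cofibrations of complete complexes}), the acyclic fibration $p$ admits a strict chain-level section by lifting, which splits $W\cong V\oplus C$ as a direct sum of \emph{complexes} with $C$ graded-acyclic; a contracting homotopy on $C$ is then extracted from a retraction of the trivial cofibration $C\hookrightarrow\mathrm{Cone}(C)$ (so $ph=0$ and $hi=0$ are automatic, the homotopy being supported on $C$), and the remaining side condition $h^2=0$ is arranged a posteriori by the replacement $h\mapsto -hdh$. You instead split only the underlying filtered graded vector space and repair everything by hand: the contracting homotopy on $K=\ker(p)$ comes from the homological perturbation lemma applied to a weight-homogeneous contraction $s_0$ of $(\Gr K,d)$, with the series $\sum_{n\ge 0}(-1)^n s_0(\delta s_0)^n$ converging because $\delta$ raises filtration weight and $K$ is complete (and the identity $d_Ks+sd_K=\id$, like the convergence, uses Hausdorffness of the filtration); the chain-level section $i=\sigma-s\beta$ and homotopy $h=-s\circ\mathrm{pr}_K$ then drop out of the block-triangular bookkeeping, with all three side conditions following from $s^2=0$ — your sign conventions and verifications check out, including the dual case. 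What each approach buys: the paper's proof is shorter because it can quote the transferred model structure (Theorem \ref{thm:Model str}) and the cofibrancy of all complete complexes, at the cost of being less explicit; yours stays at the level of filtered linear algebra, makes the role of completeness completely transparent (it is exactly the convergence of the perturbation series, matching the splitting technique of Proposition \ref{prop:graded mixed complexes}), and produces explicit formulas for $i$ and $h$ — in effect a perturbation-lemma proof that acyclic (co)fibrations of complete complexes split with side conditions.
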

\begin{proof}
We will only prove assertiona about $p$. By Lemma \ref{lem:cofibrations of complete complexes}, every complete complex is cofibrant. It follows from the model category axioms that $p$ admits a section $i$, which decomposes $W\cong V\simeq C$ with $C$ weakly contractible. It then suffices to provide a contracting homotopy $h$ on $C$ such that $h^2=0$. 
Let $j\colon C\rt \mm{Cone}(C)$ be the inclusion of $C$ into its cone. Since $C$ is cofibrant and acyclic, this is a trivial cofibration between fibrant objects; it therefore admits a retraction. Writing $\mm{Cone}(C)=C\oplus C[1]$, this retraction takes the form $(\mm{id}, h)\colon C\oplus C[1]\rt C$, where $h$ is a contracting homotopy. One can now define $h'=-hdh$ and check that $h'$ is the desired contracting homotopy satisfying $(h')^2=0$.
%
%
\end{proof}
\begin{proposition}\label{prop:qiso is qinv}
	Let $f\colon W \rt V$ be a weak equivalence of complete filtered complexes.
	Then  $f$ is a filtered homotopy equivalence. 
\end{proposition}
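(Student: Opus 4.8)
The plan is to reduce the statement to the two special cases already established in Lemma~\ref{lem:acyclic fib is def retract}, namely that acyclic fibrations and acyclic cofibrations of complete complexes are filtered homotopy equivalences, and then to compose these.

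First I would invoke the model structure on complete complexes provided by Theorem~\ref{thm:Model str} (applied to the trivial operad) to factor $f$ as a cofibration $i\colon W\rt W'$ followed by an acyclic fibration $p\colon W'\rt V$. Since $f$ and $p$ are both weak equivalences, the two-out-of-three axiom forces $i$ to be an acyclic cofibration as well. It matters here that the factorization produce an \emph{acyclic} fibration, not merely a fibration, so that this last step applies. Next, by Lemma~\ref{lem:acyclic fib is def retract}, the acyclic fibration $p$ sits in a deformation retract: there are a filtered section $s\colon V\rt W'$ with $p\circ s=\id_V$ and a filtration-preserving degree $-1$ map $h_p$ with $s\circ p-\id_{W'}=[d,h_p]$. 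Dually, the acyclic cofibration $i$ sits in a deformation retract, with a filtered retraction $r\colon W'\rt W$, $r\circ i=\id_W$, and a filtration-preserving degree $-1$ map $h_i$ with $i\circ r-\id_{W'}=[d,h_i]$.

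I would then take $g\coloneqq r\circ s\colon V\rt W$ as the homotopy inverse of $f=p\circ i$. Conjugating the identity $s\circ p-\id_{W'}=[d,h_p]$ on the left by $r$ and on the right by $i$, and using that $r$ and $i$ commute with the differentials, gives $g\circ f-\id_W=[d,\,r h_p i]$; similarly, conjugating $i\circ r-\id_{W'}=[d,h_i]$ on the left by $p$ and on the right by $s$ (and using $p\circ s=\id_V$) gives $f\circ g-\id_V=[d,\,p h_i s]$. Hence $h_W\coloneqq -r h_p i$ and $h_V\coloneqq -p h_i s$ are filtration-preserving degree $-1$ maps satisfying $dh_W+h_W d=\id_W-g\circ f$ and $dh_V+h_V d=\id_V-f\circ g$, which exhibits $f$ as a filtered homotopy equivalence.

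I do not expect any real obstacle here: the argument is formal once Lemma~\ref{lem:acyclic fib is def retract} is in hand, and the only points requiring attention are the sign bookkeeping in the last step and the observation — automatic in the category of complete filtered complexes — that every map and homotopy produced along the way is filtration-preserving, being built by composition from filtration-preserving data.
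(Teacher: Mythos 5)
Your proof is correct and follows essentially the same strategy as the paper: factor $f$ into a map that is a homotopy equivalence for "formal" reasons followed by an acyclic fibration, and then invoke Lemma~\ref{lem:acyclic fib is def retract} to produce the filtered homotopies. The only difference is in how the factorization is obtained: the paper writes down the explicit factorization $W\rt W\oplus V[0,-1]\rto{p} V$ through the contractible path space, so that the first leg is a summand inclusion with contractible complement (a homotopy equivalence on the nose) and only the fibration half of Lemma~\ref{lem:acyclic fib is def retract} is needed, whereas you use the abstract factorization axiom of the model structure from Theorem~\ref{thm:Model str} together with two-out-of-three, and then also need the dual ("acyclic cofibration") half of Lemma~\ref{lem:acyclic fib is def retract}. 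Both are legitimate; the paper's version is slightly more self-contained since the dual half of that lemma is only asserted there, while yours is more uniform and your conjugation bookkeeping $g=r\circ s$, $h_W=-r h_p i$, $h_V=-p h_i s$ is carried out correctly, with all maps filtration-preserving as required.
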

\begin{proof}
We can decompose $f$ as $W\rt W\oplus V[0, -1]\rto{p} V$, where $V[0, -1]$ is the (contractible) path space of $V$. The first map is the obvious summand inclusion (hence a homotopy equivalence) and $p$ is given on $W$ by $f$, while on $V[0, -1]$ it is determined uniquely by the fact that on $V[0]$ it is the identity. Note that $p$ is both surjective in every filtration degree and a filtered quasi-isomorphism (since $f$ was). Consequently, $p$ is part of a deformation retract by Lemma \ref{lem:acyclic fib is def retract}, so that the composite $f$ is a homotopy equivalence as well.
\end{proof}

\begin{theorem}[Homotopy Transfer Theorem]\label{thm:HTT}
	Let $\CC$ be a complete conilpotent operad.
	Suppose $W$ is a complete $\Omega \CC$-algebra that homotopy retracts (as a filtered complex) to a complete complex $V$, as above. 
	Then there is a \emph{transferred} $\Omega \CC$-algebra structure on $V$ such that $i$ extends to an $\infty$-morphism of $\Omega \CC$-algebras.
\end{theorem}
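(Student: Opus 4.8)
The plan is to follow the classical proof of the Homotopy Transfer Theorem as in \cite[Section 10.3]{LodayVallette2012}, using the twisting morphism $\iota\colon \CC\to \Omega\CC$, but being careful that every infinite sum that appears converges in the complete filtered setting. Concretely, a $\CC_+$-coalgebra structure on $\Tot$ of the transferred object is the same as a twisting morphism $\CC\to \End_V$, and the transferred $\Omega\CC$-algebra structure on $V$ will be produced as the twisting morphism $\CC\to \End_W\to \End_V$ obtained by composing the Koszul twisting morphism $\varphi_W\colon \CC\to \End_W$ (coming from the $\Omega\CC$-algebra structure on $W$, i.e.\ a map $\Omega\CC\to \End_W$) with the "transfer" operation along the homotopy retract data $(i,p,h)$.

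First I would recall the explicit formula: the transferred twisting morphism $\varphi_V\colon \CC\to \End_V$ is given on a cooperation $c\in\CC(n)$ by summing over all planar binary trees with $n$ leaves, decorating the leaves by $i$, the internal edges by $h$, the root by $p$, and the vertices by the structure operations coming from $\varphi_W$, with $c$ plugged in via iterated cocomposition $\Delta^{(k)}$. The same tree sum with $p$ replaced by $h$ (and one extra leaf left undecorated) gives the components $i_n$ of the $\infty$-morphism, i.e.\ a morphism of $\CC_+$-coalgebras $\Bar_\iota V\to \Bar_\iota W$ — equivalently a map $\CC_+\circ V\to W$ — lifting $i$. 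Checking that $\varphi_V$ is a Maurer--Cartan element and that $(i_n)$ is a coalgebra map is then the same diagrammatic computation as in the uncurved, unfiltered case (it only uses $[d_W,h]=ip-\id_W$, the (co)associativity of $\CC$ and $\End_W$, and the Leibniz rule), so I would not reproduce it.

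The point that genuinely needs the completeness hypothesis is \emph{convergence} of these tree sums. Here I would argue as follows: since $\CC$ is complete conilpotent, for each $c\in\CC(n)$ and each $r\geq 0$ the iterated cocomposition $\Delta^{(k)}(c)$ lands in filtration weight $\geq r$ once $k$ is large enough (Definition \ref{def:complete cooperad}, Warning \ref{war:complete conilpotence}); since $i,p,h$ and the operations of $\End_W$ are all filtration-preserving, each tree of "depth" $\geq k$ contributes an operation of filtration weight $\geq r$. Hence the (a priori infinite) sum over all trees is a sum of a filtration-preserving map plus a net that converges to $0$ in the complete filtered complex $\Hom(V^{\otimes n},V)$, so it defines an element of $\End_V(n)$; and because $\End_V$ is a \emph{complete} operad (using Proposition \ref{prop:SMC} and the Example on endomorphism operads), the Maurer--Cartan equation makes sense and can be verified weightwise. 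Completeness of $W$ is what guarantees that $\Hom(V^{\otimes n},V)$ is complete, which is exactly Proposition \ref{prop:SMC}.

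The main obstacle — and the only place the argument is more than bookkeeping — is precisely this convergence/completeness argument: one must check that the filtration on $\CC$ interacts correctly with the tree combinatorics so that the gauge-type sums defining $\varphi_V$ and the $\infty$-morphism are well-defined, and that the same holds for the verification of the Maurer--Cartan and coalgebra-morphism identities (which involve further infinite sums obtained by applying $d$ and the bracket). Once one knows that $\Gr$ is monoidal and that all the structure maps are strict, these identities can be checked after passing to the associated graded, where $\Gr(\CC)$ is honestly conilpotent and the classical HTT applies verbatim; lifting back uses that $\Gr$ detects equivalences and that everything in sight is complete. I would therefore organize the proof as: (i) write the tree formulas; (ii) prove convergence using complete conilpotence; (iii) reduce the algebraic identities to $\Gr$ and invoke the classical HTT; (iv) note that $i_0=i$ so the $\infty$-morphism extends $i$.
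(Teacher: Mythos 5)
Your construction is the same as the paper's: the paper also just invokes the Loday--Vallette tree formulas (packaged there as the universal map $\Bar\End_W\rt \Bar\End_V$ that labels incoming edges by $i$, the root by $p$ and internal edges by $h$) and observes that everything is filtration-preserving because $i,p,h$ are; your convergence discussion via complete conilpotence is the unpacked form of the fact that the $\Omega\CC$-structure on $W$ corresponds to a map of \emph{complete conilpotent} cooperads $\CC\rt\Bar\End_W$ (Proposition/definition \ref{propdef:bar operad}), after which composing with the filtered map $\Bar\End_W\rt\Bar\End_V$ raises no further convergence issues. Two cosmetic slips: the trees are arbitrary rooted trees (with symmetric-group labels), not planar binary ones, and the components of $i_\infty$ are given by trees whose root is labelled by $h$ -- there is no ``extra undecorated leaf''.

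The one step that would not survive being written out is your item (iii): you cannot verify the Maurer--Cartan identity for $\varphi_V$, or the coalgebra-map identity for $i_\infty$, by passing to the associated graded and ``lifting back''. The functor $\Gr$ detects weak equivalences, but it does not detect vanishing of filtration-preserving maps: a map that raises filtration weight has zero associated graded without being zero (cf.\ the map $k\rt k'$ of the paper's remark on non-admissible inclusions), so knowing that $\Gr(\partial\varphi_V+\tfrac12[\varphi_V,\varphi_V])=0$ only tells you the defect lies in higher filtration, not that it vanishes. Fortunately this detour is unnecessary, and your second paragraph already contains the correct argument (which is also the paper's): the classical verification in \cite[Section 10.3]{LodayVallette2012} uses only the homotopy relation $[d_W,h]=ip-\id_W$, (co)associativity and the Leibniz rule, all of which hold verbatim for filtration-preserving maps, so the identities hold on the nose in the complete filtered setting once the tree sums are known to converge. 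Drop the $\Gr$-reduction and keep the direct computation, and the proof is the paper's proof.
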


\begin{proof}
	The proof is identical to the one in \cite[Section 10.3]{LodayVallette2012}. In loc. cit.\ Loday and Vallette consider the case where $\PP$ is a Koszul operad (and $\CC = \PP^{\antishriek}$) for simplicity, but the proof carries through.
	Indeed, the transferred structure is constructed by establishing a universal map $\Bar\End_W\to \Bar\End_V$, obtained by composing incoming edges with $i$, outgoing edges with $p$ and adding a copy of $h$ to every  internal edge. Since we require our homotopy retract to be made up of filtered maps, the map  $\Bar\End_W\to \Bar \End_V$ is compatible with the filtrations.
		The extension of $i$ to an $\infty$-morphism $i_\infty$ involve similar formulas and is therefore compatible with the filtrations.
\end{proof}

\subsubsection{Minimal models}
\begin{definition}
	A complete complex $V$ is said to be \emph{minimal} if for all $n\in \mathbb{Z}$, $dF^nV \subseteq F^{n+1}V$. In other words, if the differential vanishes on the associated graded.
\end{definition}
\begin{proposition}\label{prop:complex is we to homology}
	Every complete filtered complex (over a field $k$) admits a deformation retract to a minimal complete filtered complex,  with side conditions $ph=0$, $hi=0$ and $h^2=0$. This minimal complete complex is unique up to non-canonical isomorphism.
\end{proposition}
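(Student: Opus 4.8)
The plan is to construct the minimal model by transferring the full differential of $V$ along a contraction of its associated graded onto the latter's cohomology (a homological perturbation argument), and to deduce uniqueness from the fact that any weak equivalence between minimal complete complexes is automatically an isomorphism.

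\emph{Existence.} As in Remark \ref{rem:graded is monoidal} and the proof of Proposition \ref{prop:graded mixed complexes}, choose a splitting of the filtration on $V$ (ignoring the differential), identifying the underlying filtered graded vector space of $V$ with $\Tot(\Gr V)$ and writing the differential as $d_V = d_0 + \delta$, where $d_0$ is the weight-preserving part --- the internal differential of $\Gr V$ --- and $\delta = \sum_{k \ge 1}\delta_k$ strictly raises the filtration weight. Since $k$ is a field, elementary linear algebra produces, weight by weight and degree by degree, a decomposition of $(\Gr V, d_0)$ into harmonic, exact and anti-exact parts, yielding a deformation retract of complete complexes from $(\Tot\Gr V, d_0)$ onto $(\Tot\mathcal H, 0)$, with $\mathcal H = H(\Gr V, d_0)$ and with all structure maps $i_0, p_0, h_0$ of weight $0$ and satisfying the side conditions $p_0 h_0 = 0$, $h_0 i_0 = 0$, $h_0^2 = 0$. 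Now perturb $d_0$ by $\delta$: since $h_0$ preserves the filtration weight while $\delta$ raises it, the iterates $(h_0\delta)^n$ push every element into arbitrarily deep filtration stages, so completeness of $V$ guarantees that $\sum_{n \ge 0}(h_0\delta)^n$ and $\sum_{n \ge 0}(\delta h_0)^n$ converge to filtration-preserving maps. The special homological perturbation lemma --- the same bookkeeping that underlies the proof of Theorem \ref{thm:HTT} --- then upgrades $(i_0, p_0, h_0)$ to a deformation retract $(i, p, h)$ from $(V, d_V)$ onto $(\Tot\mathcal H, d_\infty)$, where $d_\infty = p_0 \bigl( \sum_{n\ge 0}(\delta h_0)^n \bigr)\, \delta\, i_0$, and it preserves the three side conditions. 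Every summand of $d_\infty$ contains a factor of $\delta$ and hence raises filtration weight, so $H \coloneqq (\Tot\mathcal H, d_\infty)$ is minimal, and $(i, p, h)$ is the required deformation retract.

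\emph{Uniqueness.} Let $H_1$ and $H_2$ be minimal complete complexes, each admitting a deformation retract from $V$ with structure maps $i_j\colon H_j \to V$ and $p_j\colon V \to H_j$ (so $p_j i_j = \id$ and $i_j p_j$ is homotopic to $\id$); applying $\Gr$ shows $i_j$ and $p_j$ are weak equivalences, hence so is $f \coloneqq p_2 i_1\colon H_1 \to H_2$. By minimality $\Gr H_1$ and $\Gr H_2$ carry the zero differential, so the quasi-isomorphism $\Gr(f)$ is an isomorphism in each weight and degree. It remains to note that a map of complete filtered complexes that induces an isomorphism on the associated graded is itself an isomorphism: it is injective because, by Hausdorffness of the filtration, every nonzero element has a largest filtration weight in which its class is nonzero, and there $\Gr(f)$ is injective; and it is surjective by a successive-approximation argument that converges by completeness and at the same time shows the inverse is filtration-preserving. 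Thus $f$ is an isomorphism, so the minimal model is unique up to isomorphism; the isomorphism is non-canonical because it depends on the chosen splitting, the decomposition of $\Gr V$, and the chosen retracts.

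The main obstacle is the convergence in the perturbation step: one must check that the infinite sums defining the perturbed contraction make sense and preserve filtrations --- this is precisely where completeness, rather than mere filteredness, is used --- and that the perturbation-lemma formulas genuinely preserve the side conditions $ph = 0$, $hi = 0$, $h^2 = 0$. Everything else reduces to linear algebra over $k$ on the associated graded.
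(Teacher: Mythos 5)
Your proof is correct, but it follows a genuinely different route from the paper. You split the filtration (as in Remark \ref{rem:graded is monoidal} and Proposition \ref{prop:graded mixed complexes}), write $d_V=d_0+\delta$ with $\delta$ strictly raising weight, contract $(\Gr V,d_0)$ onto its cohomology by linear algebra, and then run the basic homological perturbation lemma, using completeness exactly where it is needed to make $\sum_n(h_0\delta)^n$ converge; the side conditions are indeed preserved by the standard perturbation formulas, and every term of the transferred differential contains a $\delta$, so the result is minimal. The paper instead argues inductively over the filtration without ever splitting it: it first builds minimal models of the quotients $V/F^qV$ as a tower of acyclic cofibrations using Lemma \ref{lem:exact sequences of equivalent complexes} and passes to the limit, then handles the negative filtration degrees by a dual induction over the subcomplexes $F^pV$ and passes to the colimit, and only at the very end produces the deformation retract with side conditions by appealing to Lemma \ref{lem:acyclic fib is def retract}. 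Your argument is more explicit (it gives closed formulas for $i$, $p$, $h$ and $d_\infty$ and identifies the minimal model directly as $\Tot\,H(\Gr V)$), at the cost of invoking the perturbation lemma and a choice of splitting; the paper's argument stays inside the model-categorical toolkit it has already set up (exact sequences, acyclic cofibrations) and never needs convergence estimates beyond completeness of the (co)limits. Your uniqueness argument coincides in substance with the paper's, and you in fact spell out the step the paper leaves implicit, namely that a filtered map between complete complexes inducing an isomorphism on the associated graded is itself an isomorphism. Note also that your use of the classical perturbation lemma is not circular with respect to Theorem \ref{thm:HTT}: the latter transfers algebraic structure along a given retract, whereas here you only need the contraction-level statement, which is self-contained.
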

\begin{lemma}\label{lem:exact sequences of equivalent complexes}
Consider a diagram of complete complexes
$$\begin{tikzcd}
0\arrow[r] & M'\arrow[d, "\sim"{swap}, hook]\arrow[r, dotted] & M\arrow[d, dotted, "\sim"] \arrow[r, dotted] & M''\arrow[r]\arrow[d, hook, "\sim"] & 0 \\
0\arrow[r] & V'\arrow[r] & V\arrow[r] & V''\arrow[r] & 0
\end{tikzcd}$$
in which the bottom row is short exact and the vertical maps are acyclic cofibrations. Then there exists a complete module $M$ and a dotted extension of the diagram as indicated, such that the top row is exact and the map $M\rt V$ is an acyclic cofibration as well.
\end{lemma}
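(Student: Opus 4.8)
The plan is to construct $M$ directly inside $V$ in two stages: first correct the quotient term of the bottom row, then correct the sub-term, using only the exact structure on complete complexes together with Lemma~\ref{lem:acyclic fib is def retract}. Throughout I will use the elementary fact (immediate from the definitions, cf.\ Lemma~\ref{lem:fibrations and w.e. on associated graded} and Lemma~\ref{lem:graded exact}) that a submodule carrying the induced filtration is an admissible monomorphism, and that an admissible monomorphism with graded-acyclic cokernel is a weak equivalence: applying $\Gr$ to the corresponding short exact sequence gives a quasi-isomorphism on associated gradeds.

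\emph{Step 1: correcting the quotient.} I would first form the pullback $P := V\times_{V''}M''$ of complete complexes. Since the bottom row is short exact in each weight, $V\rt V''$ is surjective in each weight, hence so is $P\rt M''$, and its kernel is $V'$; this gives a short exact sequence $0\rt V'\rt P\rt M''\rt 0$. The projection $P\rt V$ is injective in each weight, and because $M''\hookrightarrow V''$ is admissible one checks that $F^pP = P\cap F^pV$, so $P\hookrightarrow V$ is an admissible monomorphism with cokernel $V/P\cong V''/M''$. The latter is graded-acyclic, being the cokernel of the acyclic cofibration $M''\hookrightarrow V''$, so $P\hookrightarrow V$ is an acyclic cofibration.

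\emph{Step 2: correcting the sub-object, and conclusion.} The composite $M'\hookrightarrow V'\hookrightarrow P$ (with $V'=\ker(P\rt M'')$) is an admissible monomorphism; let $q\colon P\rt P/M'$ be the quotient. Dividing the sequence of Step~1 by $M'$ yields $0\rt V'/M'\rt P/M'\rt M''\rt 0$, and since $V'/M'$ is graded-acyclic while $P/M'\rt M''$ is surjective in each weight, this map is an acyclic fibration. By Lemma~\ref{lem:acyclic fib is def retract} it admits a section $s\colon M''\rt P/M'$, which is filtration preserving and split injective in each weight; tracing the definitions shows $s(M'')$ carries the induced filtration, hence $s$ is an admissible monomorphism with graded-acyclic cokernel $(P/M')/s(M'')\cong V'/M'$. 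Now set $M := q^{-1}(s(M''))\subseteq P$. Since $0\in s(M'')$ we get $M'=\ker q\subseteq M$ and $M/M'\xrightarrow{\ \sim\ }s(M'')\cong M''$, so $0\rt M'\rt M\rt M''\rt 0$ is short exact in each weight. As $q$ is surjective in each weight and $s(M'')\hookrightarrow P/M'$ is admissible, $M$ inherits the filtration from $P$, so $M\hookrightarrow P$ is an admissible monomorphism with cokernel $P/M\cong (P/M')/s(M'')$, which is graded-acyclic; hence $M\hookrightarrow P$ is an acyclic cofibration, and composing with Step~1 shows $M\hookrightarrow V$ is an acyclic cofibration. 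Unwinding the pullback and the quotient shows the resulting square commutes, restricting to the identity on $V'$ and to $M''\hookrightarrow V''$.

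The only delicate point — the ``main obstacle'' — is the bookkeeping with filtrations: at each stage (the pullback $P$, the image $s(M'')$, the preimage $M$) one must verify that the object carries the filtration induced from the ambient complete complex, which is precisely what makes the relevant monomorphism admissible, i.e.\ a cofibration by Lemma~\ref{lem:cofibrations of complete complexes}. Once these exact-category manipulations are in place the homotopical content is minimal, consisting only of the observation about graded-acyclic cokernels above and a single application of Lemma~\ref{lem:acyclic fib is def retract}.
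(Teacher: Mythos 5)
Your proof is correct and follows essentially the same two-step strategy as the paper: first pass to the pullback $V\times_{V''}M''$ and check its projection to $V$ is an acyclic cofibration, then use Lemma \ref{lem:acyclic fib is def retract} to absorb the acyclic discrepancy between $M'$ and $V'$. The only (minor) difference is in the second step, where you take the preimage of a section of the acyclic fibration $P/M'\rt M''$, while the paper instead splits $V'\cong M'\oplus C$ with $C$ acyclic and takes a complement of $C$ inside the pullback; both maneuvers rest on the same deformation-retract lemma, and your verification via admissible monomorphisms with graded-acyclic cokernel is a sound substitute for the paper's ``summand inclusion with acyclic complement'' characterization.
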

\begin{proof}
Note that a map $W'\rt W$ is an acyclic cofibration if and only if it is of the form $W\rt W\oplus C$, where $C$ is acyclic (since it admits a retraction for model-categorical reasons).
Let $N=V\times_{V''} M''$, which fits into a short exact sequence $0\to V'\to N\to M''\to 0$. The map $N\rt V$ is the pullback of an acyclic cofibration along a fibration, and is hence easily seen to be an acyclic cofibration as well (using the above observation).

We can write $V'\cong M'\oplus C$, for some contractible complex. Then the inclusion $C\hookrightarrow V'\hookrightarrow N$ is a cofibration whose domain is contractible. This implies that $N\rt N/C$ is an acyclic fibration and hence a deformation retract by Lemma \ref{lem:acyclic fib is def retract}. In particular, the short exact sequence $0\to C\to N\to N/C\to 0$ splits and we can identify $C\rt N$ with a summand inclusion $C\subseteq N=C\oplus M$. We therefore obtain a commuting diagram
$$\begin{tikzcd}
M\arrow[r, "\sim", hook]\arrow[rd, "\sim"{swap}, hook] & N\arrow[r]\arrow[d, "\sim", hook] & M''\arrow[d, hook, "\sim"]\\
& V\arrow[r] & V''
\end{tikzcd}$$
where all downwards pointing arrows are acyclic cofibrations. Since the projection $N\rt M''$ sent $C\subseteq V'=M'\oplus C$ to zero, we see that the map $M\rt M''$ is surjective in each filtration degree, with kernel given precisely by $M'$. We therefore obtain the desired short exact sequence $M'\to M\to M'' $ mapping to $V'\to V\to V''$ by acyclic cofibrations.
\end{proof}
\begin{proof}[Proof of Proposition \ref{prop:complex is we to homology}]
To see uniqueness, suppose that $V$ and $W$ are weakly equivalent minimal complete complexes. By Proposition \ref{prop:qiso is qinv} every weak equivalence of complete complexes has a homotopy inverse, so we may assume that there exists a weak equivalence $f \colon V \to W$, as opposed to a zig-zag of weak equivalences. The induced map $\Gr(f) \colon \Gr(V) \to \Gr(W)$ is a quasi-isomorphism of complexes with trivial differential, hence an isomorphism. It follows that $f$ is itself an isomorphism.  

We will prove existence of minimal models in two steps, first dealing with the positive part of the filtration and then with the negative part.

\medskip

\noindent \textit{Positive filtration.} Assume that $V=F^0V$. We will inductively construct a tower of acyclic cofibrations $M^q\rto{\sim} V/F^qV$ (where $V/F^qV$ has the induced filtration), where each $M^q$ is a minimal complete complete and each $M^{q+1}\twoheadrightarrow M^q$ is a quotient map.

For $q=0$, one simply sets $M^0=0$. For the inductive step, suppose we have already constructed $i\colon M^q\rto{\sim} V/F^qV$. The complex $\Gr^q(V)$ can then be written as $F^qV/F^{q+1}V\cong H(F^qV/F^{q+1}V) \oplus C$, where $C$ is an acyclic complex and $H(F^qV/F^{q+1}V)$ is the homology. Let us now consider the following diagram, in which the rows are exact
$$\begin{tikzcd}
0\arrow[r] & H(F^qV/F^{q+1}V)\arrow[d, hook, "\sim"{swap}]\arrow[r, dotted] & M^{q+1}\arrow[r, dotted]\arrow[d, dotted, "\sim"] & M^q\arrow[r]\arrow[d, "i"] & 0\\
0\arrow[r] & F^qV/F^{q+1}V\arrow[r] & V/F^{q+1}V\arrow[r] & V/F^{q}V\arrow[r] & 0.
\end{tikzcd}$$
By Lemma \ref{lem:exact sequences of equivalent complexes}, there exists a complete complex $M^{q+1}$ making the top row exact, together with an acyclic cofibration $M^{q+1}\rto{\sim} V/F^{q+1}V$. Since the top row is short exact, it induces a short exact sequence on the associated graded (Lemma \ref{lem:graded exact}). Since $\Gr(M^q)$ is concentrated in weight $<q$ and $\Gr(H(F^qV/F^{q+1}V))$ is concentrated in weight $q$, we see that the differential on $\Gr(M^{q+1})$ vanishes, so that $M^{q+1}$ is minimal.

Now, taking the limit of the tower of $M^q$ provides a an acyclic cofibration $M=\lim_q M^q\rt \lim_q V/F^qV=V$ by the completeness of $V$. Since $\Gr(M)\rt \Gr(M^q)$ is an isomorphism in weight $\leq q$, we see that $M$ is minimal.

\medskip

\noindent \textit{Negative filtration.} 
Let us now consider the general case where $V$ need not agree with $F^0V$. We are going to inductively construct a compatible family of acyclic cofibrations $M^{(p)}\rt F^pV$, for $p\leq 0$, where each $M^{(p)}$ is minimal and $M^{(p)}\rt M^{(p-1)}$ is an isomorphism in filtration degree $p$. 

For $F^0V$, we have constructed an acyclic cofibration $M^{(0)}\rto{\sim} F^0V$ in our previous argument. Next, assume we have constructed $ ^{(p)}\rto{\sim}F^pV$. We now apply Lemma \ref{lem:exact sequences of equivalent complexes} to the short exact sequence $0\to F^pV\to F^{p-1}V\to F^{p-1}V/F^{p}V\to 0$ and the minimal models $M^{(p)}\rto{\sim} F^pV$ and $H(F^{p-1}V/F^{p}V)\rto{\sim} F^{p-1}V/F^{p}V$. This produces an acyclic cofibration $M^{(p-1)}\rto{\sim} F^{p-1}V$ whose domain fits into a short exact sequence 
$$
0\rt M^{(p)}\rt M^{(p-1)}\rt H(F^{p-1}V/F^{p}V)\rt 0.
$$ 
Passing to the associated graded, one sees that $M^{(p-1)}$ is minimal. Furthermore $M^{(p-1)}$ agrees with $M^{(p)}$ in filtration degree $p$ because $F^p\big(H(F^{p-1}V/F^{p}V)\big)=0$.

Finally, taking the colimit as $p\to -\infty$ yields an acyclic cofibration $M=\colim M^{(p)}\rt V$, where $M$ is minimal (since it agrees with the minimal complex $M^{(p)}$ in filtration weight $p$). Note that the acyclic cofibration $M\rt V$ admits a retraction, which is then an acyclic fibration. The desired deformation retract with side conditions then follows from Lemma \ref{lem:acyclic fib is def retract}.
\end{proof}

\begin{proposition}
	Let $V$ be a $\Omega \CC$-algebra that deformation retracts into a minimal complex $M$ satisfying the side conditions $ph=0$, $hi=0$ and $h^2=0$. Then, the map $p$ extends to an $\infty$-morphism $p_\infty$ between the transferred $\Omega \CC$-structure on $M$ given by Theorem \ref{thm:HTT} and $V$.
\end{proposition}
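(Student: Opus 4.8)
The plan is to carry the classical homotopy-transfer construction of the $\infty$-morphism $p_\infty$ (\cite[\S\,10.3]{LodayVallette2012}) over to the complete filtered setting, in exactly the spirit of the proof of Theorem \ref{thm:HTT}. Recall from Proposition/definition \ref{propdef:bar algebras} that, writing $\iota\colon \CC\rt \Omega\CC$ for the universal Koszul twisting morphism, an $\infty$-morphism $V\rightsquigarrow M$ of $\Omega\CC$-algebras is the same datum as a morphism of conilpotent $\CC$-coalgebras $\Bar_\iota V\rt \Bar_\iota M$; and since $\Bar_\iota M=\CC_+\circ M$ is the \emph{cofree} conilpotent $\CC$-coalgebra on $M$, such a morphism is equivalent to the datum of its corestriction, a map of complete complexes $\Bar_\iota V\rt M$ satisfying the usual twisting-cochain equation with respect to the two $\Omega\CC$-structures. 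It therefore suffices to produce such a map whose restriction to $I\circ V\cong V\subseteq \CC_+\circ V$ equals $p$.

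\emph{Step 1: the formula.} I would define $p_\infty$ by the usual tree sum: on the summand $\overline{\CC}(n)\circ V$ it sends an element to the sum, over the trees with $n$ leaves obtained by iterating the decomposition of $\overline{\CC}$, of the composite that decorates the $n$ leaves with the inputs taken from $V$, every internal vertex with the corresponding structure operation of the $\Omega\CC$-algebra $V$, every internal edge with the contracting homotopy $h$, and the root with $p$; on $I\circ V\cong V$ it is simply $p$. This is the tree formula used for $i_\infty$ in the proof of Theorem \ref{thm:HTT}, with the decoration of the leaves by $i$ replaced by the decoration of the root by $p$ so that the output lands in $M$. In particular the linear component $p_\infty^{(1)}$ is $p$ by construction.

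\emph{Step 2: it is an $\infty$-morphism.} By the cofreeness recalled above, it remains to check that $p_\infty$ satisfies the twisting-cochain equation making it a morphism of $\CC$-coalgebras between the bar constructions, i.e.\ an $\infty$-morphism. Expanding this equation produces a sum of tree terms that cancel in pairs once one substitutes the homotopy-retract identity $ip-\id_V=d_Vh+hd_V$ and invokes the side conditions $ph=0$, $hi=0$, $h^2=0$; these side conditions are precisely what make the ``naive'' tree formula of Step 1 close up into a morphism, and they also identify the $\Omega\CC$-structure that appears on the target $M$ as the transferred one of Theorem \ref{thm:HTT}. This computation uses nothing beyond the identities defining a deformation retract of complexes, so it is literally the one of \cite[\S\,10.3]{LodayVallette2012}; I would only spell out the places where the side conditions enter.

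\emph{Step 3: well-definedness over complete complexes.} The one point genuinely requiring the complete filtered framework — and the mildest possible obstacle — is that for a general complete conilpotent cooperad $\CC$ the iterated decomposition of an element of $\overline{\CC}(n)$ need not terminate (Warning \ref{war:complete conilpotence}), so the defining sum for $p_\infty$ on $\overline{\CC}(n)\circ V$ is genuinely infinite. Exactly as in the proof of Theorem \ref{thm:HTT}, this sum converges and lands in the \emph{complete} complex $M$ because $i$, $p$, $h$ and all structure operations of $V$ preserve the filtration, so that adjoining extra vertices and edges can only raise the filtration weight; equivalently, $p_\infty$ is read off from a filtration-preserving ``universal'' map built out of $i$, $p$ and $h$ on the bar constructions of the endomorphism operads, just as in loc.\ cit. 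Granting this, Steps 1 and 2 go through verbatim and produce the desired $\infty$-morphism $p_\infty$.
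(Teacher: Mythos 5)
Your proposal is correct and follows essentially the same route as the paper, whose proof simply invokes \cite[Proposition 10.3.14]{LodayVallette2012} adapted to the complete filtered case: the classical tree formula with $p$ at the root, $h$ on internal edges and the operations of $V$ at the vertices, with the side conditions ensuring the coalgebra-map equation and the filtration-preservation of $i$, $p$, $h$ guaranteeing convergence of the (possibly infinite) sums in the complete setting. Your write-up just makes explicit the details the paper leaves to the reader.
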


\begin{proof}
	The proof is the same as \cite[Proposition 10.3.14]{LodayVallette2012}, adapted to the complete filtered case.
\end{proof}

\begin{corollary}
	Restricting to the subcategory of complete filtered algebras,
	$\infty$-weak equivalences are $\infty$-quasi-invertible. 
\end{corollary}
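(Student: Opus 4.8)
The statement is the complete filtered counterpart of the fact that $\infty$-quasi-isomorphisms of $\PP_\infty$-algebras admit a quasi-inverse up to $\infty$-homotopy \cite[\S10.4]{LodayVallette2012}, and the plan is to run that argument while making sure every step stays within filtration-preserving maps. Throughout, an \emph{$\infty$-weak equivalence} is an $\infty$-morphism whose linear part is a weak equivalence of the underlying complete complexes, and to say that $f\colon V\rightsquigarrow W$ is \emph{$\infty$-quasi-invertible} is to produce an $\infty$-weak equivalence $g\colon W\rightsquigarrow V$ which is moreover a two-sided inverse to $f$ up to $\infty$-homotopy. First I would replace $V$ and $W$ by minimal models: by Proposition~\ref{prop:complex is we to homology} their underlying complete complexes deformation retract onto minimal ones $M_V$, $M_W$ with the side conditions $ph=0$, $hi=0$, $h^2=0$, and the Homotopy Transfer Theorem~\ref{thm:HTT} together with the preceding proposition transfers the $\Omega\CC$-algebra structures and extends $i$ and $p$ to $\infty$-morphisms $i_V\colon M_V\rightsquigarrow V$, $p_V\colon V\rightsquigarrow M_V$ (and likewise $i_W$, $p_W$ for $W$). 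Their linear parts are the original filtered homotopy equivalences, so these are $\infty$-weak equivalences, and --- exactly as in the unfiltered case, all relevant homotopies being built from $h$ and hence filtration-preserving --- the side conditions yield $p_V\circ i_V=\id_{M_V}$ and an $\infty$-homotopy $i_V\circ p_V\simeq\id_V$. It then suffices to invert, up to $\infty$-homotopy, the composite $g'\coloneqq p_W\circ f\circ i_V\colon M_V\rightsquigarrow M_W$, which is again an $\infty$-weak equivalence since its linear part is a composite of weak equivalences of complete complexes.

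The heart of the matter is that any $\infty$-weak equivalence $g'\colon M_V\rightsquigarrow M_W$ between minimal complete $\Omega\CC$-algebras is an honest $\infty$-isomorphism. Its linear part $g'_1\colon M_V\to M_W$ commutes with the underlying differentials --- immediate, by restricting to cogenerators, from the description of $\infty$-morphisms as morphisms of $\CC$-coalgebras $\Bar_\iota M_V\to\Bar_\iota M_W$ in Proposition/definition~\ref{propdef:bar algebras} --- and induces a quasi-isomorphism on associated graded. Since $M_V$ and $M_W$ are minimal, $\Gr(M_V)$ and $\Gr(M_W)$ carry the zero differential, so $\Gr(g'_1)$ is a quasi-isomorphism between complexes with zero differential, hence an isomorphism, whence $g'_1$ is an isomorphism of complete complexes (a filtration-preserving map of complete complexes inducing an isomorphism on associated graded is itself an isomorphism, as in the uniqueness part of Proposition~\ref{prop:complex is we to homology}). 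Regarding $g'$ as a morphism of the cofree $\CC$-coalgebras $\CC_+\circ M_V\to\CC_+\circ M_W$ and forgetting differentials, it is triangular for the filtration by number of generators, with associated graded $\bigoplus_n\id_{\CC(n)}\otimes(g'_1)^{\otimes n}$; as this is invertible, so is $g'$ as a map of graded $\CC$-coalgebras, its inverse automatically commutes with the differentials, and hence defines an $\infty$-morphism $(g')^{-1}\colon M_W\rightsquigarrow M_V$ that is a two-sided inverse of $g'$.

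To conclude, set $g\coloneqq i_V\circ(g')^{-1}\circ p_W\colon W\rightsquigarrow V$, a composite of $\infty$-weak equivalences and hence an $\infty$-weak equivalence. Using $p_Wi_W=\id_{M_W}$, $p_Vi_V=\id_{M_V}$, $i_Vp_V\simeq\id_V$, $i_Wp_W\simeq\id_W$ together with the compatibility of $\infty$-homotopy with composition, one computes $g\circ f=i_V\circ(g')^{-1}\circ p_W\circ f\simeq i_V\circ(g')^{-1}\circ(p_W\circ f\circ i_V)\circ p_V=i_V\circ(g')^{-1}\circ g'\circ p_V=i_V\circ p_V\simeq\id_V$, and symmetrically $f\circ g\simeq i_W\circ p_W\simeq\id_W$. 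Thus $f$ is $\infty$-quasi-invertible.

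Formally this is the argument of \cite[\S10.4]{LodayVallette2012}, and the two points that genuinely belong to the complete filtered setting are the following. The entire reduction to minimal models --- the Homotopy Transfer Theorem, the identities $p_\infty i_\infty=\id$ and $i_\infty p_\infty\simeq\id$, and the compatibility of $\infty$-homotopies with composition --- has to be carried out within complete filtered complexes, which causes no trouble precisely because the data $i$, $p$, $h$, the transferred structure maps, and all components of the transferred $\infty$-morphisms are filtration-preserving. The one completeness-dependent input, and the main thing to get right, is the rigidity used in the second paragraph: a weak equivalence between minimal complete complexes is an isomorphism, equivalently a filtration-preserving map of complete complexes inducing an isomorphism on the associated graded is an isomorphism. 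This fails without completeness --- the inclusion of a non-complete filtered complex into its completion induces an isomorphism on associated graded without being an isomorphism --- which is exactly why the statement is restricted to complete filtered algebras.
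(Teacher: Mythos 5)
Your proof is correct and takes essentially the same route as the paper: pass to minimal models via Proposition \ref{prop:complex is we to homology} and the homotopy transfer results, note that the induced $\infty$-weak equivalence between minimal models is an $\infty$-isomorphism, and take the composite $W\rightsquigarrow H(W)\rightsquigarrow H(V)\rightsquigarrow V$ as the quasi-inverse. The only difference is that the paper delegates the inversion between minimal models to \cite[Theorem 10.4.2]{LodayVallette2012}, whereas you spell out that step (and the role of completeness in it) explicitly.
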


\begin{proof}
	Given an $\infty$-weak equivalence $f\colon V\rightsquigarrow W$ one constructs an $\infty$-quasi-inverse by taking the composite 
	$$\begin{tikzcd}
	W\arrow[r,rightsquigarrow, "p_\infty"] & H(W) \arrow[r, rightsquigarrow, "{[f]^{-1}}"] & H(V)\arrow[r, rightsquigarrow, "i_\infty"]& V
	\end{tikzcd}$$
where $[f]^{-1}$ is the inverse $\infty$-morphism as in \cite[Theorem 10.4.2]{LodayVallette2012}.
%
%
\end{proof}

\subsubsection{The $\infty$-category of algebras}
By definition, the $\infty$-category of complete algebras over a complete operad $\PP$ is the $\infty$-categorical localization of the category of $\PP$-algebras at the filtered weak equivalences. As an application of the Homotopy Transfer Theorem, we will describe this $\infty$-category more explicitly in terms of $\infty$-morphisms.

As should be expected, the bar-cobar construction provides a cofibrant replacement functor on $\PP$-algebras. In fact, we will need a slightly stronger version of this.
\begin{proposition}\label{prop:bar cobar resolutions are cofibrant}
Let $\phi\colon \CC\rt \PP$ be a twisting morphism and $A$ a $\PP$-algebra. Then the natural map of $\PP$-algebras $\PP\circ A\rt \Omega_\phi\Bar_\phi(A)$ is a cofibration. In particular, the bar-cobar construction $\Omega_\phi\Bar_\phi(A)$ is a cofibrant $\PP$-algebra.
\end{proposition}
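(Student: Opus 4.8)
The plan is to present $\PP\circ A\rt\Omega_\phi\Bar_\phi(A)$ as a transfinite composition of pushouts of generating cofibrations of $\Alg_\PP$, filtering the extra generators by the coradical filtration of $\CC$. Recall that as a graded $\PP$-algebra $\Omega_\phi\Bar_\phi(A)=\PP\circ(\CC_+\circ A)$ is free on the graded object underlying $\Bar_\phi(A)=(\CC_+\circ A,\,d_A+d_\phi)$; since $\CC_+=\CC\oplus I$ we have $\CC_+\circ A=A\oplus(\CC\circ A)$, and the map in the statement is the free extension induced by the summand inclusion $A=I\circ A\hookrightarrow\CC_+\circ A$. This is a map of dg $\PP$-algebras because both the bar differential $d_\phi$ and the cobar differential $d^\phi$ vanish on the cogenerators $A$ — the former since $\phi$ takes values in $\overline{\PP}$ and hence annihilates the unit $I\subseteq\CC_+$, the latter since the reduced coaction vanishes on $I\circ A$ — so the total differential restricts there to $d_A$, matching $\PP\circ A$. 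Since every complete complex is cofibrant (Lemma \ref{lem:cofibrations of complete complexes}) and $\PP\circ(-)$ is left Quillen (Theorem \ref{thm:Model str}), $\PP\circ A$ is already a cofibrant $\PP$-algebra, so the final assertion follows once we know the map is a cofibration.

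For $w\geq 0$ let $\CC_{\le w}\coloneqq\mm{corad}^w(\CC)$ denote the coradical filtration of the complete conilpotent cooperad $\CC$, so that $\CC_{\le 0}=0$. Although $\bigcup_w\CC_{\le w}$ need not be all of $\CC$ (Warning \ref{war:complete conilpotence}), applying $\Gr$ gives the coradical filtration of $\Gr(\CC)$, which is conilpotent in the strict sense and hence genuinely exhausted; since $\Gr$ commutes with filtered colimits (Lemma \ref{lem:graded exact}) and a map that is an isomorphism on associated gradeds with complete target identifies that target with the completion of the source, we get $\widehat{\colim}_w\,\CC_{\le w}=\CC$. Put $V_w\coloneqq(\CC_{\le w})_+\circ A$, so that $V_0=A$ and, by the same argument, $\widehat{\colim}_w V_w=\CC_+\circ A$; and set $Q_w\coloneqq(\PP\circ V_w,\,D)$. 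This is a sub-dg-$\PP$-algebra of $\Omega_\phi\Bar_\phi(A)$: the internal differentials $d_\CC,d_A$ preserve each $V_w$; the bar differential strictly lowers the coradical degree, $d_\phi(V_w)\subseteq V_{w-1}$ (it is built from the infinitesimal cocomposition of $\CC$ together with $\phi$, and the piece to which $\phi$ is applied lies in $\overline{\CC}$, hence has coradical degree $\ge 1$); and likewise $d^\phi(V_w)\subseteq\PP\circ V_{w-1}$. Moreover $Q_0=\PP\circ A$ (the two non-internal differentials vanish on $V_0=I\circ A$ as above), and $\widehat{\colim}_w Q_w=\Omega_\phi\Bar_\phi(A)$, since $\PP\circ(-)$ commutes with colimits (again because $\Gr$ does and detects weak equivalences).

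It remains to see that each $Q_{w-1}\rt Q_w$ is a cofibration. Choose a splitting $V_w\cong V_{w-1}\oplus G_w$ without differential (Remark \ref{rem:surjections have sections}), with $G_w\cong V_w/V_{w-1}$, and let $\overline d$ be the differential induced on $G_w$ by $d_\CC+d_A$ (the bar differential $d_\phi$ descends to zero here, so $\overline d^{\,2}=0$). For $g\in G_w$ the total differential then has the form $Dg=\overline d g+\psi(g)$ with $\psi(g)\in Q_{w-1}$ of degree $+1$, collecting $d^\phi g$, the term $d_\phi g$, and the $V_{w-1}$–component of $(d_\CC+d_A)g$; and $D^2=0$ forces $\psi$ to be a chain map $(G_w,\overline d)\rt Q_{w-1}[1]$. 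Thus $Q_w$ is the cell attachment of $(G_w,\overline d)$ to $Q_{w-1}$ along $\psi$: it is the pushout of $Q_{w-1}\xleftarrow{\PP\circ\psi}\PP\circ(G_w[-1])\hookrightarrow\PP\circ\mm{Cone}(G_w[-1])$, in which the right-hand map is $\PP\circ(-)$ applied to the canonical summand inclusion $G_w[-1]\hookrightarrow\mm{Cone}(G_w[-1])$ — an admissible monomorphism of complete complexes, hence a cofibration of $\PP$-algebras by Lemma \ref{lem:cofibrations of complete complexes} and Theorem \ref{thm:Model str}. Therefore $Q_{w-1}\rt Q_w$ is a cofibration, and $\PP\circ A=Q_0\rt\widehat{\colim}_w Q_w=\Omega_\phi\Bar_\phi(A)$ is a transfinite composition of cofibrations, hence a cofibration; in particular $\Omega_\phi\Bar_\phi(A)$ is cofibrant.

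The step I expect to require the most care is the interaction of the coradical filtration — which is exhaustive only after passing to the associated graded — with the transfinite-composition argument: one has to check that the completed colimit of the tower $(Q_w)_w$ genuinely recovers $\Omega_\phi\Bar_\phi(A)$ and that completeness does not obstruct the cofibration property in the limit. This is precisely where the results of Section \ref{sec:complete operads} enter essentially: the model structure and its description of (co)fibrations on complete complexes and on $\PP$-algebras, left-Quillenness of $\PP\circ(-)$, and the fact that $\Gr$ detects weak equivalences and preserves filtered colimits. A subsidiary point is confirming that the bar and cobar differentials strictly lower the coradical degree even in the presence of arity-one co-operations, which reduces to the facts that $\phi$ is supported on $\overline{\PP}$ (so it kills the counit) and that the cocomposition of a conilpotent cooperad strictly lowers the coradical degree of each of the factors it creates.
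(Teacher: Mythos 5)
Your overall strategy—filter the bar-cobar construction by the coradical degree of $\CC$ and exhibit $\PP\circ A\rt \Omega_\phi\Bar_\phi(A)$ as a transfinite composition of cell attachments—is exactly the ``standard'' argument, and the paper deliberately avoids it because its crucial input fails in the complete setting. The gap is your claim that $\widehat{\colim}_w\,\mathrm{corad}^w(\CC)=\CC$, which you deduce from the assertion that $\Gr$ of the coradical filtration of $\CC$ is the coradical filtration of $\Gr(\CC)$. That interchange is false: $\Gr(\ker f)\subseteq \ker(\Gr f)$ can be strict, and for a merely \emph{complete conilpotent} cooperad the honest coradical filtration can be drastically smaller than what the associated graded suggests. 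Concretely, take the (arity-one) complete non-counital coalgebra $\CC=kc\oplus kb$ with $c$ of filtration weight $0$, $b$ of weight $1$, $\Delta(c)=c\otimes b$ and $\Delta(b)=b\otimes b$. This is coassociative, filtration-preserving, and complete conilpotent ($\Delta^m$ of any element has weight $\geq m$), and $\Gr(\CC)$ has zero coproduct, so it is conilpotent as Warning \ref{war:complete conilpotence} predicts; yet no nonzero element of $\CC$ is annihilated by any iterate of $\Delta$, so $\mathrm{corad}^w(\CC)=0$ for all $w$. In your notation one then gets $V_w=A$ and $Q_w=\PP\circ A$ for every $w$, and the completed colimit of your tower is $\PP\circ A$, not $\Omega_\phi\Bar_\phi(A)$. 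So the tower need not exhaust the bar-cobar construction, and the transfinite-composition argument does not get off the ground. (The rest of your steps—$Q_0=\PP\circ A$, the cell-attachment description of $Q_{w-1}\rt Q_w$ via an admissible splitting, Lemma \ref{lem:cofibrations of complete complexes}—would be fine if the filtration were exhaustive, which is precisely the point at issue.)

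This is why the paper's proof takes a different route: it first reduces to the universal twisting morphism $\iota\colon \CC\rt\Omega\CC$ using the left Quillen functor $\PP\circ_{\Omega\CC}(-)$, and then verifies the left lifting property of $\Omega\CC\circ A\rt\Omega_\iota\Bar_\iota(A)$ against acyclic fibrations directly: a map out of $\Omega_\iota\Bar_\iota(A)$ is an $\infty$-morphism out of $A$, a linear section exists because the linear part of the universal $\infty$-morphism is a summand inclusion, and Lemma \ref{lem:sections from htt} (homotopy transfer) upgrades that linear section to an $\infty$-morphism section. If you want to salvage a filtration argument, you would need a filtration intrinsic to the complete setting (not the coradical one); as written, the exhaustion step is a genuine obstruction, not a technicality.
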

The standard method for checking cofibrancy of the bar-cobar construction is to endow it with a filtration, coming from the coradical filtration on the cooperad $\CC$. Since we do not have access to the coradical filtration in the complete setting (Warning \ref{war:complete conilpotence}), we will give a slightly different argument, using the homotopy transfer theorem as follows:
\begin{lemma}[$\infty$-sections from homotopy transfer]\label{lem:sections from htt}
Let $p\colon B\twoheadrightarrow A$ be an acyclic fibration between $\Omega\CC$-algebras and $i\colon A\rt B$ a linear section. Then $i$ extends to an $\infty$-morphism $i_\infty$ such that $pi_\infty=\id_A\colon A \rightsquigarrow A$.
\end{lemma}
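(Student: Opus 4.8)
The plan is to deduce this from the Homotopy Transfer Theorem (Theorem \ref{thm:HTT}), which is precisely what the name of the lemma suggests. Since $p$ is a strict surjection, its kernel $C\coloneqq\ker(p)$ is a complete subcomplex of $B$, and the (strict) section $i$ splits $B\cong i(A)\oplus C$ as complete complexes, compatibly with the filtrations — this is checked exactly as for the summand decompositions appearing in the proof of Lemma \ref{lem:cofibrations of complete complexes}. Applying the exact functor $\Gr$ (Lemma \ref{lem:graded exact}) to the short exact sequence $0\rt C\rt B\rt A\rt 0$ and using that $\Gr(p)$ is a quasi-isomorphism, one sees that $C$ is graded-acyclic, hence a contractible complete complex.

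Next I would upgrade $(i,p)$ to a deformation retract with side conditions, following the proof of Lemma \ref{lem:acyclic fib is def retract}. By that lemma applied to the acyclic fibration $C\twoheadrightarrow 0$ (or by Proposition \ref{prop:complex is we to homology}), $C$ carries a filtration-preserving contracting homotopy $h_C$ of degree $-1$ with $h_C^2=0$; extending it by zero along the splitting $B\cong i(A)\oplus C$ yields $h\colon B\rt B$ with $[d_B,h]=ip-\id_B$, and, together with the hypothesis $pi=\id_A$, this is a deformation retract of complete complexes satisfying the side conditions $ph=0$, $hi=0$, $h^2=0$. Feeding this datum into Theorem \ref{thm:HTT} produces a transferred $\Omega\CC$-algebra structure on $A$ together with an $\infty$-morphism $i_\infty\colon A\rightsquigarrow B$ whose linear component is $i$.

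The step requiring the most care, as I see it, is to identify the transferred structure on $A$ with the $\Omega\CC$-structure it already carries — so that the equation $p\circ i_\infty=\id_A$ even typechecks. For this I would use the explicit tree formulas underlying Theorem \ref{thm:HTT}: each transferred operation $A^{\otimes n}\rt A$ is a sum over trees whose leaves are fed $i$, whose internal edges carry $h$, whose vertices carry the structure operations of $B$, and whose root carries $p$. Since $p$ is strict it may be commuted through the root vertex, turning it into the corresponding operation of $A$ evaluated on the $p$-images of that vertex's inputs; a leaf input then contributes $p\circ i=\id_A$, while an internal-edge input contributes $p\circ h=0$. Hence only single-vertex trees survive, and these reproduce precisely the original operations of $A$. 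Thus $i_\infty$ is honestly an $\infty$-morphism from $A$, with its given structure, to $B$.

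Finally, $p\circ i_\infty=\id_A$ is checked component by component. The linear component is $p\circ i=\id_A$; for $n\geq 2$ the component $i_\infty^{(n)}\colon A^{\otimes n}\rt B$ is again a sum of tree contributions, this time with $h$ decorating the root edge, so composing with $p$ and using $ph=0$ makes all of them vanish. Therefore $p\circ i_\infty$ equals the strict identity of $A$, as required.
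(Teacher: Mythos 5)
Your proof is correct and follows essentially the same route as the paper: extend $(i,p)$ to a deformation retract with side conditions (the paper cites the proof of Lemma \ref{lem:acyclic fib is def retract} for this, which is exactly your splitting-plus-contracting-homotopy argument), apply the Homotopy Transfer Theorem, and then use strictness of $p$ together with the tree formulas to see that the transferred structure is the original one on $A$ and that $p\,i_\infty=\id_A$. The only difference is that you spell out the steps the paper delegates to Lemma \ref{lem:acyclic fib is def retract} and to the Loday--Vallette tree formulas.
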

\begin{proof}
By (the proof of) Lemma \ref{lem:acyclic fib is def retract}, $p$ and $i$ are part of a deformation retract with homotopy $h$ satisfying the side conditions. We can therefore apply the Homotopy Transfer Theorem \ref{thm:htt-B} to obtain \emph{another} $\Omega\CC$-algebra structure on $A$ for which $i$ can be upgraded to an $\infty$-morphism.

Since $p$ was already a (strict) map of $\Omega\CC$-algebras from the start, the formula for the transferred structure on $A$ in terms of trees with roots labeled by $p$ \cite[\S 10.3.3]{LodayVallette2012} shows that this transferred structure coincides with the original $\Omega\CC$-algebra structure on $A$. Furthermore, the formula for $i_\infty$ in terms of trees with roots labeled by $h$ \cite[\S 10.3.10]{LodayVallette2012} shows that $pi_\infty=\id_A$. 
\end{proof}
\begin{proof}[Proof of Proposition \ref{prop:bar cobar resolutions are cofibrant}]
It suffices to verify this when $\PP=\Omega\CC$ and $\phi=\iota$ is the universal twisting morphism. Indeed, $\PP(A)\rt \Omega_\phi\Bar_\phi(A)$ is the image of $\Omega\CC\circ A\rt \Omega_\iota\Bar_\iota(A)$ (where $A$ is viewed as a $\Omega\CC$-algebra) under the left Quillen functor $\PP\circ_{\Omega\CC}(-)\colon \Alg_{\Omega\CC}\rt \Alg_{\PP}$. 

In the case where $\PP=\Omega\CC$, since trivial fibrations are preserved under pullbacks, it suffices to verify that every acyclic fibration of $\Omega\CC$-algebras
$$\begin{tikzcd}
	\Omega\CC\circ A\arrow[r, "s"]\arrow[d] & B\arrow[d, twoheadrightarrow, "\sim"{swap}, "p"] \\
\Omega_\iota\Bar_\iota(A)\arrow[ru, dotted]\arrow[r, equal] & \Omega_\iota\Bar_\iota(A)
\end{tikzcd}$$
there exists a dotted section as indicated. Since maps $\Omega_\iota\Bar_\iota(A)\rt B$ correspond bijectively to $\infty$-morphisms $A\rightsquigarrow B$, this is equivalent to finding an $\infty$-morphism $s_\infty\colon A\rightsquigarrow B$ whose linear part agrees with the map $s$, such that the composition $A\rightsquigarrow B\rt \Omega_\iota\Bar_\iota(A)$ agrees with the universal $\infty$-morphism $v_\infty$ (adjoint to the identity on $\Omega_\iota\Bar_\iota(A)$). 

Let us first observe that the linear map $A\rt \Omega_\iota\Bar_\iota(A)$ underlying $v_\infty$ is the inclusion of a summand (induced by the inclusion $k\rt \Omega\CC\circ_\iota \CC_+$). In particular, it is a cofibration, so that we can find a linear section $i\colon \Omega_\iota\Bar_\iota(A)\rt B$ extending $s$. Lemma \ref{lem:sections from htt} shows that $i$ can be extended to an $\infty$-morphism $i_\infty$ such that $pi_\infty=\id_A$. Now the composite $s_\infty=i_\infty v_\infty$ provides the desired extension of $s$.
\end{proof}
\begin{definition}\label{def:oocat of algebras}
Let $\phi\colon \CC\rt \PP$ be a Koszul twisting morphism. We denote by $\oocat{Alg}_{\PP}^\mm{cpl}$ the simplicially enriched category where:
\begin{enumerate}[start=0]
\item objects are complete $\PP$-algebras.
\item for any two objects $A_0$ and $A_1$, the simplicial set of morphisms between them is given in simplicial degree $n$ by the set of $\infty$-morphisms
$$
\Map_{\PP}(A_0, A_1)_n = \big\{A_0\rightsquigarrow A_1\botimes \Omega[\Delta^n]\big\}.
$$
The composition of maps is given by
$$\begin{tikzcd}
A_0\arrow[r, rightsquigarrow, "\phi"] & A_1\botimes \Omega[\Delta^n]\arrow[r, rightsquigarrow, "{\psi\otimes \id}"] & A_2\botimes \Omega[\Delta^n]\botimes \Omega[\Delta^n] \arrow[r, "{\id\otimes \mu}"] & A_2\botimes \Omega[\Delta^n]
\end{tikzcd}$$
where the last map arises from the multiplication on $\Omega[\Delta^n]$.
\end{enumerate}
Note that $\oocat{Alg}_{\PP}^\mm{cpl}$ depends (implicitly) on a choice of Koszul twisting morphism.
\end{definition}
\begin{lemma}\label{lem:kan enriched}
For any two objects $A_0, A_1$, the simplicial set $\Map_{\PP}(A_0, A_1)$ of $\infty$-morphisms is a Kan complex. Furthermore, every (strict) weak equivalence $f\colon A_1\rt A_2$ induces a homotopy equivalence $f_*\colon \Map_{\PP}(A_0, A_1)\rt \Map_{\PP}(A_0,A_2)$.
\end{lemma}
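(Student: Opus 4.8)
The plan is to realize $\Map_{\PP}(A_0,A_1)$ as a mapping simplicial set out of the cofibrant bar--cobar resolution of $A_0$ into a simplicial frame built from $A_1$, and to carry out all the homotopical verifications on the associated graded, where the completed tensor product becomes an ordinary tensor product over $k$. Concretely, by the bar--cobar adjunction of Proposition/definition \ref{propdef:bar algebras}, an $\infty$-morphism $A_0\rightsquigarrow A_1\botimes\Omega[\Delta^n]$ is the same datum as a strict map of $\PP$-algebras $QA_0\rt A_1\botimes\Omega[\Delta^n]$, where $QA_0\coloneqq\Omega_\phi\Bar_\phi(A_0)$ is cofibrant by Proposition \ref{prop:bar cobar resolutions are cofibrant}. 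Hence $\Map_{\PP}(A_0,A_1)$ is identified with the simplicial set $[n]\mapsto\Hom_{\cat{Alg}_{\PP}^{\mm{cpl}}}(QA_0,X_n)$, where $X_\bullet\coloneqq A_1\botimes\Omega[\Delta^\bullet]$ is the simplicial complete $\PP$-algebra obtained by tensoring $A_1$ with the Sullivan simplicial commutative dg algebra $\Omega[\Delta^\bullet]$ (with trivial filtration; this uses that complete $\PP$-algebras are tensored over complete commutative dg algebras, as in the construction of the path object in the proof of Theorem \ref{thm:Model str}). Note that $X_0=A_1$ and, by Theorem \ref{thm:Model str}, every object in sight is fibrant.

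To see that $\Map_{\PP}(A_0,A_1)$ is a Kan complex, I would fill horns directly. A map $\Lambda^n_k\rt\Map_{\PP}(A_0,A_1)$ is a map of $\PP$-algebras $QA_0\rt M$ into the matching object $M$ of $X_\bullet$ along the $k$-th horn, and filling the horn means solving the lifting problem for $QA_0\rt M$ against $X_n\rt M$; since $QA_0$ is cofibrant it suffices to show that $X_n\rt M$ is a trivial fibration. Using completeness of $A_1$ one checks that $M\cong A_1\botimes\Omega[\Lambda^n_k]$, with $\Omega[\Lambda^n_k]$ the (trivially filtered) commutative dg algebra of polynomial forms on the horn, so that — as $\Gr$ is symmetric monoidal (Remark \ref{rem:graded is monoidal}) and commutes with completion — the associated graded of $X_n\rt M$ is $\Gr(A_1)\otimes_k\big(\Omega[\Delta^n]\rt\Omega[\Lambda^n_k]\big)$. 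Since $\Lambda^n_k\hookrightarrow\Delta^n$ is anodyne, $\Omega[\Delta^n]\rt\Omega[\Lambda^n_k]$ is a surjective quasi-isomorphism of commutative dg algebras, and this is preserved by tensoring with $\Gr(A_1)$ over the field $k$. By Lemma \ref{lem:fibrations and w.e. on associated graded}, $X_n\rt M$ is therefore surjective in each filtration weight and a graded quasi-isomorphism, hence a trivial fibration, and the lift exists.

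For the second statement, a strict weak equivalence $f\colon A_1\rt A_2$ induces a levelwise weak equivalence $f\botimes\Omega[\Delta^\bullet]\colon A_1\botimes\Omega[\Delta^\bullet]\rt A_2\botimes\Omega[\Delta^\bullet]$ of simplicial complete $\PP$-algebras, because $(-)\botimes\Omega[\Delta^n]$ preserves weak equivalences (Corollary \ref{cor:monoidal model}). Exactly the same associated-graded computation as above — now with the surjective matching maps $\Omega[\Delta^n]\twoheadrightarrow\Omega[\partial\Delta^n]$ of the Sullivan algebra in place of the horn restrictions — shows that $A_1\botimes\Omega[\Delta^\bullet]$ and $A_2\botimes\Omega[\Delta^\bullet]$ are Reedy fibrant, hence simplicial frames on the fibrant objects $A_1$ and $A_2$. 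By the standard theory of framings in model categories, applying $\Hom_{\cat{Alg}_{\PP}^{\mm{cpl}}}(QA_0,-)$ with $QA_0$ cofibrant to a levelwise weak equivalence between Reedy fibrant simplicial objects produces a weak equivalence of Kan complexes; this is $f_*$, and a weak equivalence between Kan complexes is a homotopy equivalence. (Alternatively, one can recognize $\Map_{\PP}(A_0,A_1)$ as the Maurer--Cartan space of the relevant convolution algebra and invoke the standard fact that such spaces attached to complete filtered algebras are Kan, where completeness plays the analogous role.)

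The one place where real care is needed — and where completeness of the filtrations is essential, as anticipated in the remarks opening Section \ref{sec:HTT} — is the bookkeeping around the associated graded in the two arguments above: the completed tensor $A_1\botimes(-)$ does \emph{not} commute with arbitrary limits and the Sullivan forms $\Omega[\Delta^n]$ are infinite-dimensional in each cohomological degree, but $A_1\botimes(-)$ does commute with the finite limits defining the matching objects (this uses $A_1=\lim_p A_1/F^pA_1$), $\Gr$ is exact and symmetric monoidal and commutes with completion, and (trivial) fibrations of complete $\PP$-algebras are detected on $\Gr$ by Lemmas \ref{lem:graded exact}--\ref{lem:fibrations and w.e. on associated graded}. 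I expect this — making sure every limit, surjection, and weak equivalence in sight is controlled by the associated graded — to be the main, if modest, obstacle; everything else is routine model-category machinery.
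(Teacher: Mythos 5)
Your proposal is correct and follows essentially the same route as the paper: identify $\Map_{\PP}(A_0,A_1)$ with strict maps $\Omega_\phi\Bar_\phi(A_0)\rt A_1\botimes\Omega[\Delta^\bullet]$, then use cofibrancy of the bar--cobar resolution (Proposition \ref{prop:bar cobar resolutions are cofibrant}) together with the fact that $A_1\botimes\Omega[\Delta^\bullet]$ is a (Reedy) fibrant simplicial resolution. The only difference is that the paper outsources the latter fact to Hinich, whereas you verify it by hand via the associated-graded computation of the matching maps, which is a fine and correct way to fill in that citation.
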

\begin{proof}
One can identify $\Map_{\PP}(A_0, A_1)$ with the simplicial set of strict maps of $\PP$-algebras $\Omega_\phi\Bar_\phi(A_0)\rt A_1\botimes \Omega[\Delta^\bullet]$. The result then follows formally from $\Omega_\phi\Bar_\phi(A_0)$ being cofibrant (Proposition \ref{prop:bar cobar resolutions are cofibrant}) and $A_1\botimes \Omega[\Delta^\bullet]$ being a fibrant simplicial resolution of $A_1$ \cite{hinich1997homological}.
\end{proof}
\begin{proposition}\label{prop:oocat of algebras over k}
Let $\phi\colon \CC\rt \PP$ be a Koszul twisting morphism and consider the functor from the model category of $\PP$-algebras to the simplicial category of $\PP$-algebras
$$
j\colon \Alg_{\PP}^\mm{cpl}\rt \oocat{Alg}_{\PP}^\mm{cpl}.
$$
This exhibits $\oocat{Alg}_{\PP}^\mm{cpl}$ as the $\infty$-categorical localization of the category $\Alg_{\PP}^\mm{cpl}$ at the filtered quasi-isomorphisms.
\end{proposition}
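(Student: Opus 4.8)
The plan is to exhibit $\oocat{Alg}_{\PP}^\mm{cpl}$ as a model for the $\infty$-categorical localization of the model category $\Alg_{\PP}^\mm{cpl}$ by checking that the identity-on-objects functor $j$ is a Dwyer--Kan equivalence: it inverts weak equivalences, it is (trivially) essentially surjective, and it induces weak equivalences on homotopy function complexes. The only real content is the last point, and it has essentially been arranged in Lemma \ref{lem:kan enriched}: the simplicial mapping sets $\Map_{\PP}(A_0,A_1)$ are computed by strict morphisms out of the cofibrant bar--cobar resolution into a Hinich-style simplicial resolution.

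First I would record the relevant properties of $j$. Every complete $\PP$-algebra is fibrant (Theorem \ref{thm:Model str}), and every filtered quasi-isomorphism between complete $\PP$-algebras is in particular an $\infty$-quasi-isomorphism, hence $\infty$-quasi-invertible by the results of \S\ref{sec:HTT}; therefore $j$ carries weak equivalences to homotopy equivalences in the Kan-enriched category $\oocat{Alg}_{\PP}^\mm{cpl}$ (the mapping complexes are Kan complexes by Lemma \ref{lem:kan enriched}). Moreover $j$ is the identity on objects, hence essentially surjective.

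Next I would identify the mapping spaces with homotopy function complexes. By the $\Omega_\phi\dashv\Bar_\phi$ adjunction and the definition of an $\infty$-morphism (Proposition/definition \ref{propdef:bar algebras}, Definition \ref{def:oocat of algebras}) there is a natural bijection, for each $n$,
$$
\Map_{\PP}(A_0,A_1)_n \;=\; \big\{\,A_0\rightsquigarrow A_1\botimes\Omega[\Delta^n]\,\big\} \;\cong\; \Hom_{\Alg_{\PP}^\mm{cpl}}\big(\Omega_\phi\Bar_\phi(A_0),\, A_1\botimes\Omega[\Delta^n]\big),
$$
so that $\Map_{\PP}(A_0,A_1)$ is the simplicial set of strict $\PP$-algebra maps from $\Omega_\phi\Bar_\phi(A_0)$ into $A_1\botimes\Omega[\Delta^\bullet]$. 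Here $\Omega_\phi\Bar_\phi(A_0)$ is a cofibrant resolution of $A_0$: it is cofibrant by Proposition \ref{prop:bar cobar resolutions are cofibrant} and weakly equivalent to $A_0$ by Proposition/definition \ref{propdef:bar algebras} (using that $\CC$ is complete conilpotent), while $A_1\botimes\Omega[\Delta^\bullet]$ is a Reedy-fibrant simplicial resolution of $A_1$ in the sense of Hinich \cite{hinich1997homological}, exactly as invoked in the proof of Lemma \ref{lem:kan enriched}; this is the step where completeness of the underlying filtered complexes enters. Consequently $\Map_{\PP}(A_0,A_1)$ is a model for the derived mapping space $\Map^h_{\Alg_{\PP}^\mm{cpl}}(A_0,A_1)$, and the canonical comparison map induced by $j$ is a weak equivalence --- indeed, with these choices of resolutions the two simplicial sets are literally identified.

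Finally I would invoke the standard principle that a functor from a model category to a Kan-enriched category which inverts weak equivalences, is essentially surjective, and induces weak equivalences of homotopy function complexes exhibits (the homotopy coherent nerve of) the target as the $\infty$-categorical localization at the weak equivalences --- equivalently, that such a functor is a \emph{Dwyer--Kan equivalence} from the relative category $\Alg_{\PP}^\mm{cpl}$ to $\oocat{Alg}_{\PP}^\mm{cpl}$, the latter coinciding with its own hammock localization because it is Kan-enriched. The main obstacle is not conceptual but is the verification, already hidden inside Lemma \ref{lem:kan enriched}, that $A_1\botimes\Omega[\Delta^\bullet]$ genuinely is a simplicial resolution computing homotopy function complexes out of cofibrant objects: one needs the iterated path objects $A\botimes\Omega[\Delta^n]$ to remain complete $\PP$-algebras and the matching-object maps to be fibrations, all of which follows from Theorem \ref{thm:Model str} together with Hinich's construction, so the remaining work is bookkeeping rather than a new idea.
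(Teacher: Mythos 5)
Your proposal is correct and follows essentially the same route as the paper: the paper's proof likewise deduces essential surjectivity from $j$ inverting weak equivalences (Lemma \ref{lem:kan enriched}) and full faithfulness from the identification of $\Map_{\PP}(A_0,A_1)$ with strict maps $\Omega_\phi\Bar_\phi(A_0)\rt A_1\botimes\Omega[\Delta^\bullet]$, i.e.\ with derived mapping spaces computed from the cofibrant bar--cobar resolution and Hinich's simplicial resolution. Your write-up merely makes explicit the Dwyer--Kan localization principle that the paper leaves implicit.
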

\begin{proof}
By Lemma \ref{lem:kan enriched}, $j$ sends filtered quasi-isomorphisms to homotopy equivalences, so it induces an essentially surjective functor $j\colon \Alg_{\Omega\CC}^\mm{cpl}[\text{w.e.}^{-1}]\rt \oocat{Alg}_{\Omega\CC}^\mm{cpl}$. This functor is fully faithful because the mapping spaces in $\oocat{Alg}_{\Omega\CC}^\mm{cpl}$ compute the derived mapping spaces in the model category of complete $\Omega\CC$-algebras (by the proof of Lemma \ref{lem:kan enriched}). 
\end{proof}

\subsection{Main example: curved $L_\infty$-algebras}\label{sec:curved algebras}
Recall \cite[Section 3.2.2]{HirshMilles2012} that a \emph{curved Lie algebra} is a complete (non-differential) graded vector space $\mf{g}$, together with a Lie bracket $[-, -]$, a Lie algebra derivation $\nabla$ of cohomological degree $1$ and a degree $2$ element $\omega\in F^1(\mf{g})$ such that
$$
\nabla(\omega)=0 \qquad \qquad \text{and} \qquad \qquad \nabla^2(x) + [\omega, x]=0.
$$
(This notion is also frequently appears with the different convention $\nabla^2(x)=[\omega, x]$, e.g.\ in \cite{maunder2017Koszul}.) This is a particular example of a curved $L_\infty$-algebra \cite{Fukaya2003, HiroshigeStasheff2006, Getzler2018}, where all operations in arity $\geq 3$ vanish:
\begin{definition}\label{def:cLie classical}
A \emph{(classical) curved $L_\infty$-algebra} (over the field $k$) is a complete graded vector space $\mf{g}$, endowed with operations 
$$
\ell_i\colon \mm{Sym}^i_k(\mf{g}[1])\rt \mf{g}[1]
$$
such that $\ell_0\in F^1\mf{g}$ and the following equations hold:
$$
\sum_{\substack{p+q=n+1\\ q\geq 0, \ p\geq 1}}\sum_{\sigma\in \mathrm{Sh}_{p-1,q}^{-1}}\sgn(\sigma)(-1)^{(p-1)q}(\ell_p\circ_1\ell_q)^\sigma = 0.
$$
Equivalently, this is the data of a codifferential on the cofree complete coalgebra $\mm{Sym}^c_k(\mf{g}[1])$, sending the element $1$ to an element of filtration weight $1$.
\end{definition}
The purpose of this section is to describe the homotopy theory of curved $L_\infty$-algebras from the perspective of the above operadic framework.

\subsubsection{The Koszul morphism $\ucocom\{1\}\to \cLie$}\label{sec:Koszul morphism}
Let $\ucocom$ denote the linear dual of the unital commutative operad, $\ucocom(n)= k\mu_n,$ for $n\geq 0$. This comes equipped with partial cocomposition maps (dual to the partial composition of the unital commutative operad). Note that the partial composition maps do \emph{not} determine a total cocomposition map $\ucocom\rt \ucocom\circ \ucocom$.
We can endow $\ucocom$ with two filtrations:
\begin{enumerate}[label=(\alph*)]
\item the (`classical') filtration $\ucocom^\cl$, where $F^0\ucocom^\cl=\ucocom$, $F^1\ucocom^\cl=k\mu_0$ and $F^2\ucocom^\cl=0$.
\item the (`mixed') filtration $\ucocom^\mix$ where  $F^0\ucocom^\mix=\ucocom$, $F^1\ucocom^\mix=k\mu_0 \oplus k\mu_1$ and $F^2\ucocom^\mix=0$.
\end{enumerate}
With these filtrations, $\ucocom^\cl$ becomes a complete cooperad and $\ucocom^\mix$ becomes a conilpotent cooperad in the complete sense, as in Definition \ref{def:complete cooperad}.

\begin{remark}
	Note that $\ucocom^\cl$ is a counital cooperad, but (despite the name) $\ucocom^\mix$ is not since the ``counit'' $\ucocom(1) \to k$ is not compatible with the filtration.  
\end{remark}

\begin{definition}\label{def:mixed-curved lie over k}
	The \emph{mixed-curved $L_\infty$-operad} is the complete operad
	$$\cLie_\infty\coloneqq \Omega (\ucocom^\mix\{1\}).$$
	We will refer to algebras over $\cLie_\infty$ as \emph{mixed-curved $L_\infty$-algebras} (in contrast to the (classical) curved $L_\infty$-algebras from Definition \ref{def:cLie classical}) and write $\cat{cLie}^\mm{mix}$ for the category of mixed-curved $L_\infty$-algebras.
\end{definition}

Unraveling the definition, as a graded operad, $\cLie_\infty$ is freely generated by an infinite collection of 
operations $(\ell_n)_{n\geq0}$, where $\ell_n$ has arity $n$ and degree $2-n$. The filtration is given by 
$$
F^p\cLie_\infty = \text{span}\big\{\text{words containing at least }p \text{ times } l_0 \text{ or } l_1\big\}
$$
and the differential reads 
\begin{equation}\label{eq:cLoo structure equation}
-\partial (\ell_n)=\sum_{\substack{p+q=n+1\\ q\geq 0, \ p\geq 1}}\sum_{\sigma\in \mathrm{Sh}_{p-1,q}^{-1}}\sgn(\sigma)(-1)^{(p-1)q}(\ell_p\circ_1\ell_q)^\sigma
\end{equation}
where we use the convention $\partial(f)=d\circ f - (-1)^{|f|} f\circ d$. 
Let us make the differential more explicit in low arity: 
\begin{flalign*}
(n=0) & \quad -\partial(\ell_0)=\ell_1\circ_1 \ell_0 &&\\
(n=1) & \quad -\partial(\ell_1)=\ell_1\circ_1\ell_1+\ell_2\circ_1\ell_0 &&\\
(n=2) & \quad -\partial(\ell_2)=\ell_1\circ_1\ell_2-\ell_2\circ_1\ell_1+(\ell_2\circ_1\ell_1)^{(12)}+\ell_3\circ_1\ell_0.
\end{flalign*}
\begin{propdef}\label{propdef:mixed-curved lie}
	The filtered operad of  \emph{mixed-curved Lie algebras} is the operad $\cLie$ obtained from $\cLie_\infty$ by quotienting out the operadic dg-ideal generated by $\ell_3,\ell_4,\dots$. The quotient map $\cLie_\infty\to \cLie$ is a weak equivalence.
\end{propdef}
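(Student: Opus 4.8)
The plan is to verify this by passing to the associated graded and comparing with the classical (uncurved) situation, where the analogous statement is the well-known Koszulity of the Lie operad. First I would make the quotient explicit: $\cLie = \cLie_\infty / (\ell_3, \ell_4, \dots)$ is, as a graded operad (forgetting the filtration and differential), the free operad on the two generators $\ell_0$ (arity $0$, degree $2$) and $\ell_1$ (arity $1$, degree $1$) together with $\ell_2$ (arity $2$, degree $0$), modulo the ideal generated by the higher brackets; concretely its operations are built by iterated $\circ_1$-composition of $\ell_2$ with insertions of $\ell_1$ and $\ell_0$. The differential descends from \eqref{eq:cLoo structure equation}: quotienting kills $\ell_3, \ell_4, \dots$, so in arities $n \geq 3$ the relation $-\partial(\ell_n) = \dots$ becomes a Jacobi-type relation among the surviving operations, which is exactly what is needed for the quotient to be a well-defined dg-operad (one has to check the ideal generated by $\ell_{\geq 3}$ is a differential ideal — this is the content of the structure equations, since $\partial(\ell_n)$ for $n\geq 3$ lands in the ideal once we also know $\partial(\ell_3)$ involves only $\ell_2$'s modulo higher terms). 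This bookkeeping is routine but should be spelled out.

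The main point is that $\cLie_\infty \to \cLie$ is a weak equivalence, i.e.\ a filtered quasi-isomorphism, which by definition means it induces a quasi-isomorphism on associated graded operads. Since $\Gr$ commutes with $\Omega$ and with quotients by operadic ideals (it preserves colimits and tensor products), I would identify $\Gr(\cLie_\infty) = \Omega(\Gr(\ucocom^\mix\{1\}))$ and $\Gr(\cLie) = \Gr(\cLie_\infty)/(\ell_3, \ell_4, \dots)$ as weight-graded dg-operads. Now $\Gr(\ucocom^\mix)$ is the weight-graded cooperad with $\mu_0$ in weight $1$, $\mu_1$ in weight $1$, and $\mu_n$ in weight $0$ for $n \geq 2$; crucially, by Warning \ref{war:complete conilpotence} this associated graded is conilpotent in the usual sense, so the classical operadic calculus of \cite{LodayVallette2012} applies verbatim. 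The claim then reduces to a statement purely in weight-graded dg-operads, where one can run the standard Koszulity argument.

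The hard part — or rather, the part requiring the most care — is showing that the cobar construction $\Omega(\Gr(\ucocom^\mix\{1\}))$ is quasi-isomorphic to $\Gr(\cLie)$, where the latter plays the role of a "curved Lie operad". I would argue by exhibiting $\Gr(\ucocom^\mix\{1\})$ as the Koszul dual cooperad of $\Gr(\cLie)$ and then invoking (a weight-graded, conilpotent version of) the fact that $\Omega$ of a Koszul dual cooperad resolves the operad; equivalently, one can set up a filtration on $\Omega(\Gr(\ucocom^\mix\{1\}))$ by the number of internal vertices (the cobar filtration, cf.\ the discussion after Proposition/definition \ref{propdef:bar operad}) and identify the first page of the resulting spectral sequence with the Koszul complex of the (shifted) commutative operad, whose acyclicity away from the "diagonal" is exactly the Koszulity of $\Com$ — with the extra $\mu_0$ and $\mu_1$ generators handled as in the curved/unital generalizations already present in the literature. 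One then reads off that the surviving classes are precisely the generators of $\Gr(\cLie)$. An alternative, cleaner route is to observe that $\ucocom^\mix\{1\} \to \Bar(\cLie)$ is a Koszul twisting morphism in the sense of Definition \ref{def:mixed-curved lie over k}'s surrounding discussion: one checks it at the level of associated graded, where it becomes the classical statement for the commutative/Lie Koszul pair extended by the two extra low-arity generators, and then the weak equivalence $\Omega(\ucocom^\mix\{1\}) = \cLie_\infty \to \cLie$ follows from the definition of Koszul twisting morphism together with Proposition/definition \ref{propdef:bar operad}. Either way, the only genuine input is classical Koszul duality for $\Com$; everything filtered-specific is handled by the reduction to $\Gr$, which is legitimate precisely because $\Gr$ detects weak equivalences of complete operads.
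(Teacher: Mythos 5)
Your overall strategy --- reduce to the associated graded (legitimate since $\Gr$ detects weak equivalences and commutes with $\Omega$ and with quotients) and then invoke classical $\Com$/$\Lie$ Koszul duality --- is exactly the paper's strategy. But the decisive step, namely why the extra arity-$0$ and arity-$1$ generators do not spoil the quasi-isomorphism, is precisely where you defer to ``curved/unital generalizations already present in the literature'' or to an unverified Koszulity claim, and that deferral hides a real problem. Note that $\Gr(\cLie)$ is \emph{not} a quadratic operad with zero differential: the weight-$1$ component of $\partial(\ell_1)=-\ell_1\circ_1\ell_1-\ell_2\circ_1\ell_0$ survives in the associated graded, so $\Gr(\cLie)$ carries the internal differential $\partial(\ell_1)=-\ell_2\circ_1\ell_0$. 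Hence you cannot simply ``exhibit $\Gr(\ucocom^\mix\{1\})$ as the Koszul dual cooperad of $\Gr(\cLie)$'' and cite standard Koszul duality; you would need an inhomogeneous/curved version of it, which is the machinery this paper is deliberately avoiding. Your ``alternative, cleaner route'' via Koszul twisting morphisms is moreover circular: a twisting morphism is Koszul \emph{by definition} when $\Omega\CC\to\PP$ is a weak equivalence, which is the statement to be proved.

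What the paper actually does to close this gap is an explicit decomposition: on the associated graded the differential preserves the number \emph{and placement} of the weight-one generators $\ell_0,\ell_1$ (their ``skeleton''), so $\Gr(\cLie_\infty)\to\Gr(\cLie)$ splits as a direct sum, indexed by skeleton types, of copies of the classical quasi-isomorphism $\Lie_\infty\to\Lie$. That two-line combinatorial observation is the entire content of the proof and is what your proposal is missing; your route (b) via the cobar filtration could in principle be completed, but its completion \emph{is} this skeleton argument. A secondary point: your parenthetical claim that $\partial(\ell_n)$ lands in the ideal $(\ell_3,\ell_4,\dots)$ for $n\geq 3$ is false --- $\partial(\ell_3)$ contains the Jacobiator $\sum_\sigma(\ell_2\circ_1\ell_2)^\sigma$, which involves no $\ell_{\geq 3}$. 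The quotient is by the differential closure of that ideal, and it is exactly this closure that imposes the Jacobi identity on $\ell_2$ (and the relation $\partial(\ell_1)=-\ell_1\circ_1\ell_1-\ell_2\circ_1\ell_0$ already present in arity $1$).
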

\begin{proof}
	On the associated graded of $\cLie_\infty$ let us consider a filtration by the number of times an element $\ell_0$ appears.
	Taking a spectral sequence with respect to this filtration, on the $0$-th page we observe that	the differential acts essentially like the differential in the ordinary operad $\Lie_\infty$. 
	Indeed, one can declare two $\ell_n$-labelled trees in $\cLie_\infty$ to have the same skeleton if they have the same number of $\ell_0$'s and $\ell_1$'s, appearing in the same position, see Figure \ref{fig:skeleton}.

\begin{figure}[h]
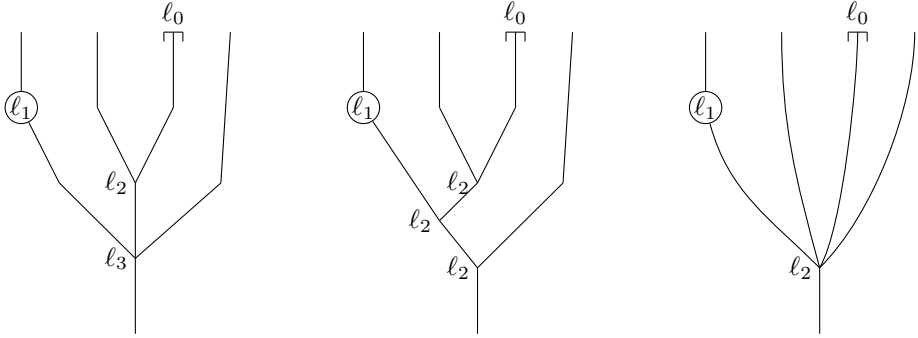

	\ctikzfig{skeleton}
	\caption{Three trees in $\cLie_\infty$ with the same skeleton.}\label{fig:skeleton}
\end{figure}
	It follows that map $\cLie_\infty\to \cLie$ on the $0$-th page decomposes as a sum of maps 
	\[
	\bigoplus_\text{skeleton types}\Lie_\infty\to \bigoplus_\text{skeleton types}\Lie,
	\]
	which is a quasi-isomorphism. 
	Since the (second) filtration is bounded above and exhaustive, it follows that the map $\Gr \cLie_\infty \to \Gr \cLie$ is also a quasi-isomorphism, i.e. $\cLie_\infty\to \cLie$ is a weak equivalence.
\end{proof}

\begin{corollary}
The map $\ucocom^\mix\{1\}\to \cLie$ mapping $\mu_n$ to $\ell_n$ for $n=0,1,2$ is a Koszul twisting morphism.
\end{corollary}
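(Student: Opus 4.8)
The plan is to realise $\phi$ as a post-composition of the universal Koszul twisting morphism with a known weak equivalence, and then read off the induced cobar map. Recall from Definition~\ref{def:mixed-curved lie over k} that $\cLie_\infty = \Omega(\ucocom^\mix\{1\})$, and let $\iota\colon \ucocom^\mix\{1\}\to \cLie_\infty$ denote the universal twisting morphism, i.e.\ the one corresponding to $\id_{\cLie_\infty}$ under the bar--cobar adjunction of Section~\ref{sec:bar-cobar}; concretely $\iota$ is the composite of the canonical (de)suspension map with the inclusion of generators, so it sends each cogenerator $\mu_n$ to the operadic generator $\ell_n\in \cLie_\infty$. Write $\pi\colon \cLie_\infty\to \cLie$ for the quotient map of Proposition/definition~\ref{propdef:mixed-curved lie}.

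The first step is to identify $\phi$ with the composite $\pi\circ\iota$. Since $\iota(\mu_n)=\ell_n$, and $\pi$ sends $\ell_n$ to $\ell_n$ for $n\leq 2$ and to $0$ for $n\geq 3$, the composite $\pi\circ\iota$ sends $\mu_n$ to $\ell_n$ for $n=0,1,2$ and to $0$ otherwise; this is precisely the map in the statement. In particular $\phi$ is genuinely a twisting morphism (a fortiori one valued in $\overline{\cLie}$), being the image of the twisting morphism $\iota$ under post-composition with the operad morphism $\pi$; this is a standard feature of the convolution Lie algebra, as post-composition with an operad map is a morphism of Lie algebras $\mf{g}(\ucocom^\mix\{1\},\cLie_\infty)\to \mf{g}(\ucocom^\mix\{1\},\cLie)$ and hence preserves Maurer--Cartan elements.

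The second step invokes the naturality of the bijection $\Hom_{\cat{Op}^\mm{aug}}(\Omega\CC,\PP)\cong \Tw(\CC,\PP)$ in the variable $\PP$: the operad morphism $\Omega(\ucocom^\mix\{1\})=\cLie_\infty\to \cLie$ adjoint to $\phi=\pi\circ\iota$ is simply $\pi$ composed with the operad morphism adjoint to $\iota$, namely $\pi\circ\id_{\cLie_\infty}=\pi$. By Proposition/definition~\ref{propdef:mixed-curved lie} this quotient map is a weak equivalence of complete operads, so $\phi$ is Koszul by definition. There is no real obstacle here: the entire content sits in Proposition/definition~\ref{propdef:mixed-curved lie} and in the bar--cobar formalism of Section~\ref{sec:bar-cobar}, and the only point requiring an explicit verification is the equality $\phi=\pi\circ\iota$ on cogenerators, which is immediate from the description of the cobar construction and of the operadic ideal generated by $\ell_3,\ell_4,\dots$.
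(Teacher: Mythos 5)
Your proof is correct and is exactly the intended argument: the paper states this as an immediate corollary of Proposition/definition \ref{propdef:mixed-curved lie}, the point being precisely that $\phi=\pi\circ\iota$ so the induced cobar map is the quotient $\cLie_\infty\to\cLie$, which is a weak equivalence. Nothing to add.
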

\begin{corollary}
Every mixed-curved $L_\infty$-algebra is filtered quasi-isomorphic to a mixed-curved Lie algebra.
\end{corollary}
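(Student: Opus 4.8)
The plan is to produce, for a given mixed-curved $L_\infty$-algebra $\mf{g}$, an explicit zig-zag of weak equivalences to a mixed-curved Lie algebra, built from the bar and cobar constructions attached to the cooperad $\CC\coloneqq\ucocom^\mix\{1\}$. I will use three facts from the excerpt: $\cLie_\infty=\Omega\CC$ with $\CC$ complete conilpotent; the universal twisting morphism $\iota\colon\CC\to\Omega\CC$ is Koszul; and $\phi\colon\CC\to\cLie$ is a Koszul twisting morphism, i.e.\ $q\coloneqq\Omega\phi\colon\cLie_\infty=\Omega\CC\to\cLie$ is a weak equivalence of complete operads (Proposition/definition \ref{propdef:mixed-curved lie}).

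First I would invoke Proposition/definition \ref{propdef:bar algebras} for the universal Koszul twisting morphism $\iota$: since $\CC$ is complete conilpotent, the counit $\Omega_\iota\Bar_\iota(\mf{g})\to\mf{g}$ is a weak equivalence of $\cLie_\infty$-algebras, so it is enough to connect $\Omega_\iota\Bar_\iota(\mf{g})$ to a $\cLie$-algebra. Since $\Bar_\iota(\mf{g})$ is a conilpotent $\CC$-coalgebra, I can also form its cobar construction with respect to $\phi$, namely the $\cLie$-algebra $\Omega_\phi\Bar_\iota(\mf{g})=\big(\cLie\circ\Bar_\iota(\mf{g}),\,d_{\Bar_\iota(\mf{g})}+d^\phi\big)$, which is by definition a mixed-curved Lie algebra. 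Because $\phi=q\circ\iota$, the operad map $q$ induces a comparison
$$
q\circ\id\colon\ \Omega_\iota\Bar_\iota(\mf{g})=\Omega\CC\circ\Bar_\iota(\mf{g})\ \longrightarrow\ \cLie\circ\Bar_\iota(\mf{g})=\Omega_\phi\Bar_\iota(\mf{g}),
$$
and a short check that $q$ intertwines the operadic differentials and, via $\phi=q\iota$, the twisting parts, shows that $q\circ\id$ is a strict morphism of $\cLie_\infty$-algebras, the target being regarded as a $\cLie_\infty$-algebra along $q$.

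It then remains to show that $q\circ\id$ is a weak equivalence, and this I would do by repeating the argument from the second half of the proof of Proposition/definition \ref{propdef:bar algebras}. Since $\Gr$ detects weak equivalences and commutes with $\circ$, one works on the associated graded, filtering the middle factor $\Bar_\iota(\mf{g})=\CC_+\circ\mf{g}$ by the total coradical degree of its $\CC_+$-part. All twisting components of the differential strictly lower this degree, so on the associated graded only the internal differentials survive and the comparison becomes $\Gr(\Omega\CC)\circ\CC_+\circ\Gr(\mf{g})\to\Gr(\cLie)\circ\CC_+\circ\Gr(\mf{g})$ with purely internal differentials; this is a quasi-isomorphism because $q$ is one and, in characteristic zero, composing with a fixed symmetric sequence preserves quasi-isomorphisms. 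Chaining things together yields $\mf{g}\xleftarrow{\,\sim\,}\Omega_\iota\Bar_\iota(\mf{g})\xrightarrow{\,\sim\,}\Omega_\phi\Bar_\iota(\mf{g})$ with the right-hand side a mixed-curved Lie algebra. The main obstacle is precisely this last step: the strict coradical filtration is not available in the complete setting (Warning \ref{war:complete conilpotence}), but passing to the associated graded restores ordinary conilpotency and makes the filtration/spectral-sequence argument go through, exactly as it does for Proposition/definition \ref{propdef:bar algebras}.
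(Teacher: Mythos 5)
Your proposal is correct and is essentially the argument the paper intends: the corollary is deduced from the Koszulness of $\ucocom^\mix\{1\}\to\cLie$ via the bar--cobar machinery of Section \ref{sec:bar-cobar}, and your zig-zag $\mf{g}\xleftarrow{\sim}\Omega_\iota\Bar_\iota(\mf{g})\xrightarrow{\sim}\Omega_\phi\Bar_\iota(\mf{g})$, with the second map handled by the same coradical-filtration-on-the-associated-graded argument as in the proof of Proposition/definition \ref{propdef:bar algebras}, is exactly the intended construction. Your remark that conilpotency in the strict sense is only available after passing to the associated graded matches the paper's own treatment of this point (Warning \ref{war:complete conilpotence}).
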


Notice that unlike $\Lie$, the operad $\cLie$ has a differential and the complete complex underlying $\cLie$ is not minimal, since on the associated graded $\partial(\ell_1) = - \ell_2 \circ_1 \ell_0\ne 0$. 
A $\cLie$ algebra just a complete cochain complex $(V,d)$ equipped with operations $\ell_0,\ell_1$ increasing the filtration by $1$ and $\ell_2$ 
such that $(V,\omega=\ell_0, \nabla=\ell_1+d,\ell_2)$ is a curved Lie algebra in the usual sense, as described above Definition \ref{def:cLie classical}.

\begin{remark}\label{rem:classical curved lie operadically}
One can apply a similar analysis starting instead from the cooperad $\ucocom^\cl$. In this case one obtains an operad $\Omega(\ucocom^\cl)$ with the same generating operations and differential \eqref{eq:cLoo structure equation}, but where only $\ell_0$ is of filtration weight $1$ and $\ell_1$ is of weight $0$. In particular, a curved $L_\infty$-algebra in the classical sense of Definition \ref{def:cLie classical} is simply a $\Omega(\ucocom^\cl)$-algebra structure on a complete graded vector space (with no differential). Equivalently, this is the data of a codifferential on the cosymmetric coalgebra $\ucocom^\cl\circ (\mf{g}[1])$ (where we consider $\ucocom^\cl$ as a counital cooperad).

The complete operad $\Omega(\ucocom^\cl)$ is filtered acyclic, since $\ucocom^\cl$ is counital. Likewise, the underlying unfiltered versions of $\cLie$ and $\cLie_\infty$ (either before or after completing with respect to the filtrations above) can be shown to be quasi-isomorphic to the unit operad. This would make the na\"ive homotopy theory of their algebras quite trivial.                  
\end{remark}

\subsubsection{Morphisms of mixed-curved $L_\infty$-algebras.}\label{sec:oo-morphism of clie}
Let us fix $\ucocom^\mix\{1\}\rt \cLie_\infty$, $\mu_n \mapsto \ell_n$ the universal twisting morphism. 
From Definition \ref{propdef:bar algebras} we have that an $\infty$-morphism $\phi\colon \mathfrak g \rightsquigarrow \mathfrak h$ between $\cLie_\infty$-algebras is determined by a map $\ucocom^\mix\{1\}_+ \circ \mathfrak g \to \mathfrak h$. Notice that at the level of the underlying vector spaces we have $\ucocom^\mix\{1\}_+ \circ \mathfrak g =  \mm{Sym}^c\left(\mathfrak g [1]\right)[-1] \oplus \mathfrak g$.
Given this, $\phi$ is determined by maps
$$
\phi_\mm{lin}\colon \mf{g}\rt \mf{h} \qquad\qquad \phi_n\colon \mm{Sym}^n(\mf{g}[1])[-1]\rt \mf{h} \qquad n\geq 0
$$
with $\phi_n$ maps of cohomological degree $1-n$ and where $\phi_0$ and $\phi_1$ increase the filtration weight by $1$. The map $\phi_\mm{lin}$ is required to be a chain map and
\begin{multline}\label{eq:mixed cLoo map}
\partial(\phi'_n)=\sum_{\substack{p+q=n+1\\ q\geq 0, \ p\geq 1}}\sum_{\sigma\in \mathrm{Sh}_{p-1,q}^{-1}}\sgn(\sigma)(-1)^{(p-1)q}(\phi'_p\circ_1\ell^{\mathfrak g}_q)^\sigma-\\ 
-\sum_{\substack{k\geq 0\\ i_1+\dots +i_k=n}} \sum_{\sigma\in \mathrm{Sh}_{(i_1,\dots,i_k)}^{-1}} \sgn(\sigma) \frac{(-1)^\epsilon}{k!} \ell^\mathfrak{h}_k(\phi'_{i_1}, \dots, \phi'_{i_k})^\sigma,
\end{multline}
where $\phi'_n=\phi_n$ if $n\neq 1$, $\phi'_1=\phi_1 + \phi_\mm{lin}$ and  $\epsilon = \prod_{j=1}^{k-1} (k-j)(i_k-1)$. Notice that the first sum is finite (like the infinitesimal cocomposition of $\ucocom$) whereas the second sum is infinite (since it corresponds to the total cocomposition of $\ucocom$).
For example, in the first term of an $\infty$-morphism $\phi \colon \mathfrak{g} \rightsquigarrow \mathfrak h$ we have
$$
d_{\mf{h}}(\phi_0) = \phi_{\mm{lin}}(\ell_0^{\mathfrak g}) +\phi_1(\ell_0^{\mathfrak g}) -\left(\ell_0^{\mathfrak h}+\ell_1^{\mathfrak h}(\phi_0) + \frac{1}{2!} \ell_2^{\mathfrak h}(\phi_0,\phi_0) + \frac{1}{3!}\ell_3^{\mathfrak h}(\phi_0,\phi_0,\phi_0) \dots\right).
$$

In particular, an $\infty$-morphism $0\rightsquigarrow \mathfrak h$ into an uncurved $\Lie_\infty$-algebra is equivalent to a choice of a Maurer--Cartan element in $F^1\mathfrak h$.
%

\begin{remark}\label{rem:oo-morphisms of classical clie}
Likewise, applying Definition \ref{propdef:bar algebras} in the setting of $\Omega(\ucocom^\cl)$, one sees that an $\infty$-morphism between $\Omega(\ucocom^\cl)$-algebras is given by maps $\phi_\mm{lin}, \phi_i$ satisfying Equation \ref{eq:cLoo map}, except that $\phi_1$ is of filtration weight $0$ instead of $1$.

Now let $\mf{g}$ and $\mf{h}$ be two (classical) curved $L_\infty$-algebras in the sense of Definition \ref{def:cLie classical}, corresponding to $\Omega(\ucocom^\cl)$-algebras with zero differential. Then an \emph{$\infty$-morphism of curved $L_\infty$-algebras} $\mf{g}\rightsquigarrow \mf{h}$ is defined to be an $\infty$-morphism between the corresponding $\Omega(\ucocom^\cl)$-algebras such that $\phi_\mm{lin}=0$. In other words, it is given by a collection of maps $\phi'_n\colon \mm{Sym}^n(\mf{g}[1])[-1]\rt \mf{h}$
satisfying Equation \eqref{eq:mixed cLoo map}, where the left hand side is zero, because there is no differential. It is not hard to verify that this is equivalent to the data of a map of dg-coalgebras $\mm{Sym}^c(\mf{g}[1])\rt \mm{Sym}^c(\mf{h}[1])$ (see Definition \ref{def:cLie classical}).
\end{remark}
\begin{example}
A \emph{curved} map between two $\cLie$-algebras is an $\infty$-morphism $\phi \colon (\mathfrak g,\omega_{\mathfrak g}) \rightsquigarrow (\mathfrak h,\omega_{\mathfrak h})$ ($\omega$ denotes the curvature) such that $\phi_{\geq 2} =0$. In this case, Equation \ref{eq:cLoo map} reduces to the more familiar notion of a map of curved Lie algebras that we find for example in \cite[Def 4.3]{maunder2017Koszul} (up to signs): concretely, taking the linear term $\phi'_1=\phi_\mm{lin}+\phi_1$ as before and the curved differential $d' = d+\ell_1$, one finds

\begin{align*}
\phi'_1([X,Y])&= [\phi'_1(X),\phi'_1(Y)],\\
\phi'_1('X)  &=  d'\phi'_1 (X)+ [\phi_0,  \phi'_1(X)],\\
\omega_{\mathfrak h} &= \phi'_1(\omega_{\mathfrak g}) + d\phi_0 +\frac{1}{2}[\phi_0,\phi_0].
\end{align*}
\end{example}
\begin{remark}[General remark concerning signs]\label{rem:signs}
Despite the large quantity of signs, most of them come from the degree shift in the space of generators, $\ucocom\{1\}$. 
For instance, for the shifted operad $\cLie_{\infty}\{-1\} = \Omega(\cocom)$, the equation for $\infty$-morphisms \eqref{eq:mixed cLoo map} takes the form

\[	\partial(\phi_n)=\sum_{\substack{p+q=n+1\\ q\geq 0, \ p\geq 1}}(\phi'_p\circ_1\ell^{\mathfrak g}_q)^\sigma
	-\sum_{\substack{k\geq 0\\ i_1+\dots +i_k=n}} \sum_{\sigma\in \mathrm{Sh}_{(i_1,\dots,i_k)}^{-1}} \frac{1}{k!} \ell^\mathfrak{h}_k(\phi'_{i_1}, \dots, \phi'_{i_k})^\sigma.
\]
In practice, this allows us to abuse the notation later on, by replacing signs with $\pm$, since most of the signs are encompassed by this degree shift.
\end{remark}

\subsubsection{From mixed-curved $L_\infty$-algebras to curved $L_\infty$-algebras}
One can think of mixed-curved $L_\infty$-algebras as overdetermined versions of curved $L_\infty$-algebras in the sense of Definition \ref{def:cLie classical}: indeed, they come with two derivations $d, \ell_1$ (the second of filtration weight $1$) instead of a single $\ell_1$ of weight $0$. Likewise, $\infty$-morphisms between $\cLie_\infty$-algebras come equipped with two linear components, one being of filtration weight $1$. The classical homotopy theory of curved $L_\infty$-algebras is usually formulated in terms of $\infty$-morphisms with a single linear part (cf.\ Remark \ref{rem:oo-morphisms of classical clie}).
\begin{definition}\label{def:cat of classical curved Lie}
The \emph{$\infty$-category $\oocat{cLie}$ of curved $L_\infty$-algebras} is defined to be following simplicially enriched category:
\begin{enumerate}[start=0]
\item objects are curved $L_\infty$-algebras in the sense of Definition \ref{def:cLie classical}; equivalently, complete algebras over $\Omega(\ucocom^\cl)$ with zero differential (Remark \ref{rem:classical curved lie operadically}).
\item the simplicial sets of maps $\Map(\mf{g}, \mf{h})$ are given in degree $n$ by the set of $\infty$-morphisms $\mf{g}\rightsquigarrow \mf{h}\botimes \Omega[\Delta^n]$ in the sense of Remark \ref{rem:oo-morphisms of classical clie}.
\end{enumerate}
\end{definition}
To study the properties of the $\infty$-category of curved $L_\infty$-algebras, we will relate it to the $\infty$-category $\oocat{cLie}^\mm{mix}\coloneqq \oocat{Alg}^\mm{cpl}_{\cLie_\infty}$ of mixed-curved $L_\infty$-algebras, i.e.\ algebras over the operad $\cLie_\infty$ (Definition \ref{def:mixed-curved lie over k}). Note that this latter $\infty$-category has very good abstract properties, since it arises from a combinatorial model category: for example, it has all limits and colimits. More precisely, observe that there is a functor of simplicially enriched categories
\begin{equation}\label{eq:functor summing differential}\begin{tikzcd}
\blend\colon \oocat{cLie}^\mm{mix}= \oocat{Alg}^\mm{cpl}_{\cLie_\infty}\arrow[r] & \oocat{cLie}
\end{tikzcd}\end{equation}
sending a mixed-curved $L_\infty$-algebra $(\mf{g}, d, \ell_i)$ to the (classical) curved $L_\infty$-algebra $(\mf{g}, \ell'_i)$ where $\ell'_i=\ell_i$ for $i\neq 1$ and $\ell'_1=d+\ell_1$. On $\infty$-morphisms, the functor sends $(\phi_\mm{lin}, \phi_i)\colon \mf{g}\rightsquigarrow \mf{h}$ to $(\phi'_n)\colon \blend(\mf{g})\rt \blend(\mf{h})$ where $\phi'_n=\phi_n$ for $n\neq 1$ and $\phi'_1=\phi_\mm{lin}+\phi_1$, as in Equation \ref{eq:cLoo map}.

This forgets the redundancies is the definition of a curved $L_\infty$-algebra by combining the differential $d$ and its perturbation $\ell_1$, resp.\ the linear map $\phi_\mm{lin}$ and its perturbation $\phi_1$. Alternatively, one can also get rid of redundancies by imposing further restrictions on $d$:
\begin{definition}\label{def:graded mixed-curved Loo}
A \emph{graded mixed-curved $L_\infty$-algebra} is a graded complex $\mf{g}$, together with the structure of a mixed-curved $L_\infty$-algebra on $\Tot(\mf{g})$.
\end{definition}
There is a weight-graded operad $\cLie_\infty^\tot$ whose algebras are precisely the graded mixed-curved $L_\infty$-algebras. Indeed, let $\ucocom^\tot$ be the weight-graded cooperad spanned by operations $n$-ary operations $\mu_n^r$ of weight $r$, with $r\geq 0$ for $n\geq 2$ and $r\geq 1$ for $n=0, 1$. The cocomposition is that of the cocommutative operad
$$
\Delta(\mu_n^r) = \sum \mu_{m}^p\circ \big(\mu_{n_1}^{q_1}, \dots, \mu_{n_m}^{q_m}\big)
$$
where the sum runs over all indices such that $n_1+\dots+n_m=n$ and $p+q_1+\dots+q_m=r$. Note that this sum is finite because $n_i\in \{0, 1\}$ implies that $q_i\geq 1$, and that this defines a conilpotent cooperad. The desired operad is then given by
$$
\cLie_\infty^\tot = \Omega(\ucocom^\tot).
$$
Here we use that the bar-cobar formalism (as in Section \ref{sec:bar-cobar}) works equally well in the weight-graded setting. Unraveling the definitions, an algebra over this operad comes equipped with operations
$$
\ell_i^p\colon \mm{Sym}^i\big(\mf{g}[1]\big)\rt \mf{g}\wgt{p}[1]
$$
with $i\geq 2$ and $p\geq 0$ or $i=0, 1$ and $p\geq 1$, such that $\sum_p \ell_i^p$ makes $\Tot(\mf{g})$ a $\cLie_\infty$-algebra. 

It follows that the category of graded mixed-curved $L_\infty$-algebras carries a model structure and there is a notion of $\infty$-morphism between graded mixed-curved $L_\infty$-algebras. Explicitly, an $\infty$-morphism $\mf{g}\rightsquigarrow \mf{h}$ is given by maps $\phi_\mm{lin}\colon \mf{g}\rt \mf{h}$ and 
$$
\phi_0^p\colon k\rt \mf{h}\wgt{p} \qquad \qquad \phi_1^p\colon \mf{g}\rt \mf{h}\wgt{p} \qquad\qquad \phi_n^q\colon \mm{Sym}^n\big(\mf{g}[1]\big)[-1]\rt \mf{h}\wgt{q}
$$
for $p\geq 1$, $n\geq 2$ and $q\geq 0$. These maps have the property that $\phi_\mm{lin}$ is a chain map and that the maps between total complexes $\phi_n=\sum_p \phi_n^p$ satisfy Equation \ref{eq:mixed cLoo map}. In other words, an $\infty$-morphism between graded mixed-curved $L_\infty$-algebras $\mf{g}\rightsquigarrow \mf{h}$ is simply an $\infty$-morphism of mixed-curved $L_\infty$-algebras $\Tot(\mf{g})\rightsquigarrow \Tot(\mf{h})$ whose linear part respects the grading.
\begin{definition}
Let us denote by $\oocat{cLie}^\mm{mix}$ and $\oocat{cLie}^\mm{gr-mix}$ (the $\infty$-categories associated to) the simplicially enriched categories of (graded-)mixed-curved $L_\infty$-algebras and $\infty$-morphisms, as in Definition \ref{def:oocat of algebras}.
\end{definition}
The above discussion shows that there is a sequence of $\infty$-categories
$$
\begin{tikzcd}
\oocat{cLie}^\mm{gr-mix}\arrow[r, "\Tot"] & \oocat{cLie}^\mm{mix}\arrow[r, "\blend"] & \oocat{cLie}.
\end{tikzcd}
$$
\begin{proposition}\label{prop:graded mixed = classical}
The composite functor of $\infty$-categories is an equivalence.
\end{proposition}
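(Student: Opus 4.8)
The plan is to follow the blueprint of Proposition~\ref{prop:graded mixed complexes}: we show that the composite $\blend\circ\Tot\colon \oocat{cLie}^\mm{gr-mix}\rt\oocat{cLie}$ is essentially surjective and induces bijections on the simplicial sets of $\infty$-morphisms, hence is an equivalence. The guiding principle is that a (classical) curved $L_\infty$-structure on a complete graded vector space is, after choosing a splitting of the filtration, exactly a graded mixed-curved $L_\infty$-structure on the associated graded, with the weight-$0$ component of $\ell_1$ playing the role of the honest differential, precisely as the differential of a complete complex gets re-encoded via the operations $\delta_k$ in Proposition~\ref{prop:graded mixed complexes}.

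\textbf{Full faithfulness.} Fix graded mixed-curved $L_\infty$-algebras $\mf g,\mf h$. Since $\Tot\mf g$ and $\Tot\mf h$ are weight-graded and $\Omega[\Delta^n]$ sits in weight $0$, the objects $\Sym^n(\Tot\mf g[1])[-1]$ and $\Tot\mf h\botimes\Omega[\Delta^n]$ are weight-graded, so any classical $\infty$-morphism $\phi'=(\phi'_n)_{n\geq 0}\colon\blend\Tot\mf g\rightsquigarrow\blend\Tot\mf h\botimes\Omega[\Delta^n]$ decomposes uniquely into weight-homogeneous pieces $\phi'_n=\sum_p\phi'^{\,p}_n$. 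By construction of $\blend\Tot$, the curvature of $\blend\Tot\mf h$ lies in weight $\geq 1$ and its linear operation splits as $\ell_1'=d+\ell_1$ with $d$ the internal (weight-$0$) differential and $\ell_1$ in weight $\geq 1$ (likewise for $\mf g$). Taking weight components of the classical $\infty$-morphism equation (Equation~\eqref{eq:mixed cLoo map} with vanishing left-hand side, as in Remark~\ref{rem:oo-morphisms of classical clie}) one checks, using that $\ell'_0,\ell_1$ are filtration-increasing, that: $\phi'^{\,p}_0=0$ for $p\leq 0$; the weight-$0$ component $\phi_\mm{lin}:=\phi'^{\,0}_1$ is a chain map for the internal differentials (this is exactly the weight-$0$ part of the arity-$1$ equation); and, setting $\phi_1^p:=\phi'^{\,p}_1$ for $p\geq 1$ and $\phi_n^q:=\phi'^{\,q}_n$ for $n\neq 1$, the collection $(\phi_\mm{lin},\phi_n^q)$ is precisely a graded mixed-curved $\infty$-morphism $\mf g\rightsquigarrow\mf h\botimes\Omega[\Delta^n]$ in the sense accompanying Definition~\ref{def:graded mixed-curved Loo}. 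Conversely, summing over weights recovers $\phi'$, so this is a bijection, and it is visibly compatible with the face/degeneracy maps and with the composition of Definition~\ref{def:oocat of algebras} (which only uses the weight-$0$ multiplication of $\Omega[\Delta^n]$). Hence $\blend\Tot$ is an isomorphism on mapping simplicial sets.

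\textbf{Essential surjectivity.} Let $(\mf g,\ell'_i)$ be a classical curved $L_\infty$-algebra. Choosing a splitting of its complete filtration without differentials (as in the proof of Proposition~\ref{prop:graded mixed complexes}) identifies $\mf g\cong\Tot(\Gr\mf g)$ as complete graded vector spaces, and we decompose each structure map into weight-homogeneous pieces $\ell'_i=\sum_p\ell'^{\,p}_i$, with $\ell'^{\,p}_0=0$ for $p\leq 0$ since $\ell'_0\in F^1\mf g$. Set $d:=\ell'^{\,0}_1$. The weight-$0$ part of the arity-$1$ instance of Equation~\eqref{eq:cLoo structure equation}, which for a classical curved $L_\infty$-algebra reads $\ell'_1\circ_1\ell'_1+\ell'_2\circ_1\ell'_0=0$, forces $d^2=0$ because $\ell'_2\circ_1\ell'_0$ is concentrated in weight $\geq 1$. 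Thus $(\Gr\mf g,d)$ is a weight-graded complex, and taking the remaining weight components of \eqref{eq:cLoo structure equation} shows that $\Tot(\Gr\mf g)$, equipped with differential $d$ and operations $\ell_1:=\sum_{p\geq 1}\ell'^{\,p}_1$ and $\ell_i:=\sum_p\ell'^{\,p}_i$ for $i\neq 1$, is a $\cLie_\infty$-algebra; that is, $\Gr\mf g$ is a graded mixed-curved $L_\infty$-algebra. By design $\blend\Tot(\Gr\mf g)=(\Tot\Gr\mf g,\ell'_i)$ with $\ell'_1=d+\ell_1$, which is isomorphic to $(\mf g,\ell'_i)$ via the chosen splitting.

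\textbf{Main obstacle.} Neither step is deep; the only point demanding care is the bookkeeping in the full-faithfulness argument, where one must confirm that under the weight decomposition the weight-$0$ part of the classical linear operation $\phi'_1$ matches the differential-compatible linear part $\phi_\mm{lin}$ of a graded mixed $\infty$-morphism while its positive-weight parts match the perturbation $\phi_1$ — so that no data and no equations are lost or duplicated when passing between the classical and the graded mixed descriptions (and symmetrically for the structure equations in the surjectivity step). Once the indexing conventions of $\ucocom^\mix\{1\}$, $\ucocom^\tot$ and $\ucocom^\cl$ are lined up, this is the curved-operadic counterpart of the purely differential-theoretic Proposition~\ref{prop:graded mixed complexes}.
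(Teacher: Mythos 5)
Your proposal is correct and follows essentially the same route as the paper: essential surjectivity by splitting the filtration, reading off $d$ as the weight-$0$ part of $\ell'_1$ and deducing $d^2=0$ from the weight-$0$ component of the arity-$1$ structure equation; full faithfulness by the unique decomposition of $\phi'_1$ into a weight-homogeneous weight-$0$ part (which is a chain map for weight reasons) plus a filtration-weight-$\geq 1$ perturbation, compatibly with the simplicial enrichment. The extra bookkeeping you carry out on the weight components of the structure and morphism equations is the same verification the paper leaves implicit.
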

\begin{proof}
To see that $\blend\circ \Tot$ is essentially surjective, pick a curved $L_\infty$-algebra $\big(\mf{g}', \ell'_i\big)$. We can split the filtration on the complete graded vector space underlying $\mf{g}$, i.e.\ we can write $\mf{g}'=\Tot(\mf{g})$ for some weight-graded (non-differential) graded vector space $\mf{g}$. Using this splitting, the operations $\ell'_i$ can be decomposed by pure weight: we can write
$$
\ell'_0 = \ell_0^1+\ell_0^2+\dots \qquad \ell'_1 = d+\ell_1^1+\dots \qquad\qquad \ell'_n= \ell_n^0+\ell_n^1+\dots \qquad (n\geq 2)
$$
where each $\ell_n^r\colon \mf{g}^{\otimes n}\rt \mf{g}\wgt{r}$ increases the weight by (exactly) $r$. We let $d$ denote the part of $\ell_1$ of pure weight $0$, and since $\ell_0$ was required to be of filtration weight $1$, there is no $\ell_0^0$.

Note that $\ell'_1\circ \ell'_1+\ell'_2\circ_1 \ell_0$ implies that $\ell'_1\circ \ell'_1$ is of filtration degree $1$; this implies that $d^2=0$. This means that $\big(\mf{g}', d, \ell'_0, \ell'_1-d, \ell'_2, \dots\big)$ is a mixed-curved $L_\infty$-algebra. By definition, this implies that $\big(\mf{g}, d, \ell_n^i\big)$ is a mixed-curved $L_\infty$-algebra, which maps to $\mf{g}'$ under the functor $\blend\circ \Tot$.

Concerning full faithfulness, pick two graded mixed-curved $L_\infty$-algebras $\mf{g}, \mf{h}$. The set of \emph{graded mixed} $\infty$-morphisms $\mf{g}\rightsquigarrow \mf{h}$ coincides with the set of those \emph{mixed} $\infty$-morphisms $\phi\colon \Tot(\mf{g})\rightsquigarrow \Tot(\mf{h})$ such that $\phi_\mm{lin}$ is homogeneous of weight $0$. The functor $\blend$ sends such an $\infty$-morphism to $\blend(\phi)$ with $\blend(\phi)_n=\phi_n$ for $n\neq 1$ and $\blend(\phi)_1=\phi_\mm{lin}+\phi_1$. This is a bijection: indeed, for any $\infty$-morphism $\phi'$ between curved $L_\infty$-algebras, we can always decompose $\phi'_1$ \emph{uniquely} into a part $\phi_\mm{lin}$ which is \emph{homogeneous} of weight $0$ and a part $\phi_1$ of filtration weight $\geq 1$. The resulting map $\phi_\mm{lin}$ is then a chain map for weight reasons. 

Replacing $\mf{h}$ by $\mf{h}\botimes\Omega[\Delta^n]$ shows that the functor $\blend\circ \Tot$ is fully faithful. In fact, it is even (strictly) fully faithful as a functor of simplicial categories.
\end{proof}
\begin{proposition}\label{prop:graded mixed as pullback}
The functor $\Tot\colon \oocat{cLie}^\mm{gr-mix}\rt \oocat{cLie}^\mm{mix}$ is a right adjoint functor between presentable $\infty$-categories. Furthermore, it fits into a homotopy pullback square of $\infty$-categories
$$\begin{tikzcd}
\oocat{cLie}^\mm{gr-mix}\arrow[r, "\Tot"]\arrow[d, "\mm{forget}"{left}] & \oocat{cLie}^\mm{mix}\arrow[d, "\mm{forget}"]\\
\oocat{Mod}^\gr_k\arrow[r, "\Tot"{below}] & \oocat{Mod}_k^\mm{cpl}.
\end{tikzcd}$$
\end{proposition}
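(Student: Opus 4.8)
The plan is to establish the three claims in turn, the homotopy pullback square being where the substance lies.

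\emph{Presentability and the right adjoint.} By Proposition~\ref{prop:oocat of algebras over k} and its weight-graded counterpart, $\oocat{cLie}^{\mm{mix}}$ and $\oocat{cLie}^{\mm{gr-mix}}$ are the $\infty$-categorical localizations of the model categories of complete $\cLie_\infty$-algebras and of weight-graded $\cLie_\infty^{\tot}$-algebras; these model structures (Theorem~\ref{thm:Model str} and its evident graded version) are cofibrantly generated on locally presentable underlying categories, hence combinatorial, so the localizations are presentable. To see that $\Tot$ is a right adjoint I would check that it is right Quillen: on underlying objects $\Tot\colon\Mod_k^{\gr}\to\Mod_k^{\mm{cpl}}$ sends weightwise surjections to surjections in each filtration weight, and, since $\Gr^p\Tot(\mf g)\cong\mf g\langle-p\rangle$ so that $\Gr\circ\Tot$ is the weight-reversing autoequivalence of $\Mod_k^{\gr}$, it also sends weightwise quasi-isomorphisms to graded quasi-isomorphisms; as fibrations and weak equivalences of (graded mixed-)curved $L_\infty$-algebras are created by the forgetful functors to modules, $\Tot$ preserves them. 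It preserves all limits (created by the forgetful functors, $\Tot$ on modules being limit-preserving) and is accessible, hence has a left adjoint and is right Quillen; so it induces a right adjoint of presentable $\infty$-categories.

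\emph{Reducing the pullback square.} Write $\mathcal P:=\oocat{cLie}^{\mm{mix}}\times_{\oocat{Mod}_k^{\mm{cpl}}}\oocat{Mod}_k^{\gr}$ and let $\Phi\colon\oocat{cLie}^{\mm{gr-mix}}\to\mathcal P$ be the canonical functor; I need that $\Phi$ is an equivalence. For essential surjectivity, given $(\mf a,V,\alpha\colon\Tot(V)\simeq U(\mf a))\in\mathcal P$, split the filtration on the complete complex underlying $\mf a$ as in Proposition~\ref{prop:graded mixed complexes} to present it, up to weak equivalence, as $\Tot$ of a weight-graded complex; transferring the $\cLie_\infty$-structure along this equivalence via the Homotopy Transfer Theorem~\ref{thm:HTT} and decomposing the transferred operations by pure weight yields a graded mixed-curved $L_\infty$-algebra on $V$ with total complex $\infty$-equivalent to $\mf a$. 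This is the essential-surjectivity argument in the proof of Proposition~\ref{prop:graded mixed = classical}, run without $\blend$.

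\emph{Full faithfulness --- the hard part.} Since $\mathcal P$ is a pullback of $\infty$-categories, its mapping spaces are homotopy pullbacks, so full faithfulness of $\Phi$ amounts to showing that for graded mixed-curved $L_\infty$-algebras $\mf g,\mf h$ the square
$$\begin{tikzcd}[column sep=small]
\Map_{\oocat{cLie}^{\mm{gr-mix}}}(\mf g,\mf h)\arrow[r]\arrow[d] & \Map_{\oocat{cLie}^{\mm{mix}}}(\Tot\mf g,\Tot\mf h)\arrow[d]\\
\Map_{\oocat{Mod}_k^{\gr}}(\mf g,\mf h)\arrow[r] & \Map_{\oocat{Mod}_k^{\mm{cpl}}}(\Tot\mf g,\Tot\mf h)
\end{tikzcd}$$
is a homotopy pullback, the vertical maps being ``take the linear part'' (induced by the forgetful functors to modules). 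By the description recalled above --- a graded-mixed $\infty$-morphism is precisely a mixed $\infty$-morphism $\Tot\mf g\rightsquigarrow\Tot\mf h\botimes\Omega[\Delta^{\bullet}]$ whose linear part is weight-homogeneous, equivalently $\Tot$ of a (unique) map of graded complexes --- this square is a \emph{strict} pullback of simplicial sets. It therefore suffices to prove that the right-hand vertical map is a Kan fibration. I would argue this directly, in the spirit of Section~\ref{sec:HTT}: with the linear part fixed, equation~\eqref{eq:mixed cLoo map} is affine in $\phi_n$ at each $n$, so lifting a partial $\infty$-morphism along a horn $\Lambda^m_j\hookrightarrow\Delta^m$ reduces, inductively in $n$, to solving affine equations with prescribed restriction to $\Tot\mf h\botimes\Omega[\Lambda^m_j]$ inside $\Tot\mf h\botimes\Omega[\Delta^m]$; the kernel of $\Omega[\Delta^m]\to\Omega[\Lambda^m_j]$ is acyclic, and completeness of the filtration makes the resulting obstruction series converge --- exactly the mechanism behind the homotopy transfer theorem. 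Granting the fibration property, the strict pullback above is a homotopy pullback; replacing $\mf h$ by $\mf h\botimes\Omega[\Delta^n]$ gives the required equivalence of mapping spaces, and with the previous paragraph this yields the square. The step I expect to be most delicate is precisely this Kan-fibration statement for the linear-part map, together with keeping track of which mixed $\infty$-morphisms descend from graded ones.
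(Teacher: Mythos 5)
Your overall strategy is sound and, once unpacked, runs parallel to the paper's: both arguments rest on the observation that a graded mixed $\infty$-morphism is exactly a mixed $\infty$-morphism whose linear part is weight-homogeneous, so that the relevant square of mapping spaces is a \emph{strict} pullback of simplicial sets, and both then reduce everything to (a) a Kan-fibration statement for the ``take the linear part'' map and (b) a homotopy-transfer argument to realize objects/equivalences. The packaging differs: the paper does not compare $\oocat{cLie}^\mm{gr-mix}$ with the homotopy pullback object by object, but instead shows that the right-hand vertical functor $\oocat{cLie}^\mm{mix}\to\oocat{Mod}_k^\mm{cpl}$ is a fibration of simplicially enriched categories (local Kan fibrations plus isofibration on homotopy categories, the latter via Proposition \ref{prop:qiso is qinv} and Theorem \ref{thm:HTT}), which makes the strict pullback of simplicial categories a homotopy pullback in one stroke; your essential-surjectivity step is essentially the paper's isofibration check in disguise, and is fine, though you should be a little careful that the transferred structure on $\Tot(V)$ is indeed graded mixed (it is, because homotopy transfer keeps the underlying differential of $V$, which is weight-homogeneous) and that the comparison datum in the homotopy pullback is supplied by the linear part of $i_\infty$.

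The one place where you genuinely depart from the paper is the proof of the Kan-fibration statement, and this is also the only place where your argument is not actually carried out. The paper closes this step formally: using Proposition/Definition \ref{propdef:bar algebras}, an $n$-simplex of $\infty$-morphisms is a strict map out of $\Omega_\phi\Bar_\phi(\mf g)$ and a linear map is a strict map out of $\mm{Free}(\mf g)$, so the horn-lifting problem becomes a lifting problem of $\cLie_\infty$-algebras against the cofibration $\mm{Free}(\mf g)\to\Omega_\phi\Bar_\phi(\mf g)$ of Proposition \ref{prop:bar cobar resolutions are cofibrant} and the acyclic fibration $\mf h\botimes\Omega[\Delta^n]\to\mf h\botimes\Omega[\Lambda^n_i]$, and the lift exists by the model structure. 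Your proposed replacement --- an induction solving equation \eqref{eq:mixed cLoo map} degreewise, using acyclicity of $\ker(\Omega[\Delta^m]\to\Omega[\Lambda^m_j])$ and completeness to make the corrections converge --- is plausible, but as written it is a plan rather than a proof: the equation for $\phi'_n$ contains $\phi'_n$ itself on the right (through $\phi'_n\circ_1\ell_1$, $\ell_1(\phi'_n)$ and the infinite sums involving $\phi_0,\phi_1$), so the induction is not on arity alone but a double induction on arity and filtration weight, with prescribed restriction to the horn at every stage, and one must check compatibility of the corrections with the already-chosen horn data. Carrying this out amounts to re-proving the content of Proposition \ref{prop:bar cobar resolutions are cofibrant} by hand; since that proposition is available, the cleaner move is simply to invoke it as the paper does.
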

\begin{proof}
Note that taking total complexes defines a right Quillen functor from graded mixed-curved $L_\infty$-algebras and mixed-curved $L_\infty$-algebras. In light of Proposition \ref{prop:oocat of algebras over k}, the functor $\Tot\colon \oocat{cLie}^\mm{gr-mix}\rt \oocat{cLie}^\mm{mix}$ is the associated derived functor between $\infty$-categorical localizations, hence a right adjoint between presentable $\infty$-categories \cite{hinichlocalization}. 

For the second statement, note that by Remark \ref{rem:model for derived complete}, the $\infty$-category of (derived) complete complexes can be modeled by the model category on complete complexes. In particular, we can describe the bottom row of the square in terms of simplicial categories as well: we simply take the simplicially enriched categories of complete (resp.\ graded) complexes with maps given in simplicial degree $n$ by chain maps $V\rt W\botimes \Omega[\Delta^n]$.

Using this and our description of $\infty$-morphisms between graded mixed $L_\infty$-algebras, one then sees that the above square arises from a (strict) pullback square of categories enriched in Kan complexes. To conclude, it suffices to verify that the right vertical functor is a \emph{fibration} of simplicially enriched categories.

To see this, we have to verify two conditions: first, given $\mf{g}, \mf{h}\in \cat{cLie}^\mm{mix}$, we have to verify that sending an $\infty$-morphism $\phi\colon \mf{g}\leadsto \mf{h}\otimes\Omega[\Delta^\bullet]$ to its base map $\phi_\mm{lin}\colon \mf{g}\rt \mf{h}\otimes\Omega[\Delta^n]$ gives a Kan fibration of simplicial sets. Unraveling the definitions, the Kan condition translates into the lifting condition
$$\begin{tikzcd}
\mm{Free}(\mf{g})\arrow[d]\arrow[r] & \mf{h}\otimes \Omega[\Delta^n]\arrow[d]\\
\Omega_\phi\Bar_\phi(\mf{g})\arrow[r]\arrow[ru, dotted] & \mf{h}\otimes \Omega[\Lambda^n_i]
\end{tikzcd}$$
where $\Omega_\phi\Bar_\phi(\mf{g})$ is the bar-cobar resolution for the canonical twisting morphism. The left vertical map is a cofibration by Proposition \ref{prop:bar cobar resolutions are cofibrant} and the right vertical map is an acyclic fibration, so the desired lift exists.

Second, we have to verify that the induced map on homotopy categories is an isofibration: if $i\colon V\rt \mf{g}$ is a homotopy equivalence between complete complexes and $\mf{g}$ carries a curved $\Lie_\infty$-structure, then there exists an $\infty$-morphism of curved $\Lie_\infty$-algebras $i_\infty$ whose base map is (homotopic to) $i$. To do this, we can extend $i$ to a homotopy retract by Proposition \ref{prop:qiso is qinv} and then apply the Homotopy Transfer Theorem \ref{thm:HTT}.
\end{proof}
\begin{corollary}\label{cor:main result}
The $\infty$-category $\oocat{cLie}$ of curved $L_\infty$-algebras is a presentable $\infty$-category, which arises as the pullback of $\infty$-categories 
$$
\oocat{cLie}\simeq \oocat{cLie}^\mm{mix}\times_{\oocat{Mod}_k^\mm{cpl}} \oocat{Mod}_k^\gr.
$$
\end{corollary}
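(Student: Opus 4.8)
The plan is to deduce the corollary directly from Propositions \ref{prop:graded mixed = classical} and \ref{prop:graded mixed as pullback}, with essentially no new work required. Proposition \ref{prop:graded mixed = classical} supplies an equivalence of $\infty$-categories $\blend\circ\Tot\colon \oocat{cLie}^\mm{gr-mix}\xrightarrow{\ \sim\ }\oocat{cLie}$, and Proposition \ref{prop:graded mixed as pullback} exhibits $\oocat{cLie}^\mm{gr-mix}$ as a presentable $\infty$-category sitting in a homotopy pullback square whose other corners are $\oocat{cLie}^\mm{mix}$, $\oocat{Mod}_k^\gr$ and $\oocat{Mod}_k^\mm{cpl}$, the top horizontal map being $\Tot$ and the left vertical map the forgetful functor. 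First I would transport that square along the equivalence $\blend\circ\Tot$: since this equivalence is by its very construction compatible with the forgetful functors to $\oocat{Mod}_k^\gr$ and, via $\blend$, with the passage to $\oocat{cLie}^\mm{mix}$, the transported square identifies $\oocat{cLie}$ with the homotopy pullback $\oocat{cLie}^\mm{mix}\times_{\oocat{Mod}_k^\mm{cpl}}\oocat{Mod}_k^\gr$, which is the asserted description. Presentability of $\oocat{cLie}$ is then immediate, being equivalent to the presentable $\infty$-category $\oocat{cLie}^\mm{gr-mix}$.

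Alternatively, if one wants the pullback description to be manifestly presentable without passing through $\oocat{cLie}^\mm{gr-mix}$, I would argue as follows. All three corners are presentable: $\oocat{cLie}^\mm{mix}$ by Proposition \ref{prop:graded mixed as pullback}, and $\oocat{Mod}_k^\mm{cpl}$, $\oocat{Mod}_k^\gr$ by Remark \ref{rem:model for derived complete} and its evident graded counterpart. Moreover the two functors into $\oocat{Mod}_k^\mm{cpl}$ are right adjoints: the forgetful functor $\oocat{cLie}^\mm{mix}\to\oocat{Mod}_k^\mm{cpl}$ has the free $\cLie_\infty$-algebra functor as a left adjoint, while $\Tot\colon \oocat{Mod}_k^\gr\to\oocat{Mod}_k^\mm{cpl}$ is accessible and preserves limits (a direct computation shows that it commutes with products and with filtered colimits), hence is a right adjoint by the adjoint functor theorem. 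Since a pullback of presentable $\infty$-categories along right adjoint functors is again presentable and is computed in $\oocat{Cat}_\infty$, this reproves both the pullback identification and the presentability statement.

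I do not expect a genuine obstacle at this stage, because the two substantive inputs — the equivalence $\oocat{cLie}^\mm{gr-mix}\simeq\oocat{cLie}$ and the homotopy pullback square — are already established in the preceding propositions. The only point that deserves a line of care is the bookkeeping in the transport step, namely checking that under $\blend\circ\Tot$ the structural functors of the square in Proposition \ref{prop:graded mixed as pullback} correspond to the expected functors $\oocat{cLie}\to\oocat{cLie}^\mm{mix}$ and $\oocat{cLie}\to\oocat{Mod}_k^\gr$; this follows at once from the definitions of $\blend$ and $\Tot$ on objects and on $\infty$-morphisms. If one prefers the self-contained argument of the second paragraph, the only mild technicality is the verification that $\Tot\colon \oocat{Mod}_k^\gr\to\oocat{Mod}_k^\mm{cpl}$ is accessible and limit-preserving, which is a short computation with completed direct sums.
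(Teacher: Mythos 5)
Your proposal is correct and follows exactly the route the paper intends: the corollary is an immediate combination of the equivalence $\blend\circ\Tot\colon \oocat{cLie}^\mm{gr-mix}\xrightarrow{\sim}\oocat{cLie}$ from Proposition \ref{prop:graded mixed = classical} with the homotopy pullback square and presentability statement of Proposition \ref{prop:graded mixed as pullback}. The transport bookkeeping you flag is indeed the only point requiring a word, and your treatment of it (as well as the optional self-contained presentability argument) is sound.
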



%
%
%

\subsubsection{Other types of curved algebras}\label{sec:Other types of curved algebras}

The above results generalize from curved $L_\infty$-algebras to other types of curved algebras. More precisely, let us fix the following data (in unfiltered complexes). First, let $\CC$ be a graded cooperad concentrated in arity $\geq 2$ and let us think of the symbol $\uCC$ be a ``counital extension'', in the sense that for $n\geq 2$ we have 
$$
\uCC(n)= \CC(n), \qquad \uCC(0)= k\cdot \mu_0, \qquad \uCC(1)= k\cdot \mu_1\text{ in degree }0.
$$
Furthermore, $\uCC$ comes equipped with partial cocomposition maps $\uCC(r)\rt \uCC(n+1)\circ \uCC(r-n)$ which are coassociative and extend the partial cocomposition of $\CC$. 
We can endow this with two filtrations:
\begin{definition} 
Let $\uCC^\cl$ be the complete cooperad where $\mu_0$ has weight $1$ and all other operations have weight $0$. Then $\uCC^\cl$ is a counital, non-conilpotent complete cooperad, i.e.\ a counital coalgebra for $\circ$ on the category of symmetric complete complexes.

Similarly, let us call the ``mixed variant'' $\uCC^\mix$ the complete cooperad where $\mu_0$ and $\mu_1$ have weight $1$ and all other operations have weight $0$. Then $\uCC^\mix$ is a conilpotent non-counital cooperad.
\end{definition}
Furthermore, let $\PP$ be a graded operad and $\phi\colon \CC\rt \PP$ be a Koszul twisting morphism. 
 In particular the map $\Omega\CC\twoheadrightarrow \PP$ is surjective.
We then have the following three versions of `curved $\PP$-algebras':
\begin{definition}\label{def:classical general curved algebra}
A \emph{(classical) curved $\PP$-algebra} is a graded vector space $\mf{g}$, together with a codifferential on the cofree $\uCC^\cl$-coalgebra $\uCC^\cl(\mf{g})$, such that the corresponding map onto cogenerators 
$$
\delta\colon \uCC^\cl(\mf{g})\rt \mf{g}[1]
$$
induces a map $\Omega\CC^{\cl}\circ\mf{g}\rt \mf{g}$ vanishing on the kernel of $\Omega\CC^{\cl}\rt \PP$. An \emph{$\infty$-morphism} of curved $\PP$-algebras is a map of differential-graded $\uCC^\cl$-coalgebras.
\end{definition}
\begin{lemma}
Let us denote by $\cPP$ the quotient of the complete operad $\Omega(\CC^\mix)$ by the operadic ideal generated by $\ker(\phi)\subseteq \CC\subseteq \uCC^\mix$. This inherits a differential and the quotient map $\Omega(\CC^\mix)\rt \mm{u}\PP$ is a filtered quasi-isomorphism.
\end{lemma}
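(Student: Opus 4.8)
The plan is to mimic, almost verbatim, the proof of Proposition/definition \ref{propdef:mixed-curved lie}, which is the special case $\CC = \cocom$, $\PP = \Lie$. The key observation is that the operad $\Omega(\CC^\mix)$ is, as a graded operad (forgetting the filtration and differential), the free operad on the shifted symmetric sequence $\uCC^\mix\{\cdot\}[-1]$, and that its filtration assigns weight $1$ to each occurrence of the generators $\mu_0, \mu_1$ and weight $0$ to all other generators $c \in \CC(n)$, $n \geq 2$. Passing to the associated graded kills exactly the part of the cobar differential that involves cocompositions landing in the $\mu_0$ or $\mu_1$ components, leaving only the ``internal'' part of the differential --- i.e. the one coming purely from $\CC \to \CC$ --- together with the part that contracts edges not adjacent to a $\mu_0$ or $\mu_1$ vertex. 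Concretely, one introduces the notion of the \emph{skeleton} of a tree labelled by the generators: two trees have the same skeleton if they have the same $\mu_0$- and $\mu_1$-labelled vertices in the same positions, and arbitrary $\CC$-labels elsewhere. The differential on $\Gr\,\Omega(\CC^\mix)$ preserves skeleta, and on each skeleton type it restricts to (a shift of) the cobar differential of the uncurved cooperad $\CC$, i.e. to the differential of $\Omega\CC$ over the ``core'' obtained by deleting the $\mu_0,\mu_1$-vertices.

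The argument then proceeds in four steps. First, I would make precise the decomposition
\[
\Gr\big(\Omega(\CC^\mix)\big) \;\cong\; \bigoplus_{\text{skeleton types}} \big(\text{a shift of }\Omega\CC\big),
\]
checking that the surviving differential is exactly the cobar differential of $\CC$ on each summand (the curved generators contribute no differential at the associated-graded level, since any cocomposition producing or hitting a $\mu_0$/$\mu_1$ raises the weight). Second, I would identify the ideal $\ker(\phi) \subseteq \CC \subseteq \uCC^\mix$ inside $\Omega(\CC^\mix)$ and observe that it is a differential ideal: the filtration on it is induced from $\Omega(\CC^\mix)$, and the quotient $\cPP = \Omega(\CC^\mix)/\langle\ker\phi\rangle$ inherits both the filtration and the differential since $\ker(\phi)$ is preserved by cocomposition away from the curved vertices. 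Third, under the skeleton decomposition, the quotient map becomes, on each summand, the canonical map $\Omega\CC \to \mm{u}\PP$-over-the-core, which on the uncurved core is precisely the Koszul map $\Omega\CC \to \PP$; by hypothesis this is a quasi-isomorphism. Fourth, since a direct sum of quasi-isomorphisms is a quasi-isomorphism, $\Gr\big(\Omega(\CC^\mix)\big) \to \Gr(\cPP)$ is a quasi-isomorphism, which is exactly the statement that $\Omega(\CC^\mix) \to \cPP$ is a filtered quasi-isomorphism (a weak equivalence of complete operads, by Theorem \ref{thm:Model str}).

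The step I expect to be the main obstacle is the bookkeeping in the skeleton decomposition: one must verify carefully that contracting an edge adjacent to a $\mu_1$-vertex strictly raises the weight (so it vanishes on the associated graded), that the partial cocompositions $\uCC(r) \to \uCC(n{+}1)\circ\uCC(r{-}n)$ extending those of $\CC$ do not introduce new low-weight differential components hitting the curved part, and that the quotient by $\ker(\phi)$ interacts correctly with this decomposition --- i.e. that passing to $\Gr$ commutes with taking the quotient, which follows from $\ker(\phi)$ being an admissibly-embedded (hence ``exact'') suboperad so that Lemma \ref{lem:graded exact} applies. Once these combinatorial compatibilities are in place, everything else reduces formally to the hypothesis that $\phi\colon \CC \to \PP$ is Koszul and to exactness of $\Gr$.
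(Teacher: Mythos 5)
Your proposal is correct and follows exactly the route the paper takes: the paper's proof of this lemma simply says ``exactly the same as Proposition/definition \ref{propdef:mixed-curved lie}'', whose argument is precisely the skeleton decomposition of $\Gr\,\Omega(\CC^\mix)$ into summands on which the surviving differential is the cobar differential of $\CC$, reducing the claim to the Koszulness of $\phi\colon\CC\to\PP$. You in fact spell out more of the bookkeeping (weight-raising of cocompositions hitting $\mu_0,\mu_1$, exactness of $\Gr$ for the quotient by $\langle\ker\phi\rangle$) than the paper does.
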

\begin{proof}
Exactly the same as Definition/proposition \ref{propdef:mixed-curved lie}.
\end{proof}
\begin{definition}
A \emph{mixed-curved $\PP$-algebra} is an (filtered complete) algebra over the complete operad $\cPP$. 
A \emph{graded mixed-curved $\PP$-algebra} is a weight-graded complex $\mf{g}$ equipped with a mixed-curved $\PP$-algebra structure on $\Tot(\mf{g})$.
\end{definition}
\begin{remark}\label{rem:Ctot}
The exact same discussion as below Definition \ref{def:graded mixed-curved Loo} shows that there is a weight-graded operad $\cPP^{\tot}$ whose algebras are graded mixed-curved $\PP$-algebras. Indeed, let $\uCC^{\tot}$ denote the weight-graded (conilpotent, noncounital) cooperad given by $\CC=\uCC\big/\langle \mu_0, \mu_1\rangle$ in weight $0$ and $\uCC$ in weight $\geq 1$. The comultiplication is inherited from $\uCC$. Then $\cPP^{\tot}$ is the quotient of $\Omega(\CC^{\tot})$ by the operadic ideal generated by $\ker(\phi)\subseteq \CC$ (in all weights $\geq 0$). An argument analogous to Proposition/definition \ref{propdef:mixed-curved lie} shows that $\uCC^{\tot}\rt \cPP^{\tot}$ is a Koszul twisting morphism.
\end{remark}
In particular, there is a natural notion of $\infty$-morphism between mixed (resp.\ graded mixed) curved $L_\infty$-algebras, given by maps between their bar constructions (which are conilpotent coalgebras over $\uCC^\mix$, resp.\ $\uCC^{\tot}$). We define the $\infty$-category of (graded mixed, mixed) curved $\PP$-algebras as in Definition \ref{def:oocat of algebras}, using $\infty$-morphisms into $\mf{h}\otimes \Omega[\Delta^\bullet]$ as morphisms.
\begin{proposition}\label{prop:curved algebras}
There are functors of $\infty$-categories
$$\begin{tikzcd}
\oocat{cAlg}^{\mm{gr-mix}}_{\PP}\coloneqq \oocat{Alg}_{\cPP^\tot}^\gr\arrow[r, "\Tot"] & \oocat{cAlg}^{\mm{mix}}\coloneqq \oocat{Alg}^\mm{cpl}_{\cPP}\arrow[r, "\blend"] & \oocat{cAlg}_{\PP}.
\end{tikzcd}$$
The total composite is an equivalence and the left functor $\Tot$ fits into a homotopy pullback square of the form
$$\begin{tikzcd}
\oocat{cAlg}^{\mm{gr-mix}}_{\PP}\arrow[r, "\Tot"]\arrow[d, "\mm{forget}"{left}] & \oocat{cAlg}^{\mm{mix}}_{\PP}\arrow[d, "\mm{forget}"]\\
\oocat{Mod}_k^\gr\arrow[r, "\Tot"{below}] & \oocat{Mod}_k^\mm{cpl}.
\end{tikzcd}$$
\end{proposition}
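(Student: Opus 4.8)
The plan is to follow, almost verbatim, the treatment of curved $L_\infty$-algebras in Propositions \ref{prop:graded mixed = classical} and \ref{prop:graded mixed as pullback}, now with $\cLie_\infty,\cLie$ and $\cLie_\infty^\tot$ replaced by $\Omega(\uCC^\mix)$, $\cPP$ and $\cPP^\tot$, using the weight-graded Koszul twisting morphism $\uCC^\tot\rt\cPP^\tot$ of Remark \ref{rem:Ctot} and, at the level of mixed objects, the Koszul twisting morphism $\uCC^\mix\rt\cPP$. All the homotopical inputs are available from Section \ref{sec:complete operads}: the (cofibrantly generated) model structures on complete and weight-graded algebras (Theorem \ref{thm:Model str} and its evident weight-graded variant), their identification with the simplicial categories of $\infty$-morphisms (Proposition \ref{prop:oocat of algebras over k}), cofibrancy of bar-cobar resolutions (Proposition \ref{prop:bar cobar resolutions are cofibrant}), and the Homotopy Transfer Theorem \ref{thm:HTT}.

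\emph{The equivalence $\blend\circ\Tot$.} Essential surjectivity is as in Proposition \ref{prop:graded mixed = classical}: given a classical curved $\PP$-algebra $\mf{g}'$, split its complete filtration to write $\mf{g}'=\Tot(\mf{g})$ and decompose each structure operation into weight-homogeneous components. The weight-$0$ component of the operation dual to $\mu_1$ is a degree-$1$ endomorphism $d$ of $\mf{g}$; since in $\uCC^\cl$ the curvature $\mu_0$ has positive weight, the weight-$0$ part of the master equation reads $d^2=0$, so $\big(\mf{g},d,\dots\big)$ is a graded mixed-curved $\PP$-algebra mapping to $\mf{g}'$. For full faithfulness, recall (Remark \ref{rem:Ctot} and the discussion after Definition \ref{def:graded mixed-curved Loo}) that a graded mixed $\infty$-morphism $\mf{g}\rightsquigarrow\mf{h}$ is precisely a mixed $\infty$-morphism $\Tot(\mf{g})\rightsquigarrow\Tot(\mf{h})$ whose linear part $\phi_\mm{lin}$ is weight-homogeneous of weight $0$, and that $\blend$ sends it to the $\infty$-morphism of classical curved $\PP$-algebras with linear term $\phi_\mm{lin}+\phi_1$. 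This is a bijection, since the linear part of any $\infty$-morphism of classical curved $\PP$-algebras decomposes uniquely into a weight-$0$ homogeneous summand $\phi_\mm{lin}$ (automatically a chain map, for weight reasons) and a summand of filtration weight $\geq 1$; replacing $\mf{h}$ by $\mf{h}\botimes\Omega[\Delta^\bullet]$ upgrades this to a (strict) isomorphism of mapping spaces.

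\emph{The homotopy pullback square.} Taking total complexes is a right Quillen functor from graded mixed-curved to mixed-curved $\PP$-algebras, so $\Tot$ is a right adjoint between presentable $\infty$-categories. By Remark \ref{rem:model for derived complete} and Proposition \ref{prop:oocat of algebras over k}, all four $\infty$-categories are presented by Kan-enriched categories, with $n$-simplices given by $\infty$-morphisms (resp.\ chain maps) into $(-)\botimes\Omega[\Delta^n]$, and the square arises from a strict pullback of such categories. It therefore suffices to check that the right vertical functor $\oocat{cAlg}^\mm{mix}_{\PP}\rt\oocat{Mod}_k^\mm{cpl}$ is a fibration of simplicially enriched categories, i.e.: (i) for fixed $\mf{g},\mf{h}$ the map $\Map(\mf{g},\mf{h})\rt\Map_{\mm{Mod}}(\mf{g},\mf{h})$, $\phi\mapsto\phi_\mm{lin}$, is a Kan fibration, which unravels (exactly as in Proposition \ref{prop:graded mixed as pullback}) to the lifting of the cofibration $\Free(\mf{g})\rt\Omega_\phi\Bar_\phi(\mf{g})$ against the acyclic fibration $\mf{h}\botimes\Omega[\Delta^n]\rt\mf{h}\botimes\Omega[\Lambda^n_i]$; and (ii) the induced functor on homotopy categories is an isofibration, which follows by extending a homotopy equivalence $V\rto{\sim}\mf{g}$ of complete complexes to a homotopy retract (Proposition \ref{prop:qiso is qinv}) and transporting the mixed-curved $\PP$-structure along it via Theorem \ref{thm:HTT}.

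The only place where one must verify that nothing is special to $L_\infty$ is the weight bookkeeping: one needs that in $\uCC^\cl$ the curvature $\mu_0$, and in $\uCC^\mix$ and $\uCC^\tot$ both $\mu_0$ and $\mu_1$, have positive weight, so that the weight-$0$ part of the master equation really pins down a genuine differential and the relevant cocompositions stay conilpotent in the complete sense. These are exactly the defining properties of $\uCC^\cl,\uCC^\mix,\uCC^\tot$, so there is no serious obstacle; I expect the Kan fibration check in (i) to be the most delicate ingredient (as it already is in Proposition \ref{prop:graded mixed as pullback}), since it is where cofibrancy of the bar-cobar resolution and the multiplicativity of $\Omega[\Delta^\bullet]$ genuinely enter.
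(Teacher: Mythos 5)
Your proposal is correct and takes exactly the route the paper intends: the paper's proof simply says "the same proofs as for Propositions \ref{prop:graded mixed = classical} and \ref{prop:graded mixed as pullback}," and your write-up faithfully carries those arguments over, including the one genuinely new check (that the positive weights of $\mu_0$, $\mu_1$ in $\uCC^\cl$, $\uCC^\mix$, $\uCC^\tot$ make the weight-zero part of the master equation a square-zero differential and keep the cocompositions completely conilpotent).
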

In particular, all the above $\infty$-categories are presentable.
\begin{proof}
The same proofs as for Proposition \ref{prop:graded mixed = classical} and Proposition \ref{prop:graded mixed as pullback}.
\end{proof}
\begin{example}\label{ex:graded mixed}
We can endow the unit cooperad $\mb{1}$ with two filtrations: let $\mb{1}^\mm{cl}$ simply be $\mb{1}$ in filtration weight $0$ and let $\mb{1}^\mm{mix}$ be $\mb{1}$ in filtration weight $1$. This corresponds to the case where $\CC=0$ is the zero cooperad, except that we omit the nullary operation $\mu_0$ (everything above holds in this setting as well). The cobar construction $\Omega(\mb{1}^\mm{cl})$ is the graded-commutative algebra $k[\delta]$ with $\delta$ of degree $1$ and filtration weight $0$, while $\Omega(\mb{1}^\mm{mix})$ has $\delta$ of filtration weight $1$. In particular, a `classical $\Omega(\mb{1}^\mm{cl})$-algebra' is nothing but a complete filtered complex and a graded mixed $\Omega(\mb{1}^{\mm{mix}})$-algebra is precisely a graded mixed complex (Definition \ref{def:graded mixed complex}). Proposition \ref{prop:curved algebras} then reproduces the equivalence of $\infty$-categories
$$\begin{tikzcd}
\Tot\colon \oocat{Mod}_k^{\mm{gr-mix}}=\oocat{Alg}_{\Omega(1)}^{\mm{gr-mix}}\arrow[r] & \oocat{Alg}_{\Omega(1)}=\oocat{Mod}_k^\mm{cpl}
\end{tikzcd}$$
announced in Proposition \ref{prop:graded mixed complexes}.
\end{example}
\begin{example}\label{ex:other curved algebras}
A way to encounter operads that fit the framework above is to consider Koszul operads $\PP$ such that their Koszul dual $\PP^!$ is extendable in the sense of \cite[Section 4.6]{DotsenkoShadrinVallette2018}: there is a \emph{unital extension} $\mathrm u\PP^!$ operad with a monomorphism $\PP\to \mathrm u\PP^!$.

It follows that taking $\PP$ to be the operad of associative, permutative, gravity or $\Pois_n$, there is a meaningful notion of curved $\PP$-algebras  \cite[Proposition 4.6.1]{DotsenkoShadrinVallette2018}. Let us treat the associative case in a more detail:

The associative operad $\As$ is Koszul self dual and extendable to the operad governing unital associative algebras $\mathrm{u}\As$.
The cooperad $\ucoas\{1\}$ has dimension $n!$ in every arity $n\geq 0$. 

Following Definition \ref{def:classical general curved algebra}, a curved $\As$ algebra is therefore a graded vector space $(A,\cdot,\mu_1,\theta)$, equipped with a product $\cdot\colon A\otimes A\to A$ satisfying the associativity relation, a degree $1$ endomorphism $d\colon A\to A$ which is an algebra derivation and a ``curvature'' element $\theta\in A_2$ such that $d^2x = \theta x - x  \theta$ for all $x\in A$. 
This corresponds precisely to the classical notion of a curved associative algebra, as in \cite[Section 3.1]{positselski2011Twokinds}.

A mixed-curved associative algebra on the other hand, is an algebra over the complete operad $\mathrm c\As$, i.e., a filtered dg associative algebra $(A,d,\cdot)$, complete with respect to the filtration and equipped furthermore with a degree $1$ and filtration increasing algebra derivation $\mu_1\colon F^\bullet A^\bullet \to F^{\bullet +1} A^{\bullet +1}$ and a curvature element $\theta \in F^1 A^2$ satisfying 
\[
(d+\mu_1)^2x = \theta x - x  \theta,\quad \quad \forall x\in A.
\]

\end{example}

\begin{example}[Example of example \ref{ex:other curved algebras}]
A classical example of a curved associative algebra is as follows: Given a manifold $M$ and a vector bundle $E\to M$, one can consider the set of $\End(E)$-valued differential forms $\Omega(M,\End(E))= \bigoplus_{p=0}^{\dim M} \Omega^p(M,\End(E))$ which is a graded associative algebra.
The choice of a connection $\nabla$ on $E$ gives rise to the covariant exterior derivative $d_\nabla\colon \Omega^\bullet(M,\End(E))\to \Omega^{\bullet +1}(M,\End(E))$. 
Interpreting the curvature of $\nabla$ as an element $F_\nabla \in \Omega^2(M,\End(E))$, the tuple $(\Omega(M,\End(E)),\wedge,d_\nabla,F_\nabla)$ is a curved associative algebra, see for instance \cite[Appendix B.1]{positselski2011Twokinds}.
\end{example}

\section{Curved Lie algebras over filtered algebras}\label{sec: filtered algebras}
In Section \ref{sec:complete operads} we have studied the homotopy theory of curved Lie algebras (or curved $L_\infty$-algebras) over a field $k$ from the perspective of filtered operadic homological algebra. In particular, we have seen that there is a simple operadic way to describe `overdetermined variants' of curved Lie algebras, i.e.\ mixed-curved Lie algebras; it then remains to remove some redundancies, for example by choosing a splitting of the filtration.

The purpose of this section is to give a similar analysis of the homotopy theory of curved Lie algebras over a \emph{filtered} commutative dg-algebra $B$:
\begin{definition}\label{def:classical clie over B}
Let $B$ denote a complete filtered commutative dg-algebra. A \emph{(classical) curved $L_\infty$-algebra} over $B$ is a complete $B$-module $\mf{g}$, endowed with operations 
$$
\ell_i\colon \mm{Sym}^i_B(\mf{g}[1])\rt \mf{g}[1]
$$
which are $B$-linear for $i\neq 1$ and a $B$-module derivation for $i=1$, such that $\ell_0\in F^1\mf{g}$ and the following equations hold:
$$
\sum_{\substack{p+q=n+1\\ q\geq 0, \ p\geq }}\sum_{\sigma\in \mathrm{Sh}_{p-1,q}^{-1}}\sgn(\sigma)(-1)^{(p-1)q}(\ell_p\circ_1\ell_q)^\sigma = 0.
$$
\end{definition}
This situation is more complicated because the operation $\ell_1$ on $\mf{g}$ (which does not square to zero and hence should be considered as algebraic data) is required to interact with the differential on $B$. To deal with this, we will assume throughout that the filtration on $B$ splits multiplicatively, so that $B$ arises as the totalization of the weight-graded algebra $\Gr(B)$. In this case, the differential on $B=\Tot(\Gr(B))$ decomposes as $d+\delta$, where $d$ is homogeneous of weight $0$ and $\delta$ is of filtration degree $1$, endowing the weight-graded $\Gr(B)$ with the structure of a `graded mixed cdga' in the following sense:
\begin{definition}\label{def:graded mixed algebra}
We define a \emph{graded mixed cdga} $B$ to be the datum of a graded cdga denoted $B_{\gr}$, together with a derivation $\delta\colon B\coloneqq \Tot(B_{\gr})\rt \Tot(B_{\gr})[1]=B[1]$ of filtration weight $1$ such that $\delta^2+[d, \delta]=0$.

By a \emph{complete module} over a graded mixed cdga $B$ we will mean a complete module of the associated complete filtered cdga $(B, d)$, while a \emph{graded module} is a weight-graded module over $B_{\gr}$.
%
\end{definition}

\begin{remark}\label{def:de Rham algebra}
We will typically denote a graded mixed cdga $B$ and its associated complete cdga (with the differential $d$, not the total differential $d+\delta$) by the same symbol $B$. One can then identify $B_{\gr}=\Gr(B)$ with the associated graded of $B$. The data of a graded mixed cdga can also be described explicitly at the graded level: the graded cdga $B_{\gr}$ comes equipped with derivations $\delta_{p}\colon B\rt B\wgt{p}[1]$ for all $p\geq 1$, such that
$$
[d, \delta_r]+\sum_{p+q=r} \delta_p\circ \delta_q=0.
$$
This corresponds to a shifted version of a multicomplex \cite[10.3.7]{LodayVallette2012}.
\end{remark}

\begin{example}[De Rham algebra]\label{ex:de Rham}
Given a cofibrant cdga $A$, we denote by $\dR(A)$ the graded mixed cdga given by its completed algebra of differential forms, with respect to the grading given by the form degree $\dR(A)\langle p\rangle = \Omega_{\dR}^p(A)$ and derivation $\delta$ given by the de Rham differential $\delta = d_\dR$.
	
In particular, if $A$ is of the form $A=(\Sym(V), d)$, we have	 $\dR(A) = A\otimes \widehat\Sym(V[-1])$.
\end{example}

We will give an operadic description of mixed-curved $L_\infty$-algebras over such graded mixed algebras $B$. Contrary to the case where $B=k$ is a field, the operad controlling such algebras is not augmented, and hence does not quite fit into the framework of Section \ref{sec:complete operads}. Nonetheless, there are analogues of $\infty$-morphisms and the homotopy transfer theorem over $B$ as well.

\subsection{Mixed-curved Lie algebras}
Let $(B, \delta)$ be a graded mixed cdga. Throughout, we will consider $B$ with the (internal) differential $d$ and view $\delta$ as some additional algebraic structure, so that e.g.\ a $B$-module is simply a dg-module over $(B, d)$. In this case, there are obvious mixed and graded mixed variants of curved $L_\infty$-algebras over $B$:
\begin{definition}\label{def:mixed-curved lie over B}
A \emph{mixed-curved $L_\infty$-algebra} over a graded mixed cdga $(B, \delta)$ is a complete module $\mf{g}$ over the complete filtered cdga $B$, equipped with the structure of a $k$-linear mixed-curved $L_\infty$-algebra (Definition \ref{def:mixed-curved lie over k}) such that:
\begin{itemize}
\item for each $n\geq 2$, the map $\ell_n\colon \mm{Sym}_k(\mf{g}[1])\rt \mf{g}[2]$ is $B$-multilinear.
\item the map $\ell_1\colon \mf{g}\rt \mf{g}[-1]$ is a derivation over $\delta$, in the sense that
$$
\ell_1(b\cdot x) = \delta(b)x + (-1)^{|b|} b\cdot \ell_1(x), \hspace{1cm} \forall x\in \mf g, b\in B.
$$
\end{itemize}
A mixed-curved Lie algebra over $B$ is a curved $L_\infty$-algebra over $B$ for which the $\ell_n$ vanish for $n\geq 3$.
\end{definition}
\begin{definition}\label{def:graded mixed-curved lie over B}
A \emph{graded mixed-curved $L_\infty$-algebra} over a graded mixed cdga $(B, \delta)$ is a weight-graded module $\mf{g}_{\gr}$ over $B_{\gr}$, together with the structure of a mixed-curved $L_\infty$-algebra over $B$ on $\Tot(\mf{g}_{\gr})$.
\end{definition}
It is not difficult to see that there exists a complete $k$-linear operad $\cLie_{\infty, B}$ whose algebras are mixed-curved $L_\infty$-algebras over $B$. We can describe this operad more concretely by expressing it as a distributive law \cite[Section 8.6]{LodayVallette2012}:
\begin{lemma}\label{lem:operad for mixed-curved Loo over B}
Let $\cLie_{\infty}=\Omega(\ucocom^\mix\{1\})$ be the complete curved $L_\infty$-operad and consider
$$\begin{tikzcd}
\phi\colon \cLie_\infty\circ B\arrow[r] & B\circ \cLie_\infty
\end{tikzcd}$$
sending $\ell_n\circ (b_1, \dots, b_n) \mapsto (-1)^{(n-2)(|b_1|+\dots+|b_n|)} b_1\dots b_n\circ \ell_n$ for $n\geq 2$ and $\ell_1\circ b\mapsto (-1)^{|b|} b\circ \ell_1+ \delta(b)\circ 1$ (and extended in the obvious way to compositions). This defines a distributive law, with associated $k$-linear operad $\cLie_{\infty, B}\cong B\circ \cLie_\infty$.
\end{lemma}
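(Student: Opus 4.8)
The plan is to check that $\phi$ satisfies the axioms of a distributive law of complete dg operads, in the dg/complete analogue of \cite[Section 8.6]{LodayVallette2012}, and then to identify the resulting operad structure on $B\circ\cLie_\infty$ with $\cLie_{\infty,B}$ by unwinding what its algebras are. Here we view the commutative dg algebra $(B,d)$ (with its internal differential $d$, \emph{not} the total differential $d+\delta$) as a complete operad concentrated in arity $1$, so that $(B\circ\cLie_\infty)(n)\cong B\botimes\cLie_\infty(n)$ and $(\cLie_\infty\circ B)(n)\cong\cLie_\infty(n)\botimes B^{\botimes n}$ as complete $\Sigma_n$-complexes. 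The essential structural input is that $\cLie_\infty=\Omega(\ucocom^\mix\{1\})=(\mathcal T(E),d_{\cLie_\infty})$ is \emph{free} as a graded operad on the collection $E$ of generators $\ell_n$, $n\geq0$. Hence $\phi$ is already completely determined by its values on the $\ell_n\circ(b_1,\dots,b_n)$ together with the prescription that a $B$-label is pushed past a composite of generators one generator at a time, and with this definition the compatibility of $\phi$ with the operadic composition of $\cLie_\infty$ holds by construction. Thus the axioms that remain to be verified are: compatibility with the operadic composition and unit of $B$, and compatibility with the differentials.

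Compatibility with the unit of $B$ reads $\phi(\ell_n\circ(1,\dots,1))=1\botimes\ell_n$, which is clear. Compatibility with the composition of $B$ says that pushing a product $b'b$ past a generator equals pushing $b$ and then $b'$: for $\ell_n$ with $n\geq2$ this follows from the commutativity and associativity of $B$, the sign $(-1)^{(n-2)\sum_i|b_i|}$ being exactly the Koszul sign for commuting $\ell_n$ past the product $b_1\cdots b_n$; for $\ell_1$ it unwinds precisely to the Leibniz rule $\delta(b'b)=\delta(b')\,b+(-1)^{|b'|}b'\,\delta(b)$, i.e.\ to $\delta$ being a derivation of $B$, and the summand $\delta(b)\circ1$ in $\phi(\ell_1\circ b)$ is exactly what makes this come out right. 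Finally, $\phi$ must be a chain map for the differentials $d_B\botimes\mm{id}\pm\mm{id}\botimes d_{\cLie_\infty}$. On $\ell_n$ with $n\geq2$ this is automatic, since $d_{\cLie_\infty}$ produces only operations that are again $B$-multilinear while $\phi$ merely pulls scalars out. On $\ell_1$ one uses the structure equation \eqref{eq:cLoo structure equation}, in particular $-\partial(\ell_1)=\ell_1\circ_1\ell_1+\ell_2\circ_1\ell_0$: pushing the scalar $b$ through the two copies of $\ell_1$ in $\phi\big((\ell_1\circ_1\ell_1)\circ b\big)$ produces, besides terms that match the right-hand side, a contribution $\delta^2(b)\circ1$, whereas differentiating $\phi(\ell_1\circ b)=(-1)^{|b|}b\circ\ell_1+\delta(b)\circ1$ produces a contribution $[d,\delta](b)\circ1$; these cancel precisely because $\delta^2+[d,\delta]=0$ in a graded mixed cdga (Definition \ref{def:graded mixed algebra}), all remaining terms matching by \eqref{eq:cLoo structure equation} and $B$-(multi)linearity. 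This last verification, together with the attendant sign bookkeeping, is the only genuinely delicate point, and I expect it to be the main technical obstacle; everything else is formal.

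Granting that $\phi$ is a distributive law, the construction of \cite[Section 8.6]{LodayVallette2012} endows $B\circ\cLie_\infty$ with the structure of a complete $k$-linear dg operad, whose operadic composition first uses $\phi$ to bring every $B$-label to the outside, then multiplies these labels in $B$ and composes inside $\cLie_\infty$. It remains to identify its algebras: unwinding the definition, a $(B\circ\cLie_\infty)$-algebra structure on a complete $k$-module $\mf{g}$ is precisely the datum of a $B$-module structure on $\mf{g}$ together with a $k$-linear $\cLie_\infty$-algebra structure whose operations interact with the $B$-action through $\phi$ — that is, $\ell_n$ is $B$-multilinear for $n\geq2$ and $\ell_1$ is a derivation over $\delta$. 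By Definition \ref{def:mixed-curved lie over B} this is exactly a mixed-curved $L_\infty$-algebra over $B$; hence the operad just constructed is canonically isomorphic to $\cLie_{\infty,B}$, as claimed.
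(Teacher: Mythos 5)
Your overall route is the same as the paper's: check compatibility of $\phi$ with the composition of $\cLie_\infty$ (free by construction), with the unit and multiplication of $B$ (Koszul signs for $n\geq 2$, the Leibniz rule for $\delta$ when $n=1$), then with the differentials, and finally read off that algebras over $B\circ\cLie_\infty$ are exactly mixed-curved $L_\infty$-algebras over $B$. The identification of $\delta^2+[d,\delta]=0$ as the source of the cancellation in the $n=1$ chain-map check is exactly the computation the paper performs.

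There is, however, one incorrect assertion: the claim that the chain-map condition for $\ell_n$ with $n\geq 2$ is ``automatic, since $d_{\cLie_\infty}$ produces only operations that are again $B$-multilinear while $\phi$ merely pulls scalars out.'' This is not so. By \eqref{eq:cLoo structure equation}, $\partial(\ell_n)$ for $n\geq 2$ contains the commutator $[\ell_1,\ell_n]$ (e.g.\ $-\partial(\ell_2)=\ell_1\circ_1\ell_2-\ell_2\circ_1\ell_1+(\ell_2\circ_1\ell_1)^{(12)}+\ell_3\circ_1\ell_0$), and $\ell_1$ is precisely the generator that $\phi$ does \emph{not} treat $B$-linearly: pushing the $b_i$ past each individual summand $\ell_1\circ_1\ell_n$ or $\ell_n\circ_1\ell_1$ produces extra terms involving $\delta(b_i)$. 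What is true is that these $\delta$-contributions cancel \emph{in the commutator}, by the same Leibniz computation that makes the bracket of a derivation with a multilinear map multilinear, yielding $\phi\big([\ell_1,\ell_n]\circ(b_1,\dots,b_n)\big)=(b_1\cdots b_n)\circ[\ell_1,\ell_n]$. This is exactly the verification the paper carries out, after splitting $-[d,\ell_n]=[\ell_1,\ell_n]+T_n$ with $T_n$ a composite of $\ell_i$, $i\neq 1$ (for which the pull-out really is sign bookkeeping). So this step requires the same kind of cancellation you already perform for $n=1$ and for the compatibility with the product of $B$; once you supply it, your argument is complete and coincides with the paper's.
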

Similarly, one obtains a graded operad $\cLie_{\infty, B}^\gr$ whose algebras are graded mixed-curved $L_\infty$-algebras.
\begin{proof}
We have to verify that $\phi$ is compatible with composition in $\cLie_\infty$ and $B$, i.e.\ that the two squares
$$\begin{tikzcd}
\cLie_\infty\circ \cLie_\infty\circ B\arrow[r, "\phi"]\arrow[d, "\text{compose}"{left}] & \cLie_\infty\circ B\circ \cLie_\infty\arrow[d, "\phi"] & \cLie_\infty\circ B\circ B\arrow[r, "\phi"]\arrow[d, "\text{compose}"{left}] & B\circ \cLie_\infty\circ B\arrow[d, "\phi"]\\
\cLie_\infty\circ B\arrow[r, "\phi"{below}] & B\circ \cLie_\infty & \cLie_\infty\circ B\arrow[r, "\phi"{below}] & B\circ \cLie_\infty
\end{tikzcd}$$
commute. The left square commutes since $\phi$ is defined on generators and extended to be composition-preserving in $\cLie_\infty$. For the right square, the only nontrivial thing to observe is that the bottom-left composite sends $\ell_1\circ b_1\circ b_2$ to $(-1)^{|b_1|+|b_2|}(b_1b_2)\circ \ell_1+\delta(b_1b_2)\circ 1$, while the top-right composite sends it to $(-1)^{|b_1|+|b_2|}(b_1b_2)\circ \ell_1+(-1)^{|b_1|}b_1\delta(b_2)\circ 1+\delta(b_1)b_2\circ 1$. These terms coincide since $\delta$ is a derivation.

Finally, one has to check that $\phi$ preserves the differentials. To this end, note that for $n\neq 1$, we can write $-[d, \ell_n]=[\ell_1, \ell_n]+T_n$, where $T_n$ is a composition of $\ell_i$ with $i\neq 1$ (see Equation \ref{eq:cLoo structure equation}). Ignoring the Koszul signs appearing in the definition of $\phi$, one can verify that 
$$
\phi\Big([\ell_1, \ell_n]\circ (b_1, \dots, b_n)\Big)=(b_1\dots b_n)\circ [\ell_1, \ell_n] \qquad\qquad \phi\Big(T_n\circ (b_1, \dots, b_n)\Big)=(b_1\dots b_n)\circ T_n\Big).
$$
Using this, one sees that $\phi$ sends $d(\ell_n\circ (b_1,\dots, b_n))$ to $d(b_1\cdot \dots\cdot b_n\circ \ell_n)$ for $n\neq 2$. For $n=1$, we have (in the case $b$ is even, the odd case is similar)
\begin{align*}
\phi\big(d(\ell_1\circ b)\big) &= \phi\Big(-\ell_1^2\circ b- (\ell_2\circ_1 \ell_0)\circ b -\ell_1\circ d(b)\Big)\\
&= -\Big(b\circ \ell_1^2 + \delta^2(b)\circ 1\Big) - \Big(b\circ (\ell_2\circ_1 \ell_0)\Big)-\Big(-d(b)\circ \ell_1 + \delta(d(b))\circ 1\Big)\\
&= b\circ d(\ell_1) + d(b)\circ \ell_1 + d(\delta(b))\circ 1=d\big(\phi(\ell_1\circ b)\big)
\end{align*}
We thus obtain an operad $B\circ \cLie_\infty$ from the distributive law $\phi$. By construction, an algebra over this operad is a $B$-module with the structure of an algebra over $\cLie_\infty$, such that $\ell_n$ is $B$-multilinear and $\ell_1$ is a derivation over $\ell_1$. This is precisely a mixed-curved $L_\infty$-algebra.
\end{proof}
Being the category of algebras over a complete operad, the category of mixed-curved $L_\infty$-algebras over $B$ admits a model structure whose weak equivalences are the filtered quasi-isomorphisms and whose fibrations are surjections in every filtration degree (Theorem \ref{thm:Model str}). Furthermore, the natural map of operads $B\rt \cLie_{\infty, B}$ induces a Quillen adjunction between $B$-modules and mixed-curved $L_\infty$-algebras
$$\begin{tikzcd}
\mm{Free}\colon \Mod_B \arrow[r, shift left=1ex] & \cat{cLie}_B\colon \mm{forget}\arrow[l, shift left=1ex].
\end{tikzcd}$$
To relate mixed-curved $L_\infty$-algebras over $B$ to curved $L_\infty$-algebras in the sense of Definition \ref{def:classical clie over B}, we will need a few more details on the homotopy theory of algebras over operads like $\cLie_{\infty, B}=B\circ \cLie_{\infty}$.

\subsection{Some operadic results}
Let $\CC$ be a nonunital complete (or graded) cooperad over $k$, let $\PP=\Omega(\CC)$ and let $(B, \delta)$ be a graded mixed cdga. Suppose that
$$
\phi\colon \PP\circ B\rt B\circ \PP
$$
is a well-defined distributive law, given  on generating elements $c\in \CC[-1]$ by
\begin{align}
c\circ (b_1, \dots, b_n)&\mapsto(-1)^{(|b_1|+\dots |b_n|)|c|}(b_1 \dots b_n)\circ c & \text{arity }\neq 1\nonumber\\
c\circ b&\mapsto (-1)^{|b|} b\circ c+\lambda_c\cdot \delta(b)\circ 1 & \text{arity }=1\label{eq:distributive law}
\end{align}
for certain $\lambda_c\in k$. This endows $\PP_B\coloneqq B\circ \PP$ with the structure of a (unital) operad, which need not be augmented (it is augmented iff all $\lambda_c=0$). There are natural maps of operads $B\rt \PP_B\lt \PP$, so that every $\PP_B$-algebra has an underlying $B$-module and an underlying ($k$-linear) $\PP$-algebra. Explicitly, a $\PP_B$-algebra is precisely a $B$-module $M$, equipped with the structure of an ($k$-linear) algebra over $\PP=\Omega(\CC)$, such that the generating operations $c\in \CC(p)$ interact with the $B$-module structure via
\begin{align*}
c(b_1\cdot m_1, \dots, b\cdot m_p) &=  (-1)^{(|b_1|+\dots |b_n|)|c|}(b_1\cdot \dots \cdot b_p)\cdot c(m_1, \dots, m_p) & \text{for }p\neq 2, \\
c(b\cdot m)& =(-1)^{|b|}b\cdot  c(m) + \lambda_c\cdot \delta(b)\cdot m.&
\end{align*}
By Theorem \ref{thm:Model str}, the category of $\PP_B$-modules carries a model structure. The purpose of this section is to gather some general results on this homotopy theory of $\PP_B$-algebras.
\begin{remark}\label{rem:graded algebras over B}
Recall from Remark \ref{rem:Ctot} that for any complete cooperad $\CC$, there is a cooperad $\CC^{\tot}$ given in weight $i$ by $\CC^\tot\langle i \rangle=F^i\CC$. This weight-graded cooperad has the property that a weight-graded $\Omega(\CC^\tot)$-algebra is precisely a weight-graded complex $V$ together with an $\Omega(\CC)$-algebra structure on the corresponding filtered complex $\prod_i V\langle i \rangle$. The distributive law \eqref{eq:distributive law} induces a distributive law in the weight-graded setting
$$
\Omega(\CC^\tot)\circ B_{\gr}\rt B_{\gr}\circ \Omega(\CC^\tot).
$$
Let $\PP_B^\gr=B_{\gr}\circ \Omega(\CC^\tot)$ denote the corresponding weight-graded operad, whose algebras are weight-graded $B_{\gr}$-modules $\mf{g}_{\gr}$ such that the complete $B$-module $\Tot(\mf{g}_{\gr})$ carries a compatible $\PP_B$-structure. All of the results from this section apply to this weight-graded operad as well (with easier proofs).
\end{remark}

\subsubsection{Homotopy transfer theorem}
The notion of an $\infty$-morphism for algebras over $\PP=\Omega(\CC)$ has an analogue for $\PP_B$-algebras:
\begin{definition}\label{def:oo-morphism B-linear}
An \emph{$\infty$-morphism} of $\PP_B$-algebras $\phi\colon \mf{g}\rightsquigarrow \mf{h}$ is an $\infty$-morphism between the underlying $k$-linear $\PP$-algebras, i.e.\ a map of $\CC$-coalgebras $\CC_+\circ \mf{g}\rt \CC_+\circ \mf{h}$, satisfying the following condition: forgetting differentials, each element $c\in \CC_+(p)$ induces a $B$-multilinear map
$$
\phi_c\colon \mf{g}^{\otimes p} \rt \mf{h}.
$$
\end{definition}
\begin{remark}\label{rem:composing oo-morphisms}
Given two $\infty$-morphisms $\phi\colon \mf{g}\rt \mf{h}$ and $\psi\colon \mf{h}\rt \mf{k}$, recall that the composition of their underlying $\infty$-morphisms of $\PP$-algebras is given by
$$
\big(\psi\phi\big)_c = \sum \psi_{c^{(1)}}\circ \big(\phi_{c^{(i_1)}}, \dots, \phi_{c^{(i_p)}}\big)
$$
where $\Delta(c)=\sum c^{(1)}\circ \big(c^{(i_1)}, \dots, c^{(i_p)}\big)$ is the (total) cocomposition. If each $\phi_c, \psi_c$ is $B$-multilinear, certainly $(\psi\phi)_c$ is $B$-multilinear as well. It follows that $\infty$-morphisms of $\PP_B$-algebras can be composed.
\end{remark}
Using this notion of $\infty$-morphism, we have the following version of the homotopy transfer theorem:
\begin{theorem}[Homotopy Transfer Theorem]\label{thm:htt-B}
Let $(B, \delta)$ be a graded mixed cdga, $\CC$ a complete nonunital cooperad over $k$ and suppose that $\PP_B=B\circ \Omega(\CC)$ arises from the distributive law \eqref{eq:distributive law}. Let $W$ be a $\PP_B$ algebra and consider a deformation retract of $B$-modules
$$\begin{tikzcd}[cells={nodes={}}]
V \arrow[yshift=-.5ex,swap]{r}{i}  
& W \arrow[yshift=.5ex,swap]{l}{p} 
\arrow[loop right, distance=2em, 
]{}{h} 
\end{tikzcd}$$
satisfying the side conditions $ph=0$, $hi=0$ and $h^2=0$. Then the $B$-module structure on $V$ extends to a \emph{transferred} $\PP_B$-algebra structure, and $i$ extends to an $\infty$-morphism $i_\infty$ of $\PP_B$-algebras.
\end{theorem}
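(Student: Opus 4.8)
The plan is to deduce this from the $k$-linear Homotopy Transfer Theorem \ref{thm:HTT} and then to verify, by inspecting the explicit transfer formulas, that everything in sight is $B$-(multi)linear. Since there is a morphism of operads $\PP=\Omega(\CC)\to\PP_B$, the $\PP_B$-algebra $W$ has an underlying $k$-linear $\Omega(\CC)$-algebra, and the given data $(i,p,h)$ is in particular a deformation retract of complete complexes satisfying the side conditions. Theorem \ref{thm:HTT}, together with the explicit formulas of \cite[\S 10.3]{LodayVallette2012} (which carry over verbatim to the complete setting), then yields a transferred $\Omega(\CC)$-algebra structure on the complete complex underlying $V$ and a $k$-linear $\infty$-morphism $i_\infty\colon V\rightsquigarrow W$ extending $i$. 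Concretely, for $c\in\CC(n)$ the transferred operation $\gamma^V_c\colon V^{\otimes n}\to V$ is a sum over $n$-leaf trees $T$, each summand obtained by decorating the leaves by $i$, the internal edges by $h$, the root by $p$, and a vertex of arity $k$ by the operation $\gamma^W_{c^{(v)}}$ attached to the corresponding cofactor $c^{(v)}\in\CC(k)$ of the iterated cocomposition of $c$; the components $(i_\infty)_c$ are described by the same trees with the root edge decorated by $h$ instead of $p$, and $(i_\infty)_{\id}=i$.

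The first thing I would check is that the transferred $\Omega(\CC)$-structure on $V$ refines to a $\PP_B$-structure, i.e.\ is compatible with the $B$-module structure as in \eqref{eq:distributive law}. Fix $c\in\CC(n)$, a summand tree $T$, and a scalar $b\in B$ inserted in one of the leaves of $T$; I propagate $b$ along the path from that leaf to the root. The decorations $i$ and $h$ are $B$-linear, so $b$ passes through them up to the evident Koszul sign; at each vertex of arity $\geq 2$ on the path the relation $c'(b\cdot m_1,\dots,m_k)=\pm\,b\cdot c'(m_1,\dots,m_k)$ again lets $b$ pass through (with a sign), while at each vertex $v$ of arity $1$ the relation $c'(b\cdot m)=\pm\,b\cdot c'(m)+\lambda_{c'}\,\delta(b)\cdot m$ produces, besides the expected $b$-term, a correction term proportional to $\delta(b)$. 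The key point is that such a correction term is immediately composed below with the decoration of the edge entering $v$ (an $h$, or an $i$ if $v$ meets a leaf) and above with the decoration of the edge leaving $v$ (an $h$, or $p$ if $v$ meets the root); hence by the side conditions $h^2=0$, $ph=0$, $hi=0$ it vanishes — the sole exception being the one-vertex tree, where $n=1$ and $p\circ(\lambda_c\,\delta(b)\,i)=\lambda_c\,\delta(b)\cdot\id_V$ because $pi=\id_V$. It follows that $\gamma^V_c$ is $B$-multilinear for $n\geq 2$ and that the transferred arity-one operation is a derivation over $\delta$ with the same constant $\lambda_c$ as in \eqref{eq:distributive law}; this is precisely what it means for $V$ to be a $\PP_B$-algebra.

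The same bookkeeping applied to the formula for $i_\infty$ shows that each component $(i_\infty)_c$, $c\in\CC_+(n)$, is $B$-multilinear: the correction terms created at arity-one vertices are again killed by $h^2=0$ and $hi=0$ (now even at the root, whose edge carries $h$), while the linear part $(i_\infty)_{\id}=i$ is $B$-linear by hypothesis. Hence $i_\infty$ is an $\infty$-morphism of $\PP_B$-algebras in the sense of Definition \ref{def:oo-morphism B-linear}, and since such $\infty$-morphisms compose (Remark \ref{rem:composing oo-morphisms}) the argument is complete. The weight-graded variant $\PP^\gr_B$ of Remark \ref{rem:graded algebras over B} is treated verbatim.

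I do not expect a deep obstacle here: the entire content is the combinatorics of tracking a $B$-scalar through the tree-sum formulas and identifying the surviving summands. The one genuine subtlety is that the arity-one generators act by $\delta$-derivations rather than $B$-linearly, so a priori the transferred higher operations could fail to be $B$-multilinear; the point is simply that the side conditions are exactly strong enough to annihilate every offending term, so the hard part is really the record-keeping of where each $\delta(b)$ gets absorbed. (It is worth noting that the side conditions are always available in practice, since acyclic (co)fibrations of complete $B$-modules are deformation retracts satisfying the side conditions by Lemma \ref{lem:acyclic fib is def retract}.)
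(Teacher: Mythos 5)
Your proposal is correct and follows essentially the same route as the paper's proof: apply the $k$-linear Homotopy Transfer Theorem and then track a $B$-scalar through the tree-sum formulas, observing that the $\delta(b)$ correction terms produced at arity-one vertices are sandwiched between $h,h$, between $p,h$, or between $h,i$ and hence vanish by the side conditions, with the sole surviving term $p\circ c_1\circ i$ accounting for the $\delta$-derivation property of the transferred arity-one operation. The paper organizes this by listing the four local configurations $hc_1h$, $pc_1h$, $hc_1i$, $pc_1i$ rather than propagating $b$ along a root path, but the content is identical.
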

\begin{proof}
Apply the $k$-linear Homotopy Transfer Theorem \ref{thm:HTT} to endow $V$ with a transferred $\Omega\CC$-algebra structure, together with a $k$-linear $\infty$-morphism $i_\infty\colon V\rightsquigarrow W$. We claim that the resulting $\PP$-algebra structure on $V$ is compatible with its $B$-module structure and that $i_\infty$ is $B$-multilinear.

To see this, recall that for any generating operation $c\in \Omega(\CC)$, the resulting operation on $V$ is given by a sum of trees, with vertices labeled by elements from $\CC$, leaves labeled by $i$, internal edges labeled by $h$ and the root labeled by $p$ \cite[\S 10.3.3]{LodayVallette2012}. Almost all operations appearing in this tree are $B$-multilinear, the only exception coming from vertices $c_1\in \CC(1)$, which can appear locally in the tree as
$$
\begin{tikzpicture}[grow'=up]
\tikzset{level 1+/.style={level distance=1cm}}
\Tree[.\node {}; \edge node[auto=left, scale=0.8]{$h$}; [.\node[draw, circle, scale=0.8]{$c_1$};\edge node[auto=left, scale=0.8]{$h$}; [.\node {};]]]
\end{tikzpicture}
\qquad\qquad\qquad
\begin{tikzpicture}[grow'=up]
\tikzset{level 1+/.style={level distance=1cm}}
\Tree[.\node {}; \edge node[auto=left, scale=0.8]{$p$}; [.\node[draw, circle,, scale=0.8]{$c_1$};\edge node[auto=left, scale=0.8]{$h$}; [.\node {};]]]
\end{tikzpicture}
\qquad\qquad\qquad
\begin{tikzpicture}[grow'=up]
\tikzset{level 1+/.style={level distance=1cm}}
\Tree[.\node {}; \edge node[auto=left, scale=0.8]{$h$}; [.\node[draw, circle,, scale=0.8]{$c_1$};\edge node[auto=left, scale=0.8]{$i$}; [.\node {};]]]
\end{tikzpicture}
\qquad\qquad\qquad
\begin{tikzpicture}[grow'=up]
\tikzset{level 1+/.style={level distance=1cm}}
\Tree[.\node {}; \edge node[auto=left, scale=0.8]{$p$}; [.\node[draw, circle,, scale=0.8]{$c_1$};\edge node[auto=left, scale=0.8]{$i$}; [.\node {};]]]
\end{tikzpicture}
$$
Even though $c_1$ does not define a $B$-linear map on $W$, the first 3 composites are all $B$-linear by the side conditions: for example, 
$$
(hc_1h)(b\cdot m) = b\cdot (hc_1h)(m) + \lambda_{c_1}\cdot \delta(b)\cdot  h^2(m) = b\cdot (hc_1h)(m)
$$ by the side condition $h^2=0$. The fourth operation appears exactly once, in the formula for the transferred operation $c^V_1\colon V\rt V$. Consequently, all transferred operations of arity $\neq 1$ are $B$-multilinear, while $c_1^V=p\circ c_1\circ i + f$ with $f$ $B$-linear. In particular, $c_1^V$ satisfies
$$
c_1^V(b\cdot m) = \big(pc_1i\big)(b\cdot m) + f(b\cdot m) = b\cdot c_1^V(m) + \lambda_{c_1}\cdot \delta(b)\cdot m
$$
since $pi=1$.

A very similar argument shows that all components of the map $i_\infty$ are $B$-linear, since they are given by trees with vertices labeled by $\CC$, leaves labeled by $i$ and the root and internal edges labeled by $h$ \cite[\S 10.3.10]{LodayVallette2012}. The operations from $\CC(1)$ now only appear in the form of the left two pictures, and hence contribute $B$-linear terms to the formula for $i_\infty$.
\end{proof}

\subsubsection{Cofibrant resolutions}\label{sec:cofibrant resolutions over B}
Cofibrant $\PP_B$-algebras can be studied efficiently using bar-cobar methods.
\begin{lemma}\label{lem:cofibrant => B-cofibrant}
Every cofibrant $\PP_B$-algebra is cofibrant as a $B$-module.
\end{lemma}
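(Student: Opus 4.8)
The plan is to run the classical cell-induction argument --- over an operad, cofibrant algebras have cofibrant underlying objects --- but carried out inside complete filtered $B$-modules. Write $\mm{Free}\colon \Mod_B\rt \Alg_{\PP_B}^\mm{cpl}$ for the free $\PP_B$-algebra functor $\PP_B\circ_B(-)$. Transferring the model structure of Theorem~\ref{thm:Model str} first along $\Mod_k^\mm{filt}\rt \Mod_B$ and then along $\mm{Free}$ gives the same model structure on $\PP_B$-algebras as the direct transfer, so the latter is cofibrantly generated by the maps $\mm{Free}(K)\rt \mm{Free}(L)$ with $K\rt L$ a generating cofibration of complete $B$-modules. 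First I would apply the small object argument: every cofibrant $\PP_B$-algebra is a retract of a transfinite composite of pushouts of such maps, starting from the initial $\PP_B$-algebra $\mm{Free}(0)$. Since cofibrations of complete $B$-modules are stable under retracts and transfinite composition --- this follows from the description in Proposition~\ref{prop:cofibrations of complete modules} together with Lemma~\ref{lem:graded exact}, so that $\Gr$ converts such a composite into a composite of cofibrations of $\Gr(B)$-modules --- it then suffices to establish: (i) the underlying $B$-module of $\mm{Free}(0)$ is cofibrant; and (ii) if $X$ has cofibrant underlying $B$-module and $X\rt X'$ is a pushout of some $\mm{Free}(K)\rt \mm{Free}(L)$, with $K\rt L$ a cofibration of complete $B$-modules, along an arbitrary $\PP_B$-algebra map $\mm{Free}(K)\rt X$, then $X\rt X'$ is a cofibration of complete $B$-modules.

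Step (i) is routine: $\mm{Free}(0)=\PP_B\circ_B 0$ is, as a complete $B$-module, the free module $B\botimes_k\PP(0)$ on the arity-zero part of $\PP=\Omega(\CC)$, which is a cofibrant complete complex over $k$ by Lemma~\ref{lem:cofibrations of complete complexes}; hence $\mm{Free}(0)$ is cofibrant by Proposition~\ref{prop:cofibrations of complete modules}.

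The heart of the matter is (ii), which I would handle with the standard filtration of a pushout of free-algebra maps: $X'=\colim_n X_n$ with $X=X_0$, where each $X_{n-1}\rt X_n$ is a pushout \emph{in complete $B$-modules} of a map
$$
\mm{Env}_X(n)\otimes_{\Sigma_n} Q^n(K,L)\rt \mm{Env}_X(n)\otimes_{\Sigma_n} L^{\botimes_B n},
$$
with $\mm{Env}_X$ the enveloping operad of $X$ and $Q^n(K,L)\rt L^{\botimes_B n}$ the $n$-th latching map of $K\rt L$. The latching map is a cofibration of complete $B$-modules by iterating the pushout-product axiom, which holds by Corollary~\ref{cor:monoidal model}. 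It remains to see that $\mm{Env}_X(n)\otimes_{\Sigma_n}(-)$ preserves cofibrations of complete $B$-modules: in characteristic $0$ the group algebra $k[\Sigma_n]$ is semisimple, and since the monoidal unit $B$ is itself a cofibrant $B$-module, the enveloping operad of a $\PP_B$-algebra with cofibrant underlying module is $\Sigma$-cofibrant in the relevant sense, so this functor is left Quillen; here the classical arguments of \cite{hinich1997homological, Fresse2004} apply essentially unchanged. It follows that each $X_{n-1}\rt X_n$, hence $X\rt X'$, is a cofibration of complete $B$-modules, which closes the induction.

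I expect the one genuinely delicate point to be the interface between this operadic bookkeeping and completeness --- making sure the colimits and $\Sigma_n$-coinvariants in the pushout filtration interact correctly with the (possibly non-split) filtrations, and in particular that the ``quasiprojective cokernel'' clause of Proposition~\ref{prop:cofibrations of complete modules} survives each step. I would deal with this by systematically passing to the associated graded via Lemma~\ref{lem:graded exact} and Remark~\ref{rem:graded is monoidal}, where $\Gr$ is exact, commutes with all colimits and products, and $\Sigma_n$-coinvariants are exact, so that the completion is invisible and the classical computations hold verbatim; the quasiprojectivity clause is then obtained separately by re-running the same cell induction after forgetting all differentials, each step of which manifestly preserves the property of being a $B$-linear retract of a free complete $B$-module. (Alternatively: $\Gr$ is a left Quillen functor from $\PP_B$-algebras to graded $\PP_B^\gr$-algebras, which already reduces the associated-graded half of (ii) to the weight-graded world, where there are no completion issues.)
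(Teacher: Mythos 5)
Your overall strategy---reduce to cellular $\PP_B$-algebras and induct over the cell attachments---is the same basic idea as the paper's, but you run it through the full operadic machinery (enveloping operads, latching objects, $\Sigma_n$-coinvariants), whereas the paper's proof is a two-line shortcut: a cofibrant algebra is a retract of a quasi-free algebra with an increasing filtration on its generators, and since $\PP_B=B\circ\PP$ is cofibrant as a left $B$-module, the underlying graded $B$-module of such a quasi-free algebra is free, so the filtration on generators immediately exhibits it as a cofibrant complete $B$-module (the paper refers to \cite[Corollary 5.5]{berger2003axiomatic}). Note also that the lemma only asserts cofibrancy of the underlying \emph{object}; the stronger statement you aim for, that each cell attachment is an underlying $B$-module cofibration, is more than what is needed, and it is precisely this stronger statement that forces the enveloping operad into the picture.

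Two steps of your argument are asserted rather than proved, and one of them is a genuine weak point as written. First, the pushout-product you invoke for the latching map concerns $\botimes_B$ on complete $B$-modules, which is not what Corollary \ref{cor:monoidal model} says (that corollary is about $\botimes_k$ on complete complexes); you would need to re-derive the $B$-linear version from Proposition \ref{prop:cofibrations of complete modules} together with the monoidality and exactness of $\Gr$. Second, and more seriously, the claim that the enveloping operad of a $\PP_B$-algebra \emph{with cofibrant underlying $B$-module} is ``$\Sigma$-cofibrant in the relevant sense'' does not follow from your inductive hypothesis: cofibrancy of the underlying module of $X$ gives no control over the coequalizers defining $\mm{Env}_X(n)$, and over a cdga base (where not every module is cofibrant) this is a real condition, not something char-$0$ semisimplicity supplies. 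What the classical arguments you cite actually establish is that the enveloping operad of a \emph{cellular} algebra inherits a cell filtration; since your $X$ is a stage of the transfinite composition, the fix is to strengthen the induction hypothesis to include quasiprojectivity and cofibrancy of $\Gr(\mm{Env}_X(n))$ over $\Gr(B)$, and to propagate it along the same cell induction. With that strengthening (and the exactness of $\Sigma_n$-coinvariants in characteristic zero) your argument closes, but as written this step is a gap---one the paper's lighter quasi-free argument never has to confront.
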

\begin{proof}
Every cofibrant $\PP_B$-algebra is a retract of a quasi-free $\PP_B$-algebra with an increasing filtration on its generators. Since $\PP_B=B\circ \PP$ is cofibrant as a left $B$-module, such quasi-free algebras are cofibrant as $B$-modules as well (see also \cite[Corollary 5.5]{berger2003axiomatic}).
\end{proof}
Conversely, a $\PP_B$-algebra which is cofibrant as a $B$-module admits an explicit cofibrant replacement by means of a $B$-linear extension of the `bar-cobar construction' for the operad $\PP=\Omega(\CC)$. To explain this, it will be convenient to phrase things in terms of symmetric $B$-bimodules, i.e.\ symmetric sequences $X$ with actions 
$$
B\curvearrowright X(p)\curvearrowleft \Sigma_p\ltimes B^{\otimes p}.
$$
The category of such symmetric $B$-bimodules has a monoidal structure given by the relative composition product $\circ_B$, such that unital associative algebras in symmetric $B$-bimodules are simply ($k$-linear, complete) operads equipped with an operad map from $B$. In particular, $\PP_B$ is an algebra in this monoidal category. 

Now recall that the differential on the cobar construction $\PP=\Omega\CC$ decomposes as a sum of two differentials $d_{\Omega\CC}=d+\delta$, where $d$ arises from the differential on $\CC$ and $\delta$ is the cobar differential. For a distributive law as in \eqref{eq:distributive law}, the differential on $\PP_B=B\circ \Omega\CC$ can be decomposed similarly as $d+\delta$, where both terms are (square zero) derivations for the operadic composition.

Let us now consider $\CC_{B+}\coloneqq B\circ \CC_+$, equipped with the structure of a symmetric $B$-bimodule where $c\circ (b_1, \dots, b_n)=(-1)^{|c|(|b_1|+\dots |b_n|)}(b_1 \dots b_n)\circ c$. Without differentials, let
$$
\mc{M}_B\coloneqq \PP_B\circ_B \CC_{B+}\circ_B \PP_B
$$
be the free $\PP_B$-bimodule generated by $\CC_{B+}$ (relative to $B$). This inherits an (internal) differential $d$ from $\PP_B$ and $\CC_B$. Furthermore, it comes with an additional derivation $\delta$ (which is a derivation over the cobar differentials $\delta$ on $\PP_B$) given on generators $b\circ c\in \CC_{B+}=B\circ\CC$ by 
$$
\delta(b\circ c) = \big(b\circ c^{(1)}\big)\circ \big(c^{(2)}\big)\circ \big(1\big) - \big(b\big)\circ \big(c^{(1)}\big)\circ \big(c^{(2)}\big)\qquad\qquad \text{in } \PP_B\circ_B \CC_{B+}\circ_B \PP_B.
$$
Here $\Delta^{(1)}(c)=c^{(1)}\circ c^{(2)}$ is the infinitesimal cocomposition. This derivation $\delta$ \mbox{(graded-)} commutes with the internal differential $d$ and using coassociativity of the infinitesimal cocomposition, one sees that $\delta^2=0$. Taking the sum, this makes $\mc{M}_B$ a bimodule over $\PP_B$ (with its total differential).
\begin{remark}
Taking $B=k$, the resulting $\PP$-bimodule $\mc{M}_k$ is the usual bimodule such that $\mc{M}_k\circ_\PP \mf{g}$ takes the bar-cobar resolution of a $\PP$-algebra. The symmetric sequence $\mc{M}_B$ is isomorphic to $B\circ \mc{M}_k$, but the right $B$-action is nontrivial and involves the distributive law.
\end{remark}
\begin{definition}
The \emph{bar-cobar construction} of a $\PP_B$-algebra $\mf{g}$ is the $\PP_B$-algebra $\mc{M}_B\circ_{\PP_B}\mf{g}$.
\end{definition}
\begin{proposition}\label{prop:cobar corepresents oo-maps}
Let $\mf{g}$ and $\mf{h}$ be $\PP_B$-algebras. Then there is a natural bijection between the set of $\infty$-morphisms $\mf{g}\rightsquigarrow \mf{h}$ and the set of strict morphisms $\mc{M}_B\circ_{\PP_B}\mf{g}\rt \mf{h}$.
\end{proposition}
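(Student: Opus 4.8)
The plan is to recognize $\mc{M}_B\circ_{\PP_B}\mf{g}$ as a relative-over-$B$ cobar--bar resolution of $\mf{g}$ and then to transport the $k$-linear bar--cobar adjunction (Proposition/definition~\ref{propdef:bar algebras}) to the $B$-linear setting. First I would unwind the source. Since $\mc{M}_B=\PP_B\circ_B\CC_{B+}\circ_B\PP_B$ is the free $\PP_B$-bimodule on $\CC_{B+}$ relative to $B$, there is an isomorphism of graded $\PP_B$-algebras $\mc{M}_B\circ_{\PP_B}\mf{g}\cong\PP_B\circ_B\big(\CC_{B+}\circ_B\mf{g}\big)$; in other words, $\mc{M}_B\circ_{\PP_B}\mf{g}$ is the quasi-free $\PP_B$-algebra generated by the $B$-module $\Bar_B(\mf{g})\coloneqq\CC_{B+}\circ_B\mf{g}$, carrying the differential $d+\delta$ with $d$ internal and $\delta$ the bar--cobar derivation displayed just before the Proposition. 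Because $\CC_{B+}=B\circ\CC_+$ carries the twisted right $B$-action $c\circ(b_1,\dots,b_p)=\pm(b_1\cdots b_p)\circ c$, every element of $\CC_{B+}\circ_B\mf{g}$ has a representative with trivial $B$-label; consequently, forgetting differentials, a morphism of $\PP_B$-algebras $\mc{M}_B\circ_{\PP_B}\mf{g}\rt\mf{h}$ is the same as a $B$-linear map $\Bar_B(\mf{g})\rt\mf{h}$, i.e.\ a $\Sigma_p$-equivariant, $k$-linear-in-$c$ family of $B$-multilinear maps $\phi_c\colon\mf{g}^{\otimes p}\rt\mf{h}$ (one for each $c\in\CC_+(p)$, equivalently a family of maps $\mf{g}^{\otimes_B p}\rt\mf{h}$). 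By Definition~\ref{def:oo-morphism B-linear} this is exactly the underlying datum of an $\infty$-morphism $\mf{g}\rightsquigarrow\mf{h}$ of $\PP_B$-algebras: such a morphism is a map of $\CC$-coalgebras $\CC_+\circ\mf{g}\rt\CC_+\circ\mf{h}$, determined by its components $\phi_c\colon\mf{g}^{\otimes p}\rt\mf{h}$, subject to each $\phi_c$ being $B$-multilinear.

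It then remains to match the two constraints placed on this common datum: that the induced $\PP_B$-algebra map out of $\mc{M}_B\circ_{\PP_B}\mf{g}$ commute with the total differential $d+\delta$, and that the induced map of $\CC$-coalgebras commute with the bar differentials. I would check that these conditions coincide by running, generator by generator, the very computation that establishes the $k$-linear bar--cobar adjunction $\Omega_\phi\dashv\Bar_\phi$ (cf.\ \cite[Proposition~11.3.2]{LodayVallette2012}): the internal part $d$ matches the internal differentials of $\mf{g}$ and $\mf{h}$, while the bar--cobar derivation $\delta$, built from the infinitesimal cocomposition $\Delta^{(1)}$ of $\CC$, matches the term assembled from the total cocomposition of $\CC$ and the $\PP$-structure maps in the structure equations for an $\infty$-morphism. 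For $B=k$ this is precisely the statement that $\infty$-morphisms of $\PP$-algebras are strict maps out of $\mc{M}_k\circ_\PP\mf{g}$, so the genuinely new input is the presence of the distributive-law twists in $\PP_B=B\circ\PP$ and in $\CC_{B+}$. These are innocuous: since \eqref{eq:distributive law} is assumed to be a bona fide distributive law, the relevant coherence squares commute, $B$-multilinearity is preserved under all the operations entering the computation, and the correction terms $\lambda_c\,\delta(b)\circ 1$ are exactly what makes the arity-one generators interact consistently with $\delta\colon B\rt B[1]$ on both sides of the proposed bijection.

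I expect the bookkeeping of this second step to be the main obstacle --- one has to check that the derivation $\delta$ on $\mc{M}_B$ corresponds under the identification above to precisely the total-cocomposition summand of the $\infty$-morphism equations, and that the Koszul signs (from the shift $\CC[-1]$ and from the distributive law) match up exactly --- but it is routine rather than conceptual. Once this is in place, the bijection and its inverse are immediate and manifestly natural in $\mf{g}$ and $\mf{h}$: an $\infty$-morphism $\{\phi_c\}$ is sent to the $B$-linear extension of $\{\phi_c\}$ to a strict $\PP_B$-algebra map $\mc{M}_B\circ_{\PP_B}\mf{g}\rt\mf{h}$, and a strict map is sent to its restriction to the generating $B$-module $\Bar_B(\mf{g})$, the two assignments being mutually inverse by the universal property of the quasi-free $\PP_B$-algebra. (Equivalently, one could package everything as a relative bar--cobar adjunction $\Omega_B\dashv\Bar_B$ between $\PP_B$-algebras and conilpotent coalgebras over the $B$-linear cooperad $B\circ\CC$, with $\Omega_B\Bar_B(\mf{g})=\mc{M}_B\circ_{\PP_B}\mf{g}$, and read the statement off from the (co)unit.)
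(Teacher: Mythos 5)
Your proposal is correct and follows essentially the same route as the paper: identify $\mc{M}_B\circ_{\PP_B}\mf{g}$ as quasi-free on $\CC_{B+}\circ_B\mf{g}$, match the generators with the components $\phi_c$, and observe that the only new condition is $B$-multilinearity. The one place you do more work than necessary is the ``main obstacle'' of re-verifying the differential compatibility with the distributive-law twists: the paper avoids this entirely by using the surjection of $\PP$-algebras $\mc{M}_k\circ_{\PP}\mf{g}\twoheadrightarrow \mc{M}_B\circ_{\PP_B}\mf{g}$, so that strict $\PP_B$-maps out of the target form a \emph{subset} of strict $\PP$-maps out of the source; the latter are already identified with $k$-linear $\infty$-morphisms by the established bar--cobar adjunction, and since Definition \ref{def:oo-morphism B-linear} defines an $\infty$-morphism of $\PP_B$-algebras as a $k$-linear one whose components are $B$-multilinear (a condition imposed forgetting differentials), the sign and differential bookkeeping is inherited verbatim from the $k$-linear case rather than redone.
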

\begin{proof}
Note that the unit $k\to B$ induces a natural map of $k$-operads $\PP\rt \PP_B$ and a map of bimodules over it $\mc{M}_k\rt \mc{M}_B$. These induce a map of $\PP$-algebras $\mc{M}_k\circ_{\PP}\mf{g}\rt \mc{M}_B\circ_{\PP_B} \mf{g}$. Without the differential, this map can be identified with the quotient map
$$
\PP\circ \CC_+\circ \mf{g} \rt \PP_B\circ_B \CC_{B+}\circ_B \mf{g}
$$
from a free $\PP$-algebra to the quotient of a free $\PP_B$-algebra. This map sends maps the space of $\PP$-algebra generators $\CC_+\circ \mf{g}$ onto the module of $\PP_B$-algebra generators $\CC_{B+}\circ_B \mf{g}$. Consequently, the set of $\PP_B$-algebra maps $\mc{M}_B\circ_{\PP} \mf{g}\rt \mf{h}$ is a subset of the set of $\PP$-algebra maps $\mc{M}_k\circ_{\PP}\mf{g}\rt \mf{h}$; the latter is naturally isomorphic to the set of $k$-linear $\infty$-morphisms of $\PP$-algebras from $\mf{g}$ to $\mf{h}$.

Furthermore, a map of $\PP$-algebras $\mc{M}_k\circ_{\PP}\mf{g}\rt \mf{h}$ descends to a map of $\PP_B$-algebras $\mc{M}_B\circ_{\PP_B}\mf{g}\rt \mf{h}$ if and only at the level of generators, it descends from a $k$-linear map $\CC_+\circ \mf{g}\rt \mf{h}$ to a $B$-linear map $\CC_{B+}\circ_B \mf{g}\rt \mf{h}$. This means precisely that the components of the $\PP$-algebraic $\infty$-morphism are $B$-multilinear.
\end{proof}
\begin{proposition}\label{prop:cobar resolution of algebras}
Let $\mf{g}$ be a $\PP_B$-algebra whose underlying $B$-module is cofibrant. Then the natural map of complete $\PP_B$-algebras
$$
\mc{M}_B\circ_{\PP_B} \mf{g}\rt \mf{g}
$$ 
induces a quasi-isomorphism on the associated graded.
\end{proposition}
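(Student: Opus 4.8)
The plan is to follow, $B$-linearly, the strategy used to prove that the $k$-linear bar--cobar counit is a weak equivalence (Proposition/definition \ref{propdef:bar algebras}). The first step is to pass to the associated graded: since $\Gr\colon \Mod_k^\mm{cpl}\to\Mod_k^\gr$ is exact, preserves colimits, tensor products and (relative) composition products of flat objects, and detects weak equivalences (Lemma \ref{lem:graded exact}, Remark \ref{rem:graded algebras over B}), it suffices to show that the weight-graded analogue $\mc{M}_B^\gr\circ_{\PP_B^\gr}\Gr(\mf{g})\to\Gr(\mf{g})$ is a quasi-isomorphism in each weight. At this point we may forget the original filtration and replace $B$ by the weight-graded cdga $B_{\gr}$; the cooperad $\CC^\tot$ is now conilpotent in the usual sense (Warning \ref{war:complete conilpotence}), and $\Gr(\mf{g})$ is a cofibrant, hence flat, $B_{\gr}$-module by (the graded analogue of) Proposition \ref{prop:cofibrations of complete modules}.

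Next I would identify $\mc{M}_B^\gr\circ_{\PP_B^\gr}\Gr(\mf{g})$, as a weight-graded module without differential, with $\PP_B^\gr\circ_{B_{\gr}}\CC^\tot_{B+}\circ_{B_{\gr}}\Gr(\mf{g})$ (using associativity of $\circ_{B_{\gr}}$ and that $\mc{M}_B^\gr$ is free as a right $\PP_B^\gr$-module on $\PP_B^\gr\circ_{B_{\gr}}\CC^\tot_{B+}$), and equip it with the ``bar-cobar weight'' filtration from the proof of Proposition/definition \ref{propdef:bar algebras}: the sum of the coradical weights of the cooperations appearing in the $\CC^\tot_{B+}$-layer and of the $\CC^\tot[-1]$-cogenerators appearing in the $\PP_B^\gr=B_{\gr}\circ\Omega(\CC^\tot)$-layer. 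As in loc.\ cit., on the associated graded the only surviving non-internal part of the differential is the ``counital'' piece that replaces a cooperation $c$ in the $\CC^\tot_{B+}$-layer by $c\circ 1$ and moves $c$ (with a degree shift) into the $\Omega(\CC^\tot)$-layer. The $\delta$-twisted corrections attached to arity-one generators (the terms involving $\delta_B$ in the distributive law \eqref{eq:distributive law}) strictly change the $B_{\gr}$-weight, hence do not contribute to this first associated graded; equivalently, one runs a secondary induction on the $B_{\gr}$-weight, whose bottom stratum is precisely the $B_{\gr}$-linear bar--cobar resolution of $\Gr(\mf{g})$ viewed as an $\Omega(\CC^\tot)\otimes_k B_{\gr}$-algebra over the plain cdga $B_{\gr}$.

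One then concludes exactly as in Proposition/definition \ref{propdef:bar algebras}: there is a contracting homotopy moving the rightmost arity-one cogenerator $c\in\CC^\tot[-1]$ that is attached only to units $1\in\CC^\tot_+$ back to the $\CC^\tot_+$-side (compare \cite[Proposition 3.1.12]{Fresse2004}), whose only fixed points form the summand $B_{\gr}\circ_{B_{\gr}}\Gr(\mf{g})=\Gr(\mf{g})$ on which no cooperation is applied. Since this homotopy only reshuffles structure in arity one, it is automatically $B_{\gr}$-linear and compatible with the relative composition products. Running the associated spectral sequence --- which converges because the bar-cobar weight filtration is exhaustive and, in each fixed degree and weight, bounded below by conilpotency of $\CC^\tot$ --- then yields the quasi-isomorphism, and reassembling the $B_{\gr}$-weight and the original filtration gives the statement.

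The main obstacle is the bookkeeping of the $B$-module structure. Unlike over a field, $\mc{M}_B$ agrees with $B\circ\mc{M}_k$ only as a symmetric sequence and not as a $\PP_B$-bimodule, so one must verify that the auxiliary filtration, the identification of the $E^0$-page, and the contracting homotopy are all compatible with the twisted right $B$-action coming from the distributive law. Here flatness of $\Gr(\mf{g})$ over $B_{\gr}$ (coming from the cofibrancy hypothesis) is genuinely needed: unlike $\otimes_k$, the relative tensor product $\otimes_{B_{\gr}}$ --- and hence the relative composition product $\circ_{B_{\gr}}$ --- is only exact, and compatible with passing to associated gradeds, on flat modules. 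Dealing with the arity-one $\delta_B$-corrections via the extra $B_{\gr}$-weight grading --- the feature that has no counterpart in the $k$-linear statement --- is the other point requiring care.
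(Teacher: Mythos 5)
Your proposal is correct and follows essentially the same route as the paper's proof: reduce to the weight-graded setting via $\Gr$, identify $\mc{M}_B\circ_{\PP_B}\mf{g}$ with $\big(\PP_B\circ_B\CC_{B+}\big)\circ_B\mf{g}$, filter by the coradical filtration so that only the ``counital'' piece of the differential survives on the associated graded, contract it as in Proposition/definition \ref{propdef:bar algebras}, and invoke cofibrancy of $\mf{g}$ over $B$ to pass through $\circ_B$. Your extra bookkeeping of the arity-one $\delta$-corrections and of spectral-sequence convergence only makes explicit points the paper leaves implicit.
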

\begin{proof}
Since taking the associated graded commutes with taking relative composition products and the cobar construction, it suffices to prove the analogous statement in the setting where $\CC$, $B$ and $\mf{g}$ are graded objects. In particular, $\CC$ is a graded conilpotent cooperad and has an exhaustive coradical filtration. One can identify 
$$
\mc{M}_B\circ_{\PP_B}\mf{g}\cong \big(\PP_B\circ_B \CC_{B+}\big)\circ_B \mf{g}.
$$
The cobar differential applies the cocomposition to a vertex in $\CC_{B+}$ and then either moves the bottom vertex to $\PP=\Omega(\CC)$ or acts by the top vertex on $\mf{g}$. In particular, this differential preserves the exhaustive filtration on $\mc{M}_B\circ_{\PP_B}\mf{g}$ induced by the coradical filtrations on $\CC_+$, resp.\ on $\CC[-1]\subseteq \Omega(\CC)$. 

The associated graded can be associated with the composition product $X\circ_B \mf{g}$, where $X=B\circ \Omega(\CC)\circ \CC_+$, together with the differential taking an element $c$ in $\cat{C}_+$, replacing it by $c\circ 1$ and ``moving'' $c$ to $\Omega(\CC)$ while increasing the degree by $1$. As in Proposition \ref{propdef:bar algebras}, $X$ admits a contracting homotopy and the natural map $X\rt B$ is a (graded) quasi-isomorphism. Since $\mf{g}$ is cofibrant as a left $B$-module, the natural map $X\circ_B \mf{g}\rt \mf{g}$ is then a quasi-isomorphism as well.
\end{proof}

As indicated by the terminology, under good conditions the bar-cobar construction provides a cofibrant replacement $\mc{M}_B\circ_{\PP_B}\mf{g} \to \mf{g}$. Let us proof a slightly stronger version of this fact.
\begin{proposition}\label{prop:cobar is cofibrant}
Let $\mf{g}$ be a $\PP_B$-algebra whose underlying $B$-module is cofibrant. Then the natural map $\PP_B\circ_B \mf{g}\rt \mc{M}_B\circ_{\PP_B}\mf{g}$ is a cofibration of $\PP_B$-algebras. In particular, $\mc{M}_B\circ_{\PP_B}\mf{g}$ is a cofibrant $\PP_B$-algebra.
\end{proposition}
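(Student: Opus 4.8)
The plan is to run the proof of Proposition \ref{prop:bar cobar resolutions are cofibrant} again, with the $B$-linear Homotopy Transfer Theorem \ref{thm:htt-B} and Proposition \ref{prop:cobar corepresents oo-maps} in place of their $k$-linear counterparts. Since trivial fibrations of $\PP_B$-algebras are stable under pullback, it suffices to show that for every acyclic fibration $p\colon Y\twoheadrightarrow \mc{M}_B\circ_{\PP_B}\mf{g}$ of $\PP_B$-algebras and every $\PP_B$-algebra map $s\colon \PP_B\circ_B\mf{g}\rt Y$ lifting the canonical map, there is a section of $p$ restricting to $s$. By the free--forget adjunction $s$ is the same datum as a $B$-module map $s\colon \mf{g}\rt Y$, and by Proposition \ref{prop:cobar corepresents oo-maps} a $\PP_B$-algebra map $\mc{M}_B\circ_{\PP_B}\mf{g}\rt Y$ is the same datum as an $\infty$-morphism $\mf{g}\rightsquigarrow Y$ of $\PP_B$-algebras; writing $v_\infty\colon \mf{g}\rightsquigarrow \mc{M}_B\circ_{\PP_B}\mf{g}$ for the universal $\infty$-morphism adjoint to the identity, the task becomes to produce a $\PP_B$-$\infty$-morphism $s_\infty\colon \mf{g}\rightsquigarrow Y$ with linear part $s$ and with $p\circ s_\infty=v_\infty$.

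Next I would observe that $\mc{M}_B\circ_{\PP_B}\mf{g}$ is cofibrant as a $B$-module: without differentials it is $\PP_B\circ_B\CC_{B+}\circ_B\mf{g}$, a completed sum of iterated $\botimes_B$-products of free $B$-modules and of the $B$-cofibrant module $\mf{g}$, hence quasiprojective with cofibrant associated graded, so cofibrant by Proposition \ref{prop:cofibrations of complete modules} (compare Lemma \ref{lem:cofibrant => B-cofibrant}). In particular $\PP_B\circ_B\mf{g}$ is a cofibrant $\PP_B$-algebra, being $\PP_B\circ_B(-)$ applied to the cofibrant $B$-module $\mf{g}$. Forgetting differentials, the linear part of $v_\infty$ is the inclusion of the direct summand $\mf{g}\hookrightarrow \mc{M}_B\circ_{\PP_B}\mf{g}$; this is an admissible monomorphism of $B$-modules with cofibrant cokernel, hence a $B$-module cofibration by Proposition \ref{prop:cofibrations of complete modules}. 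Since $p$ is in particular an acyclic fibration of $B$-modules, we may pick a $B$-linear section $i\colon \mc{M}_B\circ_{\PP_B}\mf{g}\rt Y$ of $p$ with $i|_{\mf{g}}=s$. Granting that $i$ can be extended to a $\PP_B$-$\infty$-morphism $i_\infty$ with $p\circ i_\infty=\id$, one sets $s_\infty\coloneqq i_\infty\circ v_\infty$ (composing $\infty$-morphisms as in Remark \ref{rem:composing oo-morphisms}): then $s_\infty$ has linear part $i|_{\mf{g}}=s$ and $p\circ s_\infty=p\circ i_\infty\circ v_\infty=v_\infty$, which is the required section. Finally, $\mc{M}_B\circ_{\PP_B}\mf{g}$ is a cofibrant $\PP_B$-algebra because the cofibration $\PP_B\circ_B\mf{g}\rt \mc{M}_B\circ_{\PP_B}\mf{g}$ has a cofibrant domain.

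The main obstacle is the ``granting'' clause above: extending the $B$-linear section $i$ to a $\PP_B$-$\infty$-morphism $i_\infty$ with $p\circ i_\infty=\id$. The $k$-linear argument (Lemma \ref{lem:sections from htt}) first turns $p,i$ into a deformation retract with side conditions via Lemma \ref{lem:acyclic fib is def retract} and then transfers along it; but over a general graded mixed cdga $B$ the kernel $\ker(p)$, although acyclic, need not admit a $B$-linear contracting homotopy, so Lemma \ref{lem:acyclic fib is def retract} does not carry over and Theorem \ref{thm:htt-B} cannot be invoked directly; likewise, the $k$-linear $i_\infty$ produced by Lemma \ref{lem:sections from htt} need not have $B$-multilinear components. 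I would instead construct $i_\infty$ by obstruction theory: its components $(i_\infty)_c\colon \mf{g}^{\otimes p}\rt Y$ (for $c$ ranging over the cogenerators, valued in $\ker(p)$ beyond the linear part) are defined inductively --- with respect to a filtration adapted to the complete setting --- by solving at each stage an equation $\partial(i_\infty)_c=(\text{lower-order terms})$ inside a mapping complex $\Hom_B\big((\mc{M}_B\circ_{\PP_B}\mf{g})^{\otimes p},\ker(p)\big)$, or a finite truncation of it. Such solutions exist because $(\mc{M}_B\circ_{\PP_B}\mf{g})^{\otimes p}$ is a cofibrant $B$-module and $\ker(p)$ is acyclic, so these mapping complexes are acyclic, and $B$-multilinearity of $i_\infty$ is automatic since one works inside $\Hom_B$. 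It is exactly in securing the acyclicity of these $B$-linear mapping complexes that the hypothesis ``$\mf{g}$ cofibrant as a $B$-module'' (hence $\mc{M}_B\circ_{\PP_B}\mf{g}$ cofibrant as a $B$-module) is indispensable.
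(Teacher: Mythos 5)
The reduction in your first two paragraphs is exactly the paper's: via Proposition \ref{prop:cobar corepresents oo-maps} and pullback-stability of trivial fibrations, the problem becomes extending a $B$-linear section $i$ of an acyclic fibration $p\colon Y\twoheadrightarrow \mc{M}_B\circ_{\PP_B}\mf{g}$ to an $\infty$-morphism $i_\infty$ with $p\,i_\infty=\id$, and then setting $s_\infty=i_\infty v_\infty$; the $B$-cofibrancy of $\mc{M}_B\circ_{\PP_B}\mf{g}$ that you sketch is the paper's Lemma \ref{lem:cobar is B-cofibrant}. The gap is in your last paragraph. You correctly observe that for an \emph{arbitrary} $Y$ one cannot run Lemma \ref{lem:acyclic fib is def retract} $B$-linearly, but you miss the simple fix: first replace $Y$ by a cofibrant resolution $q\colon Y'\stackrel{\sim}{\twoheadrightarrow} Y$ of $\PP_B$-algebras and lift $s$ along $q$ (possible since $\mf{g}$ is a cofibrant $B$-module). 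By Lemma \ref{lem:cofibrant => B-cofibrant} the algebra $Y'$ is cofibrant as a $B$-module; the kernel of the acyclic fibration $pq$ is then a $B$-linear retract of $Y'$, hence an acyclic \emph{cofibrant} $B$-module, so the cone argument of Lemma \ref{lem:acyclic fib is def retract} does carry over $B$-linearly and yields a deformation retract with side conditions. Lemma \ref{lem:sections from htt}, now fed with the $B$-linear Homotopy Transfer Theorem \ref{thm:htt-B}, produces $i_\infty$ with $pq\,i_\infty=\id$, and postcomposing with $q$ finishes the proof. So the obstacle you flag is real but is resolved by one extra cofibrant-replacement step, not by abandoning the transfer argument.

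Your proposed substitute---constructing the components of $i_\infty$ by obstruction theory ``with respect to a filtration adapted to the complete setting''---is not a proof as it stands, and it runs into precisely the difficulty the paper points out before Proposition \ref{prop:bar cobar resolutions are cofibrant}: in the complete setting $\CC$ is only conilpotent in the complete sense (Warning \ref{war:complete conilpotence}), so there is no exhaustive coradical-type filtration making the $\infty$-morphism equations triangular with finitely many already-determined lower-order terms at each stage. Acyclicity of the complexes $\Hom_B\big((\mc{M}_B\circ_{\PP_B}\mf{g})^{\otimes p},\ker(p)\big)$ is not the issue; what is missing is a specification of the induction, a proof that each ``lower-order'' expression is a cycle, and a convergence argument for the infinitely many choices in the completed topology. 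It is exactly to sidestep this that the paper routes the cofibrancy argument through the homotopy transfer theorem rather than through a step-by-step obstruction calculus.
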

We start by proving an auxiliary lemma.
\begin{lemma}\label{lem:cobar is B-cofibrant}
Let $\mf{g}$ be a $\PP_B$-algebra whose underlying $B$-module is cofibrant. Then the natural map $v\colon \mf{g}\rt \mc{M}_B\circ_{\PP_B}\mf{g}$ induced by the unit map $B\rt \mc{M}_B$ is a cofibration of $B$-modules. In particular, the bar-cobar construction $\mc{M}_B\circ_{\PP_B}\mf{g}$ is cofibrant as a $B$-module. 
\end{lemma}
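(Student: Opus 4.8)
The plan is to verify directly the characterisation of $B$-cofibrations from Proposition~\ref{prop:cofibrations of complete modules}: one must check that $v$ is an admissible monomorphism, that its cokernel $N$ is quasiprojective over $B$, and that $\Gr(N)$ is cofibrant over $\Gr(B)$.

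First I would forget all differentials. Then the bar--cobar construction is the free-generated $B$-module $\mc{M}_B\circ_{\PP_B}\mf{g}\cong \PP_B\circ_B\CC_{B+}\circ_B\mf{g}$, and the natural $\PP_B$-algebra map $\mc{M}_B\circ_{\PP_B}\mf{g}\to\mf{g}$ from Proposition~\ref{prop:cobar resolution of algebras} is $B$-linear and splits $v$ (the usual ``empty tree'' section). So, ignoring differentials, $\mc{M}_B\circ_{\PP_B}\mf{g}\cong \mf{g}\oplus N$ as $B$-modules with $v$ the inclusion; consequently $v$ sits in a short exact sequence of complete $B$-modules in each filtration weight and is therefore an admissible monomorphism (Remark~\ref{rem:exact category}), while $N$ is, without differential, a $B$-linear retract of $\mc{M}_B\circ_{\PP_B}\mf{g}$.

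Next I would establish that $\mc{M}_B\circ_{\PP_B}\mf{g}$ (hence its retract $N$) is quasiprojective over $B$ and has $\Gr(B)$-cofibrant associated graded. By assumption $\mf{g}$ is cofibrant as a $B$-module, so Proposition~\ref{prop:cofibrations of complete modules} (applied to $0\to\mf{g}$) gives that $\mf{g}$ is quasiprojective --- a retract, without differential, of some $B\botimes_k V$ --- and that $\Gr(\mf{g})$ is cofibrant over $\Gr(B)$. Since $\PP_B(q)=B\botimes_k\Omega(\CC)(q)$ and $\CC_{B+}(q)=B\botimes_k\CC_+(q)$ are free left $B$-modules and we are in characteristic zero, forming $\Sigma_q$-coinvariants of $B$-tensor powers of a quasiprojective $B$-module again yields a quasiprojective $B$-module; iterating over the two layers of $\circ_B$ shows $\mc{M}_B\circ_{\PP_B}\mf{g}$ is quasiprojective over $B$. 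This is the same characteristic-zero mechanism (via \cite[Corollary~5.5]{berger2003axiomatic}) used already in Lemma~\ref{lem:cofibrant => B-cofibrant}. Applying $\Gr$, which is symmetric monoidal and preserves colimits, hence relative composition products (Remark~\ref{rem:graded is monoidal}, Lemma~\ref{lem:graded exact}), identifies $\Gr\big(\mc{M}_B\circ_{\PP_B}\mf{g}\big)$ with $\Gr(\PP_B)\circ_{\Gr(B)}\Gr(\CC_{B+})\circ_{\Gr(B)}\Gr(\mf{g})$, and the same argument --- now with $\Gr(\mf{g})$ cofibrant over $\Gr(B)$ and $\Gr(\PP_B),\Gr(\CC_{B+})$ aritywise free over $\Gr(B)$ --- shows this to be a cofibrant $\Gr(B)$-module. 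Both properties descend to the retract $N$, and then Proposition~\ref{prop:cofibrations of complete modules} yields that $v$ is a cofibration of complete $B$-modules. The final assertion is then immediate: $0\to\mf{g}\xrightarrow{v}\mc{M}_B\circ_{\PP_B}\mf{g}$ is a composite of cofibrations, so the bar--cobar construction is $B$-cofibrant.

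The one place where real care is needed is the characteristic-zero bookkeeping in the last step --- that relative composition along $\circ_B$ sends $B$-quasiprojective (resp.\ $\Gr(B)$-cofibrant) modules to modules of the same kind --- but this is precisely the technical input already isolated for Lemma~\ref{lem:cofibrant => B-cofibrant}, so nothing genuinely new is required.
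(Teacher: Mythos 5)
Your reduction to Proposition \ref{prop:cofibrations of complete modules} and your handling of the first two conditions agree with the paper's proof: without differentials $\mc{M}_B\circ_{\PP_B}\mf{g}\cong\PP_B\circ_B\CC_{B+}\circ_B\mf{g}$, the map $v$ is a summand inclusion, and the characteristic-zero argument shows the whole object (hence the cokernel) is quasiprojective over $B$. The gap is in the third condition. Quasiprojectivity is a property of the underlying non-differential module, so a ``retracts of frees compose to retracts of frees'' argument is legitimate there; but cofibrancy of $\Gr\big(\mc{M}_B\circ_{\PP_B}\mf{g}\big)$ as a weight-graded \emph{dg}-module over $\Gr(B)$ is not detected after forgetting the differential, so ``the same argument'' does not apply. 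A dg-module over a dg-algebra can be free in every degree without differential and still fail to be cofibrant (the $2$-periodic acyclic complex over $k[\epsilon]/\epsilon^2$ is termwise free but not K-projective, hence not cofibrant). In the case at hand the cobar differential $\delta$ on $\mc{M}_B$ genuinely mixes the $\Omega(\CC)$- and $\CC_+$-layers, so identifying the underlying graded object as a composition product of aritywise free pieces says nothing yet about cofibrancy over $\Gr(B)$.

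What is missing is a filtration compatible with the differential. After passing to the associated graded (where, by Warning \ref{war:complete conilpotence}, $\CC$ becomes conilpotent in the usual sense), the paper filters $\mc{M}_B\circ_{\PP_B}\mf{g}$ by the exhaustive \emph{increasing} coradical filtration on the $\CC_+$ and $\CC[-1]\subseteq\Omega(\CC)$ factors. The cobar differential respects this filtration, the associated graded pieces are of the form $\big(B\otimes\Gr^i(\Omega(\CC)\circ\CC_+)\big)\circ_B\mf{g}$, which are cofibrant weight-graded $B$-modules because $\mf{g}$ is, and cofibrancy of the total object follows since it is built as a (transfinite) composition of cofibrations with cofibrant cokernels. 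Inserting this step repairs your argument; the rest of your write-up is sound and follows the paper's route.
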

\begin{proof}
By Proposition \ref{prop:cofibrations of complete modules}, it suffices to verify that $v$ is a summand inclusion of quasiprojective $B$-modules and that the associated graded of $\mc{M}_B\circ_{\PP_B}\mf{g}$ is a cofibrant weight-graded module over $\Gr(B)$. For the first, recall that without differential we simply have that
$$
\mc{M}_B\circ_{\PP_B} \mf{g}\cong \PP_B\circ_B \CC_{B+}\circ_B \mf{g}.
$$
Since all symmetric sequences in the above expression are quasiprojective as left $B$-modules, their composition product is quasiprojective as well. Note that $v\colon \mf{g}\rt \mc{M}_B\circ_{\PP_B} \mf{g}$ is simply the inclusion of the summand corresponding to the summand $B\cdot 1\subseteq \PP_B\circ_B \CC_{B+}$.

To see that the associated graded of $\mc{M}_B\circ_{\PP_B}\mf{g}$ is cofibrant over $\Gr(B)$, it suffices to work entirely at the graded level, since taking the associated graded preserves bar/cobar constructions, relative composition products and the cofibrancy of modules. Let us therefore work in the setting where $\CC, B$ and $\mf{g}$ are all weight-graded.
In this graded setting, we can filter $\mc{M}_B\circ_{\PP_B} \mf{g}$ using the coradical filtration on the cooperad $\CC$ (as in the proof of Proposition \ref{prop:cobar resolution of algebras}). This is an exhaustive increasing filtration whose graded is given in degree $i$ by
$$
\Big(B\otimes \Gr^i\big(\Omega(\CC)\circ \CC_+\big)\Big)\circ_{B} \mf{g}.
$$
Since $\mf{g}$ is cofibrant as a weight-graded $B$-module by assumption and each $B\otimes \Gr^i\big(\Omega(\CC)\circ \CC\big)$ is manifestly cofibrant as a weight-graded $B$-module, the associated graded of our increasing filtration consists of cofibrant weight-graded $B$-modules. This implies that $\mc{M}_B\circ_{\PP_B} \mf{g}$ is a cofibrant weight-graded $B$-module itself as well.
\end{proof}
\begin{proof}[Proof (of Proposition \ref{prop:cobar is cofibrant})]
The proof is essentially the same as that of Proposition \ref{prop:bar cobar resolutions are cofibrant}. Using  Proposition \ref{prop:cobar corepresents oo-maps}, it suffices to show the following assertion: let $p\colon \mf{h}\twoheadrightarrow \mc{M}_B\circ_{\PP_B}\mf{g}$ be an acyclic fibration, equipped with a $B$-linear map $s\colon \mf{g}\rt \mf{h}$ such that $ps\colon \mf{g}\rt \mc{M}_B\circ_{\PP_B}\mf{g}$ is the linear map underlying the universal $\infty$-morphism $v_\infty\colon \mf{g}\rightsquigarrow \mc{M}_B\circ_{\PP_B}\mf{g}$. We have to provide a refinement of $s$ to an $\infty$-morphism $s_\infty$ such that $ps_\infty=v_\infty$. 

First, note that we can always replace $\mf{h}$ by a cofibrant resolution $q\colon \mf{h}'\stackrel{\sim}{\twoheadrightarrow} \mf{h}$ and lift the map $s$ to a map with values in $\mf{h}'$. Using this, we may therefore assume that $\mf{h}$ is cofibrant as a $B$-module (by Lemma \ref{lem:cofibrant => B-cofibrant}).
Now, the underlying linear map $v\colon \mf{g}\rt \mc{M}_B\circ_{\PP_B}\mf{g}$ is a cofibration of $B$-modules by Lemma \ref{lem:cobar is B-cofibrant}, so that we can find a $B$-linear section $i\colon \mc{M}_B\circ_{\PP_B}\mf{g}\rt \mf{h}$ extending $v$. By the same argument as in Lemma \ref{lem:acyclic fib is def retract}, $i$ and $p$ are part of a deformation retract satisfying the side conditions. Lemma \ref{lem:sections from htt}, using the Homotopy Transfer Theorem \ref{thm:htt-B}, then provides an extension of $i$ to an $\infty$-morphism $i_\infty$. The desired map is now given by $s_\infty=v_\infty i_\infty$.

\end{proof}

\subsubsection{The $\infty$-category of algebras}
For a graded mixed cdga $(B, \delta)$ and $\CC$ and $\PP$ as above, we can consider the following simplicially enriched category of $\PP_B$-algebras and $\infty$-morphisms between them:
\begin{definition}\label{def:oocat of algebras over B}
Let $\oocat{Alg}^\mm{cpl}_{\PP_B}$ denote the simplicially enriched category defined as follows:
\begin{enumerate}[start=0]
\item objects are complete $\PP_B$-algebras whose underlying complete $B$-module is cofibrant (in the model structure of Theorem \ref{thm:Model str}).
\item For any two objects $\mf{g}$ and $\mf{h}$, the simplicial set of morphisms between them is given in simplicial degree $n$ by the set of $\infty$-morphisms
$$
\Map_{\PP_B}(\mf{g}, \mf{h})_n = \big\{\mf{g}\rightsquigarrow \mf{h}\botimes \Omega[\Delta^n]\big\}.
$$
\end{enumerate}
\end{definition}
\begin{remark}
The same proof as for Lemma \ref{lem:kan enriched} shows that $\oocat{Alg}^\mm{cpl}_{\PP_B}$ is enriched in Kan complexes.
\end{remark}
\begin{proposition}\label{prop:oocat of algebras over B}
Let $\CC$ be a nonunital complete cooperad, $(B, \delta)$ a graded mixed cdga and let $\PP_B$ be the operad arising from a distributative of the form \eqref{eq:distributive law}. Then the functor $\Alg_{\PP_B}^\mm{cpl}\rt \oocat{Alg}_{\PP_B}^\mm{cpl}$ exhibits $\oocat{Alg}_{\PP_B}^\mm{cpl}$ as the $\infty$-categorical localization of the model category of complete $\PP_B$-algebras at the filtered quasi-isomorphisms.
\end{proposition}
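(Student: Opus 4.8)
The plan is to transcribe the proof of Proposition~\ref{prop:oocat of algebras over k} to the present setting, with the $k$-linear bar--cobar resolution $\Omega_\phi\Bar_\phi(-)$ replaced by the $B$-linear bar--cobar construction $\mc{M}_B\circ_{\PP_B}(-)$ of Section~\ref{sec:cofibrant resolutions over B}. First I would reduce to the full subcategory $\Alg_{\PP_B}^{\mm{cpl},B\text{-cof}}\subseteq \Alg_{\PP_B}^\mm{cpl}$ of those $\PP_B$-algebras whose underlying complete $B$-module is cofibrant. Every cofibrant $\PP_B$-algebra lies in this subcategory by Lemma~\ref{lem:cofibrant => B-cofibrant}, so functorial cofibrant replacement in $\Alg_{\PP_B}^\mm{cpl}$ takes values there; hence the inclusion induces an equivalence of $\infty$-categorical localizations at the filtered quasi-isomorphisms, and it suffices to show that $j$ exhibits $\oocat{Alg}_{\PP_B}^\mm{cpl}$ as the localization of $\Alg_{\PP_B}^{\mm{cpl},B\text{-cof}}$, on which it is the identity on objects.

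Next I would verify, exactly as in Lemma~\ref{lem:kan enriched}, that each simplicial hom-set $\Map_{\PP_B}(\mf{g},\mf{h})$ computes the derived mapping space of the model category of $\PP_B$-algebras. Indeed, Proposition~\ref{prop:cobar corepresents oo-maps} identifies $\Map_{\PP_B}(\mf{g},\mf{h})$ with the simplicial set of strict $\PP_B$-algebra maps $\mc{M}_B\circ_{\PP_B}\mf{g}\rt \mf{h}\botimes\Omega[\Delta^\bullet]$; the source is a cofibrant $\PP_B$-algebra by Proposition~\ref{prop:cobar is cofibrant}, and $\mf{h}\botimes\Omega[\Delta^\bullet]$ is a fibrant simplicial resolution of $\mf{h}$ in the sense of Hinich~\cite{hinich1997homological} (using that every $\PP_B$-algebra is fibrant, Theorem~\ref{thm:Model str}). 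This simultaneously gives the Kan-enrichment of $\oocat{Alg}_{\PP_B}^\mm{cpl}$ and shows that any filtered quasi-isomorphism in either variable induces a homotopy equivalence on mapping complexes. Therefore $j$ inverts the filtered quasi-isomorphisms, the induced functor $\bar j\colon \Alg_{\PP_B}^{\mm{cpl},B\text{-cof}}[\text{w.e.}^{-1}]\to \oocat{Alg}_{\PP_B}^\mm{cpl}$ is fully faithful (its source mapping spaces are precisely the derived ones), and it is essentially surjective since it is bijective on objects. Combined with the reduction above, this proves the claim; alternatively, essential surjectivity over all of $\Alg_{\PP_B}^\mm{cpl}$ follows by resolving an arbitrary $\mf{g}$ by $\mc{M}_B\circ_{\PP_B}\mf{g}$, which is $B$-cofibrant by Lemma~\ref{lem:cobar is B-cofibrant} and weakly equivalent to $\mf{g}$ by Proposition~\ref{prop:cobar resolution of algebras}.

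The one genuinely new ingredient over the $k$-linear case is the cofibrancy of the bar--cobar construction, Proposition~\ref{prop:cobar is cofibrant}: because the complete cooperad $\CC$ carries no coradical filtration (Warning~\ref{war:complete conilpotence}), this cannot be deduced from the usual filtration argument and instead rests on the $B$-linear Homotopy Transfer Theorem~\ref{thm:htt-B} through the $\infty$-section construction of Lemma~\ref{lem:sections from htt}. That work having been done in the previous subsection, the present proof is purely formal, and I do not anticipate any further obstacle.
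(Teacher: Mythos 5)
Your proposal is correct and follows essentially the same route as the paper, whose proof simply states that the argument of Proposition~\ref{prop:oocat of algebras over k} carries over; you have filled in precisely the details that carrying-over requires, namely the reduction to algebras with cofibrant underlying $B$-module (which is built into Definition~\ref{def:oocat of algebras over B} and is automatic over $k$ but not over $B$), and the replacement of $\Omega_\phi\Bar_\phi(-)$ by $\mc{M}_B\circ_{\PP_B}(-)$ together with its cofibrancy from Proposition~\ref{prop:cobar is cofibrant}. Nothing further is needed.
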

\begin{proof}
The proof of Proposition \ref{prop:oocat of algebras over k} carries over to this situation.
\end{proof}

\subsection{Homotopy theory of curved Lie algebras}\label{sec:homotopy of curved Lie over B}
Let us now return to our situation of interest, the homotopy theory of curved $L_\infty$-algebras. If $(B, \delta)$ is a graded mixed cdga, we have now have three different $\infty$-categories of (types of) curved $L_\infty$-algebras over $B$:
\begin{definition}\label{def:oocategory of curved lie over B}
Let $(B, \delta)$ be a graded mixed cdga. We consider the following three simplicial categories:
\begin{enumerate}
\item The $\infty$-category $\oocat{cLie}_B$ of \emph{curved $L_\infty$-algebras over $B$}. Its objects are (classical) curved $L_\infty$-algebras over $B$ (Definition \ref{def:classical clie over B}) whose underlying $B$-module is quasiprojective, such that the associated graded is a cofibrant $\Gr(B)$-module. The simplicial set of maps $\Map_{\oocat{cLie}_B}(\mf{g}, \mf{h})$ consists of $\infty$-morphisms $\phi\colon \mf{g}\leadsto \mf{h}\otimes\Omega[\Delta^n]$ between the underlying $k$-linear curved $L_\infty$-algebras, such that each map
$$
\phi_n\colon \mf{g}^{\otimes n}\rt \mf{h}\otimes \Omega[\Delta^n]
$$
is $B$-multilinear (forgetting the differentials on both sides).

\item The $\infty$-category $\oocat{cLie}_B^{\mm{mix}}$ of \emph{mixed-curved $L_\infty$-algebras} over $(B, \delta)$ (Definition \ref{def:mixed-curved lie over B}), defined as the $\infty$-category associated to the model category of complete algebras over the operad $\cLie_{\infty, B}$ from Lemma \ref{lem:operad for mixed-curved Loo over B}. By Proposition \ref{prop:oocat of algebras over B}, this can also be described as the simplicial category whose objects are mixed-curved $L_\infty$-algebras whose underlying complete $B$-module is cofibrant, with morphisms given by simplicial sets of $\infty$-morphisms (see Definition \ref{def:oocat of algebras over B}).

\item The $\infty$-category $\oocat{cLie}_{B_{\gr}}^{\mm{gr-mix}}$ of graded mixed-curved $L_\infty$-algebras over $B$ (Definition \ref{def:graded mixed-curved lie over B}), defined as the $\infty$-category associated to the model category of graded algebras over the operad $\cLie_{\infty, B}^{\gr}$ (Remark \ref{rem:graded algebras over B}). An analogue of Proposition \ref{prop:oocat of algebras over B} shows that this $\infty$-category can also be described as the simplicial category whose objects are graded mixed-curved $L_\infty$-algebras whose underlying graded $B_{\gr}$-module is cofibrant, with morphisms given by simplicial sets of $\infty$-morphisms.
\end{enumerate}
\end{definition}
We now have the following analogues of Proposition \ref{prop:graded mixed = classical} and \ref{prop:graded mixed as pullback}:
\begin{theorem}\label{thm:homotopy theories of curved lie over B}
Let $(B, \delta)$ be a graded mixed cdga. Then there is a sequence of functors of presentable $\infty$-categories whose composite is an equivalence
$$\begin{tikzcd}
\oocat{cLie}_B^\mm{gr-mix}\arrow[r, "\Tot"] & \oocat{cLie}_B^\mm{mix}\arrow[r, "\blend"] & \oocat{cLie}_B.
\end{tikzcd}$$
Furthermore, the first functor fits into a pullback square of $\infty$-categories
$$\begin{tikzcd}
\oocat{cLie}_B^\mm{gr-mix}\arrow[r, "\Tot"]\arrow[d, "\mm{forget}"{left}] & \oocat{cLie}_B^\mm{mix}\arrow[d, "\mm{forget}"]\\
\oocat{Mod}_{B_{\gr}}^\gr\arrow[r, "\Tot"{below}] & \oocat{Mod}_B^\mm{cpl}.
\end{tikzcd}$$
\end{theorem}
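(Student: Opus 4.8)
The plan is to transcribe the two arguments that prove Proposition~\ref{prop:graded mixed = classical} and Proposition~\ref{prop:graded mixed as pullback}, feeding in at each step the $B$-linear substitutes developed in this section: the operad $\cLie_{\infty, B}$ and its weight-graded version $\cLie_{\infty, B}^\gr$ (Lemma~\ref{lem:operad for mixed-curved Loo over B}, Remark~\ref{rem:graded algebras over B}), their transferred model structures (Theorem~\ref{thm:Model str}), the simplicial presentations of the associated $\infty$-categories (Proposition~\ref{prop:oocat of algebras over B}), the bar--cobar resolutions $\mc{M}_B\circ_{\PP_B}(-)$ which are cofibrant (Proposition~\ref{prop:cobar is cofibrant}) and corepresent $\infty$-morphisms (Proposition~\ref{prop:cobar corepresents oo-maps}), and the $B$-linear Homotopy Transfer Theorem~\ref{thm:htt-B}. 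A preliminary observation I would record is that $\Tot$ is a \emph{right} Quillen functor from graded to mixed-curved $L_\infty$-algebras over $B$: on underlying modules it is the right adjoint $\Tot\colon\Mod_{B_\gr}^\gr\rt\Mod_B^\mm{cpl}$ to $\Gr$, which preserves fibrations and weak equivalences by Lemma~\ref{lem:fibrations and w.e. on associated graded}, and it lifts to algebras since the operations $\ell_n^p$ of a graded mixed-curved $L_\infty$-algebra assemble into the operations $\sum_p\ell_n^p$ of a $\cLie_{\infty,B}$-structure on the total complex. By \cite{hinichlocalization} the derived functor is then a right adjoint between presentable $\infty$-categories, which together with Proposition~\ref{prop:oocat of algebras over B} already yields that $\oocat{cLie}_B^\mm{gr-mix}$ and $\oocat{cLie}_B^\mm{mix}$ are presentable.

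To prove $\blend\circ\Tot$ is essentially surjective I would start from a classical curved $L_\infty$-algebra $\mf g'$ over $B$ whose underlying complete $B$-module is cofibrant, i.e.\ quasiprojective with $\Gr(\mf g')$ cofibrant over $\Gr(B)$ (Proposition~\ref{prop:cofibrations of complete modules}). The step replacing the trivial ``choose a basis'' of the $k$-linear case is that, without differential, $\mf g'$ is $B$-linearly isomorphic to $\Tot(\Gr(\mf g'))$: this holds for a free module $B\botimes_k V\cong\Tot(\Gr B)\botimes_k\Tot(\Gr V)\cong\Tot(\Gr(B\botimes_k V))$ because $B=\Tot(\Gr B)$ and $\Tot$ is symmetric monoidal (Remark~\ref{rem:graded is monoidal}), and it descends to retracts by lifting the defining idempotent along the complete filtration. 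Fixing such an isomorphism and decomposing each $\ell_i'$ into its pure-weight pieces $\ell_n^r\colon\mf g^{\otimes n}\rt\mf g\wgt r$, the arity-$1$ structure equation $\ell_1'\circ_1\ell_1'+\ell_2'\circ_1\ell_0'=0$ (with $\ell_0'\in F^1$) forces the weight-$0$ part $d$ of $\ell_1'$ to square to zero, while $d$ is a $B$-module derivation over the internal differential of $B$ and $\ell_1:=\ell_1'-d$ a derivation over $\delta$. Hence $(\Gr(\mf g'),d,\{\ell_n^r\})$ is a graded mixed-curved $L_\infty$-algebra over $B$ mapping to $\mf g'$. Full faithfulness follows exactly as in Proposition~\ref{prop:graded mixed = classical}: a graded mixed $\infty$-morphism is precisely a mixed $\infty$-morphism $\Tot(\mf g)\rightsquigarrow\Tot(\mf h)$ whose linear part is homogeneous of weight $0$, and $\blend$ converts these bijectively into $B$-linear $\infty$-morphisms of classical curved $L_\infty$-algebras via the unique decomposition $\phi_1'=\phi_\mm{lin}+\phi_1$ into a weight-$0$ homogeneous summand and a filtration-weight-$\geq1$ summand (both $B$-linear whenever $\phi_1'$ is, with $\phi_\mm{lin}$ automatically a chain map for weight reasons). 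Replacing $\mf h$ by $\mf h\botimes\Omega[\Delta^n]$ promotes this to an isomorphism of simplicial mapping spaces.

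For the pullback square, Proposition~\ref{prop:oocat of algebras over B} and its weight-graded analogue, combined with the simplicial models of $\oocat{Mod}_B^\mm{cpl}$ and $\oocat{Mod}_{B_\gr}^\gr$ from Remark~\ref{rem:model for derived complete}, present all four corners by categories enriched in Kan complexes; and since a graded mixed-curved $L_\infty$-structure on $\mf g_\gr$ is precisely a mixed-curved $L_\infty$-structure on $\Tot(\mf g_\gr)$ together with the graded $B_\gr$-module $\mf g_\gr$ lifting it (with $\infty$-morphisms decomposing compatibly), the square is a \emph{strict} pullback of these simplicial categories. It then suffices to check that the right-hand forgetful functor $\oocat{cLie}_B^\mm{mix}\rt\oocat{Mod}_B^\mm{cpl}$ is a fibration of simplicially enriched categories. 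For local fibrancy, the map $\phi\mapsto\phi_\mm{lin}$ on mapping spaces corresponds under Proposition~\ref{prop:cobar corepresents oo-maps} to restriction along $\PP_B\circ_B\mf g\rt\mc{M}_B\circ_{\PP_B}\mf g$, so the horn-filling condition reduces to lifting this cofibration (Proposition~\ref{prop:cobar is cofibrant}) against the acyclic fibration $\mf h\botimes\Omega[\Delta^n]\rt\mf h\botimes\Omega[\Lambda^n_i]$. For isofibrancy on homotopy categories, given $\mf g\in\oocat{cLie}_B^\mm{mix}$ and an equivalence $i\colon V\rt\mf g$ in $\oocat{Mod}_B^\mm{cpl}$, the straightforward $B$-linear analogues of Proposition~\ref{prop:qiso is qinv} and Lemma~\ref{lem:acyclic fib is def retract} present $i$ as part of a $B$-linear homotopy retract with side conditions, and Theorem~\ref{thm:htt-B} then transfers the $\cLie_{\infty,B}$-structure onto $V$ together with an $\infty$-morphism $i_\infty$ of linear part $i$. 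Hence the square is a homotopy pullback; as a pullback of presentable $\infty$-categories along accessible functors it is again presentable, and since $\blend\circ\Tot$ is an equivalence the category $\oocat{cLie}_B$ is presentable too.

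The main obstacle I anticipate is not conceptual but a matter of bookkeeping of $B$-linearity: every splitting, resolution and transferred operation imported from the $k$-linear proofs must be chosen $B$-linearly, and one must keep track of the cofibrant-underlying-module hypotheses built into Definition~\ref{def:oocategory of curved lie over B}. All of this is underwritten by three facts already established here --- that $B$ is the totalization of the graded mixed cdga $\Gr(B)$ with $\Tot$ symmetric monoidal (so quasiprojective $B$-modules split $B$-linearly), that bar--cobar resolutions over $B$ are cofibrant and corepresent $\infty$-morphisms (Propositions~\ref{prop:cobar is cofibrant} and~\ref{prop:cobar corepresents oo-maps}), and that homotopy transfer works $B$-linearly (Theorem~\ref{thm:htt-B}) --- so beyond this the argument is a faithful transcription of Propositions~\ref{prop:graded mixed = classical} and~\ref{prop:graded mixed as pullback}.
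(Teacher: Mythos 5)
Your proposal is correct and follows essentially the same route as the paper, whose proof of this theorem consists of the single remark that the arguments of Propositions \ref{prop:graded mixed = classical} and \ref{prop:graded mixed as pullback} carry over verbatim, the two substantive inputs being exactly the ones you isolate: quasiprojective complete $B$-modules split $B$-linearly as totalizations of their associated graded, and the right-hand forgetful functor is a fibration of simplicial categories by the $B$-linear Homotopy Transfer Theorem \ref{thm:htt-B}. Your write-up merely fills in the details the paper leaves implicit (the idempotent-lifting argument for the splitting, and the bar--cobar lifting for local fibrancy), so the two proofs coincide in substance.
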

\begin{proof}
The functor $\Tot$ sends a graded mixed-curved $L_\infty$-algebra to its total complex and the functor $\blend$ is defined in exactly the same way like \eqref{eq:functor summing differential}: it sends a mixed-curved $L_\infty$-algebra $(\mf{g}, d, \ell_i)$ to the curved $L_\infty$-algebra $\big(\mf{g}, \ell_1'=d+\ell_1, \ell_{i\neq 1}\big)$, and similarly on $\infty$-morphisms one sums up the component $\phi_\mm{lin}$ and $\phi_1$. 
The proofs of Proposition \ref{prop:graded mixed = classical} and Proposition \ref{prop:graded mixed as pullback} now carry over verbatim: the functor $\oocat{cLie}_B^{\mm{gr-mix}}\rt \oocat{cLie}_B$ is an equivalence because every curved Lie algebra over $B$ whose underlying $B$-module is quasiprojective admits a splitting, i.e.\ arises as the totalization of a graded $B$-module (which can then be endowed with a canonical graded mixed-curved $L_\infty$-structure). 

Furthermore, the above pullback square is a strict pullback square of simplicially enriched categories. To verify that it is a homotopy pullback, it suffices to verify that the right vertical functor is a fibration, which follows from the Homotopy Transfer Theorem \ref{thm:htt-B}.
\end{proof}
%
%
%
%

\section{Curved Lie algebroids}\label{sec:Curved Lie algebroids}
In this section we recall the notion of a Lie algebroid (or $L_\infty$-algebroid) over $A$, and introduce an obvious curved analogue as well. The main result (Theorem \ref{thm:curved Lie algebroids conceptually}) provides an equivalence between the $\infty$-category of such curved Lie algebroids and a certain $\infty$-category of weight-graded non-curved Lie algebroids. Restricting to subcategories of Lie algebras, this gives yet another description of the homotopy theory of curved Lie algebras, purely in terms of Lie algebras without curvature.

\subsection{Categories of (curved) Lie algebroids}
From now on, $A$ will denote a cdga in nonnegative cohomological degrees. Everything below will typically only be homotopically sound when $A$ is furthermore cofibrant or smooth. We will denote by $T_A$ the $A$-module of derivations of $A$, which comes with a $k$-linear Lie algebra structure given by the commutator bracket satisfying the Leibniz rule
\begin{equation}\label{eq:leibniz}
\hspace{1cm} [v, a \cdot w] = a\cdot [v, w] + \mc{L}_v(a)\cdot w, \hspace{1.5cm} a\in A, v,w\in T_A.
\end{equation}
\begin{definition}\label{def:loo algebroid}
An \emph{$L_\infty$-algebroid} over $A$ (relative to $k$) is given by an $A$-module $\Loid$ 
whose underlying complex carries a $k$-linear $L_\infty$-algebra structure, together with an \emph{anchor map}
$$
\rho\colon \Loid\rt T_A.
$$
This data has to satisfy the following conditions:
\begin{itemize}
\item The anchor map preserves both the $A$-module and $L_\infty$-algebra structure.
\item All brackets $\ell_n$ of arity $n\geq 3$ are $A$-multilinear and the binary bracket satisfies the Leibniz rule \eqref{eq:leibniz}.
\end{itemize}
A \emph{Lie algebroid} is an $L_\infty$-algebroid whose brackets in arity $\geq 3$ vanish. 
\end{definition}
There are evident filtered and weight-graded analogues of the above definition, where one treats $T_A$ (and $A$) as being concentrated in filtration degree (weight-grading) $0$. In particular, all elements of filtration degree $\geq 1$ (resp.\ weight $\neq 0$) are contained in the kernel of the anchor map. In addition, one can add curvature to the above definition, where the curvature (being of filtration degree 1) is contained in the kernel of the anchor map:
\begin{definition}
A \emph{curved $L_\infty$-algebroid} over $A$ is given by a $k$-linear curved $L_\infty$-algebra $\Loid$ (Definition \ref{def:cLie classical}) equipped with the structure of a complete graded $A$-module and an anchor map
$$
\rho\colon \Loid\rt T_A\wgt{0}
$$
to $T_A$, concentrated in filtration degree $0$. This data is required to satisfy two conditions:
\begin{itemize}
\item The anchor map is a map of complete graded $A$-modules and preserves the curved $L_\infty$-algebra structure (strictly).
\item The brackets $\ell_n$ of arity $n\geq 3$ are $A$-multilinear, the binary bracket satisfies the Leibniz rule \eqref{eq:leibniz} and $\ell_1$ is an $A$-module derivation.
\end{itemize}
We define mixed-curved and graded mixed-curved $L_\infty$-algebroids analogously.
Note that none of these versions of $L_\infty$-algebroids can be seen as algebras over an operad.
\end{definition}
\begin{example}
The terminal (curved) $L_\infty$-algebroid is $T_A$ itself (in filtration degree $0$), with the usual commutator bracket. Orthogonally, (curved) $L_\infty$-algebroids with zero anchor map are precisely (curved) $L_\infty$-algebras over $A$. 
\end{example}
\begin{example}\label{ex:free curved lie algebroid}
Let $\mf{g}\rt T_A$ be a (strict) map of $k$-linear curved $L_\infty$-algebras. Then $A\otimes \mf{g}$ has the structure of a curved $L_\infty$-algebroid over $A$, where the anchor map and the $\ell_n$ with $n\geq 3$ are extended $A$-multilinearly, while $\ell_2$ is extended according to the Leibniz rule \eqref{eq:leibniz}. This construction defines a left adjoint to the forgetful functor from (curved) $L_\infty$-algebroids to $k$-linear (curved) $L_\infty$-algebras over $T_A$.
\end{example}
\begin{example}\label{ex:tensoring of curved Loo algebroids}
If $\Loid$ is an $L_\infty$-algebroid, then the tensor products with differential forms on simplices 
$$
\Loid\boxtimes \Omega[\Delta^n]\coloneqq \Loid\otimes \Omega[\Delta^n]\times_{T_A\otimes \Omega[\Delta^n]} T_A
$$
again have the structure of $L_\infty$-algebroids (cf.\ \cite[Construction 5.23]{nuiten2019homotopicalalgebra} and \cite{vezzosi2015}). This provides a simplicial enrichment of the category of $L_\infty$-algebroids, with simplicial sets of maps consisting of (strict) maps $\Loid\rt \Loide\boxtimes \Omega[\Delta^n]$. The same thing applies to curved, mixed-curved and graded mixed-curved $L_\infty$-algebroids.
\end{example}
\begin{example}\label{ex:change of base}
Let $k\rt k'$ be a map of rings and let $A'=k'\otimes_k A$. Then there is an adjoint pair $\cat{cLie}(A/k)\leftrightarrows \cat{cLie}(A'/k')$ between (graded mixed, mixed) curved $L_\infty$-algebroids over $A$ relative to $k$ and curved $L_\infty$-algebroids over $A'$ relative to $k'$. The left adjoint sends $\Loid\rt T_{A/k}$ to $k'\otimes_k \Loid\rt k'\otimes_k T_{A/k}\rt T_{A'/k'}$ while the right adjoint sends $\Loide\rt T_{A'/k'}$ to the pullback $\Loide\times_{T_{A'/k'}} T_{A/k}$.
\end{example}
\begin{example}\label{ex:filtrations on Lie algebroids}
Let $\Loid$ be an $L_\infty$-algebroid. There are three canonical (complete) filtrations that one can put on $\Loid$:
\begin{enumerate}[start=0]
\item \emph{trivial filtration $\Loid\wgt{0}$:} the filtered $L_\infty$-algebroid with $F^1\Loid\wgt{0}=0$, $F^0\Loid\wgt{0}=\Loid$. 

\item \emph{Hodge filtration $\Loid\wgt{1}$:} given by $F^0\Loid\wgt{1}=0$, $F^{-1}\Loid\wgt{1}=\Loid$.

\item \emph{anchor filtration $\Loid^\mm{anc}$:} the fiber product $\Loid\wgt{0}\times_{T_A\wgt{0}} T_A\wgt{1}$. Explicitly, this is a filtered Lie algebroid over $T_A\wgt{1}$ given by
$$\begin{tikzcd}[row sep=0.4pc]
\text{Filtration:}& 1 & 0 & -1 & -2 & \\
\dots \arrow[r] & 0\arrow[ddd]\arrow[r, hook] & \mf{n}\arrow[ddd]\arrow[r] & \Loid\arrow[ddd, "\rho"]\arrow[r, "="] & \Loid\arrow[ddd]\arrow[r, "="] & \dots\\
\\
\\
\dots \arrow[r] & 0\arrow[r] & 0\arrow[r] & T_A\arrow[r, "="] & T_A\arrow[r, "="] & \dots
\end{tikzcd}$$ 
where $\mf{n}$ is the kernel of the anchor map.
\end{enumerate}
\end{example}
\begin{definition}\label{def:CE algebra}
Let $\Loid$ be a curved $L_\infty$-algebroid. Its \emph{Chevalley--Eilenberg complex} is given by the complete graded vector space
$$
C^*(\Loid)\coloneqq \Hom_A\big(\Sym_A(\Loid[1]), A\big)
$$
equipped with the Chevalley--Eilenberg differential, given (modulo Koszul signs) by
\begin{align*}
(d_{\mm{CE}}\alpha)(x_1,\dots, x_n) &=d_A\big(\alpha(x_1, \dots, x_n)\big)+\sum_i \mc{L}_{\rho(x_i)} \alpha(x_1, \dots, \hat{x_i}, \dots, x_n)\\
&- \sum_{k\geq 0} \sum_{\sigma\in \mm{Sh}^{-1}(k, n-k)}\alpha\big(\ell_k(x_{\sigma(1)}, \dots, x_{\sigma(k)}), x_{\sigma(k+1)}, \dots, x_{\sigma(n)}\big).
\end{align*}
The usual exterior product endows $C^*(\Loid)$ with the structure of a complete $k$-linear cdga.
\end{definition}
\begin{remark}
Abstractly, the Chevalley--Eilenberg complex can be identified as follows. The Lie algebra action of $T_A$ on $A$ (by derivations) endows $\mm{Sym}^c_k(T_A[1])\otimes_k A$ with a differential $\delta$ making it a dg-comodule over the ``bar construction'' $\mm{Sym}^c_k(T_A[1])$ (cf.\ \cite{halperin1992}). On the other hand, the anchor map $\rho$ induces a map of (complete) dg-coalgebras $\mm{Sym}^c_k(\Loid[1])\rt \mm{Sym}^c_k(T_A[1])$. Corestricting along this map, we obtain a dg-comodule structure on $\mm{Sym}^c_k(\Loid[1])\otimes_k A$. 

The complex of comodule maps $\mm{Sym}^c_k(\Loid[1])\rt \mm{Sym}^c_k(\Loid[1])\otimes_k A$ can then be identified with the complete graded vector space $\Hom_k(\mm{Sym}^c_k(\Loid[1]), A)$, endowed with the Chevalley--Eilenberg differential $d_{CE}$ described above. The subspace $\Hom_A(\mm{Sym}^c_A(\Loid[1]), A)$ of $A$-multilinear maps is closed under this differential.
\end{remark}
\begin{example}\label{ex:chevalley-eilenberg}
Let $\Loid$ be an ordinary $L_\infty$-algebroid over $A$. Then $C^*(\Loid)=C^*(\Loid\wgt{0})$ is the usual Chevalley--Eilenberg complex, concentrated in filtration weight $0$. On the other hand, $C^*(\Loid\wgt{1})$ is the Chevalley--Eilenberg complex of $\Loid$ equipped with the \emph{Hodge filtration}, in which $p$-forms are of filtration weight $p$. In particular, $C^*(\Loid\wgt{1})$ has a canonical graded mixed structure, with the grading given by form degree.

Finally, $C^*(\Loid^\mm{anc})$ is the Chevalley--Eilenberg complex of $\Loid$, endowed with the \emph{anchor filtration} where a form has filtration weight $p$ if it is zero when applied to $\geq p$ elements coming from the kernel of $\rho\colon \Loid\rt T_A$.
\end{example}

\subsection{Homotopy theory of curved Lie algebroids}
The categories of mixed-curved and graded mixed-curved $L_\infty$-algebroids almost carry a model structure:
\begin{theorem}\label{thm:model structure lie algebroids}
The category of mixed-curved $L_\infty$-algebroids over $A$ carries a (left) semi-model structure (cf.\ \cite[Section 12.1]{fresse2009}) such that:
\begin{enumerate}
\item weak equivalences are weak equivalences between the underlying complete $A$-modules.
\item fibrations are maps that induce surjections in each filtration weight.
\end{enumerate}
Furthermore, every cofibrant object is (in particular) cofibrant as a complete $A$-module. 

Similarly, there are (left) semi-model structures on the categories of graded mixed-curved $L_\infty$-algebroids and (weight-graded/complete) $L_\infty$-algebroids over $A$.
\end{theorem}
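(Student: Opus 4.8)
The plan is to obtain all the asserted (semi-)model structures by transfer along the forgetful functor
$U\colon \cat{cLie}(A/k)^{\mm{mix}}\rt \Mod_A^{\mm{cpl}}$
that sends a mixed-curved $L_\infty$-algebroid to its underlying complete $A$-module, following the template used in \cite{nuiten2019homotopicalalgebra} for ordinary $L_\infty$-algebroids over $k$ and grafting onto it the complete-filtered and curved bookkeeping developed in Sections \ref{sec:complete operads} and \ref{sec: filtered algebras}. With this set-up, weak equivalences and fibrations are \emph{defined} to be the maps detected by $U$, so conditions (1)--(2) and the last sentence of the theorem will hold by construction once the transfer goes through; the only real work is checking that it does, and identifying the left adjoint.

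First I would construct the left adjoint $\Free$, the free mixed-curved $L_\infty$-algebroid on a complete $A$-module $M$. Unlike for algebras over the operad $\cLie_{\infty,A}\cong A\circ\cLie_\infty$ of Lemma \ref{lem:operad for mixed-curved Loo over B}, this is \emph{not} simply $\cLie_{\infty,A}\circ M$: the binary bracket must obey the Leibniz rule \eqref{eq:leibniz} relative to the anchor, so the free algebroid is a Lie--Rinehart-type construction, obtained by combining the free $k$-linear mixed-curved $L_\infty$-algebra on the underlying complex of $M$ with a copy of $T_A$ via the adjunction of Example \ref{ex:free curved lie algebroid} and then imposing the Leibniz relations. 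In the uncurved setting over $k$ this is exactly the construction of \cite{nuiten2019homotopicalalgebra}; introducing curvature only enlarges the set of generating operations (adding $\ell_0$, $\ell_1$ of filtration weight $1$) and amounts to the distributive-law manipulation of Lemma \ref{lem:operad for mixed-curved Loo over B}. The resulting monad $\mathbb{T}=U\circ\Free$ on $\Mod_A^{\mm{cpl}}$ is, however, not ``admissible'' in the strong sense: its pushouts of free cells are only homotopically controlled over \emph{cofibrant} algebroids, which is precisely why one should expect a (left) semi-model structure rather than a full model structure.

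Next I would verify the hypotheses of the transfer principle for left semi-model structures (as in \cite[\S 12.1]{fresse2009}): the generating (trivial) cofibrations of $\Mod_A^{\mm{cpl}}$ have small domains (here $\Mod_A^{\mm{cpl}}$ is cofibrantly generated by the usual explicit cells, by Theorem \ref{thm:Model str} and Proposition \ref{prop:cofibrations of complete modules}), $\Free$ preserves them, and — the one nontrivial point — a pushout in $\cat{cLie}(A/k)^{\mm{mix}}$ of a generating trivial cofibration $\Free(i)$ along a map $\Free(C)\rt\Loid$ with \emph{cofibrant domain} $\Loid$ is again a weak equivalence. After the customary filtered-colimit reduction this comes down to exhibiting a functorial path object, and here the object $\Loid\boxtimes\Omega[\Delta^1]=\Loid\botimes\Omega[\Delta^1]\times_{T_A\botimes\Omega[\Delta^1]}T_A$ of Example \ref{ex:tensoring of curved Loo algebroids} works: it factors the diagonal $\Loid\rt\Loid\times\Loid$ as a weak equivalence followed by a fibration, because $\Omega[\Delta^1]$ is a complete cdga with trivial filtration and $\Gr$ is symmetric monoidal, which is exactly the complete-filtered form of Hinich's argument already used in the proof of Theorem \ref{thm:Model str}. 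I expect the cofibrant-domain pushout step to be the main obstacle: one must describe these pushouts explicitly enough — controlling the anchor and the Leibniz correction terms over a cofibrant $\Loid$, much as in \cite{nuiten2019homotopicalalgebra} — to see they are graded quasi-isomorphisms, and this is the precise place where a na\"ive full model structure fails and the Homotopy Transfer Theorem \ref{thm:htt-B} over $A$ (applied with $\delta=0$) is invoked to upgrade the relevant $B$-linear sections to $\infty$-morphisms.

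Finally, the statement that every cofibrant mixed-curved $L_\infty$-algebroid is cofibrant as a complete $A$-module follows from the cell structure of cofibrant objects: such an algebroid is a retract of a transfinite composite of pushouts of maps $\Free(V)\rt\Free(W)$ with $V\rt W$ an admissible monomorphism of complete $A$-modules, and each such map is a cofibration of complete $A$-modules because $\mathbb{T}$ is cofibrant as a left $A$-module — the same argument as Lemma \ref{lem:cofibrant => B-cofibrant}, via \cite{berger2003axiomatic}. The graded mixed-curved version is obtained by running the identical argument in the weight-graded setting (with the easier proofs indicated in Remark \ref{rem:graded algebras over B}), and the uncurved variants are the special case in which the curvature generators $\ell_0$ are dropped; alternatively, the uncurved and graded statements can be deduced from the mixed-curved one along the Quillen functors $\blend$ and $\Tot$, exactly as in Theorem \ref{thm:homotopy theories of curved lie over B}.
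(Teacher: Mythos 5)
Your high-level strategy --- transfer a left semi-model structure along a free--forgetful adjunction, verify Fresse's criterion over cofibrant domains, and read off the $A$-module cofibrancy from the cell structure --- is the same as the paper's, and you have correctly located the hard step. But two points need repair. First, the adjunction you start from does not exist: the forgetful functor $U\colon \cat{cLie}(A/k)^{\mm{mix}}\rt \Mod_A^{\mm{cpl}}$ to \emph{plain} complete $A$-modules has no left adjoint. Indeed, $T_A$ is the terminal mixed-curved $L_\infty$-algebroid, so an adjunction would give $\Hom_A(M, T_A)\cong \Hom(\mm{Free}(M), T_A)=\ast$ for every $M$, which is absurd; the anchor has to be part of the data of the base category. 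The paper transfers from the slice $\Mod_k^{\mm{cpl}}/T_A$, where Example \ref{ex:free curved lie algebroid} supplies the explicit left adjoint $V\mapsto A\otimes\cLie_\infty(V)$. This is easy to fix, but the explicit form of the left adjoint is exactly what is needed for the second point.

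Second, and more seriously, the key verification --- that for cofibrant $\Loid$ and contractible $Z$ over $T_A$ the map $\Loid\rt \Loid\amalg\mm{Free}(Z)$ is an acyclic cofibration of complete $A$-modules --- is not carried out, and the tools you propose do not accomplish it. Hinich's path-object argument requires the codomain of the relative cell map to be fibrant (one lifts against the map to the terminal object to produce a retraction), and fibrancy here means surjectivity of the anchor, which fails even for cofibrant algebroids; this failure is precisely why one only gets a semi-model structure, so the existence of the path object $\Loid\boxtimes\Omega[\Delta^1]$ is not the missing ingredient. Nor is the Homotopy Transfer Theorem: to apply it you would already need a deformation retract of the underlying $A$-module of the pushout onto $\Loid$, which is exactly what has to be proved. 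The paper's actual argument is an induction over the cell structure of $\Loid$ (``good'' objects and maps): using $\mm{Free}(V)=A\otimes\cLie_\infty(V)$, a cell attachment $\Loid\rt\Loid\amalg_{\mm{Free}(V)}\mm{Free}(W)$ is filtered by word length in $W/V$, and its associated graded is identified with $\Loid\amalg\mm{Free}(W/V\oplus -)$, which is controlled by the inductive hypothesis. This one computation simultaneously yields the transfer condition and the statement that cofibrant algebroids are cofibrant as complete $A$-modules; your appeal to the mechanism of Lemma \ref{lem:cofibrant => B-cofibrant} does not apply as stated, because the free-algebroid monad is not of the form $\PP_B\circ(-)$ for an operad and its cell pushouts do not split off as free $A$-modules without the word-length filtration argument.
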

One does not obtain model structures in the strict sense because there are (mixed-curved) $L_\infty$-algebroids which do not admit a fibrant replacement $\Loid\rto{\sim} \Loid^\mm{fib}$ (i.e.\ one for which the anchor map is surjective) \cite[Example 3.2]{nuiten2019homotopicalalgebra}. However, this does not pose a problem from the point of view of $\infty$-categories: the associated $\infty$-categories still have all expected properties, e.g.\ limits and colimits that are computed as homotopy limits and colimits.
\begin{proof}
For ordinary $L_\infty$-algebroids, without filtrations or curvature, this is proven in \cite{nuiten2019homotopicalalgebra}. The proofs of loc.\ cit.\ carry over verbatim to this case; we will briefly outline the argument in the mixed-curved case, the other cases are easier. The desired semi-model structure is obtained by transfer along the free-forgetful adjunction $\mm{Free}\colon \Mod^\mm{cpl}_k/T_A\leftrightarrows \cat{cLie}(A/k)^\mm{mix}\colon U$. To establish the existence of the semi-model structure, it suffices to verify that for every cofibrant $\Loid$ and every contractible complete complex $Z$ over $T_A$, the map $\Loid\rt \Loid\amalg \mm{Free}(Z)$ is a trivial cofibration of complete $A$-modules \cite[Theorem 12.1.4]{fresse2009}. We will prove something stronger: let us say that an object $\Loid$ is \emph{good} if it satisfies the following two conditions:
\begin{enumerate}[label={(\alph*)}]
\item Without differential, it is the retract of a free mixed-curved $L_\infty$-algebroid $\mm{Free}(V_0)$ on a complete graded vector space over $T_A$.
\item The functor $\Loid\amalg \mm{Free}(-)$ sends (trivial) cofibrations of complete complexes over $T_A$ to (trivial) cofibrations of complete $A$-modules.
\end{enumerate}
Furthermore, we will say that a map $\Loid\rt \Loide$ is good if both $\Loid$ and $\Loide$ are good, and for every complete complex $X$, the map $\Loid\amalg\mm{Free}(X)\rt \Loide\amalg\mm{Free}(X)$ is a cofibration of complete $A$-modules. We now claim that every cofibration with cofibrant domain is a good map, which implies the existence of the semi-model structure, as well as the fact that all cofibrant objects are (in particular) cofibrant as complete $A$-modules.

To verify the claim, note that good morphisms are closed under transfinite compositions and retracts. Next, suppose that $\Loid$ is good and let $V\rt W$ be a cofibration of complete complexes. Then any pushout $\Loid\rt \Loid\amalg_{\mm{Free}(V)}\mm{Free}(W)$ is good. To see this, let $X$ be any other complex over $T_A$ an consider the map 
\begin{equation}\label{eq:attaching cells of curved lie algebroids}
\Loid\amalg\mm{Free}(X)\rt \Loid\amalg_{\mm{Free}(V)} \mm{Free}(W\oplus X).
\end{equation}
Note that without differential, the inclusion $V\rt W$ is the inclusion of a summand. Using Example \ref{ex:free curved lie algebroid} to compute free mixed-curved Lie algebroids in terms of free mixed-curved Lie algebras, the above map then takes the form
$$
A\otimes \cLie_\infty(V_0\oplus X) \rt A\otimes \cLie_\infty(V_0\oplus W/V\oplus X)
$$
where the target has some differential. As a map of complete $A$-modules, we can filter this map by word length in $W/V$. The associated graded can then be identified with $\Loid\amalg \mm{Free}(W/V\oplus X)$. Since $\Loid$ was good by assumption, this gives that the map \eqref{eq:attaching cells of curved lie algebroids} is a cofibration of complete $A$-modules, and that its codomain is a good object as well.

This implies that all cofibrations with a good domain are themselves good. It now remains to verify that the initial mixed-curved $L_\infty$-algebroid is good, i.e.\ that $\mm{Free}(-)=A\otimes\cLie_{\infty}(-)$ sends (trivial) cofibrations of complete complexes to (trivial) cofibrations of complete $A$-modules. This is immediate.
\end{proof}
Our next goal will be to give a more explicit description of the $\infty$-categories associated to the model categories from Theorem \ref{thm:Model str}, in terms of $\infty$-morphisms:
\begin{definition}
Let $\Loid$ and $\Loide$ be curved $L_\infty$-algebroids. An $\infty$-morphism $\phi\colon \Loid\rightsquigarrow \Loide$ is an $\infty$-morphism between the underlying $k$-linear curved $L_\infty$-algebras satisfying the following two conditions:
\begin{enumerate}
\item The composite $\infty$-morphism $\Loid\rightsquigarrow \Loide\rto{\rho_{\Loide}} T_A$ agrees with the strict morphism $\rho_\Loid$.
\item Each component $\phi_n$ defines an $A$-multilinear map $\phi_n\colon \mm{Sym}^n_A(\Loid[1])[-1]\rt \Loide$.
\end{enumerate}
The same definition applies to (graded) mixed-curved $L_\infty$-algebroids and (complete, weight-graded) $L_\infty$-algebroids.
\end{definition}
We have the following version of the Homotopy Transfer Theorem (see also \cite{pymsafronov2020,camposhomotopy2019}):
\begin{theorem}[Homotopy Transfer Theorem]\label{thm:htt lie algebroids}
Let $\Loid$ be a mixed-curved $L_\infty$-algebroid over $A$ and consider a deformation retract of complete $A$-modules $\begin{tikzcd}[sep=small, cells={nodes={}}]
V \arrow[yshift=-.5ex,swap]{r}{i}  
& \Loid \arrow[yshift=.5ex,swap]{l}{p} 
\arrow[loop right, distance=2em, 
]{}{h} 
\end{tikzcd}$
relative to $T_A$ (i.e.\ both $i$ and $p$ commute with the projection to $T_A$), satisfying the side conditions $ph=0$, $hi=0$ and $h^2=0$. Then the $A$-module structure on $V$ extends to a \emph{transferred} mixed-curved $L_\infty$-algebroid structure, and $i$ extends to an $\infty$-morphism $i_\infty$ of mixed-curved $L_\infty$-algebroids.
\end{theorem}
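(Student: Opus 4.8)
The plan is to reduce to the operadic Homotopy Transfer Theorems already at our disposal and then perform the extra verifications forced by the anchor. A mixed-curved $L_\infty$-algebroid $\Loid$ over $A$ has an underlying $k$-linear mixed-curved $L_\infty$-algebra, i.e.\ an algebra over the complete operad $\cLie_\infty$, and the given deformation retract $(i,p,h)$ is in particular a deformation retract of complete $k$-complexes satisfying the side conditions. Hence the complete Homotopy Transfer Theorem \ref{thm:HTT} immediately endows $V$ with a transferred $\cLie_\infty$-structure $(d^V,\ell_n^V)$ and produces a $k$-linear $\infty$-morphism $i_\infty\colon V\rightsquigarrow\Loid$, all given by the usual tree sums (leaves labelled by $i$, internal edges labelled by $h$, the root labelled by $p$ for the transferred structure and by $h$ for $i_\infty$, vertices labelled by the brackets $\ell_n^\Loid$; the transferred differential is $d^V=p\,d^\Loid i$). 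What remains is to check that this structure is a mixed-curved $L_\infty$-algebroid structure with anchor the given map $q_V\colon V\to T_A$, and that $i_\infty$ is an $\infty$-morphism of algebroids.

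The single observation that makes everything work is that, since $ph=0$, the homotopy $h$ factors through $\ker(p)$, and since $i,p$ are maps over $T_A$ we have $\rho_\Loid=q_V\circ p$, whence $\rho_\Loid\circ h=0$; that is, $h$ takes values in the kernel of the anchor. First I would deduce that $q_V$ is a strict $L_\infty$-morphism: applying $\rho_\Loid$ at the root of any tree appearing in $\ell_n^V$, every tree with an internal edge dies because one input of the root vertex has the form $h(-)$, which $\rho_\Loid$ kills (using that $\rho_\Loid$ is already strict on $\Loid$ and that the brackets of $T_A$ are multilinear); only the one-vertex tree survives, giving $q_V\circ\ell_n^V=\ell_n^{T_A}(q_V,\dots,q_V)$, $q_V\circ d^V=d^{T_A}\circ q_V$, and $q_V(\ell_0^V)=0$ since $\ell_0^\Loid\in F^1$ and $F^1T_A=0$. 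The same reasoning shows $\rho_\Loid\circ i_\infty=q_V$ strictly, since all higher components of $i_\infty$ carry an $h$ and are annihilated by $\rho_\Loid$.

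Next I would check the $A$-(multi)linearity conditions. For arity $n\geq 3$ this is immediate, as $i$, $p$, $h$ and the brackets $\ell_n^\Loid$ with $n\geq 3$ are all $A$-(multi)linear, and $d^V=p\,d^\Loid i$ is an $A$-module derivation because $d^\Loid$ is one and $i,p$ are $A$-linear. The only real content is the Leibniz rule \eqref{eq:leibniz} for $\ell_2^V$ relative to $q_V$ and the prescribed behaviour of $\ell_1^V$. To see these, pull an element $a\in A$ from a leaf up through a tree: every $\ell_2^\Loid$-vertex contributes a Leibniz defect $\pm\rho_\Loid(-)(a)\cdot(-)$ and every $\ell_1^\Loid$-vertex a derivation defect $d_A(a)\cdot(-)$, but whenever such a vertex is not the root, splitting off its defect and reconnecting the tree produces an occurrence of $h\circ h$, $h\circ i$ or $\rho_\Loid\circ h$ — all zero by the side conditions $h^2=0$, $hi=0$ and $\rho_\Loid h=0$ — so these defects vanish; when the vertex is the root, the defect either vanishes for the same reason (via $p\circ h=0$, $\rho_\Loid\circ h=0$) or, in the corolla case, equals exactly the Leibniz, resp.\ derivation, defect for $q_V$. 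Hence the transferred structure is a mixed-curved $L_\infty$-algebroid, and the identical bookkeeping shows each component $(i_\infty)_n$ is $A$-multilinear, so $i_\infty$ is an $\infty$-morphism of mixed-curved $L_\infty$-algebroids. The graded (and plain, and classical curved) cases are proved verbatim, with strictly easier combinatorics.

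I expect the main obstacle to be exactly this last combinatorial verification: one must be confident that the Leibniz defect of the binary bracket does not propagate through the tree sums into genuinely new terms. As indicated above it cannot — every reconnected tree exhibits $h^2=0$, $hi=0$, $ph=0$ or $\rho_\Loid h=0$ — but making this bulletproof requires tracking the Koszul signs carefully and handling the nullary brackets $\ell_0^\Loid$ (which occur as interior "constant" inputs) exactly as in the proof of Theorem \ref{thm:htt-B}. This is the analogue here of the short analysis of the arity-one cooperad generators in that proof, now augmented by the interaction of the anchor with the homotopy $h$.
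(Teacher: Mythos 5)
Your proposal is correct and follows essentially the same route as the paper's proof: apply the $k$-linear complete Homotopy Transfer Theorem, observe that $\rho_\Loid h=0$ (since $\rho_\Loid=q_V\circ p$ and $ph=0$), and then use this together with the side conditions to kill all Leibniz defects except the one in the corolla term $p\circ\ell_2\circ(i,i)$, which supplies the Leibniz rule for the transferred binary bracket and the strictness of the anchor. Your tree-by-tree bookkeeping is just a slightly more expanded version of the paper's identification of the $A$-linear composites $h\circ(\ell_2\circ_1 h)$, $p\circ(\ell_2\circ_1 h)$ and $h\circ\ell_2\circ(i,i)$.
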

\begin{proof}
The proof is similar to Theorem \ref{thm:htt-B}: we apply the Homotopy Transfer Theorem for $k$-linear mixed-curved $L_\infty$-algebras to obtain a transferred mixed-curved $L_\infty$-structure on $V$ and a $k$-linear $\infty$-morphism $i_\infty\colon V\rightsquigarrow \Loid$. Note that the homotopy $h$ takes values in the kernel of the anchor map $\rho_\Loid$, since $\rho_\Loid h= \rho_{V}(ph)$. The formula for $i_\infty$ then implies that all nonlinear components of $\rho_\Loid\circ i_\infty$ vanish, i.e.\ the underlying ($A$-linear) map $\rho_\Loid\circ i$ is a strict map of mixed-curved $L_\infty$-algebras. 

Next, recall that all operations $\ell_n$ for $n\neq 2$ on $\Loid$ are $A$-linear. On the other hand, using that $h$ takes values in the kernel of the anchor map and satisfies the side conditions, one sees that $h\circ (\ell_2\circ_1 h)$, $p\circ (\ell_2\circ_1 h)$ and $h\circ \ell_2\circ (i, i)$ are all $A$-linear as well. This implies that $i_\infty$ is $A$-linear and that there is only one term in the transferred structure that is not $A$-multilinear, namely the term $p\circ \ell_2\circ(i, i)$ in the formula for the transferred operation $\ell_2'$. Since $p\circ \ell_2\circ(i, i)$ precisely satisfies the Leibniz rule (and all other terms contributing to $\ell_2$ are $A$-bilinear), the transferred $k$-linear curved-mixed $L_\infty$-structure makes $V$ a curved-mixed $L_\infty$-algebroid.
\end{proof}
\begin{proposition}\label{prop:cobar resolution for loo algebroids}
Let $\Loid$ be a mixed-curved $L_\infty$-algebroid. Then there exists a unique mixed-curved $L_\infty$-algebroid $Q(\Loid)$ together with a natural bijection
$$
\Big\{\text{structure-preserving maps }Q(\Loid)\rt \Loide\Big\}\cong \Big\{\infty\text{-morphisms }\Loid\leadsto \Loide\Big\}.
$$
If $\Loid$ is cofibrant as a complete $A$-module, then $Q(\Loid)$ is a cofibrant mixed-curved $L_\infty$-algebroid and the natural map $Q(\Loid)\rt \Loid$ is a weak equivalence. Similarly for graded mixed-curved $L_\infty$-algebroids and (complete, weight-graded) $L_\infty$-algebroids.
\end{proposition}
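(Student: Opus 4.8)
The plan is to transcribe the bar--cobar story for $\PP_B$-algebras of Section~\ref{sec:cofibrant resolutions over B} into the non-operadic world of $L_\infty$-algebroids. Although mixed-curved $L_\infty$-algebroids are not algebras over an operad, they are assembled from the complete operad $\cLie_\infty$ together with $A$-(multi)linearity, the Leibniz rule on the binary bracket, and the anchor to $T_A$, and all of these structures survive on quasi-free objects. First I would form the \emph{reduced relative bar construction} $\ol\Bar(\Loid)$: the complete $A$-module $\widehat{\bigoplus}_{n\geq 0}\Sym^n_A(\Loid[1])[-1]\oplus\Loid$ (i.e.\ $\ucocom^\mix\{1\}_+\circ_A\Loid$, shifted, as in Section~\ref{sec:oo-morphism of clie} but relative to $A$), equipped with its Chevalley--Eilenberg-type internal differential built from $d_A$, the anchor action on $A$, and the operations $\ell_n$ of $\Loid$, and viewed as an object over $T_A$ via the map that is $\rho_\Loid$ on the copy of $\Loid$ and zero on the remaining summands. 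Then I set $Q(\Loid)$ to be the mixed-curved $L_\infty$-algebroid quasi-freely generated over $A$ (in the sense of Example~\ref{ex:free curved lie algebroid}) by $\ol\Bar(\Loid)$, with total differential obtained by adding to the internal one the cobar perturbation, which on a generator of arity $n$ is the infinitesimal cocomposition $\Delta_{(1)}$ of $\ucocom^\mix\{1\}$ with the lower vertex reinterpreted as the corresponding $\cLie_\infty$-operation of $Q(\Loid)$. Checking that this total differential squares to zero, is a derivation for all the algebroid operations, and (crucially) is compatible with the anchor of $Q(\Loid)$ is the analogue of Lemma~\ref{lem:operad for mixed-curved Loo over B} and of the construction of the bimodule $\mc{M}_B$ in Section~\ref{sec:cofibrant resolutions over B}; anchor-compatibility of the cobar perturbation holds because the only arity-$1$ vertices it produces are paired with operations landing in $\ker(\rho)$.

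Corepresentability is then formal. Because $Q(\Loid)$ is quasi-free over $A$ on $\ol\Bar(\Loid)$, a strict morphism $Q(\Loid)\rt\Loide$ is precisely a morphism of complete $A$-modules over $T_A$ from $\ol\Bar(\Loid)$ to $\Loide$ commuting with the (total) differentials. Unwinding this exactly as in Proposition~\ref{prop:cobar corepresents oo-maps}: the arity-$n$ component is an $A$-multilinear map $\phi_n\colon\Sym^n_A(\Loid[1])[-1]\rt\Loide$, compatibility with the internal differential reproduces the defining relations \eqref{eq:mixed cLoo map} of an $\infty$-morphism of (mixed-)curved $L_\infty$-algebras, and compatibility with the anchor of $\ol\Bar(\Loid)$ encodes condition (1) in the definition of an $\infty$-morphism of algebroids. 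This is the asserted natural bijection; uniqueness of $Q(\Loid)$ up to unique isomorphism follows from the universal property, and the natural map $Q(\Loid)\rt\Loid$ is the strict morphism corresponding to $\id_\Loid$.

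It remains to show, under the hypothesis that $\Loid$ is cofibrant as a complete $A$-module, that $Q(\Loid)\rt\Loid$ is a weak equivalence and $Q(\Loid)$ is a cofibrant mixed-curved $L_\infty$-algebroid. For the weak equivalence I would pass to the associated graded: by Lemma~\ref{lem:graded exact} the functor $\Gr$ is exact, commutes with the relevant colimits and tensor products, and detects weak equivalences, so it suffices to show $\Gr(Q(\Loid))\rt\Gr(\Loid)$ is a graded quasi-isomorphism, and now the curvature has disappeared: $\Gr(Q(\Loid))$ is the ordinary bar--cobar resolution (relative over $A$, over $T_A$) of the weight-graded $L_\infty$-algebroid $\Gr(\Loid)$, for which $\ucocom^\mix\{1\}$ carries an exhaustive coradical filtration (Warning~\ref{war:complete conilpotence}). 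Filtering by coradical degree and running the standard contracting-homotopy argument, exactly as in Proposition/definition~\ref{propdef:bar algebras} and Proposition~\ref{prop:cobar resolution of algebras}, collapses everything onto $\Gr(\Loid)$; here one uses that $\Gr(\Loid)$ is cofibrant over $\Gr(A)$ (a consequence of $A$-cofibrancy of $\Loid$ via Proposition~\ref{prop:cofibrations of complete modules}), so that the relevant composition products $\Sym^\bullet_{\Gr(A)}(\Gr(\Loid)[1])$ have the correct homotopy type. For cofibrancy I would copy the proof of Proposition~\ref{prop:cobar is cofibrant}: given an acyclic fibration $\mf{h}\twoheadrightarrow Q(\Loid)$ of mixed-curved $L_\infty$-algebroids lying below the universal $v_\infty$, the inclusion of generators $\Loid\hookrightarrow Q(\Loid)$ is an $A$-module cofibration (when $\Loid$ is $A$-cofibrant, by the argument of Lemma~\ref{lem:cobar is B-cofibrant}), so a linear $A$-section extending it exists; the algebroid analogue of Lemma~\ref{lem:sections from htt} — which holds by the same proof using the Homotopy Transfer Theorem for $L_\infty$-algebroids (Theorem~\ref{thm:htt lie algebroids}) — upgrades it to an $\infty$-morphism $s_\infty$ with $p s_\infty = v_\infty$, as required. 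The graded mixed and the uncurved cases are identical, with $\cLie_\infty^\gr$ (respectively the uncurved $L_\infty$-operad) in place of $\cLie_\infty$.

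The genuinely new work is concentrated in the first step: since curved $L_\infty$-algebroids are not algebras over any operad, one cannot invoke a distributive-law formalism, so the existence of the quasi-free object $Q(\Loid)$ — in particular the facts that its cobar-perturbed differential is an algebroid derivation, respects the Leibniz rule on the binary bracket, and is compatible with the anchor — must be checked by hand. I expect this bookkeeping to be the main obstacle; once $Q(\Loid)$ is in hand, corepresentability, the weak-equivalence statement, and cofibrancy are faithful adaptations of Sections~\ref{sec:bar-cobar}--\ref{sec: filtered algebras}.
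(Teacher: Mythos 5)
Your overall architecture (a corepresentability statement, a filtration argument for the weak equivalence, and homotopy transfer for cofibrancy) matches the paper's, and your cofibrancy step is essentially the paper's argument. There are, however, two genuine gaps.

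First, you build $Q(\Loid)$ by hand as a quasi-free algebroid on a relative bar construction and then defer the verification that the cobar-perturbed differential squares to zero, is a derivation for the algebroid operations, and is compatible with the anchor --- you yourself flag this as ``the main obstacle''. The paper avoids this entirely: existence and uniqueness of $Q(\Loid)$ follow from the purely formal observation that the functor $\Loide\mapsto\{\infty\text{-morphisms }\Loid\rightsquigarrow\Loide\}$ preserves limits and filtered colimits and is hence corepresentable. The explicit model (a quotient of $A\otimes\Omega\Bar(\Loid)$) is then only needed \emph{without} differentials, to identify the underlying graded $A$-module as quasiprojective. As written, your first step is incomplete, and the hard verifications it defers are precisely the ones the abstract argument renders unnecessary.

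Second, and more seriously, your weak-equivalence step does not go through as stated. Taking the associated graded of the complete filtration kills the curvature in weight $0$ but \emph{not} the anchor, so $\Gr(Q(\Loid))$ is still the bar--cobar construction of a weight-graded $L_\infty$-\emph{algebroid} with nontrivial anchor; it is not the operadic bar--cobar resolution relative over $A$. The coradical-filtration contracting-homotopy arguments of Proposition/definition \ref{propdef:bar algebras} and Proposition \ref{prop:cobar resolution of algebras} apply to algebras over (complete, or distributive-law) operads; the Chevalley--Eilenberg terms $\mc{L}_{\rho(-)}$ acting on $A$-coefficients and the failure of $A$-bilinearity of $\ell_2$ via the Leibniz rule are exactly what prevents algebroids from being such algebras, so those propositions cannot be invoked ``exactly as'' stated. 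The idea you are missing is the paper's second, \emph{increasing} filtration (placing $\Loid$ in degree $1$, i.e.\ filtering $Q(\Loid)$ by number of leaves) combined with the Rees construction: this rescales all brackets \emph{and the anchor} by $\hbar$, so that the associated graded of $\pi\colon Q(\Loid)\to\Loid$ is identified with $Q(\Loid_0)\to\Loid_0$ for the trivial algebroid $\Loid_0$ with zero brackets and zero anchor, which is the module-level bar--cobar resolution already treated in Section \ref{sec:cofibrant resolutions over B}. You would need either this trick or a direct algebroid-level contracting homotopy to close this step.
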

\begin{proof}
The proof is similar to Proposition \ref{prop:cobar is cofibrant} and is slightly different from the non-curved, unfiltered case treated in \cite[Section 5]{nuiten2019homotopicalalgebra}. First, the existence and uniqueness of the object $Q(\Loid)$ follows from category theoretic reasons: the functor sending $\Loide$ to the set of $\infty$-morphisms $\Loid\rightsquigarrow \Loide$ preserves limits and filtered colimits, and is hence corepresentable. In particular, there is a canonical map $\pi\colon Q(\Loid)\rt \Loid$ corresponding to the identity $\Loid\rightsquigarrow \Loid$ and a canonical $\infty$-morphism $v_\infty\colon \Loid\rightsquigarrow Q(\Loid)$ corresponding to the identity on $Q(\Loid)$.

\medskip

\textit{Underlying graded $A$-module.} 
Next, let us identify the linear map $v_\mm{lin}\colon \Loid\rt Q(\Loid)$ as a map of complete graded $A$-modules, ignoring the differentials. We start by noting that the universal property of $Q(\Loid)$ realizes it as a certain quotient of the mixed-curved $L_\infty$-algebroid $A\otimes \Omega\Bar(\Loid)$, where $\Loid$ is viewed as a $k$-linear mixed-curved $L_\infty$-algebra (cf.\ Example \ref{ex:free curved lie algebroid}). Without differential, $Q(\Loid)$ can therefore be identified with the quotient of  $A\otimes \big(\cLie_\infty\circ \ucocom^{\mm{mix}}_+\circ \Loid\big)$ by the following relation: viewing elements in this complex as certain height 2 trees with root labeled by $A$ and leaves labeled by $\mf{g}$, rescaling a leaf by $a\in A$ is equivalent to rescaling the root by $a$. Using that $\Loid$ is (the retract of) a free $A$-module, this implies that $Q(\Loid)$ is a quasiprojective complete $A$-module. 


\medskip 

\textit{Underlying $A$-module.}
We will now show that $Q(\Loid)$ is cofibrant as a complete $A$-module and that the map $\pi\colon Q(\Loid)\rt \Loid$ is a weak equivalence. 

To this end, let us endow $\Loid$ with an \emph{additional, increasing} filtration such that $F_0(\Loid)=0\subseteq \Loid=F_1(\Loid)$. This induces a nonnegative increasing filtration on $Q(\Loid)$ and the map $\pi$ respects these filtrations. In now suffices to show that the associated graded of $Q(\Loid)$ is cofibrant as a complete $A$-module and that $\pi$ induces an equivalence on the associated graded. 

This is most easily seen using the Rees construction, sending a complete $k$-module $V$ with an increasing filtration to the $\hbar$-torsion free $k[\hbar]$-module $\bigoplus_{n} \hbar^nF_n(V)$. The associated graded is then the fiber at $\hbar=0$. The Rees construction of $\Loid$ can then be identified with the mixed-curved $L_\infty$-algebroid $\Loid_{\hbar}$ over $A[\hbar]$ given by $A[\hbar]\otimes_A \Loid$, with brackets given by $\hbar\cdot \ell_n$ and anchor map given by 
$$
\hbar\cdot \rho\colon A[\hbar]\otimes_A \Loid\rt A[\hbar]\otimes_A T_A\subseteq T_{A[\hbar]}.
$$
Likewise, the Rees construction of the map $\pi\colon Q(\Loid)\rt \Loid$ coincides with the natural map $\pi_\hbar\colon Q(\Loid_{\hbar})\rt \Loid_\hbar$. We therefore have to show that $\pi_{\hbar}$ induces a weak equivalence between cofibrant complete $A$-modules after setting $\hbar=0$.

Using the adjunction from Example \ref{ex:change of base}, ones sees that after setting $\hbar=0$, the map $\pi_\hbar$ coincides with the map $Q(\Loid_0)\rt \Loid_0$ for the \emph{trivial} mixed-curved $L_\infty$-algebroid $\Loid_0$, i.e.\ $\Loid$ with zero brackets and zero anchor map. For these, $Q(\Loid_0)$ coincides with the bar-cobar construction for mixed-curved $L_\infty$-algebras in complete $A$-modules; this is indeed cofibrant as an $A$-module and equivalent to $\Loid_0$ (cf.\ Proposition-Definition \ref{propdef:bar algebras} and Section \ref{sec:cofibrant resolutions over B}).

\medskip

\textit{Cofibrancy.}
We have shown  that $Q(\Loid)$ is cofibrant as a complete $A$-module and it remains to verify that it is also cofibrant as a mixed-curved $L_\infty$-algebroid. We now argue as in Proposition \ref{prop:bar cobar resolutions are cofibrant} and Proposition \ref{prop:cobar is cofibrant}: it suffices to verify that for any acyclic fibration $p\colon \Loide\rt Q(\Loid)$, there exists an $\infty$-morphism $s_\infty\colon \Loid\rt \Loide$ such that $ps_\infty=v_\infty$ is the universal $\infty$-morphism. In fact, we can replace $\Loide$ by a cofibrant resolution and hence assume that it is cofibrant as an $A$-module. 

Since $Q(\Loid)$ is cofibrant as an $A$-module, there exists an $A$-linear section $i$ of $p$ and an $A$-linear homotopy that realizes $Q(\Loid)$ as a deformation retract of $\Loide$ relative to $T_A$. We can now apply the argument from Lemma \ref{lem:sections from htt}, using the Homotopy Transfer Theorem \ref{thm:htt lie algebroids}.
\end{proof}
\begin{definition}\label{def:simplicial cat of lie algebroids}
We will denote by $\oocat{cLie}(A/k)^{\mm{mix}}$ the $\infty$-category corresponding to the following simplicially enriched category:
\begin{enumerate}[start=0]
\item objects are \emph{fibrant} mixed-curved $L_\infty$-algebroids over $A$ whose underlying complete $A$-module is cofibrant.
\item simplicial sets of morphisms consist of $\infty$-morphisms $\Loid\rightsquigarrow \Loide\boxtimes \Omega[\Delta^n]$, using the tensoring from Example \ref{ex:tensoring of curved Loo algebroids}.
\end{enumerate}
Likewise, we will write $\oocat{Lie}(A/k), \oocat{Lie}(A/k)^{\gr}$ and $\oocat{cLie}(A/k)^{\mm{gr-mix}}$ for the $\infty$-categories of $L_\infty$-algebroids, weight-graded $L_\infty$-algebroids and graded mixed-curved $L_\infty$-algebroids, respectively. In each case, objects are required to have a cofibrant underlying (weight-graded) $A$-module and a surjective anchor map, and morphisms are $\infty$-morphisms.
\end{definition}
\begin{corollary}\label{cor:models for algebroids}
The simplicially enriched category $\oocat{cLie}(A/k)^{\mm{mix}}$ presents the $\infty$-categorical localization of the category of mixed-curved $L_\infty$-algebroids at the weak equivalences. In particular, $\oocat{cLie}(A/k)^{\mm{mix}}$ is a presentable $\infty$-category. The same assertions holds for graded mixed-curved $L_\infty$-algebroids and (weight-graded) $L_\infty$-algebroids.
\end{corollary}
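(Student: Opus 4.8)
The plan is to run the same argument that proves Proposition \ref{prop:oocat of algebras over k} and Proposition \ref{prop:oocat of algebras over B}, with the functorial resolution $Q(-)$ of Proposition \ref{prop:cobar resolution for loo algebroids} playing the role that $\Omega_\phi\Bar_\phi(-)$ plays there, and with the Homotopy Transfer Theorem \ref{thm:htt lie algebroids} supplying the technical input. Concretely, the first step is to establish the algebroid analogue of Lemma \ref{lem:kan enriched}: for any two objects $\Loid,\Loide$ of $\oocat{cLie}(A/k)^{\mm{mix}}$ the simplicial set $\Map(\Loid,\Loide)$ is a Kan complex, and any weak equivalence $\Loide\to\Loide'$ between such objects induces a homotopy equivalence $\Map(\Loid,\Loide)\to\Map(\Loid,\Loide')$. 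Using Proposition \ref{prop:cobar resolution for loo algebroids} one identifies $\Map(\Loid,\Loide)$ with the simplicial set of strict maps $Q(\Loid)\to\Loide\boxtimes\Omega[\Delta^\bullet]$; since $Q(\Loid)$ is a cofibrant mixed-curved $L_\infty$-algebroid and, by the same Hinich-type construction used in the proof of Theorem \ref{thm:Model str}, $\Loide\boxtimes\Omega[\Delta^\bullet]$ (see Example \ref{ex:tensoring of curved Loo algebroids}) is a Reedy-fibrant simplicial resolution of $\Loide$ requiring no fibrancy hypothesis on $\Loide$, the usual model-categorical formalism yields both assertions.

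Granting this, the functor $j\colon\cat{cLie}(A/k)^{\mm{mix}}\to\oocat{cLie}(A/k)^{\mm{mix}}$ sends weak equivalences to homotopy equivalences, hence factors through the localization $\cat{cLie}(A/k)^{\mm{mix}}[\mathrm{w.e.}^{-1}]$. Full faithfulness is immediate from the first step, since the mapping spaces in $\oocat{cLie}(A/k)^{\mm{mix}}$ then compute the derived mapping spaces of the semi-model category of Theorem \ref{thm:model structure lie algebroids}: by Proposition \ref{prop:cobar resolution for loo algebroids}, $\Map(\Loid,\Loide)\simeq\mathrm{Map}\bigl(Q(\Loid),\Loide\boxtimes\Omega[\Delta^\bullet]\bigr)$ is the derived $\mathrm{Map}$ out of a cofibrant replacement of $\Loid$. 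For essential surjectivity one invokes that $Q(-)$, composed with a cofibrant-as-$A$-module replacement, provides every mixed-curved $L_\infty$-algebroid with an equivalence in the localization to an object of the allowed form. Presentability of $\oocat{cLie}(A/k)^{\mm{mix}}$ then follows because the semi-model structure of Theorem \ref{thm:model structure lie algebroids} is cofibrantly generated over the presentable category of complete $A$-modules, so its localization is a presentable $\infty$-category (cf.\ \cite{hinichlocalization,nuiten2019homotopicalalgebra}). The statements for graded mixed-curved $L_\infty$-algebroids and for (weight-graded or complete) uncurved $L_\infty$-algebroids are proved by the identical argument — in fact more easily in the weight-graded case, where one genuinely has an operad as in Remark \ref{rem:graded algebras over B} — using the versions of Theorem \ref{thm:model structure lie algebroids}, Theorem \ref{thm:htt lie algebroids} and Proposition \ref{prop:cobar resolution for loo algebroids} recorded in those generalities.

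The one point where genuine care is needed, and which I expect to be the main obstacle, is the semi-model-categorical bookkeeping already flagged after Theorem \ref{thm:model structure lie algebroids}: because not every $L_\infty$-algebroid admits a fibrant replacement (surjective anchor), one must be precise about what ``the $\infty$-categorical localization of the category of mixed-curved $L_\infty$-algebroids'' means and why restricting to objects that are fibrant with cofibrant underlying $A$-module does not lose information. This is exactly the subtlety resolved in \cite{nuiten2019homotopicalalgebra}, and the resolution used there transports to the present curved and mixed setting without change: the path-object half of the Kan-enrichment argument never used fibrancy, $Q(-)$ supplies functorial cofibrant resolutions with cofibrant underlying $A$-module, and the Homotopy Transfer Theorem \ref{thm:htt lie algebroids} lets one rectify $\infty$-morphisms between such objects. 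Once these standard reductions are in place, the proof of Proposition \ref{prop:oocat of algebras over k} applies verbatim.
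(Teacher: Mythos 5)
Your proposal is correct and follows essentially the same route as the paper, whose proof of this corollary is literally ``exactly as Proposition \ref{prop:oocat of algebras over k}'', with $Q(-)$ and Theorem \ref{thm:htt lie algebroids} playing the roles you assign them. Two small points of divergence are worth recording. First, your parenthetical that $\Loide\boxtimes\Omega[\Delta^\bullet]$ is a fibrant simplicial resolution ``requiring no fibrancy hypothesis on $\Loide$'' is not right: surjectivity of the anchor of $\Loide$ is exactly what one needs to check that the matching maps $\Loide\boxtimes\Omega[\Delta^n]\rt \Loide\boxtimes\Omega[\partial\Delta^n]$ are fibrations (one corrects an arbitrary lift by an element of $\Loide\otimes\ker(\Omega[\Delta^n]\to\Omega[\partial\Delta^n])$, which requires $\rho$ surjective), and this is precisely why Definition \ref{def:simplicial cat of lie algebroids} restricts to fibrant objects; since your objects are fibrant anyway, the argument goes through, but the justification should be fixed. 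Second, for presentability you assert directly that the localization of a cofibrantly generated semi-model category on a locally presentable category is presentable; the references you cite state this for combinatorial model categories, and the paper instead makes the reduction explicit by observing that $\cat{cLie}(A/k)^{\mm{mix}}$ is Quillen equivalent to the genuinely combinatorial model category $\tilde{0}/\cat{cLie}(A/k)^{\mm{mix}}$ of objects under a fibrant-cofibrant replacement $\tilde{0}$ of the initial object, to which \cite{hinichlocalization} applies. Your conclusion is correct, but that reduction (or an equivalent one) is the missing half-step.
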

\begin{proof}
Exactly as Proposition \ref{prop:oocat of algebras over k}. Note that the $\infty$-categorical localization of the semi-model category $\cat{cLie}(A/k)^\mm{mix}$ is presentable because it is equivalent to that of the (Quillen equivalent) combinatorial model category $\tilde{0}/\cat{cLie}(A/k)^\mm{mix}$ of mixed-curved $L_\infty$-algebroids under a fibrant-cofibrant replacement of the initial object.
\end{proof}
\begin{definition}
The $\infty$-category $\oocat{cLie}(A/k)$ of curved $L_\infty$-algebroids over $A$ is the $\infty$-category associated to the simplicially enriched category whose:
\begin{enumerate}[start=0]
\item objects are curved $L_\infty$-algebroids such that the anchor $\Loid\rt T_A\wgt{0}$ is surjective, the underlying complete $A$-module is quasiprojective and $\Gr(A)$ is a cofibrant graded $A$-module.
\item simplicial sets of morphisms consist of $\infty$-morphisms $\Loid\rightsquigarrow \Loide\boxtimes \Omega[\Delta^n]$.
\end{enumerate}
\end{definition}
\begin{proposition}\label{prop:classical curved Lie algebroid=graded mixed}
There is a sequence of functors between presentable $\infty$-categories
$$\begin{tikzcd}
\oocat{cLie}(A/k)^{\mm{gr-mix}}\arrow[r, "\Tot"] & \oocat{cLie}(A/k)^\mm{mix} \arrow[r, "\blend"] & \oocat{cLie}(A/k)
\end{tikzcd}$$
whose composite is an equivalence. In particular, the $\infty$-category of curved $L_\infty$-algebroids is presentable.
\end{proposition}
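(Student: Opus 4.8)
The plan is to reproduce, for $L_\infty$-algebroids, the proofs of Proposition \ref{prop:graded mixed = classical} and Proposition \ref{prop:graded mixed as pullback}, replacing the operadic inputs by their algebroid counterparts: the semi-model structures of Theorem \ref{thm:model structure lie algebroids}, the bar--cobar type resolution $Q(-)$ of Proposition \ref{prop:cobar resolution for loo algebroids}, and the Homotopy Transfer Theorem \ref{thm:htt lie algebroids}. First I would record that the functors are well defined between presentable $\infty$-categories. The $\infty$-categories $\oocat{cLie}(A/k)^{\mm{gr-mix}}$ and $\oocat{cLie}(A/k)^{\mm{mix}}$ are presentable by Corollary \ref{cor:models for algebroids}. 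Taking total complexes is a right Quillen functor from graded mixed-curved $L_\infty$-algebroids to mixed-curved $L_\infty$-algebroids (it preserves surjections in each weight and graded quasi-isomorphisms), so it descends to a right adjoint $\Tot$ between the associated $\infty$-categories. The functor $\blend$ is defined, exactly as in \eqref{eq:functor summing differential}, by sending $(\Loid, d, \ell_i)$ to the curved $L_\infty$-algebroid $(\Loid, \ell_1' = d + \ell_1, \ell_{i \ne 1})$ with unchanged $A$-module structure and anchor map, and by sending an $\infty$-morphism $(\phi_{\mm{lin}}, \phi_i)$ to $(\phi_n')$ with $\phi_1' = \phi_{\mm{lin}} + \phi_1$ and $\phi_n' = \phi_n$ for $n \ne 1$; one checks that the anchor compatibility, the Leibniz rule \eqref{eq:leibniz} and the $A$-multilinearity conditions are preserved, and that this is compatible with the tensoring $-\boxtimes \Omega[\Delta^\bullet]$ of Example \ref{ex:tensoring of curved Loo algebroids}.

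Next I would prove essential surjectivity of $\blend \circ \Tot$. Given a curved $L_\infty$-algebroid $\Loid'$ with quasiprojective underlying complete $A$-module, choose a splitting of the filtration without differential to write $\Loid' = \Tot(\Loid)$ for a weight-graded $A$-module $\Loid$. Because $T_A$ is concentrated in filtration weight $0$, the anchor $\rho \colon \Loid' \to T_A\wgt{0}$ kills $F^1 \Loid'$ and is therefore determined by its weight-$0$ component $\Loid\wgt{0} \to T_A$. Decomposing each bracket $\ell_n'$ into homogeneous pieces $\ell_n^r$ of pure weight $r$ (so $\ell_0'$ has no weight-$0$ part) and writing $d$ for the weight-$0$ part of $\ell_1'$, the structure equation $\ell_1' \circ_1 \ell_1' + \ell_2' \circ_1 \ell_0' = 0$ together with $\ell_0' \in F^1\Loid'$ forces $d^2 = 0$; moreover $d$ is an $A$-module derivation and $\rho d = d_{T_A}\rho$, the latter by taking weight-$0$ parts of $\rho \ell_1' = d_{T_A}\rho$ (valid since $T_A$ has zero curvature and $\rho$ is strict). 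Hence $(\Loid, d, \ell_0', \ell_1' - d, \ell_2', \dots)$ is a mixed-curved $L_\infty$-algebroid whose homogeneous components assemble into a graded mixed-curved $L_\infty$-algebroid mapping to $\Loid'$ under $\blend \circ \Tot$.

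Finally I would prove full faithfulness, again following Proposition \ref{prop:graded mixed = classical}. For graded mixed-curved $L_\infty$-algebroids $\Loid, \Loide$, an $\infty$-morphism of graded mixed-curved $L_\infty$-algebroids $\Loid \rightsquigarrow \Loide \boxtimes \Omega[\Delta^n]$ is the same datum as a mixed $\infty$-morphism $\Tot(\Loid) \rightsquigarrow \Tot(\Loide) \boxtimes \Omega[\Delta^n]$ whose linear part $\phi_{\mm{lin}}$ is homogeneous of weight $0$. The functor $\blend$ identifies this with the classical $\infty$-morphism whose linear component is $\phi_1' = \phi_{\mm{lin}} + \phi_1$, and this is bijective: any classical $\infty$-morphism of curved $L_\infty$-algebroids has a linear component $\phi_1'$ that decomposes uniquely into a weight-$0$ homogeneous summand $\phi_{\mm{lin}}$ and a summand $\phi_1$ of filtration weight $\geq 1$, the weight-$0$ piece of the $n = 1$ relation shows $\phi_{\mm{lin}}$ is a chain map, and the anchor-compatibility of $\phi'$ descends to the graded mixed data because $\rho$ factors through weight $0$. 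Replacing $\Loide$ by $\Loide \boxtimes \Omega[\Delta^n]$ upgrades this to an isomorphism of the simplicial mapping sets, so $\blend \circ \Tot$ is fully faithful --- strictly so, as a functor of simplicial categories --- and hence an equivalence. Since its source is presentable, so is $\oocat{cLie}(A/k)$.

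The main obstacle is not conceptual but organizational: since $L_\infty$-algebroids are not algebras over an operad, none of the operadic statements of Section \ref{sec:complete operads} can be cited directly, and each fact used above (the existence of $Q(-)$, its cofibrancy, the homotopy transfer theorem) must be taken from Section \ref{sec:Curved Lie algebroids}. The only genuinely new point compared with Proposition \ref{prop:graded mixed = classical} is checking that the filtration-splitting and the weight-decomposition of the brackets interact correctly with the anchor map; this goes through cleanly precisely because $T_A$ sits in filtration weight $0$ and carries zero curvature, so the anchor never meets the $\delta$-type data that is being separated out.
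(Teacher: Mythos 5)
Your proposal is correct and follows essentially the same route as the paper, whose proof simply says to repeat the arguments of Proposition \ref{prop:graded mixed = classical} and Theorem \ref{thm:homotopy theories of curved lie over B}, noting that quasiprojectivity allows an $A$-linear splitting of the filtration. You have correctly identified and verified the one genuinely new point (compatibility of the splitting and weight-decomposition with the anchor map, which works because $T_A$ sits in weight $0$ with zero curvature), so your write-up is a faithful expansion of the paper's argument.
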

\begin{proof}
As Proposition \ref{prop:graded mixed = classical} and Theorem \ref{thm:homotopy theories of curved lie over B}; the assumption that all objects are quasiprojective as complete $A$-modules implies that we can split their filtration $A$-linearly.
\end{proof}
Let us conclude with the following observation about the three canonical filtrations on an $L_\infty$-algebroid from Example \ref{ex:filtrations on Lie algebroids}:
\begin{proposition}\label{prop:adding filtrations fully faithful}
The three filtrations from Example \ref{ex:filtrations on Lie algebroids} determine fully faithful right adjoint functors of $\infty$-categories
$$\begin{tikzcd}[row sep=0pc, column sep=3.2pc]
\oocat{Lie}(A/k)\arrow[r, "\Loid\mapsto \Loid\wgt{0}"] & \oocat{cLie}(A/k)_{\hphantom{/T_A\wgt{1}}} \\
\oocat{Lie}(A/k)\arrow[r, "\Loid\mapsto \Loid\wgt{1}"] & \oocat{cLie}(A/k)_{/T_A\wgt{1}} \\
\oocat{Lie}(A/k)\arrow[r, "\Loid\mapsto \Loid^\mm{anc}"] & \oocat{cLie}(A/k)_{/T_A\wgt{1}}.
\end{tikzcd}$$
\end{proposition}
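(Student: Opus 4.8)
The plan is to handle the three filtrations in parallel, passing first to the graded mixed-curved picture of Proposition~\ref{prop:classical curved Lie algebroid=graded mixed} and its evident slice variant over $T_A\wgt{1}$. Under that equivalence, $\Loid\mapsto\Loid\wgt{0}$ becomes the functor placing $\Loid$ in weight $0$ with brackets $\ell_n^0=\ell_n$ and all mixed components zero; $\Loid\mapsto\Loid\wgt{1}$ places $\Loid$ in weight $-1$, so that the $n$-ary bracket of $\Loid$ is recorded as the weight-raising component $\ell_n^{n-1}$ and the anchor as a weight-$1$ map to $T_A$ placed in weight $-1$ (which is $T_A\wgt{1}$); and $\Loid\mapsto\Loid^{\mm{anc}}$ is the two-step version, with weight-graded module $\keranc=\ker(\rho)$ in weight $0$ and $T_A$ in weight $-1$, recording $\Loid$ as an extension of $T_A$ by $\keranc$. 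The first step is to check that each of these is a functor of (semi-)model categories descending to the $\infty$-categories of Definition~\ref{def:simplicial cat of lie algebroids}: this is routine, since each construction merely reindexes (or takes a pullback of) the underlying complete $A$-module over $T_A$, hence preserves weak equivalences, preserves cofibrancy of the underlying $A$-module and surjectivity of anchors, and commutes with the tensoring $-\boxtimes\Omega[\Delta^\bullet]$ of Example~\ref{ex:tensoring of curved Loo algebroids}, so it induces a simplicial functor on the simplicial models.

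Right adjointness is then formal: $\oocat{Lie}(A/k)$, $\oocat{cLie}(A/k)$ and the slice $\oocat{cLie}(A/k)_{/T_A\wgt{1}}$ are all presentable (Corollary~\ref{cor:models for algebroids}, Proposition~\ref{prop:classical curved Lie algebroid=graded mixed}, stability of presentability under slicing), and each functor preserves limits, because limits of $L_\infty$-algebroids are created on underlying complete $A$-modules over $T_A$ and placing a module in a fixed weight, or pulling back along $T_A\wgt{1}\rt T_A\wgt{0}$, commutes with such limits; the adjoint functor theorem then supplies left adjoints. Explicitly, the left adjoint of $\Loid\mapsto\Loid\wgt{0}$ is $\Gr^0$ with its induced structure, which is genuinely uncurved since the curvature $\ell_0$ lies in weight $\geq 1$; likewise $\Gr^{-1}$ in the Hodge case; and for the anchor filtration it is the reconstruction of $\Loid$ from its extension data. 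For full faithfulness one can either verify that the counit of each adjunction is an equivalence --- which for the first two functors is immediate, as $\Gr^0(\Loid\wgt{0})=\Loid$ and $\Gr^{-1}(\Loid\wgt{1})=\Loid$ on the nose --- or, uniformly, compare mapping spaces on the simplicial models as in the next paragraph.

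It remains to check that for all $\Loid,\Loide$ and all $n$ the functor induces a bijection between $\infty$-morphisms $\Loid\rightsquigarrow\Loide\boxtimes\Omega[\Delta^n]$ of $L_\infty$-algebroids and $\infty$-morphisms between the re-weighted objects (in the slice, when applicable). This is a weight-bookkeeping argument in the graded mixed picture: an $\infty$-morphism $\varphi=(\varphi_{\mm{lin}},\varphi_n^p)$ into a target concentrated in weight $0$ (resp. $-1$, resp. weights $\{0,-1\}$) can only have components $\varphi_n^p$ in the weights allowed by the target, and reading off the $\infty$-morphism equation~\eqref{eq:mixed cLoo map} weight by weight recovers precisely the defining equations of an $\infty$-morphism of $L_\infty$-algebroids; the $A$-multilinearity conditions and the compatibility with the (re-weighted) anchor transcribe verbatim, and in the Hodge and anchor cases the slice condition over $T_A\wgt{1}$ is exactly what expresses that the morphism lies over $T_A$ (respectively, is compatible with the $\keranc$-by-$T_A$ extension). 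The main obstacle is this last point in the two filtered cases: one must check that the extra weights present in the source leave no room to enlarge the hom-set, and deal carefully with the object $T_A\wgt{1}$, which is not itself an algebroid with surjective anchor in the naive sense but must be made to play the role of a relative base so that the identification of mapping spaces is homotopy-coherent. Granting this, the functors are simplicially fully faithful on fibrant--cofibrant objects, hence fully faithful functors of $\infty$-categories, and with the previous paragraph this completes the proof.
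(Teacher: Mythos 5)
Your overall strategy --- pass to the graded mixed picture and check that reweighting induces a bijection on sets of $\infty$-morphisms, hence an equivalence of the simplicial mapping spaces that model the derived ones --- is essentially the paper's argument (there it is phrased as the preservation of bar--cobar resolutions, $\Phi(Q(\Loid^\mm{anc}))\cong Q(\Loid)$, whose adjoint formulation is exactly your bijection of $\infty$-morphism sets). However, two of your shortcuts do not work as stated. First, the claim that full faithfulness of $\Loid\mapsto \Loid\wgt{0}$ is ``immediate because $\Gr^0(\Loid\wgt{0})=\Loid$ on the nose'' computes the counit of the \emph{underived} adjunction; the derived counit must be evaluated on a cofibrant replacement such as $Q(\Loid\wgt{0})$, which is not concentrated in weight $0$, so one is forced back to the mapping-space comparison anyway. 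Moreover the $1$-categorical left adjoint is not $\Gr^0$ with its induced structure: a strict map $\Loide\rt \Loid\wgt{0}$ must also kill every operation having an input of nonzero weight, so the left adjoint is a quotient of $\Gr^0$, exactly as in the paper's description of the functor $\Phi$ in the anchor case.

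Second, the two points you explicitly ``grant'' are where the real work lies. For the anchor filtration, $\Loid^\mm{anc}$ only becomes a \emph{graded} mixed object after choosing an $A$-linear splitting $\Loid\cong T_A\oplus \keranc$, and the anchor need not even be surjective; the paper arranges this functorially by working in the under-over category ${\scriptstyle\tilde{0}_A/}\cat{Lie}(A/k){\scriptstyle/\mf{t}_A}$, with $\mf{t}_A\rt T_A$ a cofibrant replacement and $\tilde{0}_A$ a contractible free algebroid encoding the section, and by showing this is Quillen equivalent to the plain category. The same device disposes of the slice issue: the na\"ive simplicial category of objects and $\infty$-morphisms over $T_A\wgt{1}$ models the $\infty$-categorical slice only after verifying a Kan-fibration condition as in Lemma \ref{lem:C is slice}, or, as the paper does, after replacing the base by $\mf{t}_A\wgt{1}$ and invoking the Quillen equivalence between the over-category and the ambient one. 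Without these two ingredients the identification of mapping spaces in the Hodge and anchor cases is not justified; with them supplied, your weight-bookkeeping argument goes through and recovers the paper's proof.
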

\begin{proof}
We will only treat the `anchor filtration', the others are similar but easier. We will present the $\infty$-functor $\Loid\mapsto \Loid^\mm{anc}$ by a right Quillen functor; to do this, let $\mf{t}_A\rto{\sim} T_A$ be an equivalent $L_\infty$-algebroid whose underlying $A$-module is cofibrant, and let $\tilde{0}_A$ denote the free $L_\infty$-algebroid generated by the map of $A$-modules $\mf{t}_A[0, -1]\rt \mf{t}_A\rt T_A$ from the path space of $\mf{t}_A$. Then $\tilde{0}_A\simeq 0$ is weakly equivalent to the initial $L_\infty$-algebroid.

Consider the category ${\scriptstyle\tilde{0}_A/}\cat{Lie}(A/k){\scriptstyle /\mf{t}_A}$ of $L_\infty$-algebroids that fit into a diagram $\tilde{0}_A\rt \Loid\rt \mf{t}_A$. Equivalently, this is the category of $\Loid\rt \mf{t}_A$ which come equipped with an $A$-linear section (which need not preserve differentials), inducing a decomposition $\Loid=\mf{t}_A\oplus \mf{n}$. This carries a model structure induced from the semi-model structure on all $L_\infty$-algebroids, and forgetting the maps from $\tilde{0}_A$ and to $\mf{t}_A$ relate the two by a zig-zag of Quillen equivalences.

Likewise, let $\cat{cLie}(A/k)^{\mm{gr-mix}}{\scriptstyle/\mf{t}_A\wgt{1}}$ denote the category of graded mixed-curved $L_\infty$-algebroids with a map to $\mf{t}_A\wgt{1}$. The forgetful functor to all graded mixed-curved $L_\infty$-algebroids is a right Quillen equivalence. The functor $\Loid\mapsto \Loid^\mm{anc}$ can then be presented by the right Quillen functor
$$\begin{tikzcd}
{\scriptstyle\tilde{0}_A/}\cat{Lie}(A/k){\scriptstyle /\mf{t}_A}\arrow[r] & \cat{cLie}(A/k)^{\mm{gr-mix}}{\scriptstyle /\mf{t}_A\wgt{1}}; &\big(\Loid=\mf{t}_A\oplus \mf{n}\to \mf{t}_A\big)\arrow[r, mapsto] & \Loid^\mm{anc}
\end{tikzcd}$$
where the graded mixed-curved $L_\infty$-algebroid $\Loid^\mm{anc}$ is given in weight $0$ by $\mf{n}$ and in weight $-1$ by $\mf{t}_A$ (and the mixed-curved $L_\infty$-structure on the total complex $\mf{n}\oplus \mf{t}_A$ is that of $\Loid$). The left adjoint $\Phi$ sends a graded mixed-curved $L_\infty$-algebroid $\Loide$ to (a) its quotient by the ideal generated by all $\Loide\wgt{p}$ with $p\neq 0, 1$ and by $\ker\big(\Loide\wgt{1}\rt \mf{t}_A\wgt{1}\big)$ (in particular, the result has no curvature), and (b) then takes the associated total $L_\infty$-algebroid (forgetting the filtration).

Now notice that the set of (curved) $\infty$-morphisms $\Loid^\mm{anc}\rt \Loide^\mm{anc}$ is isomorphic to the set of $\infty$-morphisms $\Loid\rt \Loide$. In particular, there is no $\phi_0$ because $\Loide^{\mm{anc}}$ is zero is positive weights. By adjunction, this means that the functor $\Phi$ preserves the `bar-cobar resolution' of Proposition \ref{prop:cobar resolution for loo algebroids}: $\Phi(Q(\Loid^\mm{anc}))\cong Q(\Loid)$. Since $Q(\Loid^\mm{anc})\rt \Loid^\mm{anc}$ is a cofibrant resolution whenever $\Loid$ is cofibrant as an $A$-module, this implies that the derived counit $\mathbb{L}\Phi(\Loid^\mm{anc})\rt \Loid$ is an equivalence, hence $(-)^\mm{anc}$ induces a fully faithful functor of $\infty$-categories.
%
\end{proof}

\subsection{Curved Lie algebroids as non-curved Lie algebroids}
The goal of this section is to give a more intrinsic description of the $\infty$-category of curved $L_\infty$-algebroids in terms on \emph{non-curved} $L_\infty$-algebroids, using (the Koszul dual of) the \emph{Rees construction}. Note that taking $A=k$ the base field, this also gives a description of the $\infty$-category of curved $L_\infty$-algebras studied in Section \ref{sec:curved algebras}.
\begin{definition}\label{def:formal affine line}
Let us denote by $\mc{R}(T_A)$ the weight-graded Lie algebroid over $A$
$$
\mc{R}(T_A)\coloneqq T_A\ltimes A\wgt{-1}[-1]
$$
given by the direct sum of $T_A$ (in weight $0$) and the free $A$-module on a generator $\theta$ of weight $1$ and degree $1$, such that $[\theta, \theta]=0$ and $[v, a\cdot \theta]=\mc{L}_v(a)\cdot \theta$ for $v\in T_A$. 
\end{definition}
\begin{theorem}\label{thm:curved Lie algebroids conceptually}
There are equivalences of presentable $\infty$-categories
$$\begin{tikzcd}
\oocat{cLie}(A/k) & \oocat{cLie}(A/k)^\mm{gr-mix}\arrow[r, "\sim"]\arrow[l, "\sim"{swap}] & \oocat{Lie}(A/k)^\gr{\scriptstyle / \mc{R}(T_A)}
\end{tikzcd}$$ 
between the $\infty$-categories of curved $L_\infty$-algebroids, graded mixed-curved $L_\infty$-algebroids and graded $L_\infty$-algebroids over $\mc{R}(T_A)=T_A\ltimes A\wgt{-1}[-1]$. 
\end{theorem}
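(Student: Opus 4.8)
The first equivalence, $\oocat{cLie}(A/k)^\mm{gr-mix}\simeq\oocat{cLie}(A/k)$, is exactly Proposition~\ref{prop:classical curved Lie algebroid=graded mixed}, so the whole content is the second equivalence $\oocat{cLie}(A/k)^\mm{gr-mix}\simeq\oocat{Lie}(A/k)^\gr{\scriptstyle/\mc{R}(T_A)}$. Both sides are $\infty$-categorical localizations of (semi-)model categories of algebroids (Theorem~\ref{thm:model structure lie algebroids}, Corollary~\ref{cor:models for algebroids}) whose weak equivalences and fibrations are detected on underlying (weight-)graded $A$-modules, and both admit the explicit simplicial description in terms of $\infty$-morphisms valued in $\Omega[\Delta^\bullet]$. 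It therefore suffices to produce a functor between them and check that it induces an equivalence on homotopy categories and on simplicial mapping sets of $\infty$-morphisms; this is the same strategy as for Proposition~\ref{prop:graded mixed = classical} and Theorem~\ref{thm:homotopy theories of curved lie over B}.

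The functor is a ``linearization of the curvature''. Recall that $\mc{R}(T_A)=T_A\ltimes A\wgt{-1}[-1]$ is the strict weight-graded Lie algebroid obtained from $T_A$ by adjoining one central generator $\theta$ of weight $1$ and degree $1$ with $\rho(\theta)=0$; a direct computation identifies its Chevalley--Eilenberg algebra with the Rees algebra of the Hodge-filtered de Rham complex, $C^*(\mc{R}(T_A))\cong\mathrm{Rees}(\dR(A))$, which is the statement advertised in Theorem~\ref{thm:intro curved vs uncurved}. I would define $\Psi\colon\cat{cLie}(A/k)^\mm{gr-mix}\to\cat{Lie}(A/k)^\gr{\scriptstyle/\mc{R}(T_A)}$ on objects by $\Psi(\Loid)=\Loid\oplus A\theta$, with anchor extending $\rho_\Loid$ by $\theta\mapsto 0$, with structure map $\Psi(\Loid)\to\mc{R}(T_A)=T_A\ltimes A\theta$ equal to $\rho_\Loid\oplus\mathrm{id}_{A\theta}$, and with the \emph{uncurved} weight-graded $L_\infty$-brackets of $\Psi(\Loid)$ obtained from those of $\Loid$ by moving the curvature $\ell_0$ and the weight-positive part of the predifferential $\ell_1$ into the brackets involving $\theta$ (schematically, $\ell_1^{\Psi(\Loid)}(\theta)$ records $\ell_0$ and the operations $\ell_n^{\Psi(\Loid)}(\theta,-)$ record the weight-positive parts of $\ell_{n-1}$). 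The inverse sends $\big(\Loide\to\mc{R}(T_A)\big)$ to the fibre $\Loide\times_{\mc{R}(T_A)}T_A$ over the canonical section $T_A\hookrightarrow\mc{R}(T_A)$, with graded mixed-curved structure read off from the $A\theta$-components of the brackets of $\Loide$; one extends $\Psi$ to $\infty$-morphisms by the analogous bookkeeping, absorbing the weight-positive linear and the $0$-ary components of an $\infty$-morphism into components involving $\theta$. One then checks, using the structure equations of a graded mixed-curved $L_\infty$-algebroid (the distributive law of Lemma~\ref{lem:operad for mixed-curved Loo over B} together with Equation~\eqref{eq:cLoo structure equation}, modulo the sign conventions of Remark~\ref{rem:signs}), that $\Psi(\Loid)$ is a genuine uncurved weight-graded $L_\infty$-algebroid and that the equations for $\Loid$ match term by term the $L_\infty$-algebroid equations for $\Psi(\Loid)$ over $\mc{R}(T_A)$; the weight-grading guarantees that every sum occurring is finite.

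With $\Psi$ in hand the remaining steps are routine. First, $\Psi$ and its inverse preserve underlying (weight-)graded $A$-modules, hence weak equivalences and fibrations, and they intertwine the free--forgetful adjunctions of Theorem~\ref{thm:model structure lie algebroids}: the free graded mixed-curved $L_\infty$-algebroid on a graded complex $V$ over $T_A$ (Example~\ref{ex:free curved lie algebroid}) corresponds under $\Psi$ to the free weight-graded $L_\infty$-algebroid over $\mc{R}(T_A)$ on $V$ — here the generator $\theta$ of $\mc{R}(T_A)$ is precisely what accounts for the nullary and weight-positive linear operations distinguishing the curved-mixed from the uncurved case — so that $\Psi$ is a Quillen equivalence. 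Second, to pass to $\infty$-categories one checks that $\Psi$ carries the bar-cobar resolution $Q(\Loid)$ of Proposition~\ref{prop:cobar resolution for loo algebroids} to a resolution isomorphic to $Q(\Psi\Loid)$ in the slice over $\mc{R}(T_A)$; since these corepresent $\infty$-morphisms and compute derived mapping spaces, $\Psi$ induces isomorphisms on all simplicial sets of $\infty$-morphisms, and Corollary~\ref{cor:models for algebroids} then yields the equivalence of $\infty$-categories, exactly as in Proposition~\ref{prop:classical curved Lie algebroid=graded mixed}.

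\textbf{The main obstacle} is the construction and verification of $\Psi$ in the second paragraph: getting the curvature-linearizing functor exactly right — tracking which weight-homogeneous piece of $\ell_0$ and $\ell_1$ becomes which bracket with $\theta$, keeping the signs straight, and exhibiting a genuine inverse so that $\Psi$ is an \emph{equivalence} of categories compatible with the semi-model structures, not merely a comparison functor. Once that combinatorial core is settled, the reduction in the first paragraph and the free-object and bar-cobar bookkeeping in the third (invoking the Homotopy Transfer Theorem~\ref{thm:htt lie algebroids} and Lemma~\ref{lem:sections from htt} just as in the earlier proofs) are mechanical.
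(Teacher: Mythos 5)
Your overall strategy coincides with the paper's: the first equivalence is quoted from Proposition \ref{prop:classical curved Lie algebroid=graded mixed}, and the second is produced by exactly the Rees-type construction you describe (this is Construction \ref{cons:Rees} and Proposition \ref{prop:rees construction} in the paper, including the term-by-term matching of the curved structure equations for $\Loid$ with the uncurved $L_\infty$-algebroid equations for $\Loid\oplus A\theta$, up to the combinatorial factor $p!$ that your ``schematic'' description leaves implicit). So the combinatorial core you flag as the main obstacle is indeed where the paper spends its effort, and your plan for it is the right one.

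There is, however, one genuine gap in the last step. Your functor $\Psi$ lands, on the nose, only in the subcategory of objects $\Loide\rt\mc{R}(T_A)$ that come equipped with an $A$-linear splitting $\Loide\cong\Loide'\oplus A\wgt{-1}[-1]$ of the $A\theta$-component, and the strict pullback $\Loide\mapsto\Loide\times_{\mc{R}(T_A)}T_A$ you propose as inverse is \emph{not} an inverse on the full slice: for instance $T_A\wgt{0}\hookrightarrow\mc{R}(T_A)$ pulls back to $T_A$, but $\Psi(T_A)=\mc{R}(T_A)\neq T_A\wgt{0}$. So $\Psi$ cannot be ``an equivalence of categories'' onto $\cat{Lie}(A/k)^\gr{\scriptstyle/\mc{R}(T_A)}$, nor is the pair $(\Psi,\text{pullback})$ an adjunction in any evident way, and the claimed direct Quillen equivalence does not typecheck. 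The paper resolves this by interposing an auxiliary category $\cat{C}$ of weight-graded $L_\infty$-algebroids fitting into $\tilde{0}\rt\Loid\rt\mc{R}(T_A)$, where $\tilde{0}$ is the free algebroid on a contractible complex mapping to $\theta$; a map from $\tilde{0}$ is exactly the datum of a splitting, the Rees construction is an isomorphism of categories onto $\cat{C}$ identifying the semi-model structures, and forgetting the map from $\tilde{0}$ is a Quillen equivalence $\cat{C}\rt\cat{Lie}(A/k)^\gr{\scriptstyle/\mc{R}(T_A)}$ precisely because $\tilde{0}$ is cofibrant and weakly contractible. Your argument needs either this device or an explicit verification that every object of the slice is weakly equivalent to a split one and that $\Psi$ is fully faithful on derived mapping spaces; as written, the inverse you exhibit does not establish the equivalence.
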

\begin{remark}\label{rem:curved Lie algebroids FMP interpretation}
From a geometric perspective, one can informally think of weight-graded Lie algebroids over $A$ as maps of formal stacks
$$
\Spec(A)\times B\mathbb{G}_m\rt Y\rt \Spec(A)_{\dR}\times B\mathbb{G}_m
$$
where the first (equivalently second) map is a nil-isomorphism. The structure map to $B\mathbb{G}_m$ gives rise to the weight-grading. The weight-graded Lie algebroid $\mc{R}(T_A)$ then corresponds to the formal stack $\Spec(A)_{\dR}\times \widehat{\mathbb{A}}^1/\mathbb{G}_m$. In other words, curved $L_\infty$-algebroids over $A$ describe nil-isomorphisms of formal stacks
$$
\Spec(A)\times B\mathbb{G}_m\rt Y \rt \Spec(A)_{\dR}\times \widehat{\mathbb{A}}^1/\mathbb{G}_m.
$$
In particular, the structure morphism to $\widehat{\mathbb{A}}^1/\mathbb{G}_m$ endows $Y$ with a complete filtration, and $Y$ maps to $\Spec(A)_{\dR}$ equipped with the \emph{trivial} filtration.
\end{remark}
Let us start by constructing the functor from graded mixed-curved $L_\infty$-algebroids to graded $L_\infty$-algebroids over $\mc{R}(T_A)$.
\begin{construction}\label{cons:Rees}
Suppose that $\Loid$ is a graded mixed-curved $L_\infty$-algebroid over $A$ and consider the weight-graded (non-differential) graded $A$-module 
$$
\mc{R}(\Loid)=\Loid\oplus A\wgt{-1}[-1].
$$
The anchor map of $\Loid$ induces a map $\mc{R}(\Loid)\rt T_A\ltimes A\wgt{-1}[-1]$. Recall from Definition \ref{def:graded mixed-curved Loo} that $\Loid$ comes with brackets
$$
\ell_n^p\colon \Loid^{\otimes n}\rt \Loid
$$ 
of weight $p\geq 0$ (except for $\ell_0^p$ and $\ell_1^p$, which are only defined for weights $p\geq 1$). Using these, we define $n$-ary operations $\ell_n$ of weight $0$ on $\mc{R}(\Loid)$ by
$$
\ell_n\big(\theta, \dots, \theta, x_{p+1}, \dots, x_n\big)\coloneqq p!\cdot \ell^p_{n-p}\big(x_{p+1}, \dots, x_n\big)
$$
for $p$ copies of $\theta$ and $x_{p+1}, \dots, x_n\in \Loid$. In particular, all maps $\ell_n$ take values in $\Loid\subseteq \mc{R}(\Loid)$.
\end{construction}
\begin{proposition}\label{prop:rees construction}
The operations $\{\ell_n\}$ make the map $\mc{R}(\Loid)\rt \mc{R}(T_A)$ a map of weight-graded $L_\infty$-algebroids if and only if the operations $\{\ell_n^p\}$ make $\Loid$ a curved $L_\infty$-algebroid.
\end{proposition}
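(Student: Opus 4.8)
The plan is to prove the equivalence by directly matching structure equations: unwinding the weight-graded $L_\infty$-algebroid axioms for $\mc{R}(\Loid)\rt\mc{R}(T_A)$ on arguments of the form $(\theta,\dots,\theta,x_{p+1},\dots,x_n)$ with $x_i\in\Loid$ should reproduce, term by term, the defining equations of a (graded mixed-)curved $L_\infty$-algebroid structure on $\Loid$, in the weighted form described after Definition \ref{def:graded mixed-curved Loo}. First I would fix notation: write $\mc{R}(\Loid)=\Loid\oplus A\theta$ with $\theta$ of weight $1$ and degree $1$, take the anchor $\mc{R}(\Loid)\rt\mc{R}(T_A)$ to be $\rho_\Loid$ on $\Loid$ and the identity on $A\theta$, and record that $\mc{R}(T_A)$ carries only the binary bracket of $T_A$ together with $[v,a\theta]=\mc{L}_v(a)\theta$, with $\theta$ in the kernel of its anchor. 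Since an $A$-basis of $\Sym_A(\mc{R}(\Loid)[1])$ is given by the products $\bar\theta^{\,p}\cdot\bar x_1\cdots\bar x_m$, the operations $\ell_n$ on $\mc{R}(\Loid)$ are determined by their values on such tuples, which is exactly the data recorded in Construction \ref{cons:Rees}.

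The $A$-linearity bookkeeping comes first. Using $\rho(\theta)=0$, the $A$-multilinearity of $\ell_n$ on $\mc{R}(\Loid)$ for $n\geq 3$, the Leibniz rule for $\ell_2$, and the derivation property of $\ell_1$ translate into: $\ell_m^{p}$ is $A$-multilinear whenever $m\geq 3$; $\ell^p_2$ is $A$-bilinear for $p\geq 1$ while $\ell^0_2$ satisfies the Leibniz rule over $\rho_\Loid$; and $\ell^p_1$ is $A$-linear for $p\geq 1$ while $\ell^0_1$ is a derivation over $d_A$. These are precisely the $A$-linearity requirements of a graded mixed-curved $L_\infty$-algebroid. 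Similarly, compatibility of the anchor with the brackets unwinds to: $\rho_\Loid$ is a map of dg Lie algebras for the weight-$0$ operations $\ell^0_1,\ell^0_2$, and the higher-weight operations $\ell^p_\bullet$ ($p\geq1$) take values in the kernel of $\rho_\Loid$ — which is automatic, since those operations raise weight and $\rho_\Loid$ vanishes in nonzero weights, the curvature components $\ell^p_0$ lying in the kernel by the very definition of a curved $L_\infty$-algebroid.

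The heart of the argument is the generalized Jacobi identity. Evaluating $\sum_{r+s=n+1}\sum_{\sigma}\pm(\ell_r\circ_1\ell_s)^\sigma$ on $(\theta^{\times p},x_1,\dots,x_m)$ with $m=n-p$, I would record for each shuffle how many copies $j$ of $\theta$ and which subset $S$ of the $x_i$ enter the inner operation $\ell_s$. There are $\binom pj$ shuffles differing only in which $\theta$-positions go inside, all contributing equal terms; the inner evaluation $\ell_s(\theta^{\times j},x_S)=j!\,\ell^{j}_{s-j}(x_S)$ lands in $\Loid$, whence $\ell_r\big(\theta^{\times(p-j)},\ell_s(\theta^{\times j},x_S),x_{S^c}\big)=(p-j)!\,\ell^{p-j}_{r-(p-j)}\big(j!\,\ell^{j}_{s-j}(x_S),x_{S^c}\big)$, so each resulting term carries the factor $\binom pj(p-j)!\,j!=p!$. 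Dividing the whole relation by $p!$ and reindexing by the outer/inner weights $p'=p-j$, $q'=j$ (so $p'+q'=p$) and arities $a=r-p'$, $b=s-q'$ (so $a+b=m+1$) yields exactly
\[
\sum_{\substack{a+b=m+1\\ p'+q'=p}}\sum_{\sigma}\pm\big(\ell^{p'}_a\circ_1\ell^{q'}_b\big)^\sigma(x_1,\dots,x_m)=0,
\]
which is the weight-$p$, arity-$m$ component of the structure equation \eqref{eq:cLoo structure equation} governing $\cLie_\infty^{\tot}$ (equivalently, of the assertion that $\sum_p\ell^p_\bullet$ makes $\Tot(\Loid)$ a mixed-curved $L_\infty$-algebra). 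Both implications then follow by reading this chain of identities in the two directions, combined with the linearity and anchor translations above.

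I expect the main obstacle to be purely combinatorial and sign-theoretic: getting the shuffle count and the interaction of the factorials from Construction \ref{cons:Rees} exactly right, and tracking Koszul signs, since $\theta$ is of odd degree and the operations have varying parities. As in Remark \ref{rem:signs}, almost all of these signs are forced by the degree shift in the space of generators, so I would suppress them with $\pm$ and only verify that the resulting sign pattern matches that of the weighted equation \eqref{eq:cLoo structure equation}; the combinatorial factor $p!$, on the other hand, deserves an explicit check. As a consistency test one may observe that $C^*(\mc{R}(\Loid))$ is, as a graded algebra, $C^*(\Loid)[[u]]$ with $u$ of weight $-1$ and degree $0$, and that under this identification the Chevalley--Eilenberg differential of $\mc{R}(\Loid)$ corresponds to the Rees-algebra differential $d+\sum_{p\geq1}u^p\delta_p$ attached to the graded mixed structure on $C^*(\Loid)$; this is the mechanism behind Theorem \ref{thm:intro curved vs uncurved} and gives an independent route to the statement.
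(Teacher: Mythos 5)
Your proposal is correct and follows essentially the same route as the paper: translating the $A$-linearity and anchor conditions directly, then evaluating the generalized Jacobi identity on $(\theta,\dots,\theta,x_{p+1},\dots,x_n)$, factoring unshuffles into $\theta$-unshuffles times $x$-unshuffles, and observing that the combinatorial factor $\binom{p}{j}\,j!\,(p-j)! = p!$ matches the normalization in Construction \ref{cons:Rees} so that the relation reduces to the weight-$p$ component of the curved $L_\infty$-equation. The sign treatment via the degree shift (Remark \ref{rem:signs}) and the use of the fact that permuting the degree-$0$ shifted copies of $\theta$ leaves terms invariant also coincide with the paper's argument.
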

\begin{proof}
Note that in the weight-graded case, all $\ell_n^p$ are $A$-multilinear except $\ell_2^0$, which satisfies the Leibniz rule. One easily sees that this is equivalent to the $\ell_n$ all being $A$-linear, except for $\ell_2$ which satisfies the Leibniz rule.

It then suffices to verify that the $\{\ell_n\}$ define a $k$-linear $L_\infty$-structure on $\mc{R}(\Loid)$ if and only if the $\ell_n^p$ define a $k$-linear graded mixed-curved $L_\infty$-algebra structure on $\Loid$. To see this, note that unshuffles $\sigma$ of an $n$-element set $(\theta, \dots, \theta, x_{p+1}, \dots, x_n)$ are in 1-1 correspondence with pairs consisting of an unshuffle $\tau$ of the $p$-element set $(\theta, \dots, \theta)$  and an unshuffle $\sigma'$ of the set $(x_{p+1}, \dots, x_n)$. Denoting $m=n-p$, the $L_\infty$-condition for the $\ell_n$ then translates into
\begin{align*}
\big[d, \ell_n\big](\theta, \dots, \theta, x_{p+1}, \dots, x_n) &\stackrel{!}{=} \sum_{\substack{i+j=n+1}} \sum_{\sigma\in \mm{Sh}^{-1}_{i-1, j}} \pm \big(\ell_i\circ_1 \ell_j\big)^\sigma(\theta, \dots, \theta, x_{p+1},\dots, x_n)\\
&= \sum_{\substack{i'+j'=m+1\\ q+r=p}} \sum_{\substack{\sigma'\in \mm{Sh}^{-1}_{i'-1, j'}\\ \tau\in \mm{Sh}^{-1}_{q, r}}} \pm q!r!\big(\ell_{i'}^q\circ_1 \ell_{j'}^r\big)^{\sigma'}(x_{p+1}, \dots, x_n)\\
&= \sum_{q+r=p}\sum_{i'+j'=m+1} \sum_{\sigma'\in \mm{Sh}^{-1}_{i'-1, j'}} \pm p!\cdot \big(\ell_{i'}^q\circ_1 \ell_{j'}^r\big)^{\sigma'}(x_{p+1}, \dots, x_n)
\end{align*}
where the $\pm$ signs are determined by Remark \ref{rem:signs}. 
Here we used that the unshuffles $\tau$ (which only permutes copies of $\theta$) leave the values invariant. Since $\big[d, \ell_n\big](\theta, \dots, x_n)=p!\cdot \big[d, \ell_m^p\big](x_{p+1}, \dots, x_n)$, the above equation can be identified with the graded mixed-curved $L_\infty$-equation for $\Loid$.
\end{proof}
\begin{example}\label{ex:hodge stack}
Let $T_A\wgt{1}$ be as in Example \ref{ex:filtrations on Lie algebroids}. Viewing $T_A\wgt{1}$ as a graded mixed-curved Lie algebroid (with trivial curvature), the Rees construction of $T_A\wgt{1}$ is given by $T_A\wgt{1}\ltimes A\wgt{-1}[1]$. The Chevalley--Eilenberg algebra of this graded Lie algebroid is given by the $\hbar$-de Rham complex (cf. Example \ref{ex:de Rham})
$$
{\dR}(A)[\![\hbar]\!], \qquad\qquad\qquad d\alpha=d_A(\alpha)+\hbar\cdot d_{\dR}\alpha, \qquad d\hbar=0,
$$
 where $\hbar$ has degree $0$ (weight $-1$) and $1$-forms have weight $1$. Using the equivalence between graded $k[\![\hbar]\!]$-algebras and filtered algebras (via the classical Rees construction), this corresponds to the de Rham algebra of $A$, endowed with the Hodge filtration.
From the perspective of Remark \ref{rem:curved Lie algebroids FMP interpretation}, the weight-graded $L_\infty$-algebroid $\mc{R}\big(T_A(1)\big)$ corresponds to the \emph{Hodge stack}
$$
\Spec(A)\times B\mathbb{G}_m\rt \Spec(A)^{\mm{Hodge}}\rt \Spec(A)_{\dR}\times \widehat{\mathbb{A}}^1/\mathbb{G}_m.
$$ 
This is the formal stack whose structure map to $\widehat{\mathbb{A}}^1/\mathbb{G}_m$ encodes the Hodge filtration on the de Rham complex.
\end{example}
\begin{remark}
Consider the zero curved $L_\infty$-algebroid $0$ and notice that $0$ is \emph{not} the initial curved $L_\infty$-algebroid: instead, the category of curved $L_\infty$-algebroids under $0$ is simply the category of (non-curved) $L_\infty$-algebroids. In fact, the space of $\infty$-morphisms $0\rightsquigarrow \Loid$ can be identified with the space of Maurer--Cartan elements in $F^1\ker(\Loid\rt T_A)$ (cf.\ Section \ref{sec:oo-morphism of clie}). The Rees construction $\mc{R}(0)$ is the trivial graded Lie algebroid $A\wgt{-1}[1]$, with zero bracket and anchor map. The corresponding Chevalley--Eilenberg algebra is simply $A[\![\hbar]\!]$ with $\hbar$ of degree $0$ and weight $-1$. Geometrically, this corresponds to $\Spec(A)\times \widehat{\mathbb{A}}^1/\mathbb{G}_m$. Informally, we can therefore think of the curvature of a curved $L_\infty$-algebroid $\Loid$ in terms of the corresponding formal stack $Y$ as the obstruction to finding a dotted lift
$$\begin{tikzcd}
\Spec(A)\times B\mathbb{G}_m\arrow[r]\arrow[d] & Y\arrow[d]\\
\Spec(A)\times \widehat{\mathbb{A}}^1/\mathbb{G}_m\arrow[r]\arrow[ru, dotted] & \Spec(A)_{\dR}\times \widehat{\mathbb{A}}^1/\mathbb{G}_m.
\end{tikzcd}$$
\end{remark}
\begin{proof}[Proof of Theorem \ref{thm:curved Lie algebroids conceptually}]
The equivalence $\oocat{cLie}(A/k)^\mm{gr-mix}\rt \oocat{cLie}(A/k)$ already appeared in Proposition \ref{prop:classical curved Lie algebroid=graded mixed}. The equivalence $\oocat{cLie}(A/k)^\mm{gr-mix}\rt \oocat{Lie}(A/k)^\gr{\scriptstyle /\mc{R}(T_A)}$ arises from a Quillen equivalence. Indeed, let $\tilde{0}$ denote the free weight-graded $L_\infty$-algebroid generated by the contractible complex $k(-1)[-1, -2]$ in weight $1$. There is a canonical map $\tilde{0}\rt \mc{R}(T_A)$ sending the degree $1$ generator of $\tilde{0}$ to the element $\theta\in A(-1)[-1]{/ \mc{R}(T_A)}$. 

Now consider the category $\cat{C}$ of weight-graded $L_\infty$-algebroids over $A$ equipped with maps $\tilde{0}\rt \Loid\rt\mc{R}(T_A)$. Equivalently, this is the category of weight-graded $L_\infty$-algebroids $\Loid$ over $\mc{R}(T_A)$, equipped with a compatible $A$-linear decomposition $\Loid\cong \Loid'\oplus A\wgt{-1}[-1]$ (which need not respect differentials). This carries a semi-model structure induced by the semi-model structure on all weight-graded $L_\infty$-algebroids from Theorem \ref{thm:model structure lie algebroids}. Since $\tilde{0}$ is cofibrant and weakly contractible, forgetting the splitting induces a Quillen equivalence to the category of weight-graded $L_\infty$-algebroids over $\mc{R}(T_A)$, whose associated $\infty$-category is exactly $\oocat{Lie}(A/k)^{\gr}{\scriptstyle /\mc{R}(T_A)}$.

On the other hand, using Proposition \ref{prop:rees construction} one sees that the functor $\Loid\mapsto \mc{R}(\Loid)$ defines an equivalence of categories from the category of graded mixed-curved $L_\infty$-algebroids to $\cat{C}$. This equivalence identifies the model structures on both sides, from which the result follows.
%
\end{proof}
\begin{remark}
Although we do not need this, one can verify that the Rees construction (Construction \ref{cons:Rees}) is compatible with $\infty$-morphisms as well.
\end{remark}

\section{The equivalence between curved Lie algebras and Lie algebroids}\label{sec:The equivalence between curved Lie algebras and Lie algebroids}
The goal of this section is to prove that for a nonpositively graded cdga $A$ satisfying a certain finiteness condition, there is an equivalence between the $\infty$-category of Lie algebroids over $A$ and the $\infty$-category of certain types of curved Lie algebras over its (completed) de Rham algebra $\dR(A)$, endowed with the Hodge filtration. 

To this end, let us recall that a nonpositively graded cdga is said to be \emph{locally of finite presentation} if it is a compact object in the category of nonpositively graded cdgas; equivalently, it is the retract of a cdga with finitely many generators and relations. Note that a nonpositively graded cdga $A$ is cofibrant and locally of finite presentation if and only if it is the retract of a quasi-free cdga with finitely many generators, each of which is in nonpositive degree. Indeed, one can always realize such a cofibrant $A$ as a retract of a nonpositively graded quasi-free cdga with infinitely many generators; if $A$ is a compact object, it must already be a retract of a subalgebra on finitely many generators.
\begin{theorem}\label{thm:comparison tangent case}
Let $A$ be a nonpositively graded cdga. Suppose that $A$ is smooth or cofibrant and locally of finite presentation, and let $\dR(A)$ denote the de Rham algebra of $A$, endowed with the Hodge filtration. Then there is a fully faithful functor of $\infty$-categories
$$\begin{tikzcd}
\curv\colon \oocat{Lie}(A/k)\arrow[r, hook] & \oocat{cLie}_{\dR(A)}
\end{tikzcd}$$
whose essential image consists of curved $L_\infty$-algebras $\mf{g}$ over $\dR(A)$ such that the canonical map $\gr_0(\mf{g})\otimes_{\gr_0(\dR(A))} \gr(\dR(A))\rt \gr(\mf{g})$ is an equivalence.
\end{theorem}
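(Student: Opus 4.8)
The strategy is to reduce Theorem \ref{thm:comparison tangent case} to the more general Theorem \ref{thm:intro general main} (i.e.\ Theorem \ref{thm:comparison}) by taking $\tang = T_A$ to be the terminal Lie algebroid and restricting along the fully faithful inclusion of uncurved $L_\infty$-algebroids into curved ones. Concretely, I would proceed in three stages. First, observe that under the hypotheses on $A$ (smooth, or cofibrant and locally of finite presentation), the $A$-module $T_A$ of derivations is finitely generated quasiprojective, so that the trivial filtration $\tang = T_A\wgt{0}$ — or rather a cofibrant model $\mf{t}_A\rt T_A$ of it — satisfies the finiteness hypothesis of Theorem \ref{thm:comparison}: each $F^i(\tang)$ is finitely generated quasiprojective as an $A$-module, and we need $F^0(\tang)$ to vanish only after passing to the appropriate shifted filtration (this is where one must be slightly careful, since $T_A\wgt 0$ has $F^0 = T_A \neq 0$; the correct input is the anchor-filtered or Hodge-filtered model, whose Chevalley--Eilenberg complex is $\dR(A)$ with the Hodge filtration, cf.\ Example \ref{ex:hodge stack}). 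Indeed, by Example \ref{ex:chevalley-eilenberg}, $C^*(T_A\wgt{1})$ \emph{is} $\dR(A)$ equipped with the Hodge filtration, so $C^*(\tang) = \dR(A)$ for the appropriate choice of $\tang$.

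**Main steps.** Second, apply Theorem \ref{thm:comparison} to this $\tang$ to obtain an equivalence $\curv\colon \oocat{cLie}(A/k)_{/\tang}\xrightarrow{\sim} \oocat{cLie}_{C^*(\tang)} = \oocat{cLie}_{\dR(A)}$. Then restrict to the full subcategory of $\oocat{cLie}(A/k)_{/\tang}$ spanned by \emph{uncurved} $L_\infty$-algebroids mapping to $\tang = T_A$. By Proposition \ref{prop:adding filtrations fully faithful}, the functor $\Loid\mapsto \Loid\wgt{0}$ (for the terminal algebroid, where the slice over $T_A\wgt 0$ is trivial) — or rather the relevant filtered lift — identifies $\oocat{Lie}(A/k)$ with a full subcategory of $\oocat{cLie}(A/k)$, hence of $\oocat{cLie}(A/k)_{/\tang}$. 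Composing with $\curv$ gives the fully faithful functor $\curv\colon \oocat{Lie}(A/k)\hookrightarrow \oocat{cLie}_{\dR(A)}$, and by construction it sends a Lie algebroid $L\xrightarrow{\rho} T_A$ to $\ker(\rho)\otimes_A \dR(A)$, matching the formula in Theorem \ref{thm:main intro}.

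**Identifying the essential image.** Third, one must characterize which curved $L_\infty$-algebras over $\dR(A)$ arise this way. Under the equivalence of Theorem \ref{thm:comparison}, the essential image of $\oocat{cLie}(A/k)_{/\tang}$ is all of $\oocat{cLie}_{\dR(A)}$, so the question becomes: which objects of $\oocat{cLie}(A/k)_{/\tang}$ correspond to genuinely uncurved $L_\infty$-algebroids? Via Theorem \ref{thm:curved Lie algebroids conceptually}, a curved $L_\infty$-algebroid over $A$ is the same as a graded $L_\infty$-algebroid over $\mc{R}(T_A)$, and it is uncurved precisely when the associated-graded map $\gr_0\rt \gr$ from the weight-zero piece is an isomorphism after base change — the curvature element lives in weight $1$, so absence of curvature (and of higher-weight corrections forced by the $\tang$-structure) is exactly the condition that $\gr_0(\mf{g})\otimes_{\gr_0(\dR(A))}\gr(\dR(A))\rt \gr(\mf{g})$ be an equivalence. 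I would make this precise by tracing the functor $\curv$ through the pullback description $\oocat{cLie}_{\dR(A)}\simeq \oocat{cLie}^\mm{mix}_{\dR(A)}\times_{\oocat{Mod}^\mm{cpl}_{\dR(A)}}\oocat{Mod}^\gr_{\gr(\dR(A))}$ of Theorem \ref{thm:homotopy theories of curved lie over B}: the uncurved algebroids are those whose underlying graded module is induced from its weight-$0$ part, which translates into the stated condition on associated gradeds.

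**The hard part.** The main obstacle will be Step 3 — the precise identification of the essential image — and, upstream of it, verifying carefully that $\dR(A)$ with the Hodge filtration really is $C^*(\tang)$ for a $\tang$ meeting \emph{all} the hypotheses of Theorem \ref{thm:comparison} (finitely generated quasiprojective $F^i$, vanishing $F^0$), which requires replacing $T_A$ by a suitable cofibrant filtered model and checking the finiteness of its generators is inherited from $A$ being locally of finite presentation (this is exactly where smoothness or finite presentation of $A$ enters, guaranteeing $\Omega^1_A$ — hence $T_A$ — is a finitely generated quasiprojective $A$-module). The bulk of the real work, namely constructing $\curv$ and proving it is an equivalence, is already contained in Theorem \ref{thm:comparison}; the present theorem is the specialization, so the proof here should be short, consisting mainly of this unwinding together with the image computation.
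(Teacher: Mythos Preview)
Your strategy matches the paper's: specialize Theorem~\ref{thm:comparison} to $\tang = T_A\wgt{1}$ (so that $C^*(\tang)=\dR(A)$ with the Hodge filtration), precompose with a fully faithful inclusion of uncurved $L_\infty$-algebroids from Proposition~\ref{prop:adding filtrations fully faithful}, and then identify the essential image. Two clarifications are worth making. First, the correct inclusion is the \emph{anchor filtration} functor $(-)^{\mm{anc}}\colon \oocat{Lie}(A/k)\hookrightarrow \oocat{cLie}(A/k)_{/T_A\wgt{1}}$, not $(-)\wgt{0}$; the latter lands in $\oocat{cLie}(A/k)$ without the required slice structure over $T_A\wgt{1}$, so Theorem~\ref{thm:comparison} would not apply. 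Second, for the essential image the paper does not go through Theorem~\ref{thm:curved Lie algebroids conceptually} or the pullback description you suggest. Instead it argues directly: the forward inclusion is immediate since $\curv(\Loid^{\mm{anc}}) \cong \dR(A)\otimes_A \keranc$ with $\keranc$ in filtration weight $0$; conversely, given $\mf{g}$ satisfying the graded-generation condition, one picks a cofibrant $A$-module $\keranc$ in weight $0$ with a map $\dR(A)\otimes_A\keranc \to \mf{g}$ that is a graded quasi-isomorphism, transfers the mixed-curved $L_\infty$-structure to $\dR(A)\otimes_A\keranc$ via the Homotopy Transfer Theorem~\ref{thm:htt-B}, and then invokes Proposition~\ref{prop:curved algebroid vs curved algebra} to recognize this as $\curv$ of a curved $L_\infty$-algebroid on $T_A\wgt{1}\oplus\keranc$---which, for weight reasons (nothing lives in weight $\geq 1$), is precisely an anchor-filtered, hence uncurved, $L_\infty$-algebroid. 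Your route via the Rees construction could likely be made to work, but the HTT argument is shorter and avoids unwinding the equivalence with graded uncurved algebroids.
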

In fact, we will deduce this from a more general result about curved $L_\infty$-algebroids over complete filtered algebras of the form $C^*(\tang)$, where $\tang$ is a complete $L_\infty$-algebroid over $A$; for the filtration on $C^*(\tang)$ to behave like the Hodge filtration on $\dR(A)$, we require $F^0(\tang)=0$. We will then prove the following result:
\begin{theorem}\label{thm:comparison}
Let $A$ be a nonpositively graded cdga and let $\tang$ be a complete $L_\infty$-algebroid over $A$ such that $F^0(\tang)=0$ and each $F^i(\tang)$ is finitely generated quasiprojective as an $A$-module. Then there is an equivalence of $\infty$-categories
$$\begin{tikzcd}
\curv\colon \oocat{cLie}(A/k){\scriptstyle /\tang}\arrow[r, "\sim"] & \oocat{cLie}_{C^*(\tang)}.
\end{tikzcd}$$
\end{theorem}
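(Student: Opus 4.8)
The plan is to exhibit $\curv$ as a relative Chevalley--Eilenberg construction, and then to prove that it is an equivalence by reducing, via the graded mixed and Rees descriptions already available, to a graded relative instance of the Koszul duality between Lie algebroids and formal moduli problems.

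First I would construct the functor. Because the objects of $\oocat{cLie}(A/k)_{/\tang}$ have quasiprojective underlying $A$-module, I may pass to a model of a curved $L_\infty$-algebroid $\Loid\to\tang$ whose underlying graded $A$-module splits off $\tang$, so that $\Loid\cong \mf{k}\oplus\tang$ with $\mf{k}=\ker(\Loid\to\tang)$. I then set $\curv(\Loid)\coloneqq \mf{k}\otimes_A C^*(\tang)$ and transport the algebroid structure of $\Loid$ to a curved $L_\infty$-algebra structure over $B\coloneqq C^*(\tang)$ (a graded mixed cdga, cf.\ Examples \ref{ex:chevalley-eilenberg} and \ref{ex:de Rham}, since $F^0(\tang)=0$): the brackets of arity $\ge 2$ restricted to $\mf{k}$ give the $B$-multilinear $\ell_i$; the action of $\tang$ on $\mf{k}$, combined with $d_A$ and the mixed differential $\delta$ of $B$, assembles into the $\delta$-derivation $\ell_1$; and the remaining component of the structure equation, which measures the failure of the chosen splitting to be a splitting of algebroids, supplies the curvature $\ell_0\in F^1$. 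The curved $L_\infty$-equations over $B$ then match the algebroid equations for $\Loid$ over $T_A$, and one checks that $\curv(\Loid)$ has quasiprojective underlying $B$-module with cofibrant associated graded, that $\curv$ preserves weak equivalences, extends to $\infty$-morphisms, and is compatible with the simplicial tensorings (Example \ref{ex:tensoring of curved Loo algebroids} and Definition \ref{def:oocategory of curved lie over B}), hence descends to $\infty$-categories. For $\tang$ the Hodge-filtered tangent algebroid this is the formula $\curv(L)=\ker(\rho)\otimes_A\dR(A)$ of Theorem \ref{thm:main intro}.

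To prove $\curv$ is an equivalence I would first pass to the graded mixed world on both sides: by Proposition \ref{prop:classical curved Lie algebroid=graded mixed} (in its over-$\tang$ version) the source is equivalent to graded mixed-curved $L_\infty$-algebroids over $A$ mapping to $\tang$, and by Theorem \ref{thm:homotopy theories of curved lie over B} the target is equivalent to graded mixed-curved $L_\infty$-algebras over $B_{\gr}=\Gr(C^*(\tang))$. Applying the Rees construction of Theorem \ref{thm:curved Lie algebroids conceptually} on the left, and the same Rees argument over $B$ on the right, both sides become categories of \emph{uncurved} graded objects: on the left, graded $L_\infty$-algebroids over $\mc{R}(\tang)$, and on the right, graded $L_\infty$-algebras over the Rees cdga $\mc{R}(B)\cong C^*\big(\mc{R}(\tang)\big)$ (the $\hbar$-de Rham type algebra of Example \ref{ex:hodge stack}). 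Under these identifications $\curv$ becomes the relative Chevalley--Eilenberg functor associated to $\mc{R}(\Loid)\to\mc{R}(\tang)$, and the claim is precisely that Chevalley--Eilenberg cochains induce an equivalence between graded Lie algebroids over $\mc{R}(\tang)$ and a suitable category of formal moduli problems over $C^*\big(\mc{R}(\tang)\big)$. I would prove this graded, relative version of Koszul duality by the bar--cobar argument of \cite{nuiten2019koszul, nuiten2019homotopicalalgebra}: Chevalley--Eilenberg cochains are a (complete, relative) bar construction, its right adjoint a cobar construction, and one shows the unit and counit are weak equivalences by filtering and passing to associated graded, using the operadic machinery of Sections \ref{sec:complete operads}--\ref{sec: filtered algebras}. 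Full faithfulness then follows by computing derived mapping spaces through the cofibrant resolutions $Q(-)$ of Proposition \ref{prop:cobar resolution for loo algebroids} and the bar--cobar resolutions of Section \ref{sec:cofibrant resolutions over B}, where the finiteness of $\tang$ is what allows one to replace $C^*(\tang)$-linear mapping complexes out of $\mf{k}\otimes_A C^*(\tang)$ by $A$-linear mapping complexes out of $\mf{k}$, via $\Hom_B\big(M\otimes_A B,-\big)\cong \Hom_A(M,-)$ and the dualizability of $C^*$ of a finitely generated quasiprojective module.

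The main obstacle I expect is essential surjectivity, that is, reconstructing an algebroid $\Loid\to\tang$ from a curved $L_\infty$-algebra $\mf{g}$ over $B=C^*(\tang)$ — equivalently, inverting $C^*$. The plan is to form the relative cobar/Chevalley--Eilenberg dual of $\mf{g}$ and dualize back, using crucially that each $F^i(\tang)$ is finitely generated quasiprojective over $A$; the delicate points are keeping track of the pro-nilpotent completions throughout (all objects are merely complete filtered, so finiteness of $\tang$ rather than of $\Loid$ must carry the reflexivity), checking that the dualization genuinely produces an honest $L_\infty$-algebroid over $A$ with anchor to $\tang$ rather than a purely formal gadget, and verifying via the Rees dictionary of Construction \ref{cons:Rees} that the curvature $\ell_0$ of $\mf{g}$ corresponds exactly to the extension data of $\Loid$ over $\tang$. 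Once this reconstruction is in place the equivalence onto all of $\oocat{cLie}_{C^*(\tang)}$ follows, and specializing to $\tang=T_A$ with the Hodge filtration and precomposing with the fully faithful embedding $(-)^{\mm{anc}}$ of Proposition \ref{prop:adding filtrations fully faithful} yields Theorem \ref{thm:comparison tangent case} with the stated essential image; I expect essentially all the genuine work of the proof to be concentrated in this reconstruction step, the remainder being formal consequences of the structural results already established.
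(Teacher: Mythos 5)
Your construction of the functor is the paper's: split $\Loid\cong\tang\oplus\keranc$, set $\curv(\Loid)=C^*(\tang)\otimes_A\keranc$, and transport the structure. But your proof that $\curv$ is an equivalence takes a detour that contains a genuine gap. You propose to reduce, via the Rees construction, to ``the Koszul duality between graded Lie algebroids over $\mc{R}(\tang)$ and formal moduli problems over $C^*(\mc{R}(\tang))$.'' That is not the statement to be proved: the target of $\curv$ is the $\infty$-category of curved $L_\infty$-\emph{algebras} over $B=C^*(\tang)$, not of formal moduli problems over $C^*(\mc{R}(\tang))$. To make your route work you would need a second, independent Koszul duality identifying (graded) $L_\infty$-algebras over the complete filtered cdga $B$ with such formal moduli problems, and then an identification of the composite with the explicit formula $\keranc\otimes_A B$. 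Neither is available off the shelf ($B$ is not a connective cdga over a field, so the Lurie--Pridham/\cite{nuiten2019koszul} results do not apply directly), and establishing them would be at least as hard as the theorem itself. Relatedly, you expect ``essentially all the genuine work'' to lie in a reconstruction step that inverts $C^*$ by a cobar-and-dualize argument with pro-nilpotent completions to track; this step, as described, is not an argument but a placeholder for one.

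What actually closes the proof — and what your appeal to dualizability is groping toward — is that no reconstruction is needed: the hypotheses $F^0\tang=0$ and $F^i\tang$ finitely generated quasiprojective make $\Sym_A(\tang[1])$ a dualizable complete $A$-module with dual $B$, so $B\otimes_A\keranc\cong\Hom_A(\Sym_A(\tang[1]),\keranc)$, and $B$-multilinear (resp.\ derivation) structure maps on $\Sym_B^p(B\otimes_A\keranc[1])$ are \emph{literally the same data} as the Taylor components $\Sym_A^i(\tang[1])\otimes\Sym_A^p(\keranc[1])\to\keranc$ of a curved $L_\infty$-algebroid structure on $\tang\oplus\keranc$ over $\tang$; the two structure equations match term by term. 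This is Proposition \ref{prop:curved algebroid vs curved algebra}, and the analogous bijection for $\infty$-morphisms is Proposition \ref{prop:curv is compatible with oo-maps}. Consequently $\curv$ is \emph{strictly} fully faithful as a functor of simplicially enriched categories, and essentially surjective because every object of $\oocat{cLie}_B$ has underlying module of the form $B\otimes_A\keranc$. No bar--cobar duality, no passage through formal moduli problems, and no Rees construction enters the proof of Theorem \ref{thm:comparison} (the Rees picture is used only for the interpretation in Theorem \ref{thm:curved Lie algebroids conceptually} and for identifying the essential image in Theorem \ref{thm:comparison tangent case}). I would encourage you to replace the Koszul-duality scaffolding by this direct combinatorial identification; as it stands, the central equivalence in your argument is asserted rather than proved.
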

\begin{example}\label{ex:comparison for tangent}
In the situation of Theorem \ref{thm:comparison tangent case}, one can take $\tang=T_A\wgt{1}$ the tangent complex of $A$, put in filtration degree $-1$ (Example \ref{ex:filtrations on Lie algebroids}), whose Chevalley--Eilenberg algebra is precisely the de Rham algebra $\dR(A)$, equipped with the Hodge filtration (Example \ref{ex:chevalley-eilenberg}). Theorem \ref{thm:comparison} provides an equivalence between curved $L_\infty$-algebras over $\dR(A)$ (with the Hodge filtration) and curved $L_\infty$-algebroids over $A$ with a map to $T_A\wgt{1}$. By Theorem \ref{thm:curved Lie algebroids conceptually} one can also identify this second $\infty$-category with the $\infty$-category of weight-graded $L_\infty$-algebroids over the Rees construction of $T_A\wgt{1}$, which corresponds to the Hodge stack of $\Spec(A)$ (Example \ref{ex:hodge stack}).
\end{example}
The majority of this section is devoted to the proof of Theorem \ref{thm:comparison}. Throughout, we will assume that $\tang$ is a complete $L_\infty$-algebroid over $A$ as in the theorem, and we will write
$$
B\coloneqq C^*(\tang)\cong \Hom_A\big(\Sym_A(\tang[1]), A\big)\cong \Sym_A\big(\tang^\vee[-1]\big)
$$
for its (filtered) Chevalley--Eilenberg algebra. The last isomorphism of complete graded algebras holds because $F^0\tang=0$ and each $F^i\tang$ is a dualizable $A$-module, so that each $F^i\Sym_A(\tang[1])$ is a dualizable $A$-module as well.

\subsection{Curved $L_\infty$-algebras from curved $L_\infty$-algebroids}\label{sec:formulas for curv}
In this section we will show how to associate a curved $L_\infty$-algebra over $B$ to a curved $L_\infty$-algebroid over $\tang$, equipped with a linear section $\sigma\colon \tang\rt \Loid$. Furthermore, we will show how $\infty$-morphisms between such curved $L_\infty$-algebroids over $\tang$ give rise to $\infty$-morphisms between the associated curved $L_\infty$-algebras over $B$. In Section \ref{sec:proof of main theorem}, we will then show how these constructions give rise to a functor between simplicially enriched categories which presents the desired functor `$\curv$' in Theorem \ref{thm:comparison}.

\subsubsection{Definition on objects}
Let us start by describing how to associate a curved $L_\infty$-algebra over $B$  to certain curved $L_\infty$-algebroids with a map to $\tang$. More precisely, our goal will be to prove the following:
\begin{proposition}\label{prop:curved algebroid vs curved algebra}
Let $\keranc$ be a complete graded $A$-module and let $\pi\colon \tang\oplus \keranc\rt \tang
$ denote the canonical projection. Then there is a natural bijection (given by Construction \ref{cons:curved algebroid vs curved algebra}) between:
\begin{enumerate}
\item curved $L_\infty$-algebroid structures on $\tang\oplus \keranc$ such that $\pi$ is a (strict) map of curved $L_\infty$-algebroids.
\item curved $L_\infty$-algebra structures on the complete graded $B$-module $B\otimes_A\keranc$.
\end{enumerate}
\end{proposition}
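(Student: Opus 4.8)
The plan is to unwind both structures in terms of explicit operations and exhibit a dictionary between them, using the identification $B = C^*(\tang) \cong \Sym_A(\tang^\vee[-1])$ together with the fact that each $F^i\tang$ is finitely generated quasiprojective. First I would recall that a curved $L_\infty$-algebroid structure on $\tang \oplus \keranc$ for which $\pi$ is strict amounts to a sequence of $k$-linear operations $\ell_n$ on $(\tang \oplus \keranc)[1]$, which are $A$-multilinear for $n \neq 2$, satisfy the Leibniz rule for $n=2$, have $\ell_0 \in F^1$, and commute with $\pi$ in the sense that composing with $\pi$ recovers the (fixed) curved $L_\infty$-structure on $\tang$. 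The condition that $\pi$ be strict forces $\ell_0$ to land in $\keranc[1]$ and forces the ``$\tang$-components'' of all the $\ell_n$ to be exactly those of $\tang$; the genuinely new data is the collection of components $\tilde{\ell}_n \colon \Sym^{\leq n}(\tang[1]) \otimes \Sym(\keranc[1]) \rt \keranc[1]$, i.e.\ the part of each bracket landing in $\keranc$, together with how $\ell_2$ interacts with the $A$-module structure via Leibniz.

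The core of the construction (this is Construction \ref{cons:curved algebroid vs curved algebra}, to be written out) is the standard ``Koszul dual'' reorganization: a multilinear operation that eats some number of inputs from $\tang$ and some from $\keranc$ and lands in $\keranc$ is the same datum as an operation on $\keranc$ alone whose structure constants are functions of $\tang$, i.e.\ elements of $\Sym_A(\tang^\vee[-1]) = B$. Concretely, I would use the adjunction $\Hom_k\big(\Sym_A(\tang[1])^{\otimes_A m}, \Hom_A(\keranc[1]^{\otimes n}, \keranc[1])\big) \cong \Hom_B\big((B\otimes_A \keranc[1])^{\otimes_B n}, B \otimes_A \keranc[1]\big)$, which holds precisely because $\tang$ is levelwise finitely generated quasiprojective, so that dualizing is available and symmetric monoidal. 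Bundling all the $\tang$-arity pieces of $\ell_n$ together produces a single $B$-multilinear operation $\ell_n^B$ on $\keranc^B \coloneqq B \otimes_A \keranc$; the curvature $\ell_0$, living in $F^1(\keranc[1])$, goes to $\ell_0^B \in F^1(\keranc^B[1])$ (using that $F^0B = A$ in weight $0$, and that the Chevalley--Eilenberg differential contributes to $\ell_1^B$). One then checks that the Leibniz rule for $\ell_2$ on the algebroid side translates into $B$-linearity of $\ell_2^B$ after the $B$-module structure is enlarged, and — this is the key computation — that the $\tang$-part of the algebroid's structure equations, together with the requirement $\rho_{\tang} = \ell_{\tang}$, is exactly what makes the $d_{\mm{CE}}$-twisted differential on $B$ appear as $\ell_1^B$, so that the curved $L_\infty$-structure equations for $(\ell_n^B)$ over $B$ are equivalent to the curved $L_\infty$-algebroid equations for $(\ell_n)$ on $\tang \oplus \keranc$.

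The main obstacle, and the step I would spend the most care on, is matching the structure equations sign-for-sign and confirming that the Chevalley--Eilenberg differential $d_B$ on $B = C^*(\tang)$ is precisely the contribution to $\ell_1^B$ coming from the $\tang$-valued components of $\ell_1, \ell_2$ on the algebroid. This is where the anchor map $\rho\colon \Loid \rt T_A$ enters: the Lie derivative terms $\mc{L}_{\rho(x_i)}$ in the formula for $d_{\mm{CE}}$ (Definition \ref{def:CE algebra}) must be produced by the action of the $\tang$-inputs of $\ell_2$ on the $A$-coefficients of the operations on $\keranc$. Invoking Remark \ref{rem:signs} to absorb most signs into the degree shift $\ucocom\{1\}$, I would verify the correspondence in low arity ($n = 0, 1, 2$) explicitly and then argue that the general arity follows from the combinatorial bijection between unshuffles of a mixed set $(\theta\text{'s from }\tang, x\text{'s from }\keranc)$ and pairs of unshuffles, exactly as in the proof of Proposition \ref{prop:rees construction}. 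Finally, naturality in $\keranc$ is immediate since every step of the construction is functorial, and bijectivity follows because the dualization adjunction is an isomorphism and each step is invertible; this yields Proposition \ref{prop:curved algebroid vs curved algebra}.
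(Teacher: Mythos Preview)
Your proposal is correct and follows essentially the same route as the paper: decompose the algebroid operations binomially by the number of $\tang$- versus $\keranc$-inputs, use the dualizability of $\Sym_A(\tang[1])$ to translate into $B$-multilinear (or $B$-derivation) operations on $B\otimes_A\keranc$, and then match the structure equations by the same unshuffle combinatorics as in Proposition~\ref{prop:rees construction}. One point of bookkeeping you should make precise: the arity index shifts under the dictionary, so that the algebroid bracket $\mc{L}_s$ contributes, via its piece with $i$ inputs from $\tang$ and $p=s-i$ inputs from $\keranc$, to the $B$-side operation $\ell_p^B$ of arity $p$ (not $s$); in particular the Leibniz rule for the algebroid $\ell_2$ with one $\tang$-input feeds into the \emph{derivation} property of $\ell_1^B$ over $d_{\mm{CE}}$, rather than into $B$-linearity of $\ell_2^B$ --- this is exactly the content of Lemma~\ref{lem:further decompositions} and Remark~\ref{rem:ell1extensionasderivation} in the paper.
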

To prove this, let us start with some preliminary observations.
\begin{remark}\label{rem:curv(g)asforms}
Because $F^0\tang=0$ and each $F^i\tang$ is a dualizable complete $A$-module by assumption, we have that $\Sym_A(\tang[1])$ is a dualizable complete $A$-module as well, whose dual is $\Sym_A(\tang^\vee[-1])$. Consequently, for every complete graded $A$-module $\keranc$, there are isomorphisms of complete graded vector spaces
$$
B\otimes_A \keranc \cong \mm{Sym}_A(\tang^\vee[-1])\otimes_A \keranc \cong \Hom_A\big(\mm{Sym}_A(\tang[1]), \keranc\big)=\prod_p \Hom_A\big(\mm{Sym}_A^p(\tang[1]), \keranc\big).
$$
In other words, we can identify $B\otimes_A \keranc$ with $\keranc$-valued forms on $\tang$.
\end{remark}
To compare the structure maps of $\tang\oplus \keranc$ and $B\otimes_A \keranc$, let us consider the sets of maps
\begin{align}\label{eq:derivations}
\mc{H}(p)&\coloneqq\Hom_B\Big(\mm{Sym}_B^p\big(B\otimes_A\keranc[1]\big), B\otimes_A \keranc[2]\Big)\nonumber\\
\mc{H}(1)&\coloneqq\mm{Der}_B\Big(B\otimes_A\keranc[1], B\otimes_A\keranc[2]\Big)
\end{align}
where in the second line we take (graded) derivations with respect to the Chevalley--Eilenberg differential on $B=C^*(\tang)$. The structure maps of a curved $L_\infty$-structure on $B\otimes_A \keranc$ are exactly contained in these sets.
\begin{lemma}\label{lem:further decompositions}
For $p\neq 1$, there are bijections
$$
\mc{H}(p)\cong \prod_i \Hom_A\Big(\Sym^i_A(\tang[1])\otimes_A \mm{Sym}_A^p\big(\keranc[1]\big), \keranc[2]\Big).
$$
Furthermore, there is an inclusion
$$\begin{tikzcd}
\mc{H}(1)\arrow[r, hook] & \prod_i \Hom_k\Big(\Sym^i_A(\tang[1])\otimes \keranc[1], \keranc[2]\Big).
\end{tikzcd}$$
whose image consists precisely of tuples of maps $f^{(i)}_1\colon \mm{Sym}_A^i(\tang[1])\otimes \keranc[1]\rt \keranc[2]$ that are $A$-multilinear for $i\neq 0, 1$, while for $x\in \keranc[1]$, $t\in \tang[1]$:
\begin{align}\label{eq:extendability to derivation}
f^{(0)}_1(a\cdot x)&= d_A(a)\cdot x + a\cdot f^{(1)}_1(x) \nonumber\\
f^{(1)}_1(a_1\cdot t, a_2\cdot x)&=a_1\mc{L}_{\rho(t)}(a_2)\cdot x + a_1a_2\cdot f^{(1)}_1(t, x).
\end{align}
\end{lemma}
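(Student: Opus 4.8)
\textbf{Proof proposal for Lemma \ref{lem:further decompositions}.}

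The plan is to unwind both sides using the identification $B\otimes_A\keranc\cong\Hom_A(\Sym_A(\tang[1]),\keranc)$ from Remark \ref{rem:curv(g)asforms}, together with the fact that $\Sym_A(\tang[1])$ is a dualizable complete $A$-module (which rests on $F^0\tang=0$ and each $F^i\tang$ being dualizable). First I would treat the $B$-multilinear case $p\neq 1$: since $B=\Sym_A(\tang^\vee[-1])$ and $\Sym_B^p(B\otimes_A\keranc[1])\cong B\otimes_A\Sym_A^p(\keranc[1])$ as complete $B$-modules, we get
$$
\mc H(p)\cong\Hom_B\big(B\otimes_A\Sym_A^p(\keranc[1]),\,B\otimes_A\keranc[2]\big)\cong\Hom_A\big(\Sym_A^p(\keranc[1]),\,B\otimes_A\keranc[2]\big),
$$
and substituting $B\otimes_A\keranc[2]\cong\Hom_A(\Sym_A(\tang[1]),\keranc[2])=\prod_i\Hom_A(\Sym_A^i(\tang[1]),\keranc[2])$ and using tensor-hom adjunction over $A$ (valid for dualizable arguments in the complete setting, cf.\ Remark \ref{rem:graded is monoidal} and Proposition \ref{prop:SMC}) yields the stated product of $\Hom_A(\Sym_A^i(\tang[1])\otimes_A\Sym_A^p(\keranc[1]),\keranc[2])$. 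I would be careful to track that the completed tensor products and products interchange correctly, which is exactly Remark \ref{rem:graded is monoidal}: $\Gr$ and $\Tot$ commute with internal homs, so it suffices to check the isomorphism weight-by-weight where it is classical.

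For $\mc H(1)$, the point is that a $B$-linear derivation of $B\otimes_A\keranc[1]$ over the Chevalley--Eilenberg differential $d_{\mm{CE}}$ on $B$ is \emph{not} the same as a $B$-linear map; it is determined by its restriction to the $A$-module of generators together with a Leibniz-type compatibility with $d_{\mm{CE}}$. So I would first note the inclusion $\mc H(1)\hookrightarrow\Hom_k(\keranc[1],B\otimes_A\keranc[2])$ sending a derivation to its value on generators, and then identify the target with $\prod_i\Hom_k(\Sym_A^i(\tang[1])\otimes\keranc[1],\keranc[2])$ exactly as above (but now only $k$-linearly in the $\keranc[1]$ slot). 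The image-characterization is then a direct computation: writing the derivation $D$ in components $f_1^{(i)}$ according to form-degree $i$ on $\tang$, the condition that $D$ be $B$-linear forces $A$-multilinearity of $f_1^{(i)}$ for $i\neq 0,1$ (these components land in a form-degree where the de Rham differential of $A$ contributes nothing), while the Leibniz rule $D(b\cdot\omega)=d_{\mm{CE}}(b)\cdot\omega\pm b\cdot D(\omega)$ applied to $b=a\in A$ and $b$ a generator $t^\vee\in\tang^\vee[-1]\subseteq B$ produces precisely the two displayed identities \eqref{eq:extendability to derivation}, since $d_{\mm{CE}}$ restricted to $A\subseteq B$ is $d_A$ and $d_{\mm{CE}}(a)$ in form-degree $1$ is the differential $da\in\Omega^1\cong\tang^\vee\otimes_A A$, paired against $\tang[1]$ via the anchor $\rho$.

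The main obstacle I anticipate is bookkeeping rather than conceptual: keeping the filtration-completeness and the Koszul/degree-shift signs straight while commuting completed tensor products, completed symmetric powers, internal homs and infinite products. To handle the sign issues I would invoke Remark \ref{rem:signs} and work with the shifted conventions, and to handle the filtration issues I would reduce every interchange-of-limits-and-colimits claim to a statement on the associated graded via Remark \ref{rem:graded is monoidal}, where all the modules in sight are (graded) dualizable and the identifications are the standard ones for graded modules over a graded ring. One subtlety worth isolating explicitly is that the derivation condition only constrains $f_1^{(0)},f_1^{(1)}$ through \eqref{eq:extendability to derivation} and leaves the higher $f_1^{(i)}$, $i\geq 2$, merely $A$-multilinear, because $d_{\mm{CE}}$ raises form-degree by exactly one and acts on the free generators of $B$; verifying there is no further constraint amounts to checking that any tuple satisfying \eqref{eq:extendability to derivation} extends uniquely to a $B$-derivation, which is the usual universal property of derivations on a (quasi-)free algebra and goes through verbatim in the complete filtered setting.
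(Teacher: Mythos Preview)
Your approach coincides with the paper's: for $p\neq 1$ use the free $B$-module adjunction $\Hom_B(B\otimes_A-,-)\cong\Hom_A(-,-)$ together with Remark \ref{rem:curv(g)asforms}, and for $p=1$ use that a derivation over $d_{\mm{CE}}$ is determined by its restriction to the generators $\keranc[1]$, subject only to compatibility with the $A$-action.

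One small correction to your write-up: both identities in \eqref{eq:extendability to derivation} come from the Leibniz rule with $b=a\in A$, not from $b=a$ and $b=t^\vee$ separately. The point (which you in fact state just afterwards) is that $d_{\mm{CE}}(a)=d_A(a)+\mc L_{\rho(-)}(a)$ has components in form-degree $0$ and $1$; the single constraint $f_1(a\cdot x)=d_{\mm{CE}}(a)\cdot x+a\cdot f_1(x)$ then decomposes by form-degree into the two displayed equations, and for $i\geq 2$ into $A$-linearity in the $\keranc$-slot (since $d_{\mm{CE}}(a)$ has no components in form-degree $\geq 2$). The Leibniz rule for $b=t^\vee$ imposes no constraint on the $f_1^{(i)}$; it only serves to \emph{extend} the derivation from $\keranc[1]$ to all of $B\otimes_A\keranc[1]$. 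Also a slip of the pen: $D$ is not ``$B$-linear'' but a derivation over $d_{\mm{CE}}$.
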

In the above lemma and throughout, we will use the following convention: maps with $p$ inputs from $\keranc[1]$ and $i$ inputs from $\tang[1]$ will be denoted by $f_p^{(i)}$.
\begin{proof}
The domain of a map in $\mc{H}(p)$ is the free $B$-module on $\Sym^p_A(\keranc[1])$. In particular, for $p\neq 1$ we can identify maps $f\in \mc{H}(p)$ with $A$-linear maps $f\colon \Sym^p_A(\keranc[1])\rt B\otimes_A \keranc[2]$. Using Remark \ref{rem:curv(g)asforms}, such $A$-linear maps $f$ correspond by adjunction to tuples of $A$-linear maps
$$\begin{tikzcd}
f_p^{(i)}\colon\mm{Sym}_A^i(\tang[1])\otimes_A\Sym_A^p(\keranc[1])\arrow[r] & \keranc[2].
\end{tikzcd}$$
For $p=1$, note that a derivation $f_1\colon B\otimes_A \keranc[1]\rt B\otimes_A \keranc[1]$ is uniquely determined by a map $f_1\colon \keranc[1]\rt B\otimes_A \keranc[2]$ with the property that for all $x\in \keranc[1]$, $a\in A$
$$
f_1(a\cdot x) = d_B(a)\cdot x+ a\cdot f_1(x).
$$
Again, it follows from Remark \ref{rem:curv(g)asforms} that $f_1$ is determined by adjunction by a family of maps $f_1^{(i)}\colon \Sym^i_A(\tang[1])\otimes_k \keranc[1]\rt \keranc[2]$. Using that $d_B(a)=d_A(a)+\mc{L}_{\rho(-)}(a)$, the above derivation rule then translates into the equations \eqref{eq:extendability to derivation}.
\end{proof}
\begin{remark}\label{rem:ell1extensionasderivation}
Let $\{f_1^{(i)}\}_{i \geq 0}$ be a family of maps $f_1^{(i)}\colon \Sym_A^i(\tang[1])\otimes\keranc[1]\rt \keranc[2]$  as in Lemma \ref{lem:further decompositions}. Then the induced derivation $f_1$ on $B\otimes_A \keranc[1]$ can be computed explicitly as follows: for $\alpha\colon \mm{Sym}_A(\tang[1])\rt \keranc$ an $\keranc$-valued form and $t_i\in \tang[1]$, we have
\begin{align*}
f_1(\alpha)(t_1,\dots, t_n) =\sum_{i\geq 1} \sum_{\sigma\in \mm{Sh}^{-1}_{i, n-i}} \Big( & f^{(i)}_{1}\big(t_{\sigma(1)}, \dots, t_{\sigma(i)}, \alpha(t_{\sigma(i+1)}, \dots, t_{\sigma(n)})\big)\\
&- \alpha\big(\ell_{\tang}^{(i)}(t_{\sigma(1)}, \dots, t_{\sigma(i)}), t_{\sigma(i+1)}, \dots, t_{\sigma(n)}\big)\Big).
\end{align*}
Here $\ell_{\tang}^{(i)}\colon \Sym^{n}(\tang[1])\rt \tang[2]$ denotes the $L_\infty$-algebroid structure on $\tang$.
\end{remark}
\begin{observation}\label{obs:decomposing maps}
The collection of $\mc{H}(p)$ has a structure similar to a convolution Lie algebra, in the sense that for any $f_p\in \mc{H}(p)$ and $f_q\in \mc{H}(q)$, there is a $B$-linear map
$$
[f_p, f_q]\colon \Sym^{p+q-1}_B\big(B\otimes_A \keranc[1]\big)\rt B\otimes_A \keranc[3]
$$
given by the graded commutator
$$
[f_p, f_q]= \sum_{\sigma\in\mm{Sh}^{-1}_{p-1, q}} \big(f_p\circ_1 f_q\big)^\sigma + \sum_{\tau\in \mm{Sh}^{-1}_{q-1, p}} \big(f_q\circ_1 f_p\big)^{\tau}.
$$
Let us identify this commutator in terms of the decomposition from Lemma \ref{lem:further decompositions}, i.e.\ identifying $f_p=\sum_i f_p^{(i)}$ as a sum of maps with $i$ inputs from $\tang$ and $p$ inputs from $\keranc$. We have for $p, q\neq 1$, that
$$
[f_p, f_q] = \sum_{i, j}\sum_{\tau\in \mm{Sh}^{-1}_{i, j}}\bigg(\sum_{\sigma\in \mm{Sh}^{-1}_{p-1, q}} \big(f_p^{(i)}\circ_{\keranc} f_q^{(j)}\big)^{\sigma\times \tau}+ \sum_{\sigma\in \mm{Sh}^{-1}_{q-1, p}} \big(f_q^{(j)}\circ_{\keranc} f_p^{(i)}\big)^{\sigma\times \tau}\bigg).
$$
Here we symmetrize with respect to the inputs coming from $\keranc$, as well as all inputs from $\tang$, and use $f_p^{(i)}\circ_{\keranc} f_q^{(j)}$ to denote partial composition along the first $\keranc$-variable.

The description of the commutator $[f_p, f_1]$ is slightly more involved, since $f_1$ is extended from its restriction to $\keranc[1]$ as a derivation. By Remark \ref{rem:ell1extensionasderivation}, we have that
$$
[f_p, f_1] = \sum_{i, j}\bigg(\sum_{\tau\in\mm{Sh}^{-1}_{i, j}}\bigg( \big(f_1^{(j)}\circ_\keranc f_p^{(i)}\big)^{\tau} + \sum_{\sigma\in\mm{Sh}^{-1}_{p-1, 1}} \big(f_p^{(i)}\circ_\keranc f_1^{(j)}\big)^{\sigma\times\tau}\bigg)+ \sum_{\tau\in \mm{Sh}^{-1}_{i-1, j}}\big(f_p^{(i)}\circ_{\tang} \ell_{\tang}^{(j)}\big)^{\tau}\bigg).
$$
Here the first term is just the commutator (suitably symmetrized in the $\tang$-variables), and in the second term $-\circ_{\tang}\ell_{\tang}^{(j)}$ takes the partial composition in the first $\tang$-variable with a structure map of $\tang$.
\end{observation}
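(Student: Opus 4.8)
The plan is to read the bracket $[f_p,f_q]$ as the graded commutator of the Nijenhuis--Richardson (convolution) pre-Lie product on the partial endomorphism operad of $B\otimes_A\keranc[1]$, formed in complete $B$-modules, with the convention that the arity-$1$ part consists of derivations over $d_B$ rather than arbitrary $B$-linear maps. The first task is then only to check that $[f_p,f_q]$ again lands in the appropriate $\mc{H}(\cdot)$, i.e.\ that it is $B$-linear whenever $p+q-1\neq 1$. For $p,q\neq 1$ this is immediate, since a partial composition of $B$-linear maps is $B$-linear. The subtle case is $[f_p,f_1]$: here $f_p\circ_1 f_1$ is a derivation in its first slot and $B$-linear in the others, while $f_1\circ_1 f_p=f_1\circ f_p$ is a derivation in every slot; rescaling an input by $b\in B$ produces two anomalous terms proportional to $d_B(b)$, one from applying $f_1$ before $f_p$ and one from applying it afterwards. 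I would show these carry opposite signs in the graded commutator and cancel, so that $[f_p,f_1]$ is again $B$-linear. This cancellation is exactly the statement that the commutator of a $B$-linear operation with a derivation over $d_B$ is $B$-linear.

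It then remains to compute the two formulas, which I would do by evaluating each bracket on a list of $\tang$-inputs $t_1,\dots,t_m\in\tang[1]$ and $\keranc$-inputs $x_1,\dots,x_{p+q-1}\in\keranc[1]$, reading off the components $f_p^{(i)}$ through the forms identification of Remark~\ref{rem:curv(g)asforms} and Lemma~\ref{lem:further decompositions} (a plain $\keranc$-input is the constant $0$-form, and the output form is evaluated on the $t_a$). For $p,q\neq1$ the single partial composition $f_p\circ_1 f_q$ plugs the $\keranc$-valued form $f_q(\dots)$ into the first $\keranc$-slot of $f_p$. Because both maps are $B$-linear they commute with wedging forms, so the total form degree---the number of absorbed $\tang$-inputs---is additive: if $f_q$ contributes its component $f_q^{(j)}$ and $f_p$ its component $f_p^{(i)}$, the composite absorbs $i+j$ of the $\tang$-inputs, distributed among the two maps by an unshuffle $\tau\in\mm{Sh}^{-1}_{i,j}$, while the $\keranc$-inputs are distributed by $\sigma\in\mm{Sh}^{-1}_{p-1,q}$. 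Symmetrizing and adjoining the opposite partial composition yields precisely the stated formula; I would verify the signs in the desuspended convention of Remark~\ref{rem:signs}, where they are absorbed by the degree shift.

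The mixed bracket $[f_p,f_1]$ requires the one extra ingredient that $f_1$ is a derivation over the Chevalley--Eilenberg differential $d_B$ rather than $B$-linear. By Remark~\ref{rem:ell1extensionasderivation}, the action of $f_1$ on a $\keranc$-valued form $\alpha$ splits into an \emph{algebraic} part, built from the components $f_1^{(j)}$ applied to some of the $\tang$-inputs together with the value $\alpha(\dots)$, and a \emph{Chevalley--Eilenberg} part $-\alpha\big(\ell_{\tang}^{(j)}(\dots),\dots\big)$ coming from the part of $d_B$ encoding the $L_\infty$-structure of $\tang$. Feeding $\alpha=f_p(\dots)$ into this formula inside the commutator, the algebraic part reproduces the genuine commutator terms $f_1^{(j)}\circ_\keranc f_p^{(i)}$ and $f_p^{(i)}\circ_\keranc f_1^{(j)}$, symmetrized over the $\tang$-inputs by $\mm{Sh}^{-1}_{i,j}$, while the Chevalley--Eilenberg part precomposes $f_p^{(i)}$ with a structure map of $\tang$ in its first $\tang$-slot, producing the term $f_p^{(i)}\circ_{\tang}\ell_{\tang}^{(j)}$; since $\ell_{\tang}^{(j)}$ consumes $j$ of the $\tang$-inputs feeding a single slot of $f_p^{(i)}$, only $i-1$ remain to be shuffled, whence the range $\mm{Sh}^{-1}_{i-1,j}$.

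The main obstacle is bookkeeping: correctly separating the two contributions of the derivation $f_1$ and matching the shuffle ranges ($\mm{Sh}^{-1}_{i,j}$ for the algebraic commutator versus $\mm{Sh}^{-1}_{i-1,j}$ for the $\ell_{\tang}$-term), while keeping the Koszul signs consistent through the degree shifts. I expect both to be routine once everything is transported into the desuspended convention of Remark~\ref{rem:signs}, in which the partial-composition signs are trivialized and the verification reduces to comparing the combinatorics of unshuffles on the $\tang$- and $\keranc$-inputs.
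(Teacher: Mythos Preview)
Your approach is correct and matches the paper's reasoning. In the paper this is stated as an observation without a separate proof: the formulas are asserted as direct computations from Lemma~\ref{lem:further decompositions} and Remark~\ref{rem:ell1extensionasderivation}, which is exactly the route you take (decompose each $f_p$ into components $f_p^{(i)}$ via the forms identification, use $B$-linearity to distribute $\tang$-inputs by unshuffles, and for $f_1$ invoke the explicit derivation formula to separate the algebraic commutator terms from the $\ell_{\tang}$-term). Your additional check that $[f_p,f_1]$ is genuinely $B$-linear, via cancellation of the $d_B(b)$-anomalies, is a nice point that the paper leaves implicit.
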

Let us now turn to the main construction behind Proposition \ref{prop:curved algebroid vs curved algebra}:
\begin{construction}\label{cons:curved algebroid vs curved algebra}
Let $\Loid =\tang\oplus \keranc$ be a complete graded $A$-module, let $\pi\colon \Loid\rt \tang$ be the natural projection and let $\ell_{\tang}^{(n)}$ denote the $n$-ary bracket of the $L_\infty$-algebroid structure on $\tang$. Then a curved $L_\infty$-algebroid structure on $\Loid$ such that $\pi$ is a (strict) map of curved $L_\infty$-algebroids has structure maps of the form
$$\begin{tikzcd}
\big(\ell_{\tang}^{(s)}\circ \pi, \mc{L}_s\big)\colon \Sym^s_k\big(\tang[1]\oplus \keranc[1]\big)\arrow[r] & \tang[2]\oplus \keranc[2].
\end{tikzcd}$$
In other words, the $\tang$-component of the bracket on $\tang\oplus\keranc$ is given by the brackets of $\tang$. Expanding binomially, such a curved $L_\infty$-algebroid structure is therefore determined by maps
$$
\ell_p^{(i)} \colon \mm{Sym}_k^i\big(\tang[1]\big)\otimes \mm{Sym}_k^p\big(\keranc[1]\big)\rt \keranc[2],
$$ 
such that
\begin{equation}\label{eq:bracket binomial}
\mc{L}_s=\sum_{i}\sum_{\gamma\in \mm{Sh}^{-1}_{i, s-i}} \big(\ell^{(i)}_{s-i}\big)^{\gamma}.
\end{equation}
(Here we view maps from a symmetric power as symmetric functions from a tensor power; the sums over unshuffles then guarantee that the above indeed gives a symmetric function.) 
Note that $\mc{L}_s$ is $A$-multilinear for $s\neq 1, 2$ and that $\mc{L}_1$ and $\mc{L}_2$ have the derivation properties
$$
\mc{L}_1(a\cdot x) = d_A(a)\cdot x + a\cdot \mc{L}_1(x) \qquad\qquad \mc{L}_2(x, a\cdot y)= a\cdot \mc{L}_2(x, y)+ \mc{L}_{\rho(x)}(a)\cdot y.
$$
This is equivalent to all maps $\ell_p^{(i)}$ being graded $A$-linear, except for the maps
$$
\ell^{(0)}_1\colon \keranc\rt \keranc[1]\qquad \text{and}\qquad \ell^{(1)}_{1}=[-, -]\colon \tang\otimes \keranc\rt \keranc
$$
which satisfy equation \eqref{eq:extendability to derivation}. In other words, Lemma \ref{lem:further decompositions} implies that the maps $\mc{L}_s$ are $A$-multilinear (resp.\ derivations for $n=1, 2$) if and only if each 
$$
\ell_p\coloneqq \sum_i \ell^{(i)}_p
$$
defines an element in $\mc{H}(p)$. 
\end{construction}

\begin{proof}[Proof (of Proposition \ref{prop:curved algebroid vs curved algebra})]
In light of Lemma \ref{lem:further decompositions} and Construction \ref{cons:curved algebroid vs curved algebra}, it suffices to verify that the maps $\ell_p=\sum_i \ell_p^{(i)}$ define a curved $L_\infty$-structure on $B\otimes_A \keranc$ if and only if the maps $\mc{L}_s=\sum_i\sum_\gamma\big(\ell_{s-i}^{(i)}\big)^{\gamma}$ define a curved $L_\infty$-structure on $\tang\oplus \keranc$.

To this end, let us consider all $\ell_p$ together as a single element $\sum_p \ell_p\in \prod_p \mc{H}(p)$, with $\mc{H}(p)$ as in \eqref{eq:derivations}. The condition of being a curved $L_\infty$-algebra translates into the equation (where we can suppress all additional signs by working with the shift $\keranc[1]$, see Remark \ref{rem:signs})
$$
\frac{1}{2}\sum_{p, q} [\ell_p, \ell_q]=\sum_{p, q}\sum_{\sigma\in\mm{Sh}^{-1}_{p-1, q}} \big(\ell_p\circ_1 \ell_q\big)^\sigma=0.
$$
Let us unravel this equation in terms of the maps $\ell_p^{(i)}\colon \mm{Sym}_A^i(\tang[1])\otimes \mm{Sym}_A^p(\keranc[1])\rt \keranc[1]$, as in Construction \ref{cons:curved algebroid vs curved algebra}. Using the formulas for partial composition and commutator from Observation \ref{obs:decomposing maps}, this yields
$$
\sum_{p, q, i, j}\sum_{\substack{\sigma\in \mm{Sh}^{-1}_{p-1, q}\\ \tau\in\mm{Sh}^{-1}_{i, j}}} \big(\ell_p^{(i)}\circ_{\keranc} \ell_q^{(j)}\big)^{\sigma\times \tau}
+\sum_{p, i, j}\sum_{\tau\in \mm{Sh}^{-1}_{i-1, j}}\big(\ell_p^{(i)}\circ_{\tang} \ell_{\tang}^{(j)}\big)^{\tau}=0.
$$
The above equation is equivalent to a certain system of equations $E(r, k)=0$, collecting all terms consisting of maps with $r$ inputs from $\keranc$ and $k$ inputs from $\tang$. In turn, this is equivalent to a system of equations
\begin{align*}
0=\sum_{r, k} \sum_{\gamma\in \mm{Sh}^{-1}_{r-k, r}} E(r-k, k)^\gamma &=\sum_{p, q, i, j}\sum_{\gamma\in \mm{Sh}^{-1}_{p+q-1, i+j}}\sum_{\substack{\sigma\in \mm{Sh}^{-1}_{p-1, q}\\ \tau\in\mm{Sh}^{-1}_{i, j}}}\Big(\big(\ell_p^{(i)}\circ_{\keranc} \ell_q^{(j)}\big)^{\sigma\times \tau}\Big)^{\gamma} \\
&+\sum_{p, i, j}\sum_{\gamma\in \mm{Sh}^{-1}_{p, i-1+j}} \sum_{\tau\in \mm{Sh}^{-1}_{i-1, j}}\Big(\big(\ell_p^{(i)}\circ_{\tang} \ell_{\tang}^{(j)}\big)^{\tau}\Big)^{\gamma}.
\end{align*}
For fixed $r$, this consists of maps $\mm{Sym}^r_k(\tang[1]\oplus \keranc[1])\rt \keranc[1]$. By our definition of the maps $\ell_{r-k}^{(k)}$ in terms of the curved $\Lie_\infty$-structure on $\tang\oplus\keranc$ \eqref{eq:bracket binomial}, the above equation is then equivalent to the equation
$$
\sum_{\substack{\\ t, s\geq 1}}\sum_{\sigma\in \mathrm{Sh}_{s-1,t}^{-1}}(\mc{L}_s\circ_{\keranc} \mc{L}_t)^\sigma + \sum_{\sigma\in \mm{Sh}^{-1}_{s-1, t}}  \big(\mc{L}_s\circ_{\tang} \ell_{\tang}^{(t)}\big)^{\sigma}=0.
$$
Here the first term involves partial composition along $\keranc$, while the second term involves partial composition along $\tang$. Unraveling the definitions, the above equation means precisely that the pair $\big(\ell_{\tang}^{(s)}\circ \pi, \mc{L}_s\big)\colon \mm{Sym}^s_k(\tang[1]\oplus \keranc[1])\rt \tang[2]\oplus \keranc[2]$ defines a curved $L_\infty$-structure on $\tang\oplus \keranc$.
\end{proof}

\subsubsection{Behaviour on $\infty$-morphisms}
Next, let us discuss how Proposition \ref{prop:curved algebroid vs curved algebra} interacts with $\infty$-morphisms. To this end, let $\tang$ and $B=C^*(\tang)$ be as in Theorem \ref{thm:comparison} and let
$$
\Loid=\tang\oplus \keranc, \qquad\qquad\qquad \Loide=\tang\oplus \kerance.
$$
Assume that $\Loid$ and $\Loide$ come equipped with curved $L_\infty$-algebroid structures such that the projection to $\tang$ is a (strict) map of curved $L_\infty$-algebroids and consider an $\infty$-morphism of curved $L_\infty$-algebroids which fits into a commuting diagram
\begin{equation}\label{diag:oo-morphisms over t}\begin{tikzcd}
\Loid\arrow[rr, rightsquigarrow, "\Phi"]\arrow[rd, "\pi_\mf{g}"{swap}] & &  \Loide\arrow[ld, "\pi_\mf{h}"]\\
& \tang.
\end{tikzcd}\end{equation}
In particular, this $\infty$-morphism is uniquely determined by $A$-linear maps of the form
$$\begin{tikzcd}[row sep=0.5pc]
\Phi_s=(0, \Phi'_s)\colon \Sym^s_A(\tang[1]\oplus \keranc[1])\arrow[r] & \tang[1]\oplus \kerance[1] & p\neq 1\\
\Phi_1=\big(\pi_{\tang}, \Phi'_1\big)\colon \tang[1]\oplus \keranc[1]\arrow[r] & \tang[1]\oplus \kerance[1]
\end{tikzcd}$$
where $\pi_{\tang}$ projects onto $\tang$. As in Construction \ref{cons:curved algebroid vs curved algebra}, we decompose $\Phi'_s=\sum_i \sum_{\gamma\in \mm{Sh}^{-1}_{i, s-i}}\big(\phi_{s-i}^{(i)}\big)^{\gamma}$ binomially into maps of complete graded $A$-modules
$$
\phi_p^{(i)}\colon \Sym_A^{i}(\tang[1])\otimes_A \Sym_A^{p}(\keranc[1])\rt \kerance[1].
$$
Since $\tang$ is a dualizable $A$-module, arguing as in Lemma \ref{lem:further decompositions} shows that such families of $A$-linear maps correspond bijectively to families of $B$-linear maps
$$
\phi_p\colon \mm{Sym}^p_B\big(B\otimes_A\keranc[1]\big)\rt B\otimes_A \kerance[1].
$$
Here $\phi_p$ denotes (using Remark \ref{rem:curv(g)asforms}) the $B$-linear extension of the map
$$
\sum \phi_p^{(i)}\colon \mm{Sym}^p_A(\keranc[1])\rt B\otimes_A \kerance[1]\cong \prod_i \Hom_A\big(\Sym_A^i(\tang[1]), \kerance[1]\big).
$$
\begin{proposition}\label{prop:curv is compatible with oo-maps}
Suppose that $\Loid$ and $\Loide$ are curved $L_\infty$-algebroids over $\tang$ as above. Then the maps $\phi_p$ constructed above define an $\infty$-morphism $\phi \colon \curv(\Loid) \rightsquigarrow \curv(\Loide)$ of curved $L_\infty$-algebras over $B$ if and only if the maps $\Phi_s$ define an $\infty$-morphism of curved $L_\infty$-algebroids as in \eqref{diag:oo-morphisms over t}.
\end{proposition}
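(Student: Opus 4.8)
The plan is to mirror the proof of Proposition \ref{prop:curved algebroid vs curved algebra} at the level of $\infty$-morphisms: just as the curved $L_\infty$-structure equations on $\curv(\Loid)=B\otimes_A\keranc$ were shown to be termwise equivalent to the curved $L_\infty$-algebroid equations on $\tang\oplus\keranc$ after a binomial expansion and a rearrangement of unshuffles, I would like to show that the $\infty$-morphism equation \eqref{eq:mixed cLoo map} for $\phi=(\phi_p)_{p\geq 0}$ over $B$ translates term-by-term into the $\infty$-morphism equation for $\Phi=(\Phi_s)_{s\geq 1}$ over $\tang$. First I would set up the bookkeeping carefully: by the discussion preceding the statement, both collections of maps are equivalent data via the binomial decomposition $\Phi'_s=\sum_i\sum_{\gamma\in\mm{Sh}^{-1}_{i,s-i}}(\phi_{s-i}^{(i)})^\gamma$ together with the adjunction identifying $A$-multilinear maps out of $\Sym^i_A(\tang[1])\otimes_A\Sym^p_A(\keranc[1])$ with $B$-linear maps out of $\Sym^p_B(B\otimes_A\keranc[1])$ (using that $\tang$ is dualizable and $F^0\tang=0$, as in Remark \ref{rem:curv(g)asforms} and Lemma \ref{lem:further decompositions}). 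The subtlety, as with $\ell_1$ in the object-level proof, is that the linear component $\Phi_1=(\pi_\tang,\Phi'_1)$ has a $\tang$-part equal to the identity on $\tang$, so the $B$-linear map $\phi_1$ is not simply $B$-linear but is built from $\phi_1^{(i)}$ with the convention from Remark \ref{rem:ell1extensionasderivation}; one must track the extra ``derivation-type'' terms $-\alpha(\ell_\tang^{(i)}(\dots),\dots)$ contributed by the $L_\infty$-structure on $\tang$.

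The key computational step is to expand the defining equation for an $\infty$-morphism of curved $L_\infty$-algebroids $\Phi\colon\Loid\rightsquigarrow\Loide$. Using the notational convention of Remark \ref{rem:signs} to suppress signs (working with the shift $\keranc[1],\kerance[1]$), this equation reads schematically
$$
\partial(\Phi'_s)=\sum_{p+q=s+1}(\Phi'_p\circ_1\ell^{\Loid}_q)^\sigma-\sum_{k\geq 0}\sum_{\sigma}\frac{1}{k!}\ell^{\Loide}_k(\Phi'_{i_1},\dots,\Phi'_{i_k})^\sigma,
$$
and one substitutes the binomial decompositions of $\Phi'_\bullet$, $\ell^{\Loid}_\bullet=(\ell^{(s)}_\tang\circ\pi,\mc{L}_s)$ and $\ell^{\Loide}_\bullet$ from Construction \ref{cons:curved algebroid vs curved algebra}. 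Sorting the resulting terms by the number $r$ of $\keranc$-inputs and $k$ of $\tang$-inputs, and then re-summing over unshuffles exactly as in the last display of the proof of Proposition \ref{prop:curved algebroid vs curved algebra}, I expect the contributions to reorganize into: (a) the ``internal'' compositions $\phi_p\circ_1\ell^{\curv(\Loid)}_q$ and $\ell^{\curv(\Loide)}_k(\phi_{i_1},\dots,\phi_{i_k})$ of the curved $L_\infty$-structures on $\curv(\Loid)$ and $\curv(\Loide)$ produced by Proposition \ref{prop:curved algebroid vs curved algebra}; plus (b) precisely the terms involving partial composition with $\ell^{(j)}_\tang$ along a $\tang$-variable, which — under the adjunction translating $A$-multilinear forms on $\tang$ to $B$-modules — assemble into the Chevalley--Eilenberg differential $d_B$ acting on $B\otimes_A\keranc$, hence are absorbed into the $\partial(\phi_p)$ on the left-hand side (the ``derivation'' part of $\ell_1$ on $B\otimes_A\keranc$). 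This matching of the $\tang$-partial-composition terms with $d_{\mm{CE}}$ is the conceptual heart: it is the same phenomenon that makes $\curv(\Loid)$ a module over $B=C^*(\tang)$ in the first place.

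The main obstacle I anticipate is purely combinatorial rather than conceptual: keeping the unshuffle sums and the three kinds of symmetrization (over $\keranc$-inputs of $\phi_p$, over $\keranc$-inputs of $\ell_q$, and over all $\tang$-inputs) consistent, and verifying that the ``diagonal'' terms where a $\tang$-variable is fed into $\ell_\tang^{(j)}$ reproduce exactly the Chevalley--Eilenberg differential on $\keranc$-valued forms from Remark \ref{rem:ell1extensionasderivation}, with the correct signs. Since the statement asserts an ``if and only if'', I would organize the argument as a single chain of equivalences: the equation for $\Phi$ holds $\iff$ a certain family of equations $E(r,k)=0$ holds for all $r,k$ $\iff$ (re-summing over $\gamma\in\mm{Sh}^{-1}_{r-k,k}$, which is an invertible reindexing) the corresponding equation for the $B$-linear data holds $\iff$ $\phi$ is an $\infty$-morphism of curved $L_\infty$-algebras over $B$. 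Because every reindexing step used is a bijection on the relevant index sets, no information is lost in either direction, and the signs are all controlled by the degree shift as recorded in Remark \ref{rem:signs}; I would simply remark that the sign verification is routine and identical in structure to that of Proposition \ref{prop:curved algebroid vs curved algebra}, rather than reproducing it.
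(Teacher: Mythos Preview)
Your overall strategy is correct and essentially mirrors the paper's: both reduce the two $\infty$-morphism equations to a common system $E(r,k)=0$ indexed by the number of $\keranc$- and $\tang$-inputs, and pass between the two via the bijective reindexing $\sum_{\gamma\in\mm{Sh}^{-1}_{r-k,k}}(-)^\gamma$. The paper runs the computation in the opposite direction (starting from the $B$-side equation \eqref{eq:cLoo map} and unpacking via Observation \ref{obs:B-linear extension}), but since every step is a bijection this is immaterial.

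There is one point of confusion you should correct before carrying this out. You write that ``the $B$-linear map $\phi_1$ is not simply $B$-linear'' and invoke Remark \ref{rem:ell1extensionasderivation} for it. This is backwards: by Definition \ref{def:oo-morphism B-linear} \emph{all} components $\phi_p$ of an $\infty$-morphism over $B$, including $\phi_1$, are genuinely $B$-multilinear. The maps that fail to be $B$-linear, and to which Remark \ref{rem:ell1extensionasderivation} applies, are the structure maps $\ell_1$ and $\kappa_1$ (the arity-$1$ operations on $\curv(\Loid)$ and $\curv(\Loide)$), which are derivations over $d_B$. Consequently the $\ell_{\tang}^{(j)}$-terms do \emph{not} enter through $\partial(\phi_p)$ but through the term $\kappa_1\circ\phi_p$ (and, symmetrically, through $\phi_p\circ_1\ell_1$), exactly as recorded in Observation \ref{obs:B-linear extension}. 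On the algebroid side these same terms arise from the composites $\Phi'_s\circ_1(\ell_{\tang}^{(t)}\circ\pi)$, where the $\tang$-valued output of $\ell^{\Loid}_t$ is fed into a $\tang$-slot of $\Phi'_s$. Once you match these correctly the rest of your outline goes through unchanged; this is precisely the content of equations \eqref{eq:term 1 and 3} and \eqref{eq:term 2} in the paper's proof.
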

Throughout, let us write $\ell_p$ and $\kappa_q$ for the curved $L_\infty$-structure maps on $B\otimes_A \keranc$ and $B\otimes_A \kerance$, respectively. Likewise, the structure maps of $\Loid=\tang\oplus \keranc$ and $\Loide=\tang\oplus\kerance$ have the form $(\ell_{\tang}^{(s)}, \mc{L}_s)$, respectively $(\ell_{\tang}^{(s)}, \mc{K}_s)$, where $\ell_{\tang}^{(s)}$ is the $L_\infty$-structure on $\tang$.
\begin{observation}\label{obs:B-linear extension}
Let $\phi_p\colon \Sym^p_B(B\otimes_A \keranc[1])\rt B\otimes_A \kerance[1]$. For $q\neq 1$ and $k\neq 1$, the composite maps
$$
\sum_{\sigma\in \mathrm{Sh}_{p-1,q}^{-1}}\big(\phi_p\circ_1 \ell_q\big)^{\sigma} \qquad\text{and}\qquad \sum_{\sigma\in \mm{Sh}^{-1}_{(p_1, \dots, p_k)}}\Big(\kappa_k\big(\phi_{p_1}, \dots, \phi_{p_k}\big)\Big)^{\sigma}
$$
are $B$-multilinear. In particular, they are determined by the maps 
$$\begin{tikzcd}[row sep=0.4pc]
\phi_p^{(i)}\colon \Sym^i_A(\tang[1])\otimes_A\Sym_A^p(\keranc[1])\arrow[r] & \kerance[1]\\
\ell_q^{(j)}\colon \Sym^j(\tang[1])\otimes\Sym_A^q\big(\keranc[1])\arrow[r] & \keranc[2]\\
\kappa_k^{(j)}\colon \Sym^j(\tang[1])\otimes\Sym_A^k\big(\kerance[1])\arrow[r] & \kerance[2]
\end{tikzcd}$$
via
\begin{align*}
\sum_{\sigma\in \mathrm{Sh}_{p-1,q}^{-1}}\big(\phi_p\circ_1 \ell_q\big)^{\sigma}&=\sum_{i, j} \sum_{\substack{\sigma\in \mathrm{Sh}_{p-1,q}^{-1}\\ \tau\in \mm{Sh}^{-1}_{i,j}}}\big(\phi^{(i)}_p\circ_{\keranc} \ell_q^{(j)}\big)^{\sigma\times \tau}\\
\sum_{\sigma\in \mm{Sh}^{-1}_{(p_1, \dots, p_k)}}\Big(\kappa_k\big(\phi_{p_1}, \dots, \phi_{p_k}\big)\Big)^{\sigma} &= \sum_{i, j_1, \dots, j_k} \sum_{\substack{\sigma\in\mm{Sh}^{-1}_{(p_1, \dots, p_k)}\\ \tau\in\mm{Sh}^{-1}_{(i, j_1, \dots, j_k)}}} \Big(\kappa^{(i)}_k\circ_{\kerance}\big(\phi^{(j_1)}_{p_1}, \dots, \phi^{(j_k)}_{p_k}\big)\Big)^{\sigma\times \tau}
\end{align*}
where $\circ_{\keranc}$ denotes partial composition via the first $\keranc$-variable and $\circ_{\kerance}$ denotes total composition along the $\kerance$-variable.

On the other hand, the maps $\sum_{\sigma}\big(\phi_p\circ_1 \ell_1\big)^{\sigma}$ and $\kappa_1\circ \phi_p$ both define maps 
$$
\Sym^p(B\otimes_A \keranc[1])\rt B\otimes_A \kerance[2]
$$
that are derivations over the Chevalley--Eilenberg differential on $B$ in each variable. Such derivations are again determined uniquely by their restriction $\Sym^p_A(\keranc[1])\rt B\otimes_A \kerance[2]$. Using Remark \ref{rem:ell1extensionasderivation}, these can be expressed in terms of the maps $\phi^{(i)}_p, \ell_1^{(j)}$ and $\kappa_1^{(j)}$ as
\begin{align*}
\sum_{\sigma\in \mathrm{Sh}_{p-1,1}^{-1}}\big(\phi_p\circ_1 \ell_1\big)^{\sigma}&=\sum_{i, j} \sum_{\substack{\sigma\in \mathrm{Sh}_{p-1,1}^{-1} \\ \tau\in \mm{Sh}^{-1}_{i,j}}}\big(\phi^{(i)}_p\circ_{\keranc} \ell_1^{(j)}\big)^{\sigma\times \tau}\\
\kappa_1\circ \phi_p &= \sum_{i, j} \sum_{\tau\in \mm{Sh}^{-1}_{j, i}}\big(\kappa^{(j)}_1\circ_{\kerance} \phi_{p}^{(i)}\big)^{\tau} - \sum_{i, j}\sum_{\tau\in \mm{Sh}^{-1}_{i-1, j}} \big(\phi_p^{(i)}\circ_{\tang} \ell_{\tang}^{(j)}\big)^{\tau}.
\end{align*}
Here $\ell_{\tang}^{(j)}$ is the $L_\infty$-structure on $\tang$ and $\circ_{\tang}$ takes the partial composition in the $\tang$-variable.
\end{observation}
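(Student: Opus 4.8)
The plan is to mirror the object-level proof of Proposition \ref{prop:curved algebroid vs curved algebra} one categorical level up, now using Observation \ref{obs:B-linear extension} as the dictionary between the $B$-linear maps and their $A$-linear components $\phi_p^{(i)}, \ell_q^{(j)}, \kappa_k^{(j)}$. First I would write out the defining equation for an $\infty$-morphism $\phi\colon \curv(\Loid)\rightsquigarrow \curv(\Loide)$ of curved $L_\infty$-algebras over $B$. Since $\curv(\Loid)$ and $\curv(\Loide)$ are classical curved $L_\infty$-algebras with no differential (Definition \ref{def:cLie classical}), this is Equation \eqref{eq:mixed cLoo map} with vanishing left-hand side (cf.\ Remark \ref{rem:oo-morphisms of classical clie}): for every $n$,
$$
\sum_{p+q=n+1}\sum_\sigma \pm\,(\phi_p\circ_1\ell_q)^\sigma=\sum_{\substack{k\geq 0\\ p_1+\dots+p_k=n}}\sum_\sigma \pm\,\tfrac{1}{k!}\,\kappa_k(\phi_{p_1},\dots,\phi_{p_k})^\sigma,
$$
where signs are suppressed via the shift $\keranc[1]$ as in Remark \ref{rem:signs}, and $\ell_q$, $\kappa_k$ denote the structure maps on $B\otimes_A\keranc$, $B\otimes_A\kerance$.

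Next I would substitute the four formulas of Observation \ref{obs:B-linear extension} to express both sides entirely through the $A$-linear components. The source terms $\sum_\sigma(\phi_p\circ_1\ell_q)^\sigma$ all reduce to $\sum_{i,j}\sum_{\sigma,\tau}(\phi_p^{(i)}\circ_\keranc \ell_q^{(j)})^{\sigma\times\tau}$, uniformly for $q=1$ and $q\neq 1$. On the target side, the cases $k\neq 1$ give $\sum(\kappa_k^{(i)}\circ_\kerance(\phi^{(j_1)},\dots,\phi^{(j_k)}))$, while the derivation case $k=1$ produces, besides $\sum(\kappa_1^{(j)}\circ_\kerance\phi_p^{(i)})$, the extra contribution $-\sum_{i,j}\sum_\tau(\phi_p^{(i)}\circ_\tang\ell_\tang^{(j)})^\tau$ coming from the Leibniz rule of $\kappa_1$ over the Chevalley--Eilenberg differential on $B$. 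The resulting single identity among the $A$-linear maps is then organized, exactly as in the proof of Proposition \ref{prop:curved algebroid vs curved algebra}, into a system of equations collecting all terms with a fixed number $r$ of inputs from $\keranc$ and $k$ from $\tang$, and I would symmetrize over the $\tang$-inputs by summing over unshuffles $\gamma$.

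Finally I would recognize the symmetrized system as the $\kerance$-component of the $k$-linear $\infty$-morphism equation for the curved $L_\infty$-algebroids $\Loid=\tang\oplus\keranc$ and $\Loide=\tang\oplus\kerance$, using the binomial decompositions $\Phi'_s=\sum_i\sum_\gamma(\phi_{s-i}^{(i)})^\gamma$, $\mc{L}_s=\sum_i\sum_\gamma(\ell_{s-i}^{(i)})^\gamma$ and $\mc{K}_s=\sum_i\sum_\gamma(\kappa_{s-i}^{(i)})^\gamma$ from Construction \ref{cons:curved algebroid vs curved algebra}. The composition $(\Phi'_p\circ_\keranc\mc{L}_q)$ reassembles the source terms; the target terms reassemble $\mc{K}_k(\Phi_{s_1},\dots,\Phi_{s_k})$ restricted to $\kerance$, where the $\tang$-slots of $\kappa_k^{(i)}$ correspond to the inputs at which the linear part $\Phi_1=(\pi_\tang,\Phi'_1)$ contributes its $\tang$-component; and the extra term $\sum(\phi_p^{(i)}\circ_\tang\ell_\tang^{(j)})$ matches precisely the contribution of the $\tang$-component $\ell_\tang^{(q)}\circ\pi$ of the source structure map $\ell^{\Loid}_q=(\ell_\tang^{(q)}\circ\pi,\mc{L}_q)$ fed into a $\tang$-slot of $\Phi'_p$. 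Since $\Phi$ commutes strictly with the anchor projections onto $\tang$ (diagram \eqref{diag:oo-morphisms over t}), the $\tang$-component of the algebroid equation is automatic, so the $\kerance$-component is the only condition, yielding the claimed equivalence.

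I expect the main obstacle to be the bookkeeping in this last matching of the $\ell_\tang$-terms: verifying that the single extra term from the derivation property of $\kappa_1$, together with the source composition along the $\tang$-component of $\ell^{\Loid}_q$, combine with the correct signs and combinatorial factors (the $\tfrac{1}{k!}$ against the binomial reorganization) to reproduce the algebroid equation exactly. The shift-suppression of signs via Remark \ref{rem:signs} and the fact that the $\tang$-inputs play a purely spectator role under the symmetrization $\gamma$ should keep this tractable, just as in the object-level proof.
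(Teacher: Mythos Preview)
Your proposal is correct and follows essentially the same route as the paper's proof (which is really the proof of Proposition \ref{prop:curv is compatible with oo-maps}, placed immediately after the Observation): expand the $B$-linear $\infty$-morphism equation via Observation \ref{obs:B-linear extension}, reorganize into a system indexed by the numbers of $\keranc$- and $\tang$-inputs, symmetrize over unshuffles $\gamma$, and match term-by-term with the $\kerance$-component of the algebroid $\infty$-morphism equation. Your identification of the three groups of terms---the $\circ_\keranc$ source terms, the $\circ_\kerance$ target terms with the $\tang$-slots of $\kappa_k^{(i)}$ absorbing the $\pi_\tang$-component of $\Phi_1$, and the extra $\circ_\tang\ell_\tang$ contribution from the derivation rule for $\kappa_1$---is exactly the paper's, and the combinatorial-factor check you flag as the main obstacle is precisely what the paper handles in the passage around equations \eqref{eq:sum over a} and \eqref{eq:term 2}.
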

\begin{proof}
By Section \ref{sec:oo-morphism of clie} and Definition \ref{def:oo-morphism B-linear}, the maps $\phi_p$ define an $\infty$-morphism of curved $L_\infty$-algebras over $B$ if and only if for each $n\geq 0$
\begin{equation}\label{eq:cLoo map}
0=\sum_{p+q=n+1}\sum_{\sigma\in \mathrm{Sh}_{p-1,q}^{-1}}\big(\phi_p\circ_1\ell_q\big)^\sigma-\sum_{\substack{k\geq 0\\ p_1+\dots +p_k=n}} \sum_{\sigma\in \mathrm{Sh}_{(p_1,\dots,p_k)}^{-1}}\frac{1}{k!} \kappa_k(\phi_{p_1}, \dots, \phi_{p_k})^\sigma
\end{equation}
The signs are determined by Remark \ref{rem:signs} and are exactly as written above if we work at the level of shifted objects.
As in the proof of Proposition \ref{prop:curved algebroid vs curved algebra}, the proof boils down to take the sum of these equations over $n$ and then appropriately rewriting the involved sums ``diagonally''.
More precisely, using Observation \ref{obs:B-linear extension}, the above equation is equivalent to a system of equations 
\begin{align*}
0&=\sum_{i, j, p, q}\sum_{\substack{\sigma\in \mathrm{Sh}_{p-1,q}^{-1}\\ \tau\in \mm{Sh}^{-1}_{i, j}}}\big(\phi_p^{(i)}\circ_\keranc\ell^{(j)}_q\big)^{\sigma \times \tau}\\
&-\sum_{\substack{i, k, j_1, \dots, j_k\\ p_1, \dots, p_k}} \frac{1}{k!}\sum_{\substack{\sigma\in\mm{Sh}^{-1}_{(p_1, \dots, p_k)}\\ \tau\in\mm{Sh}^{-1}_{(i, j_1, \dots, j_k)}}} \Big(\kappa^{(i)}_k\circ_{\kerance}\big(\phi^{(j_1)}_{p_1}, \dots, \phi^{(j_k)}_{p_k}\big)\Big)^{\sigma\times \tau}\\
&+ \sum_{i, j, p}\sum_{\tau\in \mm{Sh}^{-1}_{i-1, j}} \big(\phi_p^{(i)}\circ_{\tang} \ell_{\tang}^{(j)}\big)^{\tau}.
\end{align*}
As in the proof of Proposition \ref{prop:curved algebroid vs curved algebra}, this is a system of equations $E(r, m)=0$: for fixed $r$ and $m$, this is an equation between maps $\Sym^r(\keranc[1])\otimes \Sym^m(\tang[1])\rt \kerance[2]$. This is equivalent to the system of equations
\begin{align*}
0=\sum_{r, m} \sum_{\gamma\in \mm{Sh}^{-1}_{r-m, m}} E(r-m, m)^\gamma &=\sum_{i, j, p, q}\sum_{\gamma, \sigma, \tau}\Big(\big(\phi_p^{(i)}\circ_\keranc\ell^{(j)}_q\big)^{\sigma \times \tau}\Big)^{\gamma}\\
&-\sum_{\substack{i, k, j_1, \dots, j_k\\ p_1, \dots, p_k}} \frac{1}{k!}\sum_{\gamma, \sigma, \tau} \Big(\Big(\kappa^{(i)}_k\circ_{\kerance}\big(\phi^{(j_1)}_{p_1}, \dots, \phi^{(j_k)}_{p_k}\big)\Big)^{\sigma\times \tau}\Big)^{\gamma}\\
&+ \sum_{i, j, p}\sum_{\gamma, \tau} \Big(\big(\phi_p^{(i)}\circ_{\tang} \ell_{\tang}^{(j)}\big)^{\tau}\Big)^{\gamma}.
\end{align*}
Here we sum over unshuffles $\sigma$ of the variables within $\keranc$, unshuffles $\tau$ from the variables within $\tang$ and finally the unshuffles $\gamma$ of the sets of variables from $\tang$ and $\keranc$. For fixed $r$, this is an equation between maps $\mm{Sym}^r\big(\tang[1]\oplus \keranc[1]\big)\rt \kerance[1]$.

Now, the terms in the first and third line with $p+i=s$ and $q+j=t$ sum up to the maps
\begin{equation}\label{eq:term 1 and 3}
\sum_{\sigma\in \mm{Sh}^{-1}_{s-1, t}}\big(\Phi'_s\circ_1 \mc{L}_t\big)^{\sigma} \qquad\qquad\text{and} \qquad\qquad \sum_{\sigma\in \mm{Sh}^{-1}_{s-1, t}}\big(\Phi'_s\circ_1 \ell_{\tang}^{(t)}\big)^{\sigma}
\end{equation}
of the form $\Sym^{s+t-1}(\tang[1]\oplus\keranc[1])\rt \kerance[1]$. Likewise, the terms in the second line with fixed $k+i=s$ and $p_\alpha+j_\alpha=t_\alpha$ sum up to
\begin{equation}\label{eq:sum over a}\sum_a \sum_{\sigma\in\mm{Sh}^{-1}_{i, t_1, \dots, t_{s-i}}} \frac{1}{(s-i)!}\Big(\mc{K}_s\circ \big(\underbrace{\pi_{\tang}, \dots, \pi_{\tang}}_{i\times}, \Phi'_{t_1}, \dots, \Phi'_{t_{s-i}}\big)\Big)^{\sigma}.
\end{equation}
Here $\pi_{\tang}\colon \tang\oplus \keranc\rt \tang$ denotes the projection onto $\tang$. The sum of \eqref{eq:sum over a} over all $t_1, \dots, t_{s-a}$, can then be identified in terms of the structure maps $\Phi_p$ of \eqref{diag:oo-morphisms over t} with
\begin{equation}\label{eq:term 2}
\frac{1}{s!}\sum_{t_1, \dots, t_s}\sum_{\sigma\in \mm{Sh}^{-1}_{t_1, \dots, t_s}} \Big(\mc{K}_s\circ \big(\Phi_{t_1}, \dots, \Phi_{t_s}\big)\Big)^{\sigma}.
\end{equation}
Indeed, for each $i\geq 0$ there are ${s \choose i}$ terms in \eqref{eq:term 2} where $i$ different copies $t_\alpha$ are equal to $1$ (so that $\Phi_{t_\alpha}=\Phi_1=(\pi_{\tang}, \Phi'_1)$); in turn, each of these ${s \choose i}$ terms can be identified with $i!$ copies of the expression \eqref{eq:sum over a}, since the size of $\mm{Sh}^{-1}(1, \dots, 1, t_{1}, \dots, t_{s-i})$ is $i!$ times the size of $\mm{Sh}^{-1}(i, t_1, \dots, t_{s-i})$.

The sums of \eqref{eq:term 1 and 3} and \eqref{eq:term 2}  now precisely give the equation for $\Phi\colon \mf{g}\rightsquigarrow \mf{h}=\tang\oplus \kerance$ being an $\infty$-morphism of curved $L_\infty$-algebroids.
\end{proof}

\subsection{The functor $\curv$ and proof of the main theorem}\label{sec:proof of main theorem}
We will now use the explicit computations from Section \ref{sec:formulas for curv} to construct the desired equivalence of $\infty$-categories of Theorem \ref{thm:comparison}
$$\begin{tikzcd}
\curv\colon \oocat{cLie}(A/k){\scriptstyle/\tang}\arrow[r] & \oocat{cLie}_B.
\end{tikzcd}$$
Recall that both $\infty$-categories $\oocat{cLie}(A/k)$ and $\oocat{cLie}_B$ are modeled by concrete simplicially enriched categories (Definition \ref{def:simplicial cat of lie algebroids} and Definition \ref{def:oocat of algebras over B}). Likewise, the domain of the putative functor `$\curv$' can be modeled by an explicit simplicially enriched $\infty$-category, as follows:
\begin{definition}\label{def:slice of curved lie}
Let $\scat{C}$ denote the simplicially enriched category whose:
\begin{enumerate}[start=0]
\item objects are given by curved $L_\infty$-algebroids $\Loid$ over $A$, equipped with a (strict) map of $L_\infty$-algebroids $\pi\colon \Loid\rt \tang$ and a section of complete graded $A$-modules $\sigma\colon \tang\rt \Loid$. This section induces an equivalence $\Loid\cong \tang\oplus \keranc$ and we require $\keranc$ to be projective as a complete graded $A$-module.

\item simplicial sets of morphisms are given by the simplicial sets of $\infty$-morphisms
$$\begin{tikzcd}
\Loid\arrow[rr, rightsquigarrow, "\Phi"]\arrow[rd] & & \Loide\boxtimes_{\tang} \Omega[\Delta^\bullet]\coloneqq \Loide\otimes \Omega[\Delta^\bullet]\times_{\tang\otimes \Omega[\Delta^\bullet]} \tang\arrow[ld, start anchor={[xshift=-2.5pc]}]\\
& \tang.
\end{tikzcd}$$
\end{enumerate}
\end{definition}
\begin{lemma}\label{lem:C is slice}
There is a natural equivalence of $\infty$-categories $\scat{C}\rto{\sim} \oocat{cLie}(A/k){\scriptstyle/\tang}$.
\end{lemma}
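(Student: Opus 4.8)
The plan is to prove Lemma \ref{lem:C is slice} by the same strategy used for Proposition \ref{prop:adding filtrations fully faithful}: exhibit $\scat{C}$ as a simplicial model for the $\infty$-localization of a slice semi-model category, and identify that localization with $\oocat{cLie}(A/k){\scriptstyle/\tang}$. The two ingredients are the $\widetilde{0}$-trick that turns the chosen $A$-linear section into an ``under''-structure, and the bar-cobar/HTT machinery for $L_\infty$-algebroids that turns $\infty$-morphisms into strict maps out of cofibrant resolutions.

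First I would repackage the section datum. Choosing a complete $L_\infty$-algebroid $\mf{t}\xrightarrow{\sim}\tang$ that is cofibrant as a complete graded $A$-module (and whose map to $\tang$ is surjective in each filtration weight; since the comparison only depends on $\tang$ up to weak equivalence we may identify $\mf{t}=\tang$ in what follows), let $\widetilde{0}$ be the free curved $L_\infty$-algebroid (Example \ref{ex:free curved lie algebroid}) generated by the map of $A$-modules $\tang[0,-1]\to\tang\to T_A$ from the path space of $\tang$; this is cofibrant and weakly contractible. Exactly as in the proof of Proposition \ref{prop:adding filtrations fully faithful}, the category $\widetilde{0}/\cat{cLie}(A/k){\scriptstyle/\tang}$ of curved $L_\infty$-algebroids fitting into $\widetilde{0}\to\Loid\to\tang$ is canonically the category of curved $L_\infty$-algebroids $\pi\colon\Loid\to\tang$ equipped with an $A$-linear (non-differential) section, hence with a decomposition $\Loid\cong\tang\oplus\keranc$; it carries the semi-model structure induced from Theorem \ref{thm:model structure lie algebroids}, and since $\widetilde{0}$ is cofibrant and contractible, forgetting the under- and over-structure relates it to $\cat{cLie}(A/k){\scriptstyle/\tang}$ by a zig-zag of Quillen equivalences. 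Restricting to objects with $\keranc$ projective, equivalently cofibrant as a complete graded $A$-module by Proposition \ref{prop:cofibrations of complete modules}, loses nothing up to weak equivalence.

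Next I would check that the simplicial sets of $\infty$-morphisms over $\tang$ compute the derived mapping spaces in $\widetilde{0}/\cat{cLie}(A/k){\scriptstyle/\tang}$, which realizes $\scat{C}$ as a simplicial model for its $\infty$-localization. This is the slice version of Proposition \ref{prop:oocat of algebras over k} and Lemma \ref{lem:kan enriched}: for $\Loid=\tang\oplus\keranc$ in $\scat{C}$ the bar-cobar resolution $Q(\Loid)$ of Proposition \ref{prop:cobar resolution for loo algebroids} is a cofibrant resolution (using that $\Loid$ is cofibrant as a complete $A$-module), strict maps $Q(\Loid)\to\Loide$ over $\tang$ are precisely $\infty$-morphisms $\Loid\rightsquigarrow\Loide$ over $\tang$, and $\Loide\boxtimes_{\tang}\Omega[\Delta^\bullet]$ is a functorial path-object resolution (Example \ref{ex:tensoring of curved Loo algebroids}); hence $\Map_{\scat{C}}(\Loid,\Loide)$ is the derived mapping space. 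Combining this with the Quillen equivalences of the previous paragraph, and with the identification of $\oocat{cLie}(A/k)$ as the $\infty$-localization of $\cat{cLie}(A/k)$ (Corollary \ref{cor:models for algebroids} and Proposition \ref{prop:classical curved Lie algebroid=graded mixed}), gives that $\scat{C}\to\oocat{cLie}(A/k){\scriptstyle/\tang}$ is fully faithful; essential surjectivity follows because every object of the slice is weakly equivalent to one of the form $\tang\oplus\keranc$ with $\keranc$ cofibrant, again via the $\widetilde{0}$-Quillen equivalence.

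The step I expect to be the main obstacle is the bookkeeping forced by $\tang$ (and the objects $\tang\oplus\keranc$) failing to be fibrant in $\cat{cLie}(A/k)$: since $F^0(\tang)=0$ their anchor maps are not surjective. One must verify that slicing over $\tang$ agrees with slicing over a fibrant replacement, and, more delicately, that the simplicial sets of $\infty$-morphisms still compute the correct derived mapping spaces even though the targets are not fibrant -- this works because $\Loide\boxtimes_{\tang}\Omega[\Delta^\bullet]$ remains a legitimate Hinich-style path-object resolution and $\infty$-morphisms out of the cofibrant $Q(\Loid)$ are insensitive to fibrant replacement of the target, but it is exactly the point where the argument for algebras over an operad (Proposition \ref{prop:oocat of algebras over k}) does not transcribe mechanically. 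A secondary point is to confirm that the $\widetilde{0}$-construction reproduces precisely the ``chosen $A$-linear section with projective kernel'' datum of Definition \ref{def:slice of curved lie} and interacts correctly with $\infty$-morphisms, which can be read off from Example \ref{ex:free curved lie algebroid} and the Homotopy Transfer Theorem \ref{thm:htt lie algebroids}.
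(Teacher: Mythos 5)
Your argument is correct in outline, but it routes the proof through model-categorical presentations where the paper argues directly at the level of simplicial categories. For essential surjectivity the two proofs coincide in substance: the paper simply replaces an arbitrary object of the slice by one with surjective projection and projective underlying module (Theorem \ref{thm:model structure lie algebroids}) and then splits it $A$-linearly using projectivity of $\tang$; your $\widetilde{0}$-trick formalizes the same splitting but is not needed here, since the section carries no homotopical information in this lemma (unlike in Proposition \ref{prop:adding filtrations fully faithful} and Theorem \ref{thm:curved Lie algebroids conceptually}, where the splitting is built into the equivalence of categories). For full faithfulness the paper avoids your main implicit input --- that $\infty$-categorical localization commutes with slicing over $\tang$ --- by instead using the comparison functor from the strict simplicially enriched slice to the slice $\infty$-category (Kerodon Tag 01ZN) and verifying directly that the square
$$\begin{tikzcd}
\Map_{\scat{C}}(\Loid, \Loide)\arrow[r]\arrow[d] & \Map_{\oocat{cLie}(A/k)}(\Loid, \Loide)\arrow[d, "(\pi_2)_*"]\\
\{\pi_1\}\arrow[r] & \Map_{\oocat{cLie}(A/k)}(\Loid, \tang)
\end{tikzcd}$$
is homotopy cartesian because $(\pi_2)_*$ is a Kan fibration; this identifies $\Map_{\scat{C}}$ with the homotopy fiber that computes mapping spaces in the slice $\infty$-category. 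The underlying computation is the same in both proofs --- it rests on $Q(\Loid)$ being a cofibrant resolution corepresenting $\infty$-morphisms (Proposition \ref{prop:cobar resolution for loo algebroids}) and on $\Loide\boxtimes_{\tang}\Omega[\Delta^\bullet]$ being a fibrant path-object resolution, with $\pi_2$ a fibration --- so your version would be complete once you either cite or prove the compatibility of localization with slicing for semi-model categories; the paper does use that fact elsewhere (in the proof of Theorem \ref{thm:curved Lie algebroids conceptually}), so this is a presentational rather than a mathematical gap. Your diagnosis of the fibrancy issue (the anchor of $\tang\oplus\keranc$ is not surjective since $F^0(\tang)=0$, so fibrancy must be understood relative to $\tang$) is accurate and is precisely what the Kan-fibration check in the paper, or your path-object check in the slice, has to absorb.
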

\begin{proof}
Let us denote by $\oocat{cLie}(A/k)\hspace{1pt}\mathrlap{\hspace{-1pt}\smallsetminus}{\scriptstyle/\hspace{2pt}\tang}$ the simplicially enriched slice category of $\oocat{cLie}(A/k)$ over $\tang$: objects are maps $\mf{g}\rt \tang$ and simplicial maps of morphisms consist exactly of maps as in Definition \ref{def:slice of curved lie}. The simplicially enriched category $\oocat{cLie}(A/k)\hspace{1pt}\mathrlap{\vspace{-1pt}\hspace{-1pt}\smallsetminus}{\scriptstyle/\hspace{2pt}\tang}$ is \emph{not} a model for the slice $\infty$-category $\oocat{cLie}(A/k){\scriptstyle/\tang}$, but there is a comparison map of $\infty$-categories $\oocat{cLie}(A/k)\hspace{1pt}\mathrlap{\vspace{-1pt}\hspace{-1pt}\smallsetminus}{\scriptstyle/\hspace{2pt}\tang}\rt \oocat{cLie}(A/k){{\scriptstyle/\tang}}$ \cite[\href{https://kerodon.net/tag/01ZN}{Tag 01ZN}]{kerodon}. 

Composing this map with the natural simplicially enriched functor $\scat{C}\rt \oocat{cLie}(A/k)\hspace{1pt}\mathrlap{\vspace{-1pt}\hspace{-1pt}\smallsetminus}{\scriptstyle/\hspace{2pt}\tang}$ produces the desired map $\scat{C}\rt \oocat{cLie}(A/k){\scriptstyle/\tang}$. 
It is essentially surjective because every curved $L_\infty$-algebroid over $\tang$ is homotopy equivalent to one for which the projection $\Loid\rt \tang$ is surjective and $\Loid$ is a projective complete graded $A$-module (by Theorem \ref{thm:model structure lie algebroids}). Since $\tang$ was assumed to be projective as a graded complete $A$-module, such curved $L_\infty$-algebroids $\Loid$ admit a splitting $\Loid\cong \tang\oplus \keranc$ where $\keranc$ is projective as well. 

Furthermore, for two objects $\pi_1\colon \Loid\rt \tang$ and $\pi_2\colon \Loide\rt \tang$ in $\scat{C}$, the simplicial set of maps between them fits into a pullback square of simplicial sets
$$\begin{tikzcd}
\Map_{\scat{C}}(\Loid, \Loide)\arrow[r]\arrow[d] & \Map_{\oocat{cLie}(A/k)}(\Loid, \Loide)\arrow[d, "(\pi_2)_*"]\\
\{\pi_1\}\arrow[r] & \Map_{\oocat{cLie}(A/k)}(\Loid, \tang).
\end{tikzcd}$$
Unraveling the definitions, one sees that the right vertical map is a Kan fibration, so that the above square is homotopy cartesian. This implies that the functor $\scat{C}\rt \oocat{cLie}(A/k){\scriptstyle/\tang}$ is fully faithful (cf.\ \cite[\href{https://kerodon.net/tag/01ZT}{Tag 01ZT}]{kerodon}).
\end{proof}
\begin{construction}\label{con:curv}
For every map of curved $L_\infty$-algebroids $\Loid\rt \tang$, together with a splitting $\Loid\cong \tang\oplus \keranc$, let us now define
$$
\curv(\Loid)\coloneqq B\otimes_A \keranc
$$
equipped with the curved $L_\infty$-structure over $B$ from Proposition \ref{prop:curved algebroid vs curved algebra}. Observe that this definition is compatible with tensoring with forms on the simplex, i.e.\
\begin{equation}\label{eq:curv preserves cotensor}
\curv\big(\Loid\boxtimes_{\tang}\Omega[\Delta^n]\big)\cong \curv(\Loid)\otimes \Omega[\Delta^n].
\end{equation}
For any $\infty$-morphism $\Phi$ as in Definition \ref{def:slice of curved lie}, we then let
$$\begin{tikzcd}
\curv(\Phi)\colon \curv(\Loid)\arrow[r, rightsquigarrow] & \curv(\Loide)\otimes\Omega[\Delta^\bullet]
\end{tikzcd}$$
be the associated $\infty$-morphism of curved $L_\infty$-algebras over $B$ from Proposition \ref{prop:curv is compatible with oo-maps}.
\end{construction}
\begin{lemma}\label{lem:curv functor}
Construction \ref{con:curv} defines a simplicially enriched functor
$$\begin{tikzcd}
\curv\colon \scat{C}\arrow[r] & \oocat{cLie}_B
\end{tikzcd}$$
to the simplicial category of curved Lie algebras over $B$ from Definition \ref{def:oocat of algebras over B}.
\end{lemma}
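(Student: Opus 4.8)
The plan is to verify the three things required of a simplicially enriched functor: that $\curv$ sends an object of $\scat{C}$ to an object of $\oocat{cLie}_B$, that $\curv$ sends a simplicial set of morphisms to a simplicial set of morphisms compatibly with the simplicial structure, and that $\curv$ respects identities and composition.

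First I would check that $\curv(\Loid)$ is a legitimate object of $\oocat{cLie}_B$. By Proposition \ref{prop:curved algebroid vs curved algebra}, the complete graded $B$-module $B\otimes_A \keranc$ carries a genuine curved $L_\infty$-algebra structure over $B$, so it remains to verify the cofibrancy conditions from Definition \ref{def:oocategory of curved lie over B}: that $B\otimes_A \keranc$ is quasiprojective as a $B$-module and that its associated graded is cofibrant over $\Gr(B)$. Since $\keranc$ is projective as a complete graded $A$-module by hypothesis (Definition \ref{def:slice of curved lie}), it is a retract of a free $A$-module, hence $B\otimes_A \keranc$ is a retract of a free $B$-module, i.e.\ quasiprojective. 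For the associated graded, one uses that $\Gr$ is symmetric monoidal (Remark \ref{rem:graded is monoidal}) so $\Gr(B\otimes_A \keranc)\cong \Gr(B)\otimes_{\Gr(A)}\Gr(\keranc)$, and $\Gr(\keranc)$ is a retract of a free $\Gr(A)$-module, so the tensor product is cofibrant over $\Gr(B)$. (Here one should be slightly careful that the form degree grading making $B=\Sym_A(\tang^\vee[-1])$ into a graded algebra is compatible with taking $\Gr$; this is built into the setup since $F^0\tang=0$.)

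Next, for objects $\Loid, \Loide$ of $\scat{C}$, I would show that $\curv$ induces a map of simplicial sets $\Map_{\scat{C}}(\Loid,\Loide)\rt \Map_{\oocat{cLie}_B}(\curv(\Loid),\curv(\Loide))$. On $n$-simplices, an element of the left side is an $\infty$-morphism $\Phi\colon \Loid\rightsquigarrow \Loide\boxtimes_\tang \Omega[\Delta^n]$ over $\tang$; by Proposition \ref{prop:curv is compatible with oo-maps} the associated maps $\phi_p$ assemble into an $\infty$-morphism $\curv(\Phi)\colon \curv(\Loid)\rightsquigarrow \curv(\Loide\boxtimes_\tang\Omega[\Delta^n])$, and by the identification \eqref{eq:curv preserves cotensor} the target is $\curv(\Loide)\otimes\Omega[\Delta^n]$, as required. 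Compatibility with the simplicial operators (face and degeneracy maps in $\Delta^n$, i.e.\ functoriality in $[n]$) is immediate since all constructions are natural in the commutative algebra $\Omega[\Delta^\bullet]$: both $\boxtimes_\tang\Omega[\Delta^\bullet]$ and the passage from $\Phi$ to $\curv(\Phi)$ (being given by the explicit $A$-linear to $B$-linear adjunction of Observation \ref{obs:B-linear extension}) commute with maps of the coefficient algebra.

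Finally I would check functoriality: $\curv$ sends the identity $\infty$-morphism to the identity, which is clear since the identity $\Phi$ has $\Phi_1=(\id,\id)$ and all higher $\Phi_p=0$, so $\phi_1=\id$ and $\phi_{p\neq 1}=0$; and $\curv$ respects the composition law of Definition \ref{def:oocat of algebras over B}. The composition of $\infty$-morphisms of curved $L_\infty$-algebras is given by the cocomposition formula recalled in Remark \ref{rem:composing oo-morphisms}, and the composition of $\infty$-morphisms of curved $L_\infty$-algebroids over $\tang$ is given by the same formula; since the passage $\Phi\mapsto (\phi_p)$ of Section \ref{sec:formulas for curv} is simply the binomial re-bracketing together with the $A$-linear/$B$-linear adjunction, and these are monoidal with respect to the relevant composition products, the two composition laws match on the nose. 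The one point deserving care — and the main (modest) obstacle — is bookkeeping: one must make sure the Koszul signs introduced by the shift $\keranc[1]$ and the diagonal re-summation over unshuffles in Observation \ref{obs:B-linear extension} are compatible with composition, but as noted in Remark \ref{rem:signs} essentially all of these signs are absorbed by the degree shift, so after passing to shifted objects the verification is formal. This completes the construction of the functor, whose fully-faithfulness and essential surjectivity (the actual content of Theorem \ref{thm:comparison}) will be addressed in the remainder of the section.
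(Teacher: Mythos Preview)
Your proposal is correct and follows essentially the same approach as the paper: verify that objects land where they should, that $\infty$-morphisms are carried to $\infty$-morphisms via Proposition \ref{prop:curv is compatible with oo-maps} and the identification \eqref{eq:curv preserves cotensor}, and that composition is preserved because both sides use the same unshuffle formula for composing $\infty$-morphisms. The paper additionally offers a more conceptual alternative for the composition check: an $\infty$-morphism $\Loid\rightsquigarrow\Loide$ over $\tang$ is a map of $A$-linear cocommutative coalgebras $\Sym^c_A(\Loid[1])\rt\Sym^c_A(\Loide[1])$ over $\Sym^c_A(\tang[1])$, and since $\Sym^c_A(\tang[1])$ is a dualizable $A$-module, such a map is equivalent (by dualizing the $\tang$-coaction to a $B$-action) to a map of $B$-linear coalgebras $\Sym^c_B(B\otimes_A\keranc[1])\rt\Sym^c_B(B\otimes_A\kerance[1])$; this passage manifestly preserves composition of coalgebra maps, which sidesteps the explicit unshuffle bookkeeping you allude to.
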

\begin{proof}
	If $\Psi\colon \Loi \rightsquigarrow \Loid$ and $\Phi\colon \Loid \rightsquigarrow \Loide$ are $\infty$-morphisms of curved $L_\infty$-algebroids (over $\tang$), their composite $\Phi\circ \Psi$ has components (cf.\ \cite[Proposition 10.2.7]{LodayVallette2012})
	
	\[
	(\Phi\circ \Psi)_s = \sum_{k, s_1+\dots+s_k=s} \sum_{\sigma\in \mathrm{Sh}_{(s_1,\dots,s_k)}^{-1}} \frac{1}{k!}\Phi_k(\Psi_{s_1},\dots,\Psi_{s_k})^\sigma.  
	\]
	On the other hand, $\infty$-morphisms of curved $L_\infty$-algebras over $B$ have the same composition formula. Going through the arguments of Proposition \ref{prop:curv is compatible with oo-maps}, one then sees that $\curv(\Phi\circ \Psi) = \curv(\Phi) \circ \curv(\Psi)$. Alternatively, this can be seen in terms of coalgebras as follows: an $\infty$-morphism $\Loid=\tang\oplus\keranc\rightsquigarrow \tang\oplus\kerance=\Loide$ over $\tang$ is given by a certain map of graded complete $A$-linear cocommutative coalgebras 
$$\begin{tikzcd}
\Sym^c_A(\mf{g}[1])\cong \Sym^c_A(\tang[1])\otimes \Sym^c_A(\keranc[1])\arrow[r, "\Phi"] & \Sym^c_A(\tang[1])\otimes \Sym^c_A(\kerance[1])\cong \Sym^c_A(\mf{h}[1])
\end{tikzcd}$$
over $\Sym^c_A(\tang[1])$. Using that $\Sym^c_A(\tang[1])$ is a dualizable complete $A$-module, such a map of coalgebras over $\Sym^c(\tang[1])$ is equivalent to a map of $B$-linear coalgebras
$$\begin{tikzcd}
B\otimes_A \Sym^c_A(\keranc[1])= \Sym_A(\tang^\vee[-1])\otimes_A \Sym^c_A(\keranc[1])\arrow[r] & B\otimes_A \Sym^c_A(\keranc[1])
\end{tikzcd}$$
This is precisely the $B$-linear coalgebra map corresponding to the $\infty$-morphism of Proposition \ref{prop:curv is compatible with oo-maps}. The above construction manifestly preserves composition of coalgebra maps, which implies that $\curv$ preserves composition of $\infty$-morphisms.

For functoriality on $n$-simplices in the mapping spaces, note that the composition of $\Psi\colon \Loi \rightsquigarrow \Loid\boxtimes_{\tang}\Omega[\Delta^n]$ and $\Phi\colon \Loid \rightsquigarrow \Loide\boxtimes_{\tang}\Omega[\Delta^n]$ is given by
$$\begin{tikzcd}[column sep=1.5pc]
\Loi \arrow[r, rightsquigarrow, "\Psi"] & \Loid\boxtimes_{\tang}\Omega[\Delta^n]\arrow[r, rightsquigarrow, "\Phi"] & \Big(\Loide\boxtimes_{\tang}\Omega[\Delta^n]\Big)\boxtimes_{\tang}\Omega[\Delta^n] \cong \Loide\boxtimes_{\tang} \Omega[\Delta^n\times\Delta^n]\arrow[r, "\Delta^*"] & \Loide\boxtimes_{\tang} \Omega[\Delta^n]
\end{tikzcd}$$
where the last map is induced by the map of cdgas $\Delta^*\colon \Omega[\Delta^n\times\Delta^n]\rt \Omega[\Delta^n]$ restricting along the diagonal. By functoriality at the level of $\infty$-morphisms and Equation \eqref{eq:curv preserves cotensor}, one sees that $\curv$ also respects composition at the level of $n$-simplices.
\end{proof}
Finally, we turn to the proofs of Theorem \ref{thm:comparison} and Theorem \ref{thm:comparison tangent case}.
\begin{proof}[Proof of Theorem \ref{thm:comparison}]
Lemma \ref{lem:C is slice} and Lemma \ref{lem:curv functor} furnish a zig-zag of functors of $\infty$-categories
$$\begin{tikzcd}
\oocat{cLie}(A/k){\scriptstyle/\tang} & \scat{C}\arrow[l, "\sim"{above}] \arrow[r, "\curv"] & \oocat{cLie}_B.
\end{tikzcd}$$
Taking the inverse of the left functor and composing it with the right functor gives the desired functor $\curv\colon \oocat{cLie}(A/k){\scriptstyle/\tang}\rt \oocat{cLie}_B$. To see that it is an equivalence, it suffices to verify that the right functor is an equivalence. Indeed, Proposition \ref{prop:curved algebroid vs curved algebra} implies that it is essentially surjective and Proposition \ref{prop:curv is compatible with oo-maps} implies that it is (strictly) fully faithful as a map between simplicially enriched categories.
\end{proof}
\begin{proof}[Proof of Theorem \ref{thm:comparison tangent case}]
If $A$ is a cofibrant cdga locally of finite presentation (or smooth), then $T_A$ is a finitely generated projective graded $A$-module. Consequently, the filtered Lie algebroid $T_A\wgt{1}$ satisfies the conditions of Theorem \ref{thm:comparison} (see Example \ref{ex:comparison for tangent}). Let us now consider the composite functor
\begin{equation}\label{diag:lie algebroids into curved lie}\begin{tikzcd}[column sep=3pc]
\oocat{Lie}(A/k)\arrow[r, hook, "(-)^\mm{anc}"] & \oocat{cLie}(A/k){\scriptstyle/T_A\wgt{1}}\arrow[r, "\curv"{above}, "\sim"{below}] & \oocat{cLie}_{\dR(A)}
\end{tikzcd}\end{equation}
where $\dR(A)=C^*(T_A\wgt{1})$ is the de Rham algebra of $A$ with its natural filtration. The second functor is the equivalence of Theorem \ref{thm:comparison} and the first functor is the fully faithful inclusion of Proposition \ref{prop:adding filtrations fully faithful}. Unraveling the constructions, one sees that the composite sends an $L_\infty$-algebroid of the form $\Loid\simeq T_A\oplus \keranc$ to a curved $L_\infty$-algebra of the form $\dR(A)\otimes_A \keranc$, where $\keranc$ is in filtration weight $0$. Such curved $L_\infty$-algebras have the property that the natural map
$$\begin{tikzcd}
\Gr^0\big(\dR(A)\otimes_A \keranc\big)\otimes_{\Gr^0(\dR(A))}\Gr(\dR(A))\arrow[r] & \Gr^0\big(\dR(A)\otimes_A \keranc\big)
\end{tikzcd}$$
is an isomorphism. Conversely, suppose that $\mf{g}$ is a curved $L_\infty$-algebra over $B$ such that 
\begin{equation}\label{eq:graded generators in degree 0}
\Gr^0(\mf{g})\otimes_{\Gr^0(\dR(A))}\Gr(\dR(A))\rt \Gr(\mf{g})
\end{equation} is an equivalence. We may assume that $\mf{g}$ arises from a mixed-curved $L_\infty$-algebra over $B$ under the functor $\blend \colon \cat{cLie}^{\mm{mix}}_B\rt \cat{cLie}_B$ from Theorem \ref{thm:homotopy theories of curved lie over B}. There then exists a cofibrant dg-$A$-module $\keranc$ (in filtration degree $0$) together with a map 
$$
\phi\colon \dR(A)\otimes_A \keranc\rt \mf{g}
$$
which induces a quasi-isomorphism on the associated graded. Using the Homotopy Transfer Theorem \ref{thm:htt-B}, one can endow $\dR(A)\otimes_A \keranc$ with the structure of a mixed-curved $L_\infty$-algebra over $\dR(A)$ such that $\phi$ becomes an $\infty$-equivalence of mixed-curved $L_\infty$-algebras. Proposition \ref{prop:curved algebroid vs curved algebra} now implies that this equivalent curved $L_\infty$-algebra $\dR(A)\otimes_A \keranc$ is isomorphic to $\curv(T_A\wgt{1}\oplus \keranc)$, for a certain curved $L_\infty$-algebroid structure on $T_A\wgt{1}\oplus \keranc$. Such (curved) $L_\infty$-algebroids are precisely images of ordinary $L_\infty$-algebroids under the functor $(-)^\mm{anc}$. We conclude that the essential image of \eqref{diag:lie algebroids into curved lie} indeed consists of those curved $L_\infty$-algebras over $\dR(A)$ for which the map \eqref{eq:graded generators in degree 0} is an equivalence.
\end{proof}

\section{Examples and applications}\label{sec:Examples}
In this section we will give some concrete computations and applications of our main results.

In Section \ref{sec:explicit} we will establish an explicit formula for the functor $\curv$ for Lie algebroids with surjective anchor and discuss the less explicit case of a non-surjective anchor.

In Section \ref{sec:rogers} we will see that the formalism developed for curved Lie algebras produces non-trivial results even when restricting to uncurved Lie algebras, by exhibiting the Maurer--Cartan space as a mapping space. 

Finally, in Section \ref{sec:diffgeo} we show a $\mathcal{C}^\infty$ variant of our main result and also interpret that result in the case where $L_\infty$-algebroids arise from vector bundles.

\subsection{A more explicit functor $\curv$}\label{sec:explicit}

While the construction of the functor $\curv$ resorts to $\infty$-categorical methods, its description on objects can be made quite explicit when applied to a strict Lie algebroid $\rho\colon \Loid\to T_A$. 
The construction depends (even if it is homotopically independent) of a choice of a section of $\rho$ as a map of graded $A$-modules, so we first treat the case in which the anchor admits an $A$-linear section and then we show how to explicitly obtain such an algebroid when the anchor is not surjective.

\subsubsection*{If the anchor splits}

Suppose $\Loid$ is a Lie algebroid over a cofibrant cdga $A$ with surjective anchor $\rho\colon \Loid \twoheadrightarrow T_A$ and let us fix an $A$-linear section $s\colon T_A \to \Loid$ of $\rho$, which is not necessarily compatible with the differentials nor the Lie brackets.

Denoting $\mf{n} = \ker \rho$, using the section we can split $\Loid$ as a direct sum of (non-differential) $A$-modules $\Loid = T_A \oplus \mf{n}$.

The bracket and the differential with respect to this decomposition now take the form 

\begin{align*}
	d_{\Loid}(v, \xi)&=\big(dv, d\xi + \ell^{(1)}(v)\big) &\in T_A \oplus \mf n\\
	\big[(v, \xi), (w, \eta)\big]&= \big([v, w], [\xi, \eta]+ \nabla_v(\eta)-\nabla_w(\xi) + \ell_0^{(2)}(v, w)\big) &\in T_A \oplus \mf n,
\end{align*}

for some uniquely determined $k$-(bi)linear maps $\ell_0^{(1)}(-)$, $\ell_0^{(2)}(-,-)$ and $\nabla_-(-)$.

It follows from the Leibniz rule that $\nabla\colon T_A\otimes \mf{n}\rt \mf{n}$ is a connection and from  the Jacobi identity on $\Loid$ we observe that for a fixed $v\in T_A$ we have that  $\nabla_v\colon \mf{n}\rt \mf{n}$ is a derivation with respect to the Lie bracket.

Let us consider the  $\dR(A)$-module $\dR(A)\otimes_A \mf{n}$ equipped with the differential $d = d_{\dR(A)} + d_\mf{n}$. 
Consider the $\dR(A)$-linear endomorphism $\ell_1$ obtained by extending linearly the connection $\nabla\colon \mf{n}\rt \Omega^1_A\otimes_A\mf{n}$ and let us interpret $\ell_0^{(1)}\in \dR^1(A)\otimes_A \mf{n}$ and $\ell_0^{(2)}\in \dR^2(A)\otimes_A \mf{n}$.

\begin{proposition}\label{prop:curv for transitive lie}
The  $\dR(A)$-module $\dR(A)\otimes_A \mf{n}$, equipped with:
\begin{itemize}
\item the $\dR(A)$-linear extension of the Lie bracket on $\mf{n}$,

\item the pre-differential $d+\ell_1$,

\item and the curvature $\ell_0 = \ell_0^{(1)}+\ell_0^{(2)}$
\end{itemize}
is the curved Lie algebra obtained as the image of the functor $\curv$ from Theorem \ref{thm:comparison tangent case} applied to $\Loid$.
\end{proposition}

\begin{proof}
This is a particular case of Proposition \ref{prop:curved algebroid vs curved algebra}.
A direct verification that $\dR(A)\otimes_A \mf{n}$ is a curved Lie algebra  can be done using that:
\begin{enumerate}
\item[] $[d, \ell_0^{(1)}]=0$ since the differential on $\Loid$ squares to zero.

\item[] $[d, \nabla_v]-\nabla_{dv} = \mm{ad}_{\ell_0^{(1)}(v)}$ since  the differential on $\Loid$ derivation for the Lie bracket.

\item[] $[\nabla_v, \nabla_w]-\nabla_{[v, w]} = \mm{ad}_{\ell_0^{(2)}(v, w)}$ due to the Jacobi identity on $\Loid$.
\end{enumerate}
\end{proof}

\subsubsection*{If the anchor does not split}
Next, let us consider a Lie algebroid over $A$ whose anchor $\rho\colon \Loid \to T_A$ does not admit a splitting. Proposition \ref{prop:curv for transitive lie} and Proposition \ref{prop:curved algebroid vs curved algebra} do not apply in this situation, but they do apply to a fibrant replacement of $\Loid$. Such fibrant replacements always exist for formal reasons (Theorem \ref{thm:model structure lie algebroids}), but are typically big and not very explicit. In the case where $\Loid$ is a Lie algebroid  (or $L_\infty$-algebroid) whose underlying $A$-module is perfect, i.e.\ finitely generated and projective without differential, one can also construct a fibrant replacement by geometric means: this provides a fibrant replacement of $\Loid$ which is again perfect as an $A$-module. Let us briefly describe this construction, which is inspired by \cite{grady_gwilliam}.
\begin{construction}
Let $A$ be a smooth algebra or a cofibrant cdga which is locally of finite presentation (so that $\Omega^1_A$ is a dualizable $A$-module). Let us write $A\hat{\otimes} A$ for the complete filtered cdga given by the formal completion at the diagonal, i.e.\ by the adic completion of $A\otimes A$ at the kernel $I$ of the multiplication map. Note that since $I/I^2 \cong \Omega_A^1$, there is a canonical $\widehat\Sym_A(\Omega^1_A)\cong \Gr(A\hat{\otimes} A)$ with $\Omega^1_A$ of weight $1$. By a \emph{formal exponential} we will mean an isomorphism of complete graded algebras that fits into a commuting diagram
$$\begin{tikzcd}
A\arrow[d, "i_1"{swap}]\arrow[r] & \widehat\Sym_A(\Omega^1_A)\arrow[d]\\
A\hat{\otimes} A\arrow[r]\arrow[ru, dotted, "\exp^*"{description}] & A
\end{tikzcd}$$
and induces the canonical isomorphism on the associated graded (equivalently, on the first graded piece it induces the canonical identification $I/I^2\cong \Omega^1_A$). Geometrically, such an isomorphism identifies the formal completion of the diagonal with the tangent bundle of $A$.
  In particular, there then exists some differential on $\widehat\Sym_A(\Omega^1_A)$ so that $A\hat{\otimes} A\cong \widehat\Sym_A(\Omega^1_A)$ as complete dg-$A$-algebras, where $A\hat{\otimes} A$ is considered an $A$-algebra \emph{from the left}.  Unraveling the definitions, the data of a formal exponential (or rather its inverse) comes down to providing a left $A$-linear section of $F^1(A\hat{\otimes} A)\twoheadrightarrow \Gr^1(A\hat{\otimes} A)\cong \Omega^1_A$; in particular, such a section exists by our assumptions on $A$.

Likewise, if $E$ is a perfect dg-$A$-module, then a \emph{formal parallel transport} on $E$ is an isomorphism of graded $(A\hat{\otimes} A)$-modules
$$\begin{tikzcd}
\mathrm{PT}\colon E\otimes_A (A\hat{\otimes} A)\arrow[r, "\cong"] & (A\hat{\otimes} A)\otimes_A E 
\end{tikzcd}$$
which is the identity when tensored up along $A\hat{\otimes} A\rt A$. 
To provide some intuition, notice that if $E$ arises from a vector bundle, an isomorphism $E\otimes_A (A{\otimes} A)\cong (A{\otimes} A)\otimes_A E$ represents a continuous identification of the fibers $E_x\cong E_y$ for each point $(x, y)\in \mm{Spec}(A)\times \mm{Spec}(A)$. A formal parallel transport provides such an identification for infinitesimally close points.

A formal parallel transport is induced by a (left) $A$-linear section of the map $(A\hat{\otimes} A)\otimes_A E\twoheadrightarrow E$ (not necessarily preserving differentials), and hence exists since $E$ is perfect.
\end{construction}

These constructions can be recovered directly from connection data:
\begin{lemma}
Suppose that $A$ is a smooth algebra or a cofibrant cdga. Every connection $\nabla$ on $T_A$ (not required to preserve differentials) induces a natural formal exponential.
\end{lemma}
\begin{proof}
Notice that the complete filtered algebra $A\hat{\otimes} A$ is the (filtered) left $A$-linear dual of the universal enveloping algebra $\mc{U}(T_A)$, equipped with its PBW filtration \cite{rinehart63}; the duality arises from the canonical (filtered) left $A$-linear pairing 
$$\begin{tikzcd}
\langle-, - \rangle\colon \mc{U}(T_A)\hat{\otimes} (A\hat{\otimes} A)\arrow[r] & A; \quad \langle D, a\otimes b\rangle = a\cdot D(b).
\end{tikzcd}$$
On the associated graded, the PBW theorem \cite{rinehart63} identifies this with the canonical (nondegenerate) pairing $\Sym_A(T_A)\otimes \widehat{\Sym}_A(\Omega^1_A)$ with $T_A$ and $\Omega^1_A$ in filtration degrees $-1$ and $1$, respectively (see e.g.\ \cite[Sec. 4.4]{calaque2010hochschild}).

Following \cite{liao2019formal}, recall that the connection $\nabla$ gives rise recursively to an $A$-linear map $\exp\colon \Sym_A(T_A) \to~\mathcal U (T_A)$, which sends $X_i\in T_A$ to $X_i\in \mathcal U (T_A)$ and satisfies

\[ \exp(X_1\dots X_n) = \frac{1}{n} \sum_{k=1}^n\pm \left(X_k\exp(X_1\dots \hat X_k\dots X_n) -\exp\left(\nabla_{X_k}(X_1\dots \hat X_k\dots X_n)\right)\right).\]
At the associated graded level, $\exp$ coincides with the PBW isomorphism and is therefore an isomorphism itself \cite[Prop. 4.2]{liao2019formal}. Taking (filtered) $A$-linear duals, we obtain the left $A$-linear isomorphism $\exp^*\colon A\hat{\otimes} A \to \widehat\Sym_A(\Omega^1_A)$; this preserves the multiplication by \cite[Thm. 4.3]{liao2019formal}.
\end{proof}
\begin{lemma}
Let $A$ be a smooth algebra or a cofibrant cdga over $k$, equipped with a formal exponential. If $E$ is a perfect $A$-module, then every connection $\nabla^E$ on $E$ (not required to respect differentials) induces a formal parallel transport on $E$.
\end{lemma}
Geometrically, the algebraic construction in the proof below can be viewed as follows: for every $k$-point $x\in \mm{Spec}(A)$, one can restrict the (graded) vector bundle with connection $E$ to the formal neighbourhood $\mm{Spec}(A)^\wedge_x$ around $x$. The formal exponential map identifies $\mm{Spec}(A)^\wedge_x$ with the tangent space $T_x\mm{Spec}(A)$. We then produce a formal parallel transport by taking the usual parallel transport along straight lines in $T_x\mm{Spec}(A)$, starting at the origin.
\begin{proof}
The connection $\nabla^E$ induces a left $A$-linear connection on the base change $(A\otimes A)\otimes_A E$ (i.e.\ a connection with respect to the tangent bundle of $A\otimes A$ relative to $A\otimes k$). This connection induces a left $A$-linear connection on the complete $A\hat{\otimes} A$-module $(A\hat{\otimes} A)\otimes_A E$. Using the formal exponential, $(A\hat{\otimes} A)\cong \widehat{\Sym}_A(\Omega^1_A) \eqqcolon B$, we obtain a left $A$-linear connection on the perfect complete $B$-module $F\coloneqq B\otimes_A E$. Note that $F$ comes with the natural adic filtration, and $\gr^0(F)\cong E$ as $A$-modules. We have to provide a natural left $A$-linear section of the map $F\rt \gr^0(F)$.

To this end, let us work in a setting where there is a further ``\emph{projective}'' grading. Consider the algebra $\mm{Sym}_A(\Omega^1_A\oplus A\cdot t)$ where $A$ has projective weight $0$, $\Omega^1_A$ has weight $1$ and $t$ has weight $-1$, and let $B_t$ be its $t$-adic completion ($B_t=\mm{Sym}_A(\Omega^1_A)[\![t]\!]$ but with different projective grading). Then $B_t$ is a complete graded ring and there is a natural map of complete $A$-algebras
$$\begin{tikzcd}
B\arrow[r] & B_t; \quad \Omega^1_A\ni \alpha\arrow[r, mapsto] & t\cdot \alpha.
\end{tikzcd}$$
This map is an isomorphism onto the part of $B_t$ in projective weight $0$; in fact, $B_t$ is given in projective weight $0$ by $\prod \mm{Sym}^n_A(\Omega^1_A)\otimes t^{n}$. Now consider the perfect (projective graded) $B_t$-module $F_t\coloneqq B_t\otimes_{B} F$, which coincides with $F$ in projective weight $0$. Explicitly, it has elements $b(t)\otimes f$, with $b(t)\in B_t\cong \mm{Sym}_A(\Omega^1_A)[\![t]\!]$ and $f\in F$, subject to the relation 
$$
b(t)\otimes (\alpha\cdot f)\sim b(t)\cdot t\cdot \alpha\otimes f
$$
for every $\alpha\in \Omega^1_A$ (and extended by continuity for the adic topology). We now claim that $F_t$ has a canonical connection $\nabla_t\colon F_t\rt F_t\otimes dt$ in the $t$-direction. Indeed, let $\nabla\colon F\rt F\otimes_B \Omega^1_{B/A}$ be the $A$-linear connection we had on $F$, and denote $\nabla(f)=\nabla^{(1)}(f)\otimes \nabla^{(2)}(f)$. We then define
$$
\nabla_t\big(b(t)\otimes f\big) = \Big(\frac{db(t)}{dt}\otimes f + \big(b(t)\cdot t\cdot \nabla^{(2)}(f)\big)\otimes \nabla^{(1)}(f)\Big)dt.
$$
One can verify that this is well-defined and indeed defines a connection in the $t$-direction on $F_t$. But for $t$-connections on a perfect (projectively graded) module $F_t$ over a power series algebra $B_t=\mm{Sym}_A(\Omega^1_A)[\![t]\!]$, we know that the composite
$$\begin{tikzcd}
\ker(\nabla_t\colon F_t\rt F_t)\arrow[r] & F_t\arrow[r] & F_t\big/t\cdot F_t
\end{tikzcd}$$
is an isomorphism of projectively graded $\mm{Sym}_A(\Omega^1_A)$-modules. Finally, considering only the part in projective weight $0$, one obtains a sequence of left $A$-linear maps 
$$\begin{tikzcd}
\ker(\nabla_t)(0)\arrow[r] & F_t(0)\cong F\arrow[r] & (F_t/tF_t)(0) \cong \gr^0(F)
\end{tikzcd}$$
where the composition is an isomorphism. This provides the desired left $A$-linear section of $F\rt \gr^0(F)$, which concludes the proof.
%
%
\end{proof}

\begin{proposition}
Let $\Loid$ be a perfect $L_\infty$-algebroid over $A$ and fix a formal exponential and formal parallel transport on $\Loid$. Then the mapping cylinder $\Loid \oplus T_A[-1]\oplus T_A$ admits a natural a $L_\infty$-algebroid structure over $A$, which fits into a commuting diagram
\begin{equation}\label{diag:fibrant replacement mapping cylinder}\begin{tikzcd}
\Loid\arrow[rd, "\rho"{swap}]\arrow[rr, rightsquigarrow] & & \Loid\oplus T_A[-1]\oplus T_A\arrow[ld, "\pi"]\\
& T_A.
\end{tikzcd}\end{equation}
Here the $\infty$-morphism has linear part given by $(\mm{id}, 0, \rho)$ (in particular, it is a quasi-isomorphism) and $\pi$ is the projection onto the last factor.
\end{proposition}
\begin{proof}
Recall that for a perfect $A$-module $\Loid$ equipped with an $A$-linear map $\rho\colon \Loid\rt T_A$, there is an equivalence between $L_\infty$-algebroid structures on $\Loid$ with anchor $\rho$ and differentials on the complete filtered symmetric algebra $\widehat{\mm{Sym}}(\Loid^\vee[-1]\wgt{1})$ making 
$$\begin{tikzcd}
\dR(A)\arrow[r, "\rho^*"] & \widehat\Sym(\Loid^\vee[-1]\wgt{-1})\arrow[r] & A
\end{tikzcd}$$ 
a sequence of complete cdgas. In this case, the middle term agrees with the Chevalley--Eilenberg complex $C^*(\Loid\wgt{1})$. 

Now suppose that $\Loid$ is an $L_\infty$-algebroid and consider the map of complete cdgas $\dR(A)\otimes C^*(\Loid\wgt{1})\rt A\otimes A \rt A$. Let us write $\hat{R}$ for the adic completion of $\dR(A)\otimes C^*(\Loid\wgt{1})$ at the kernel of this map; unraveling the definitions, $\hat{R}$ can be identified without differential with the complete symmetric algebra
\begin{equation}\label{eq:Rhat}
\hat{R}=\widehat\Sym_{A\hat{\otimes} A}\Big(\pi_1^*\Omega^1_A[-1]\oplus \pi^*_2\Loid^\vee[-1]\Big).
\end{equation}
Here the filtration arises from the filtration on $A\hat{\otimes} A$ and 
$$
\pi_1^*\Omega^1_A=\Omega^1_A\otimes_A (A\hat{\otimes} A)\qquad\qquad \pi^*_2\Loid^\vee=(A\hat{\otimes} A)\otimes_A \Loid^\vee
$$
where $\Omega^1_A$ and $\Loid^\vee$ are both of filtration weight $1$. There are natural maps of complete cdgas $i_1\colon \dR(A)\rt \hat{R}$, induced by the left inclusion of $\dR(A)$, and $\phi\colon \hat{R}\rt C^*(\Loid\wgt{1})$ induced by
$$
(\rho^*, \mm{id})\colon \dR(A)\otimes C^*(\Loid\wgt{1})\rt C^*(\Loid\wgt{1}).
$$
In terms of \eqref{eq:Rhat}, $i_1$ includes $A$ into $A\hat{\otimes}A$ from the left and maps $\Omega^1_A$ into $\pi_1^*\Omega^1_A$. Furthermore, $\phi$ is given by the diagonal $A\hat{\otimes} A\rt A$, acts as $\rho^*$ on $\pi_1^*\Omega^1_A$ and as the identity on $\pi_2^*\Loid^\vee$.

Applying formal parallel transport, we can identify $\hat{R}$ with the complete graded algebra
$$
\hat{R}\cong \widehat\Sym_{A\hat{\otimes} A}\Big(\pi_1^*\Omega^1_A[-1]\oplus \pi^*_1\Loid^\vee[-1]\Big)
$$
In this presentation, $i_1$ is still the obvious inclusion, but the map $\hat{R}\rt C^*(\Loid\wgt{1})$ only acts as the identity on $\pi^*_1\Loid^\vee[-1]$ up to first order.

Next, using the formal exponential map we can identify $A\hat{\otimes}A \cong \widehat\Sym_A(\Omega^1_A)$ as complete $A$-algebras, where $A\hat{\otimes}A$ is seen as an $A$-algebra from the left. Using this, we find that the sequence of complete cdgas $\dR(A)\rt \hat{R}\rt C^*(\Loid\wgt{1})$ can be identified without differential with the sequence of $A$-algebra maps
\begin{equation}\label{diag:sequence of ce algebras}\begin{tikzcd}
\widehat\Sym_A\big(\Omega^1_A[-1]\big)\arrow[r, "i_1"] & \widehat\Sym_A\Big(\Omega^1_A\oplus \Omega^1_A[-1]\oplus \Loid^\vee[-1]\Big)\arrow[r, "\phi"] & \widehat\Sym_A\big(\Loid^\vee[-1]\big).
\end{tikzcd}\end{equation}
Here $i_1$ is simply induced by the summand inclusion $\Omega^1_A[-1]\rt \Omega^1_A\oplus \Omega^1_A[-1]\oplus \Loid^\vee[-1]$. The map $\phi$ is given on the summand $\Omega^1_A[-1]$ by $\rho^*$, while the restriction to $\Omega^1_A\oplus \Loid^\vee[-1]$ is given by the projection onto $\Loid^\vee[-1]$ up to higher order terms. 

The differential on $\hat{R}$ endows $\widehat\Sym_A\big(\Omega^1_A\oplus \Omega^1_A[-1]\oplus \Loid^\vee[-1]\big)$ with a differential, such that the associated graded is precisely the symmetric algebra on the mapping cylinder. This makes \eqref{diag:sequence of ce algebras} a diagram of cdgas, dual to the desired diagram of $L_\infty$-algebroids \eqref{diag:fibrant replacement mapping cylinder}.
\end{proof}
Even if $A$ is a smooth discrete algebra and $\Loid$ is a Lie algebroid over it concentrated in degree $0$, the mapping cylinder $\Loid\oplus T_A[-1]\oplus T_A$ will generally only come with an $L_\infty$-algebroid structure. The higher brackets depend on higher order terms appearing in the formal exponential map $\widehat\Sym_A(\Omega^1_A)\cong A\hat{\otimes} A$ and the formal parallel transport map $\Loid\otimes_A (A\hat{\otimes} A)\cong (A\hat{\otimes} A)\otimes_A \Loid$.
\begin{corollary}[cf.\ \cite{grady_gwilliam}]
Let $A$ be a smooth algebra or a cofibrant cdga of finite type and let $\Loid \to T_A$ be a Lie algebroid over $A$ whose underlying $A$-module is perfect. Then the complete $\dR(A)$-module $\dR(A)\otimes_A (\Loid\oplus T_A[-1])$ admits the structure of a curved $L_\infty$-algebra which gives a model for $\curv(\Loid)$.
\end{corollary}

\subsection{The Maurer--Cartan space}\label{sec:rogers}
In this section we show how to use our constructions at the level of curved Lie algebras to recover results concerning the classical homotopy theory of $L_\infty$-algebras over a field of characteristic $0$.

Let $\cat{Alg}_{L_\infty}^{c.p.f.}$ denote the category of complete positively filtered  $L_\infty$ algebras, with $\infty$-morphisms. 
Due to the assumption of positive filtration we can associate to such an $L_\infty$-algebra $\mathfrak g$ its Maurer--Cartan set

\[
\MC(\mf g) = \{x\in \mf g | dx + \frac{1}{2!}\ell_2(x,x) + \frac{1}{3!}\ell_3(x,x,x) + \dots =0\}.
\]

This construction extends to the Maurer--Cartan space, which is the functor
\begin{center}
\begin{tikzcd}[row sep=0]
\mathcal{M}\mathcal{C}\colon \cat{Alg}_{L_\infty}^{c.p.f.} \arrow[r] &\mathrm{Kan\ complexes}\\
\mf g & \MC(\mf g \otimes \Omega[\Delta^\bullet])
\end{tikzcd}
\end{center}

where $\Omega[\Delta^\bullet]$ denotes the polynomial differential forms on the simplex, see \cite{dolgushevrogers}.

It is not entirely trivial to see that the Maurer--Cartan functor is well defined on $\infty$-morphisms, let alone that it satisfies the right homotopical properties. 
It turns out that under our framework this functor takes the form of a mapping space if we consider the larger category of curved $L_\infty$-algebras, which establishes in particular that  $\mathcal M \mathcal C$ is a representable functor.

\begin{proposition}\label{prop:MC spaces are mapping spaces}
	Let $\mf g$ be a positively filtered $L_\infty$ algebra, seen as an object in the simplicial category of curved $L_\infty$-algebras $\oocat{cLie}$.
	Then $$\mathcal M \mathcal C (\mf g) = \Map_{\oocat{cLie}}(0,\mf g).$$
\end{proposition}

\begin{proof}
	Following the explicit form of an $\infty$-morphism as presented in section \ref{sec:oo-morphism of clie}, since $0$ is the trivial curved $L_\infty$-algebra, an $\infty$-morphism $\phi\colon 0\to \mf g$ can only have one non-trivial piece, namely $\phi_0$, which is represented by a filtration $1$, degree $1$ element satisfying precisely the Maurer--Cartan condition.	
	
	The result follows from $F^1 \mf g=\mf g$.
\end{proof}

Since in an $\infty$-category mapping spaces preserve weak equivalences, as a corollary we obtain the main result of \cite{dolgushevrogers}.
\begin{theorem}[Theorem 1.1 of \cite{dolgushevrogers}]
Let $\mf g \rightsquigarrow \mf h$ be a weak equivalence of positively filtered complete $L_\infty$-algebras.
Then, $\mathcal{M}\mathcal{C}(\mf g)\to \mathcal{M}\mathcal{C}(\mf h)$ is a homotopy equivalence of simplicial sets.
\end{theorem}

Furthermore, by expressing the Maurer--Cartan functor as a mapping space, the corollary above we can show the $\infty$-categorical version of a result due to Rogers \cite{rogers2020complete}.

\begin{theorem}[See Theorem 3 of \cite{rogers2020complete}]
	The Maurer--Cartan functor $\mathcal{M}\mathcal{C}\colon \cat{Alg}_{L_\infty}^{c.p.f.} \to \mathrm{Kan\ complexes}$ induces a left exact functor at the $\infty$-categorical level.
\end{theorem}

\begin{proof}
Following Proposition \ref{prop:MC spaces are mapping spaces}, at the $\infty$-categorical level $\mathcal{M}\mathcal{C}$ is a (covariant) mapping space functor, which is therefore left exact.
\end{proof}

\subsection{A differential-geometric variant}\label{sec:diffgeo}
Let us conclude with a brief discussion of Theorem \ref{thm:comparison tangent case} in the differential-geometric setting, where a similar construction has been studied in \cite{grady_gwilliam}. Let $M$ be a (Hausdorff, second countable) smooth manifold and let $A=\mc{C}^\infty(M)$ be the ring of $\mc{C}^\infty$-functions on $M$.
Let $\cat{Sh}_{\mc{O}_M}$ denote the category of sheaves of $\mc{O}_M$-modules on $M$, where $\mc{O}_M=\mc{C}^\infty(-)$ is the structure sheaf. Then there is an adjoint pair 
$$\begin{tikzcd}
	(-)^\sim\colon \Mod_{\mc{C}^\infty(M)}\arrow[r, yshift=1ex]\arrow[r, hookleftarrow, yshift=-1ex] & \cat{Sh}_{\mc{O}_M}(M) \colon \Gamma
\end{tikzcd}
$$
where the fully faithful right adjoint takes global sections and the left adjoint sends an $\mc{C}^\infty(M)$-module $V$ to the associated sheaf of $V\otimes_{\mc{C}^\infty(M)} \mc{C}^\infty(-)$ \cite[Section 5.4]{Joyce2019}. In other words, $\mc{O}_M$-module sheaves are completely determined by their global sections.

Furthermore, the Serre--Swan theorem asserts that $\Gamma$ restricts to a monoidal equivalence between locally free sheaves -- i.e.\ vector bundles on $M$ -- and finitely generated projective $\mc{C}^\infty(M)$-modules. For example, the module $T_{\mc{C}^\infty(M)}$ of algebra derivations of $\mc{C}^\infty(M)$ agrees with the module $\Gamma(M, TM)$ of vector fields on $M$ and its $\mc{C}^\infty(M)$-linear dual agrees with the module $\Gamma(M, T^*M)$ of $1$-forms on $M$ (however, its natural \emph{pre}dual, consisting of K\"ahler differentials, is not finitely generated).

\begin{definition}
	Let $\mf{g}$ be an $L_\infty$-algebroid over $\mc{C}^\infty(M)$ in the sense of Definition \ref{def:loo algebroid}. We will say that $\mf{g}$ is:
	\begin{enumerate}
		\item a \emph{sheaf of $L_\infty$-algebroids} on $M$ if the $\mc{C}^\infty(M)$-module underlying $\mf{g}$ arises as the global sections of a complex of sheaves of $\mc{O}_M$-modules on $M$.
		
		\item a \emph{differential-geometric} $L_\infty$-algebroid if $\mf{g}$ is bounded above and each $\mf{g}_n$ arises from a vector bundle, i.e.\ it is a finitely generated projective $\mc{C}^\infty(M)$-module.
	\end{enumerate}
\end{definition}
To justify this terminology, let us point out that under the Serre--Swan theorem, differential-geometric $L_\infty$-algebroids over $M$ indeed correspond to the usual definition of an $L_\infty$-algebroid over a smooth manifold considered in the literature \cite{laurent2018universal}; the only possible exception is that we allow \emph{unbounded} complexes of vector bundles, while one typically only considers nonpositively graded complexes of vector bundles. In particular, $T_{\mc{C}^\infty(M)}$ itself corresponds to the usual tangent Lie algebroid $T_M$ and its Chevalley--Eilenberg complex $C^*(T_{\mc{C}^\infty(M)})\cong \Omega^*(M)$ is isomorphic to the usual de Rham complex of $M$.

On the other hand, suppose that $\mf{g}\rt T_{\mc{C}^\infty(M)}$ is a sheaf of $L_\infty$-algebroids in the above sense. For every open $U$, the map 
$$\begin{tikzcd}
	\mf{g}\otimes_{\mc{C}^\infty(M)} \mc{C}^\infty(U)\arrow[r] & T_{\mc{C}^\infty(M)}\otimes_{\mc{C}^\infty(M)} \mc{C}^\infty(U)\cong T_{\mc{C}^\infty(U)}
\end{tikzcd}$$
is the anchor map of a natural $L_\infty$-algebroid $\mf{g}\otimes_{\mc{C}^\infty(M)} \mc{C}^\infty(U)$ over $\mc{C}^\infty(U)$. Taking associated sheaves, one sees that $\mf{g}^{\sim}\rt T_{\mc{C}^\infty(M)}^\sim = \mc{X}(-)$ gives a sheaf of $L_\infty$-algebroids, with anchor map taking values in the sheaf of vector fields on $M$. By our assumption on $\mf{g}$, the global sections of this sheaf of $L_\infty$-algebroids coincides with $\mf{g}$ itself. In this way, sheaves of $L_\infty$-algebroids on $M$ embed fully faithfully in $L_\infty$-algebroids over $\mc{C}^\infty(M)$, by taking global sections.
\begin{theorem}\label{thm:comparison differentiable case}
	Let $M$ be a differentiable manifold and $\mc{C}^\infty(M)$. Then there is a fully faithful inclusion of $\infty$-categories
	$$\begin{tikzcd}
		\curv\colon \oocat{Lie}(\mc{C}^\infty(M)/\mathbb{R})\arrow[r, hook] & \oocat{cLie}_{\Omega^*(M)}
	\end{tikzcd}$$
	from the $\infty$-category of $L_\infty$-algebroids over $\mc{C}^\infty(M)$ to the $\infty$-category of curved $L_\infty$-algebras over the de Rham complex $\Omega^*(M)$, filtered by form degree. Furthermore, $\curv$ restricts to equivalence between the full subcategories of:
	\begin{enumerate}
		\item sheaves of $L_\infty$-algebroids on $M$ and curved $L_\infty$-algebroids $\mf{h}$ over $\Omega^*(M)$ such that $\Gr^i(\mf{h})\simeq \Omega^i(M)\otimes_{\mc{C}^\infty(M)} \Gr^0(\mf{h})$ is an equivalence and $\Gr^0(\mf{h})$ arises as the global sections of a complex of sheaves of $\mc{O}_M$-modules.
		
		\item differential-geometric $L_\infty$-algebroids on $M$ and curved $L_\infty$-algebras over $\Omega^*(M)$ equivalent to a curved $L_\infty$-algebra of the form $\Omega^*(M)\otimes_{\mc{C}^\infty(M)} E$, with $E$ a bounded above graded vector bundle on $M$.
	\end{enumerate} 
\end{theorem}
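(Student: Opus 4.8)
The plan is to deduce Theorem \ref{thm:comparison differentiable case} from Theorem \ref{thm:comparison} in essentially the same way Theorem \ref{thm:comparison tangent case} was, the only genuinely new ingredient being the Serre--Swan theorem. Write $A\coloneqq \mc{C}^\infty(M)$ and $\tang\coloneqq T_A\wgt{1}$, where $T_A=T_{\mc{C}^\infty(M)}=\Gamma(M,TM)$ is the module of $\mc{C}^\infty$-derivations. By Serre--Swan, $T_A$ is a finitely generated projective $A$-module concentrated in degree $0$; hence $F^0(\tang)=0$ and $F^{-1}(\tang)=T_A$ is finitely generated quasiprojective, so $\tang$ satisfies the hypotheses of Theorem \ref{thm:comparison}, and moreover its Chevalley--Eilenberg complex $C^*(\tang)$ is the honest de Rham complex $\Omega^*(M)$ with the Hodge filtration (not the algebra of K\"ahler differentials, which is not finitely generated). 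The first step is to record that the homotopical input of Section \ref{sec:Curved Lie algebroids} remains valid over $A=\mc{C}^\infty(M)$ even though it is neither cofibrant nor smooth as an algebraic cdga: the proofs of Theorem \ref{thm:model structure lie algebroids}, Theorem \ref{thm:htt lie algebroids} and Proposition \ref{prop:cobar resolution for loo algebroids} only use that $T_A$, and more generally every vector bundle on $M$, is a finitely generated projective --- hence, being concentrated in degree $0$, cofibrant --- $A$-module, which is precisely Serre--Swan. Granting this, Theorem \ref{thm:comparison} gives an equivalence $\curv\colon \oocat{cLie}(A/\mathbb{R})_{/\tang}\xrightarrow{\ \sim\ }\oocat{cLie}_{\Omega^*(M)}$, and composing with the fully faithful functor $(-)^\mm{anc}\colon \oocat{Lie}(A/\mathbb{R})\hookrightarrow \oocat{cLie}(A/\mathbb{R})_{/\tang}$ of Proposition \ref{prop:adding filtrations fully faithful} produces the desired fully faithful functor $\curv$.

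Next I would identify the essential image of this composite, copying the second half of the proof of Theorem \ref{thm:comparison tangent case}. Every object of $\oocat{Lie}(A/\mathbb{R})$ is presented by an $L_\infty$-algebroid with surjective anchor $\rho\colon\Loid\rt T_A$; since $T_A$ is projective and concentrated in degree $0$ one gets a graded $A$-module splitting $\Loid\cong T_A\oplus\keranc$ with $\keranc=\ker(\rho)$, and unravelling Construction \ref{cons:Rees} and Construction \ref{con:curv} shows $\curv(\Loid)\cong\Omega^*(M)\otimes_A\keranc$ with $\keranc$ placed in filtration weight $0$, so $\Gr^i(\curv(\Loid))\cong\Omega^i(M)\otimes_A\Gr^0(\curv(\Loid))$. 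Conversely, given a curved $L_\infty$-algebra $\mf{h}$ over $\Omega^*(M)$ for which $\Gr^i(\mf{h})\rt\Omega^i(M)\otimes_A\Gr^0(\mf{h})$ is an equivalence, I would choose a cofibrant $A$-module $\keranc$ with a quasi-isomorphism $\keranc\rt\Gr^0(\mf{h})$, transfer the (mixed-)curved structure onto $\Omega^*(M)\otimes_A\keranc$ along the induced graded equivalence using the Homotopy Transfer Theorem \ref{thm:htt-B}, and then apply Proposition \ref{prop:curved algebroid vs curved algebra} to realize $\Omega^*(M)\otimes_A\keranc$ as $\curv$ of a curved $L_\infty$-algebroid of the form $\tang\oplus\keranc$ over $\tang$, which is in the image of $(-)^\mm{anc}$. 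This gives the unrestricted fully faithful statement together with the characterization of its essential image.

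For the two refinements I would rerun the same argument while tracking the sheaf-theoretic conditions. For (1): a sheaf of $L_\infty$-algebroids embeds fully faithfully into $\oocat{Lie}(A/\mathbb{R})$ by global sections (as recalled before the theorem), and one can present it by an $L_\infty$-algebroid $\Loid$ which is simultaneously a sheaf of $\mc{O}_M$-modules and has surjective anchor and cofibrant underlying $A$-module --- the bar--cobar replacement $Q(\Loid)$ of Proposition \ref{prop:cobar resolution for loo algebroids} is built freely from $\Loid$ over $T_A$, hence stays a sheaf of $\mc{O}_M$-modules, and surjectivity of the anchor can be arranged using that $T_M$ is a vector bundle. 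Then $\keranc=\ker(\rho)$ is itself a subsheaf of $\mc{O}_M$-modules, so $\curv(\Loid)=\Omega^*(M)\otimes_A\keranc$ has $\Gr^0$ arising as the global sections of a complex of $\mc{O}_M$-modules; conversely the resolving module $\keranc$ above can be taken to again be the global sections of a complex of $\mc{O}_M$-modules, whence $\tang\oplus\keranc$ is a sheaf of $L_\infty$-algebroids. For (2): here $\keranc=E$ is a bounded above graded vector bundle, which by Serre--Swan is the same as a bounded above complex of finitely generated projective $A$-modules; such $E$ is already cofibrant, so no resolution is needed and the argument of the previous paragraph specializes directly to the equivalence between differential-geometric $L_\infty$-algebroids and curved $L_\infty$-algebras over $\Omega^*(M)$ of the form $\Omega^*(M)\otimes_A E$.

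I expect the main obstacle to be bookkeeping rather than anything conceptual. First, one must verify carefully that the semi-model structure, the $\infty$-morphism calculus and the homotopy transfer theorems of Sections \ref{sec:Curved Lie algebroids}--\ref{sec:The equivalence between curved Lie algebras and Lie algebroids} really do go through for $A=\mc{C}^\infty(M)$, i.e.\ that $\mc{C}^\infty(M)$ is ``differentially smooth enough'' for those proofs to apply, the crux being that $T_{\mc{C}^\infty(M)}$ is projective and $C^*(T_{\mc{C}^\infty(M)}\wgt{1})$ is the usual de Rham complex. Second, one must check that every construction entering the essential-image computation (cofibrant replacements and presentations with surjective anchor, the splitting $\Loid\cong T_A\oplus\keranc$, the transferred curved structures on $\Omega^*(M)\otimes_A\keranc$) can be carried out compatibly with the property of arising as the global sections of a complex of sheaves of $\mc{O}_M$-modules; this is exactly where the distinction between a ``sheaf of $L_\infty$-algebroids'' and a ``differential-geometric $L_\infty$-algebroid'' matters, and where Serre--Swan (vector bundles versus general $\mc{O}_M$-modules) must be applied with some care.
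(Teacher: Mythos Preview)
Your overall strategy is correct and matches the paper's: apply Theorem \ref{thm:comparison} with $\tang=T_A\wgt{1}$ (valid since Serre--Swan makes $T_A$ finitely generated projective), precompose with $(-)^{\mm{anc}}$, and then rerun the essential-image argument from Theorem \ref{thm:comparison tangent case}. Your caution about whether the semi-model machinery of Section \ref{sec:Curved Lie algebroids} applies is unnecessary, since those results are stated for an arbitrary nonpositively graded cdga $A$; the only place finiteness of $T_A$ enters is the hypothesis of Theorem \ref{thm:comparison} itself.

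The one place where the paper's argument is sharper than yours is in handling parts (1) and (2). You propose to track the sheaf/vector-bundle property through explicit cofibrant replacements (e.g.\ arguing that $Q(\Loid)$ stays a sheaf). The paper instead makes the single observation that $\Gr^0(\curv(\mf{g}))$ is weakly equivalent to the homotopy fiber of the anchor $\rho\colon\mf{g}\to T_A$. Since $T_A$ is itself a vector bundle (hence a sheaf), this immediately gives: $\Gr^0(\curv(\mf{g}))$ is sheaf-like (resp.\ equivalent to a complex of vector bundles) if and only if $\mf{g}$ is. This bypasses all the bookkeeping you anticipate in your final paragraph. For the converse in (2), the paper also invokes the Homotopy Transfer Theorem for $L_\infty$-algebroids (Theorem \ref{thm:htt lie algebroids}), not just Theorem \ref{thm:htt-B}, to ensure that an $L_\infty$-algebroid whose underlying module is merely \emph{weakly equivalent} to a bounded above complex of vector bundles is itself equivalent to a differential-geometric one; you should make this step explicit.
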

\begin{proof}
	The Lie algebroid $T_{\mc{C}^\infty(M)}$ satisfies the conditions of Theorem \ref{thm:comparison}. Furthermore, the $\infty$-category of (algebraic) $L_\infty$-algebroids over $T_{\mc{C}^\infty(M)}$ embeds into the $\infty$-category of curved $L_\infty$-algebroids over $T_{\mc{C}^\infty(M)}\wgt{1}$ via the functor $(-)^\mm{anc}$. The proof of Theorem \ref{thm:comparison tangent case} now carries over verbatim to show that the essential image of $\oocat{Lie}(\mc{C}^\infty(M)/\mathbb{R})$ consists of curved $L_\infty$-algebroids $\mf{h}$ with $\Gr^i(\mf{h})\simeq \Omega^i(M)\otimes_{\mc{C}^\infty(M)} \Gr^0(\mf{h})$.
	
	Now note from the construction of the functor $\curv$ that $\Gr^0(\curv(\mf{g}))$ is weakly equivalent to the mapping fiber of the anchor $\rho\colon \mf{g}\rt T_A$. In particular, $\Gr^0(\curv(\mf{g}))$ arises as the global sections of a complex of sheaves of $\mc{O}_M$-modules if and only if $\mf{g}$ does. 
	
	Likewise, $\Gr^0(\curv(\mf{g}))$ is weakly equivalent to a complex of vector bundles if and only if $\mf{g}$ is weakly equivalent to a complex of vector bundles. But if $\mf{h}$ is a curved $L_\infty$-algebra with $\Gr^0(\mf{h})$ weakly equivalent to a complex of vector bundles, then $\mf{h}$ is itself weakly equivalent to a curved $L_\infty$-algebra of the form $\Omega^*(M)\otimes_{\mc{C}^\infty(M)} E$ for a graded vector bundle $E$, by the Homotopy Transfer Theorem \ref{thm:htt-B}. Similarly, if $\mf{g}$ is an $L_\infty$-algebroid whose underlying $\mc{C}^\infty(M)$-module is weakly equivalent to a bounded above complex of vector bundles, then $\mf{g}$ is weakly equivalence to a differential-geometric $L_\infty$-algebroid by the Homotopy Transfer Theorem \ref{thm:htt lie algebroids}.
\end{proof}
\begin{remark}
	Part (2) of Theorem \ref{thm:comparison differentiable case} can be made more explicit as follows. Given a curved $L_\infty$-algebra over the filtered cdga $\Omega^*(M)$ whose underlying graded module is of the form $\Omega^*(M)\otimes_{\mc{C}^\infty(M)} E$ for a bounded above graded vector bundle $E$ on $M$. Proposition \ref{prop:curved algebroid vs curved algebra} shows that the graded vector bundle $TM\oplus E$ carries a natural $L_\infty$-algebroid structure, with the anchor given by the projection to $TM$.
	
	Conversely, let $\rho\colon \mf{g}\rt TM$ be a differential-geometric $L_\infty$-algebroid. If $\rho$ is surjective, we can choose a splitting $\mf{g}=TM\oplus \keranc$ and Proposition \ref{prop:curved algebroid vs curved algebra} determines a curved $L_\infty$-structure on $\Omega^*(M)\otimes_{\mc{C}^\infty(M)} \keranc$. 
	%
\end{remark}

\begin{remark}
	The reader with derived inclinations may also take $A$ to be a nonpositively graded cdga of the form $\big(\mc{C}^\infty(M)[x_1, \dots, x_n], d\big)$ where the variables $x_1, \dots, x_n$ are of strictly negative degree. Such cdgas arise as the algebras of functions on derived manifolds, in which case the module of derivations $T_A$ indeed models the tangent sheaf of the derived manifold. Part (1) of Theorem \ref{thm:comparison differentiable case} holds in this setting as well.
\end{remark}

%
%
%
%
%
%
%
%
%
%


\bibliographystyle{alpha}
{\small\bibliography{biblio}}

\end{document}